\documentclass{amsart}
\usepackage{hyperref,amssymb}
\usepackage[english]{babel}
\usepackage{fancyhdr}
\usepackage{colortbl}
\usepackage{enumerate}
\usepackage{graphicx}
\usepackage{float} 
\usepackage{mathrsfs}
\usepackage[left=2.5cm,right=2.5cm,top=2.75cm,bottom=2.75cm]{geometry}
\usepackage{tikz}
\usetikzlibrary{arrows,automata,shapes,calc,patterns}
\usepackage{cite}
\usepackage{xcolor}




\usepackage[colorinlistoftodos]{todonotes}
\definecolor{blue}{rgb}{0,0,1}

\newtheorem{theorem}{Theorem}[section]
\newtheorem{prop}[theorem]{Proposition}
\newtheorem{lemma}[theorem]{Lemma}
\newtheorem{cor}[theorem]{Corollary}


\theoremstyle{definition}
\newtheorem{definition}{Definition}[section]

\theoremstyle{remark}
\newtheorem{remark}{Remark}[section]

\numberwithin{equation}{section}

\def \N {\mathbb{N}}  
\def \R {\mathbb{R}}  

\def\RN {\mathbb{R}^{N}} 

\def \Wop {W_0^{1,p}(\Omega)}
\def \W {W_0^{1,p}}

\def \Linfty {L^{\infty}(\Omega)}

\def \pLaplac {-\Delta_p} 
\def \gradu {\nabla u}
\def \gradv {\nabla v}
\def \muv {\frac{\mu}{p-1}v}

\def \past {p^{\ast}}


\renewcommand{\div}{\operatorname{div}}

\renewcommand{\epsilon} {\varepsilon}

\newcommand{\dist}{\operatorname{dist}}


\hypersetup{
colorlinks=true, breaklinks=true, bookmarks=true,bookmarksnumbered,
urlcolor=blue, linkcolor=blue, citecolor=red, 
pdftitle={}, 
pdfauthor={\textcopyright}, 
pdfsubject={}, 
pdfkeywords={}, 
pdfcreator={pdfLaTeX}, 
pdfproducer={LaTeX with hyperref and ClassicThesis} 
}

\begin{document}

\title[A p-Laplacian problem with critical growth in the Gradient]{Existence and Multiplicity for elliptic p-Laplacian 
problems with critical growth in the gradient}


\author[Colette De Coster and Antonio J. Fern\'andez]{}
\address{}
\curraddr{}
\email{}
\thanks{}


\email{colette.decoster@univ-valenciennes.fr}
\email{antonio\_jesus.fernandez\_sanchez@univ-fcomte.fr}

\subjclass[2010]{35J20, 35J25, 35J92}

\keywords{Quasilinear elliptic equations, critical growth in the gradient, p-Laplacian, lower and upper solutions, variational methods}

\date{}

\dedicatory{}
\maketitle

\centerline{\scshape Colette De Coster}
\smallskip
{\footnotesize
 \centerline{Univ. Valenciennes, EA 4015 - LAMAV - FR CNRS 2956, F-59313 Valenciennes, France}
}

\bigskip

\centerline{\scshape Antonio J. Fern\'andez}
\smallskip
{\footnotesize
\centerline{Univ. Valenciennes, EA 4015 - LAMAV - FR CNRS 2956, F-59313 Valenciennes, France}
\smallbreak
\centerline{Laboratoire de Math\'ematiques (UMR 6623), Universit\'e de Bourgogne Franche-Comt\'e,}
\centerline{16 route de Gray, 25030 Besan\c con Cedex, France}
} 

\begin{center}\rule{1\textwidth}{0.1mm} \end{center}
\begin{abstract}
We consider the boundary value problem 
\[ \tag{$P_{\lambda}$} 
\pLaplac u =  \lambda c(x) |u|^{p-2}u + \mu(x) |\gradu|^p + h(x)\, , \quad u \in \Wop \cap L^{\infty}(\Omega)\, ,\]
where $\Omega \subset \RN$, $N \geq 2$, is a bounded domain with smooth boundary. We assume 
$c,\, h \in L^q(\Omega)$ for some $q > \max\{N/p,1\}$ with $c \gneqq 0$ and $\mu \in L^{\infty}(\Omega)$. 
We prove  existence and 
uniqueness results in the coercive case $ \lambda \leq 0$ and existence and multiplicity results in the non-coercive case $ \lambda >0$.
Also, considering stronger assumptions on the coefficients, we clarify the structure of the set of solutions in the 
non-coercive case. 
\end{abstract}
\begin{center} \rule{1 \textwidth}{0.1mm} \end{center}

\section{Introduction and main results} \label{I}


Let $\Delta_p u= \div( |\nabla u|^{p-2} \nabla u)$ denote the $p$-Laplacian operator. We consider,
 for any $1<p<\infty$, the boundary value problem
 
 \begin{equation} \label{Plambda} \tag{$P_{\lambda}$}
\pLaplac u = \lambda c(x)|u|^{p-2}u + \mu(x)|\gradu|^p + h(x)\,, \quad u \in \Wop \cap \Linfty\,,
\end{equation}
under the assumptions

\[\label{B10_i} \tag{$A_{0}$} 
\left \{
\begin{aligned} 
& \Omega \subset \R^N,\, N \geq 2\,, \textup{ is a bounded domain with } 
\partial \Omega \textup{ of class } \mathcal{C}^{0,1},\\
& c \textup{ and } h \textup{ belong to } L^q(\Omega) \textup{ for some } q > \max\{N/p,1\}, \\
& c \gneqq 0  \textup{ and } \mu \in \Linfty\,.
\end{aligned}
\right.
\]

The study of quasilinear elliptic equations with a gradient dependence up to the critical growth 
$|\gradu|^p$ was initiated by L. Boccardo, F. Murat and J.P. Puel in the 80's and it has been an active field 
of research until now. Under the condition $\lambda c(x) \leq -\alpha_0 < 0$  for some 
$\alpha_0 > 0$, which is now referred to as the \textit{coercive case}, the existence of solution is a particular case 
of the results of \cite{B_M_P_1983, B_M_P_1992, DA_G_P_2002}. The \textit{weakly coercive case} 
($\lambda=0$) was studied in \cite{F_M_2000} where, for $\| \mu h\|_{N/p}$ small enough,  the 
existence of a unique solution is obtained, see also \cite{AH_BV_2010}. The \textit{limit coercive case}, where one just require that $\lambda c(x) \leq 0$  
and hence $ c$  may vanish only on some parts of $\Omega$, is more complex and  was left open until 
\cite{A_DC_J_T_2015}. In that paper, for the case $p = 2$, it was observed that, under the assumption \eqref{B10_i}, the existence of solutions to \eqref{Plambda} is not guaranteed. 
Sufficient conditions in order to ensure the existence of solution were given. 
\medbreak

The case $\lambda c(x) \gneqq 0$ also remained unexplored until very recently. 
First, in \cite{J_S_2013} the authors studied problem \eqref{Plambda} with $p = 2$. 
Assuming $\lambda > 0$  and $\mu h$ small enough, in an appropriate sense, they proved the existence of at least 
two solutions. 
This result has now be complemented in several ways. In \cite{J_RQ_2016} the existence of two solutions is obtained, allowing 
the function $c$ to change sign with  $c^{+} \not\equiv 0$ but assuming $h \gneqq 0$. In both \cite{J_RQ_2016, J_S_2013} $\mu >0$ is assumed constant. In \cite{A_DC_J_T_2015}  the restriction $\mu$ 
constant was removed but assuming that $h \gneqq 0$. 
Finally, in \cite{DC_J_2017}, under stronger regularity on the 
coefficients, cases where $\mu$ is non constant and $h$ is non-positive or has no sign were treated. Actually in \cite{DC_J_2017}, under different sets of assumptions, 
the authors clarify  the structure of the set of solutions to \eqref{Plambda} in the non-coercive case. Now, concerning \eqref{Plambda} with $p\not=2$, the only results in the case 
$\lambda c \gneqq 0$ are, up to our knowledge, presented in \cite{I_L_U_2010, AH_BV_2010}. In \cite{I_L_U_2010}  the case $c$ constant and $h\equiv0$ is covered and in \cite{AH_BV_2010},  the model equation is $\pLaplac u = |\gradu|^p + \lambda f(x)(1+u)^b\,,\, b \geq p-1$ and $f \gneqq 0$. 

\medbreak

To state our first main result let us define 
\begin{equation*} 
m_{p,\lambda}^{+}:= \left\{
\begin{aligned}
& \inf_{u \in W_{\lambda }} \int_{\Omega} \Big(|\gradu|^p 
- \big(\frac{\|\mu^{+}\|_{\infty}}{p-1}\big)^{p-1}h(x)|u|^p\,\Big)\, dx, \quad 
& \textup{ if } W_{\lambda} \neq \emptyset\,,
\\
& +\infty, & \textup{ if  } W_{\lambda} = \emptyset\,,
\end{aligned}
\right.
\end{equation*}
and
\begin{equation*} 
m_{p,\lambda}^{-}:= \left\{
\begin{aligned}
& \inf_{u \in W_{\lambda }} 
\int_{\Omega} \Big(|\gradu|^p - \big(\frac{\|\mu^{-}\|_{\infty}}{p-1}\big)^{p-1}h(x)|u|^p\,\Big) \,dx, 
\quad & \textup{ if } W_{\lambda} \neq \emptyset\,,
\\
& +\infty, & \textup{ if  } W_{\lambda} = \emptyset\,.
\end{aligned}
\right.
\end{equation*}
where 
\[ W_{\lambda }:= \{ w \in \Wop: \lambda c(x)w(x) = 0 \textup{ a.e. } x \in \Omega\,,\ \|w\| = 1\}\,. \]
Note that $W_{0} = \Wop$ and $W_{\lambda}$ is independent of $\lambda$ when $\lambda \neq 0$. Using these notations, we state the following result which generalizes the results obtained in 
\cite[Section 3]{A_DC_J_T_2015}. In fact, if $h$ is either non-negative or non-positive our hypothesis corresponds to
the ones introduced in \cite{A_DC_J_T_2015} for $p = 2$. However, if $h$ does not have a sign, 
our hypothesis are weaker even for $p=2$. 

\begin{theorem}\label{coercive-louis}
Assume that \eqref{B10_i} holds and that $ \lambda \leq 0$. Then if $m_{p,\lambda }^{+} > 0$ and $m_{p, \lambda }^{-} > 0$, the 
 problem \eqref{Plambda} has at least one solution.
\end{theorem}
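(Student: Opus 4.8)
The plan is to construct a solution by approximation, the role of the two hypotheses $m_{p,\lambda}^{+}>0$ and $m_{p,\lambda}^{-}>0$ being to furnish uniform a priori $\Linfty$ bounds. The device that makes these quantities appear is the exponential change of unknown adapted to the $p$-Laplacian. For $\beta>0$ put $w=\Psi_{\beta}(u)$ with $\Psi_{\beta}'(s)=e^{\frac{\beta}{p-1}s}$, so that $|\Psi_{\beta}'|^{p-2}\Psi_{\beta}'=e^{\beta u}$ and a direct computation gives $\pLaplac w=e^{\beta u}\big(\pLaplac u-\beta|\gradu|^{p}\big)$. Substituting \eqref{Plambda} yields $\pLaplac w=e^{\beta u}\big(\lambda c|u|^{p-2}u+h+(\mu-\beta)|\gradu|^{p}\big)$. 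Taking $\beta=\|\mu^{+}\|_{\infty}$ forces $\mu-\beta\le0$, so the critical gradient term acquires a favourable sign; since moreover $\lambda\le0$ and $c\gneqq0$, on $\{u\ge0\}$ one is left with the differential inequality $\pLaplac w\le e^{\beta u}h$. The crucial point is that $e^{\beta u}=\big(1+\frac{\beta}{p-1}w\big)^{p-1}$, so that, to leading order in $w$, the right-hand side equals $\big(\frac{\|\mu^{+}\|_{\infty}}{p-1}\big)^{p-1}h\,w^{p-1}$ and the energy governing the inequality is exactly the functional defining $m_{p,\lambda}^{+}$.

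Starting from this I would derive a two-sided $\Linfty$ bound. Testing $\pLaplac w\le e^{\beta u}h$ against $w^{+}$ and using $m_{p,\lambda}^{+}>0$ gives an a priori bound on $w^{+}$, hence on $u^{+}$, in $\Wop$; a Moser--Stampacchia iteration, available because $h\in\Lq$ with $q>\max\{N/p,1\}$, upgrades this to a bound on $\esssup_{\Omega}u^{+}$ and thus bounds $u$ from above. The symmetric argument---applying the same transformation to $-u$ with the constant $\|\mu^{-}\|_{\infty}$, which is licit since $\pLaplac$ is odd and $\mu\ge-\|\mu^{-}\|_{\infty}$, and invoking $m_{p,\lambda}^{-}>0$---bounds $u$ from below. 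Together these yield $\|u\|_{\infty}\le C$ for every solution, with $C$ depending only on the data.

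To turn the estimate into an existence statement I would set up an approximation. Replacing the critical term by the truncated one $\mu|\gradu|^{p}/\big(1+\tfrac1n|\gradu|^{p}\big)$ produces problems $(P_{\lambda}^{n})$ whose zero-order perturbation has subcritical growth; since $\lambda\le0$ and $c\gneqq0$ the associated operator is coercive and pseudomonotone, so each $(P_{\lambda}^{n})$ possesses a solution $u_{n}\in\Wop\cap\Linfty$ by the Leray--Lions theory. The bounds of the previous step apply uniformly in $n$, giving $\|u_{n}\|_{\infty}\le C$; since on the corresponding $\Linfty$-ball the change of unknown $\Psi_{\beta}$ is bi-Lipschitz, this also yields a uniform $\Wop$ bound. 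Passing to a subsequence, $u_{n}\rightharpoonup u$ in $\Wop$; proving the strong convergence $\nabla u_{n}\to\nabla u$ in $L^{p}$ then allows one to pass to the limit in the critical term and produces a solution of \eqref{Plambda} in $\Wop\cap\Linfty$.

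The main obstacle is twofold. The analytic crux is the strong $\Wop$ convergence of the gradients, indispensable for passing to the limit in $\mu|\gradu_{n}|^{p}$; following Boccardo--Murat this is obtained by testing with a truncation of $u_{n}-u$ weighted by a suitable exponential factor and exploiting both the sign of the gradient term and the uniform $\Linfty$ bound. The structural crux, present only when $\lambda<0$, is that the infima defining $m_{p,\lambda}^{\pm}$ run over $W_{\lambda}$---functions vanishing on $\{c>0\}$---rather than over all of $\Wop$. To exploit this sharper constraint one argues by contradiction: assuming a sequence of solutions whose $\Linfty$ norms blow up, one renormalises the corresponding transformed functions and shows that the weak limit must be supported in $\{c=0\}$, because on $\{c>0\}$ the dissipative term $\lambda c|u|^{p-2}u$ precludes concentration; the limit then contradicts $m_{p,\lambda}^{+}>0$ (respectively $m_{p,\lambda}^{-}>0$), and the uniform bound follows. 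When $\lambda=0$ one has $W_{0}=\Wop$ and this last difficulty is absent.
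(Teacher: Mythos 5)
Your proposal is correct in outline but takes a genuinely different route from the paper. The paper never attacks the variable-$\mu$ equation directly: it solves the two auxiliary \emph{constant}-coefficient problems $\pLaplac u = \lambda c(x)|u|^{p-2}u + \|\mu^{+}\|_{\infty}|\gradu|^{p} + h(x)$ and, after the reflection $u\mapsto -u$, the analogous problem with $-\|\mu^{-}\|_{\infty}$, by global minimization of the Hopf--Cole functional (Proposition \ref{globalMinimum}); the hypotheses $m_{p,\lambda}^{\pm}>0$ enter exactly once, as coercivity of $I_{\lambda}$, and the proof of that coercivity is the very dichotomy you sketch (blow-up on $\{c>0\}$ excluded by the logarithmic superlinearity, blow-up along $W_{\lambda}$ excluded by $m_{p,\lambda}>0$). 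The two solutions so obtained are an upper and a lower solution of \eqref{Plambda}; they are ordered via the comparison principle of Theorem \ref{compPrinciple} (which requires the constant gradient coefficient and the continuity supplied by Lemma \ref{lemmaUniqueness}), and existence follows from Theorem \ref{BMP1988}, to which all approximation and compactness work is outsourced. You instead rerun the Boccardo--Murat--Puel machinery by hand on the variable-$\mu$ problem (truncation, Leray--Lions, a.e.\ convergence of gradients), replacing the ordered pair of lower/upper solutions by uniform a priori bounds extracted from $m_{p,\lambda}^{\pm}>0$ through the same exponential substitution and the same two-case blow-up argument. The paper's route buys modularity (Proposition \ref{globalMinimum} is reused elsewhere, e.g.\ for Theorem \ref{th3}), a solution sandwiched between the lower and upper solutions, and no need to reprove compactness; your route buys independence from the comparison principle --- a delicate ingredient when $p\neq2$, cf.\ Remark \ref{pascomparaison} --- and from the modified problem \eqref{Qlambda}.

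Four points need care before your plan is complete, none fatal. (a) The a priori bounds must be proved for the truncated problems, not for \eqref{Plambda} itself; this works because $-\|\mu^{-}\|_{\infty}|\xi|^{p}\leq \mu(x)|\xi|^{p}/\bigl(1+\frac{1}{n}|\xi|^{p}\bigr)\leq\|\mu^{+}\|_{\infty}|\xi|^{p}$, so each $u_{n}$ is simultaneously a lower solution of the $\|\mu^{+}\|_{\infty}$-problem and an upper solution of the reflected one --- say this explicitly. (b) In the blow-up argument the weak limit $z$ of the normalized transformed functions may vanish identically, a case your ``supported in $\{c=0\}$'' dichotomy does not cover; it is handled by observing that the normalization forces $1\leq \bigl(\frac{\|\mu^{+}\|_{\infty}}{p-1}\bigr)^{p-1}\int_{\Omega} h\,z_{n}^{p}\,dx+o(1)$ while $\int_{\Omega} h\,z_{n}^{p}\,dx\to \int_{\Omega} h\,z^{p}\,dx$ by the compact embedding $\Wop\hookrightarrow L^{pq'}(\Omega)$ (here $q>\max\{N/p,1\}$ is exactly what gives $pq'<\past$), which yields a contradiction whether $z=0$ or $0\neq z$ with $cz\equiv0$. (c) Mind the sign bookkeeping in the reflected step: the equation for $-u$ carries $-h$, so the coercivity constant that actually controls the bound from below is $\inf_{w\in W_{\lambda}}\int_{\Omega}\bigl(|\nabla w|^{p}+\bigl(\frac{\|\mu^{-}\|_{\infty}}{p-1}\bigr)^{p-1}h|w|^{p}\bigr)dx$ --- this is the quantity the paper's own proof uses when it applies Proposition \ref{globalMinimum} to the problem with data $-h$; make sure your invocation of $m_{p,\lambda}^{-}$ refers to it. (d) The degenerate cases $\|\mu^{+}\|_{\infty}=0$ or $\|\mu^{-}\|_{\infty}=0$, which the paper treats separately via \cite[Theorem 13]{D_J_M_2001}, make $\Psi_{\beta}$ the identity and should be noted, though your argument then simplifies rather than breaks.
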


In the rest of the paper we assume that $\mu$ is constant. Namely, we replace \eqref{B10_i} by
\[\label{A1} \tag{$A_{1}$} 
\left \{
\begin{aligned} 
& \Omega \subset \R^N,\, N \geq 2\,, \textup{ is a bounded domain with } 
\partial \Omega \textup{ of class } \mathcal{C}^{0,1},\\
& c \textup{ and } h \textup{ belong to } L^q(\Omega) \textup{ for some } q > \max\{N/p,1\}, \\
& c \gneqq 0  \textup{ and } \mu >0\,.
\end{aligned}
\right.
\]
Observe that there is  no loss 
of generality in assuming $\mu > 0$  since, if $u$ is a solution to 
\eqref{Plambda} with $\mu < 0$, then $w = -u$ satisfies
\[ \pLaplac w = \lambda c(x)|w|^{p-2}w - \mu |\nabla w|^p - h(x)\,.\]

\medbreak

In \cite{A_DC_J_T_2014}, for $p=2$  but assuming only \eqref{B10_i}, 
the uniqueness of  solution when $\lambda \leq 0$ was obtained as a direct consequence 
of a comparison principle, see \cite[Corollary 3.1]{A_DC_J_T_2014}. As we show in Remark 
\ref{pascomparaison}, such kind of principle does not hold in general when $p\neq 2$. 
Actually the issue of uniqueness for equations of the form of $(P_{\lambda})$ appears widely open. 
If partial results, assuming for example $1 <p \leq 2$ or $\lambda c(x) \leq - \alpha_0 <0$,  
seem reachable adapting existing techniques, see in particular \cite{L_P_R_2017, P_2008, P_S_2007}, 
a result covering the full generality of \eqref{Plambda} seems, so far, out of reach. 
Theorem \ref{uniqueness} below, whose proof makes use of some ideas from \cite{A_P_2003}, 
crucially relies on the assumption that $\mu$ is constant. It permits however to treat 
the limit case $(P_0)$ which plays an important role in our paper.

\begin{theorem} \label{uniqueness}
Assume that \eqref{A1} holds and suppose $\lambda \leq 0$. Then \eqref{Plambda} has at most one solution.
\end{theorem}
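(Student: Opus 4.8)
The plan is to remove the critical gradient term by a Hopf--Cole-type substitution, which is available precisely because $\mu$ is constant, and then to run a D\'iaz--Sa\'a/Picone argument on the resulting gradient-free equation, whose reaction term has exactly the monotonicity one needs. Concretely, I would introduce the new unknown
\[ w = e^{\frac{\mu}{p-1}u}. \]
Since $u \in \Wop \cap \Linfty$ and $\mu$ is a constant, $w$ lies in $W^{1,p}(\Omega)\cap\Linfty$, is bounded below by a positive constant, satisfies $w \equiv 1$ on $\partial\Omega$ (so that $w - 1 \in \Wop$), and is in bijection with $u$ via $u = \frac{p-1}{\mu}\log w$. Testing \eqref{Plambda} against $w^{p-1}\psi$ for $\psi \in \Wop \cap \Linfty$ and using the chain rule, the term obtained by differentiating $w^{p-1}$ cancels exactly the critical term $\mu|\gradu|^p$, and one is led to the identity
\[ -\Delta_p w = \Big(\tfrac{\mu}{p-1}\Big)^{p-1} w^{p-1}\big(\lambda c(x)|u|^{p-2}u + h(x)\big) =: f(x,w), \qquad w - 1 \in \Wop, \]
with $u = \frac{p-1}{\mu}\log w$. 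The first genuine task is to carry out this cancellation rigorously for merely bounded $\Wop$ solutions, i.e.\ to verify that $w$ is a bona fide weak solution of $-\Delta_p w = f(x,w)$.

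The decisive observation is the monotonicity of
\[ s \longmapsto \frac{f(x,s)}{s^{p-1}} = \Big(\tfrac{\mu}{p-1}\Big)^{p-1}\big(\lambda c(x)\,|u(s)|^{p-2}u(s) + h(x)\big), \qquad s > 0, \]
where $u(s) = \frac{p-1}{\mu}\log s$. Since $u(s)$ is increasing, $r\mapsto|r|^{p-2}r$ is increasing, and $\lambda c \leq 0$ by hypothesis, this map is non-increasing in $s$; crucially, the possibly sign-changing datum $h$ enters only as an additive term \emph{independent of $s$} and hence does not affect the monotonicity at all. This is precisely what makes the conclusion insensitive to the sign of $h$, and it is the substitute for the comparison principle that, as recorded in Remark \ref{pascomparaison}, is unavailable when $p \neq 2$.

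Given two solutions $u_1, u_2$ of \eqref{Plambda}, set $w_1, w_2$ as above. Both are bounded away from $0$ and share the boundary trace $1$, so
\[ \varphi_1 = \frac{w_1^p - w_2^p}{w_1^{p-1}}, \qquad \varphi_2 = \frac{w_2^p - w_1^p}{w_2^{p-1}} \]
are admissible test functions in $\Wop$. Inserting $\varphi_i$ into the transformed equation for $w_i$ and adding, the right-hand side becomes
\[ \int_\Omega \Big(\frac{f(x,w_1)}{w_1^{p-1}} - \frac{f(x,w_2)}{w_2^{p-1}}\Big)\big(w_1^p - w_2^p\big)\,dx \leq 0 \]
by the monotonicity just established, while the left-hand side is $\geq 0$ by the pointwise D\'iaz--Sa\'a (Picone-type) inequality, with equality only where $\nabla(w_1/w_2) = 0$. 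Hence both sides vanish, $w_1$ and $w_2$ are proportional, and the common boundary value $1$ forces the proportionality constant to be $1$; thus $w_1 = w_2$ and $u_1 = u_2$.

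I expect the main obstacle to lie in the rigorous weak formulation of the transformed problem: justifying the chain rule and the cancellation of $\mu|\gradu|^p$ for bounded $\Wop$ solutions rather than smooth ones, and checking that $\varphi_1,\varphi_2$ genuinely belong to $\Wop$ with the asserted boundary behaviour (both points use that $w_i$ are bounded away from $0$ and from $\infty$). Once the equation $-\Delta_p w = f(x,w)$ is available in the weak sense and the D\'iaz--Sa\'a inequality is in hand, the monotonicity argument is essentially automatic; the conceptual heart is simply the realization that the logarithmic substitution converts the sign-indefinite datum $h$ into a harmless $s$-independent term.
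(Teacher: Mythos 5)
Your proof is correct, and while it shares its two essential ingredients with the paper's argument --- the exponential substitution available because $\mu$ is constant, and Picone's inequality --- it is organized in a genuinely different way. The paper never transforms the equation: it proves a full comparison principle (Theorem \ref{compPrinciple}) directly for a lower and an upper solution of the $u$-equation, using the one-sided test function $\varphi=\bigl[e^{tu_1}-e^{tu_2}\bigr]^{+}$ with $t=\frac{p\mu}{p-1}$ and applying Picone to $e^{tu_1},e^{tu_2}$ on the set $\{u_1>u_2\}$; the equality case gives $e^{tu_1}=k\,e^{tu_2}$ there, and pinning down $k=1$ requires evaluating on $\partial\{u_1>u_2\}$, hence solutions continuous up to the boundary --- which is why the paper needs the separate regularity step, Lemma \ref{lemmaUniqueness} ($\mathcal{C}^{0,\tau}(\overline{\Omega})$ via Ladyzhenskaya--Uraltseva), before deducing Theorem \ref{uniqueness} from Corollary \ref{compPrincipPlambda}. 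You instead pass to the gradient-free equation for $w=e^{\frac{\mu}{p-1}u}$ --- the weak-formulation/cancellation step you correctly flag as the main technical task is carried out verbatim in the paper's Lemma \ref{linkProblem} iii), since $w=1+\frac{\mu}{p-1}v$ and $g(v)=w^{p-1}|u|^{p-2}u$ --- and run the symmetric D\'iaz--Sa\'a test globally on $\Omega$. The equality case of Picone then yields $w_1=k\,w_2$ on all of $\Omega$, and your trace argument closes it without any boundary regularity: from $(k-1)w_2\in \Wop$ and $w_2-1\in\Wop$ one gets the constant $k-1\in\Wop$, forcing $k=1$. Your two structural observations are also sound: $h$ enters $f(x,s)/s^{p-1}$ only as an $s$-independent summand, so monotonicity needs exactly $\lambda c\leq 0$ and nothing about the sign of $h$; and non-strict monotonicity suffices because the conclusion comes from the Picone equality case rather than from strictness. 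The trade-off between the two routes: yours is more self-contained for uniqueness per se, dispensing with Lemma \ref{lemmaUniqueness} and asking of solutions only what the definition of \eqref{Plambda} provides; the paper's route is stronger, since the comparison principle it establishes orders an arbitrary lower solution below an arbitrary upper solution, and this is reused elsewhere (in the proof of Theorem \ref{coercive-louis} and in Section \ref{SER2}), whereas your symmetric two-solution argument does not by itself yield any comparison statement.
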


\medbreak
 
Let us now introduce
\begin{equation}\label{mp} 
m_p:= \inf \Big\{ \int_{\Omega} \Big(|\nabla w|^p - \big(\frac{\mu}{p-1} \big)^{p-1}h(x) |w|^p \Big)\,dx: 
w \in \Wop,\, \|w\| = 1 \Big\}\,.
\end{equation}
We can state the following result.
\begin{theorem} \label{th3}
Assume that \eqref{A1} holds. Then $(P_0)$ has a solution if, and only if, $m_p > 0$. 
\end{theorem}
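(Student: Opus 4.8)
My plan is to treat the two implications separately, getting the sufficiency of $m_p>0$ almost for free from the general existence result already at hand, and reserving the real work for the necessity. For the \emph{if} part I would simply specialise Theorem~\ref{coercive-louis} to $\lambda=0$. Under \eqref{A1} the coefficient $\mu>0$ is a constant, so $\mu^{+}=\mu$, $\mu^{-}=0$, and the constraint defining $W_{0}$ is vacuous, i.e.\ $W_{0}=\Wop$. Consequently $m_{p,0}^{+}$ coincides exactly with $m_{p}$, while $m_{p,0}^{-}=\inf_{\|w\|=1}\int_{\Omega}|\nabla w|^{p}\,dx=1>0$ because $\|\mu^{-}\|_{\infty}=0$. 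Hence, as soon as $m_{p}>0$, both quantities $m_{p,0}^{\pm}$ are positive and Theorem~\ref{coercive-louis} directly provides a solution of $(P_{0})$.

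For the \emph{only if} part the idea is to remove the gradient term by a Cole--Hopf type substitution and then to exploit a Picone inequality. Given a solution $u$ of $(P_{0})$, I would set
\[ w:=\exp\!\big(\tfrac{\mu}{p-1}u\big), \qquad \widetilde{h}:=\big(\tfrac{\mu}{p-1}\big)^{p-1}h. \]
A direct computation gives
\[ \Delta_{p}w=\big(\tfrac{\mu}{p-1}\big)^{p-1}e^{\mu u}\big(\Delta_{p}u+\mu|\nabla u|^{p}\big), \]
so that $\pLaplac u=\mu|\nabla u|^{p}+h$ transforms into the homogeneous weighted problem $\pLaplac w=\widetilde{h}\,w^{p-1}$. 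Since $u\in\Linfty$, the new unknown $w$ is bounded, bounded away from $0$, strictly positive, and satisfies $w-1\in\Wop$ (as $u=0$ on $\partial\Omega$); moreover the substitution is reversible, so that existence of $u$ is equivalent to existence of such a positive $w$.

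It then remains to show that a positive solution $w$ of $\pLaplac w=\widetilde{h}\,w^{p-1}$ with $w-1\in\Wop$ forces $m_{p}>0$. Here I would use Picone's inequality: for $\varphi\in\Wop$, testing the equation with $|\varphi|^{p}/w^{p-1}$ (legitimate because $w$ is bounded below, after the usual approximation of $\varphi$ by bounded functions) and combining with the pointwise Picone estimate yields
\[ \int_{\Omega}\big(|\nabla\varphi|^{p}-\widetilde{h}\,|\varphi|^{p}\big)\,dx\ \ge\ 0\quad\text{for all }\varphi\in\Wop, \]
whence $m_{p}\ge 0$. The equality case of Picone forces $\varphi=c\,w$, which is incompatible with $\varphi\in\Wop$ unless $\varphi\equiv0$ (since $w\equiv1$ on $\partial\Omega$), so the integral is \emph{strictly} positive for every $\varphi\not\equiv0$. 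To upgrade $m_{p}\ge0$ to $m_{p}>0$ I would argue by contradiction: if $m_{p}=0$, a minimising sequence $\varphi_{n}$ with $\|\varphi_{n}\|=1$ converges weakly in $\Wop$ and, by the compact embedding afforded by $q>N/p$, strongly enough that $\int_{\Omega}\widetilde{h}\,|\varphi_{n}|^{p}\to\int_{\Omega}\widetilde{h}\,|\varphi_{0}|^{p}=1$; thus the weak limit $\varphi_{0}$ is nonzero, and weak lower semicontinuity gives $\int_{\Omega}(|\nabla\varphi_{0}|^{p}-\widetilde{h}\,|\varphi_{0}|^{p})\,dx\le0$, contradicting the strict inequality.

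The main obstacle I anticipate lies entirely in this necessity part. The two delicate points are the rigorous justification of Picone's identity for an arbitrary $\varphi\in\Wop$ — which is precisely where one needs $w$ bounded away from $0$, and hence the $L^{\infty}$ bound on $u$ together with the regularity theory for the $p$-Laplacian with $L^{q}$ data — and the passage from the non-strict estimate $m_{p}\ge0$ to the strict one $m_{p}>0$, for which the compactness coming from $q>\max\{N/p,1\}$ is essential. The sufficiency, by contrast, should require no new argument beyond recognising the identifications $m_{p,0}^{+}=m_{p}$ and $m_{p,0}^{-}=1$.
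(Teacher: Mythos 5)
Your proof is correct, and both halves line up with the paper's argument in substance. The sufficiency is exactly the paper's Remark~\ref{reverse}: with $\mu>0$ constant one has $m_{p,0}^{+}=m_p$ and $m_{p,0}^{-}=1>0$, and Theorem~\ref{coercive-louis} applies. For the necessity, the paper's Proposition~\ref{necessaryCondition} uses the same two ingredients as you do --- Picone's inequality applied to the pair $\bigl(\phi,\, e^{\frac{\mu}{p-1}u}\bigr)$ and the compactness furnished by $q>\max\{N/p,1\}$ --- but packages them differently: it never changes variables, instead testing \eqref{P0} with $|\phi|^p$ and using Young's inequality $p\,a^{p-1}b\leq (p-1)a^p+b^p$ (the pointwise heart of Picone) to obtain $m_p\geq 0$; then, assuming $m_p=0$, it invokes standard arguments to produce a minimizer $\phi_0\in\mathcal{C}^{0,\tau}(\overline{\Omega})$ with $\phi_0>0$ in $\Omega$, and applies the equality case of Proposition~\ref{picone} to force $\phi_0=k\,e^{\frac{\mu}{p-1}u}$, contradicted by the boundary values. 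Your reorganization --- explicit Cole--Hopf to $\pLaplac w=\widetilde{h}\,w^{p-1}$ (the same computation as Lemma~\ref{linkProblem}\,iii)), strict positivity of the form for \emph{every} $\varphi\not\equiv 0$ extracted up front from Picone's equality case, then a minimizing-sequence/weak-limit contradiction --- buys a genuine simplification: you never need existence, regularity, or interior positivity of a minimizer, only a nonzero weak limit, so the ``standard arguments'' the paper leaves implicit are replaced by elementary weak compactness and lower semicontinuity. The one point to nail down is the one you flagged: Proposition~\ref{picone} as stated requires $\frac{u}{v}\in L^{\infty}(\Omega)$, so for unbounded $\varphi\in\Wop$ both the strict inequality and the equality analysis for the weak limit $\varphi_0$ should be run through the truncations $\min(|\varphi|,k)$ together with Fatou's lemma; since $w\geq\epsilon>0$ on $\overline{\Omega}$, the pointwise nonnegativity and equality case survive this limit, and with that step made explicit the argument is complete.
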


Theorem \ref{th3} provides, so to say, a characterization in term of a first eigenvalue of the existence of solution to $(P_0)$. This result again  improves, for $\mu$ constant, \cite{A_DC_J_T_2015} and it allows  to observe that, in case $h\lneqq 0$, $(P_0)$ has always a solution while the case $h\gneqq0$ is the ``worse'' case for the existence of a solution. In case $h$ changes sign, the negative part of $h$ ``helps'' in order to have a solution to $(P_0)$.  We give in Appendix \ref{AR}, sufficient conditions on
 $h^+$ in order to ensure $m_p>0$.
\medbreak

\begin{remark}\label{reverse}
Observe that the sufficient part of Theorem \ref{th3} is direct. Indeed, if $m_p>0$ then $m_{p,0}^{+} >0$ and $ m_{p,0}^{-} >0 $ and  Theorem \ref{coercive-louis} implies that $(P_0)$ has a solution.
\end{remark}

\begin{remark}
We see, combining Theorems  \ref{coercive-louis} and  \ref{th3}, that if $(P_0)$ has a solution then \eqref{Plambda} has a solution for any $\lambda \leq 0$. Moreover this solution is unique by Theorem \ref{uniqueness}.
\end{remark}


Now, we turn to the study the non-coercive case, namely when $\lambda > 0$. First, using mainly variational techniques we prove the following result.

\begin{theorem} \label{th1}
Assume that \eqref{A1} holds and suppose that $(P_0)$ has a solution. Then there exists $\Lambda > 0$ such that, for any $0 < \lambda < \Lambda$, \eqref{Plambda} 
has at least two solutions.
\end{theorem}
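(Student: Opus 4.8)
The plan is to eliminate the critical gradient term by a Boccardo--Murat--Puel change of variables, which is available precisely because $\mu$ is constant, and then to treat the resulting gradient-free problem variationally. Setting $u = \frac{p-1}{\mu}\log\big(1+\frac{\mu}{p-1}v\big)$, equivalently $v = \frac{p-1}{\mu}\big(e^{\frac{\mu}{p-1}u}-1\big)$, a direct computation (the exponent $\frac{\mu}{p-1}$ being chosen exactly to cancel the $|\gradu|^p$ term) shows that $u$ solves \eqref{Plambda} if and only if $v$, subject to the constraint $1+\frac{\mu}{p-1}v>0$, solves
\[
\pLaplac v = \Big(1+\tfrac{\mu}{p-1}v\Big)^{p-1}\big(\lambda c(x)|u|^{p-2}u + h(x)\big),\qquad v \in \Wop\cap\Linfty .
\]
This transformed problem has the variational structure $I_\lambda(v)=\frac1p\int_\Omega|\nabla v|^p\,dx-\lambda\int_\Omega c(x)\Psi(v)\,dx-\frac{p-1}{\mu p}\int_\Omega\big(1+\frac{\mu}{p-1}v\big)^p h(x)\,dx$, where $\Psi$ is a primitive of $v\mapsto(1+\frac{\mu}{p-1}v)^{p-1}|u(v)|^{p-2}u(v)$. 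Since the substitution degenerates at $1+\frac{\mu}{p-1}v=0$ and we need $L^\infty$ solutions, I would first work with a functional in which the nonlinear terms are truncated outside a large interval, recovering genuine solutions of \eqref{Plambda} at the end from uniform a priori $L^\infty$ bounds supplied by the $L^q$-regularity theory for $\pLaplac$ ($q>\max\{N/p,1\}$); this also yields the one-to-one correspondence between the two formulations.

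\emph{First solution.} Under \eqref{A1}, Theorem \ref{th3} identifies solvability of $(P_0)$ with the condition $m_p>0$, and an asymptotic estimate along rays $v=tw$ with $\|w\|=1$ shows that $m_p>0$ forces $I_0$ to be coercive and bounded below, its leading term being $\frac{t^p}{p}\big(\int|\nabla w|^p-(\frac{\mu}{p-1})^{p-1}\int h|w|^p\big)\ge \frac{t^p}{p}m_p$. Hence $I_0$ attains a global minimum $v_0$, which by Theorem \ref{uniqueness} is its unique critical point and corresponds to the unique solution of $(P_0)$. For $\lambda>0$ the extra term $-\lambda\int c\,\Psi(v)$ caps the coercivity, so I would minimize a truncated functional $\widetilde I_\lambda$ that remains coercive and bounded below; its global minimizer converges to $v_0$ as $\lambda\to 0$ and therefore, for $\lambda$ small, stays in the region where no truncation is active and away from the degeneracy, so it solves the genuine problem and is a local minimizer $v_{1,\lambda}$ of $I_\lambda$. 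Undoing the change of variables gives the first solution $u_{1,\lambda}$ of \eqref{Plambda}.

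\emph{Second solution.} For $\lambda>0$ the functional $I_\lambda$ is unbounded below: since $c\gneqq0$ and the transformed $c$-nonlinearity is superhomogeneous (one has $\Psi(v)\sim \frac1p\,v^p(\log v)^{p-1}$ as $v\to+\infty$, growing faster than $|v|^p$), choosing $w\ge 0$ with $\int c\,|w|^p>0$ gives $I_\lambda(tw)\to-\infty$ as $t\to+\infty$. Thus $I_\lambda$ has mountain-pass geometry relative to the strict local minimum $v_{1,\lambda}$, and I would apply the Mountain Pass Theorem to obtain a second critical point $v_{2,\lambda}$ at a level strictly above $I_\lambda(v_{1,\lambda})$, hence distinct from it; this produces the second solution $u_{2,\lambda}$.

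The main obstacle is verifying the Palais--Smale condition for $I_\lambda$. Boundedness of a $(PS)$ sequence should follow from an Ambrosetti--Rabinowitz-type inequality for $c\,\Psi$ (here $\psi(v)v-p\Psi(v)>0$ for large $v$, thanks to the logarithmic gain), after which strong convergence is obtained from the $(S_+)$ property of $\pLaplac$ together with the compactness of the lower-order terms in the $L^q$-framework; one must also keep the sequence away from the degeneracy $1+\frac{\mu}{p-1}v=0$, which is where the truncation and the a priori bounds re-enter. Two further points I expect to handle are upgrading the local minimum from the $C^1$ (or $L^\infty$) topology to the $\Wop$ topology, by a Brezis--Nirenberg-type argument adapted to the $p$-Laplacian, and confirming that $u_{1,\lambda}\ne u_{2,\lambda}$ as solutions of the original problem.
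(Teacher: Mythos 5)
Your architecture is the same as the paper's (Hopf--Cole substitution, lower truncation at a function built from the a priori lower bound of Lemma \ref{lowerBound}/Proposition \ref{lowerSol}, a local minimum for small $\lambda$, then a mountain-pass point), but the proposal has a genuine gap exactly at the step the paper identifies as the main difficulty: boundedness of Palais--Smale/Cerami sequences. You assert this ``should follow from an Ambrosetti--Rabinowitz-type inequality for $c\,\Psi$'', citing $\psi(v)v-p\Psi(v)>0$ for large $v$. That positivity is just $pH(s)>0$ with $H(s)=\frac1p g(s)s-G(s)$, and it is strictly weaker than the AR condition: since $g(s)\sim\big(\frac{p-1}{\mu}\big)^{p-1}s^{p-1}\big(\ln s\big)^{p-1}$ one has $sg(s)/G(s)\to p$, so there is \emph{no} $\theta>p$ with $\theta G(s)\leq sg(s)$ at infinity. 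The standard computation $I_{\lambda}(v_n)-\frac1p\langle I_{\lambda}'(v_n),v_n\rangle$ then yields only $\lambda\int_{\Omega}c(x)H(v_n)\,dx\leq C\big(1+\|v_n\|^{p-1}\big)$ (the $h$-terms are of order $\|v_n\|^{p-1}$), and this cannot bound $\|v_n\|$ in $\Wop$: $c$ is only assumed $\gneqq 0$ and may vanish on a large part of $\Omega$, and even where $c>0$ the weighted quantity $\int c\,H(v_n)$ does not control the norm. This is why the paper states in the introduction that $I_{\lambda}$ does not satisfy an AR-type condition and devotes Lemma \ref{boundednessCerami} to a Jeanjean-type argument: assuming $\|v_n\|\to\infty$, show $cw\equiv 0$ for the weak limit of $w_n=v_n/\|v_n\|$, rescale $z_n=t_nv_n$ at the maximum of an auxiliary functional $J_{\lambda}(tv_n)$, and reach a contradiction via the monotonicity $H(s)\leq(s/t)^{p-1}H(t)$ of Lemma \ref{gProperties} iv). Note also that the hypothesis that $(P_0)$ is solvable, i.e.\ $m_p>0$ by Theorem \ref{th3}, is used \emph{inside} that compactness proof (Step 2 of Lemma \ref{boundednessCerami}) and in the sphere estimate of Lemma \ref{geometrySmall1}; your sketch invokes $m_p>0$ only for the coercivity of $I_0$, so the dependence of compactness on the solvability of $(P_0)$ is lost.

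A secondary, fixable flaw: you truncate the nonlinearity ``outside a large interval'' and plan to remove the upper truncation using uniform a priori $L^{\infty}$ bounds. For the mountain-pass solution this is not justified: no a priori upper bound uniform in the truncation level is available for \eqref{Plambda} with $\lambda>0$, and the mountain-pass level itself moves with the truncation. The paper avoids the issue entirely: since $g$ has subcritical growth (Lemma \ref{gProperties} ii)), the functional with only the lower truncation at $\alpha_{\lambda}$ is $\mathcal{C}^1$ on $\Wop$; every critical point is in $L^{\infty}$ a posteriori (Lemma \ref{linkProblem} i)) and automatically satisfies $v\geq\alpha_{\lambda}$ (Lemma \ref{linkProblem} ii)), so the truncated problem \eqref{Qlambda} is exactly equivalent to \eqref{Plambda} and no removal argument is needed. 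Similarly, your first solution via minimizers of truncated functionals converging to $v_0$ as $\lambda\to 0$, plus a Brezis--Nirenberg $C^1$-versus-$\Wop$ upgrade, is more elaborate than necessary: Lemma \ref{geometrySmall1} gives $I_{\lambda}>I_{\lambda}(0)$ on a fixed sphere $\|v\|=r$ for $\lambda$ small, and minimization over the ball (Proposition \ref{localMinimumSmall}) produces directly a $\Wop$-local minimum, with the sphere estimate also separating it from the mountain-pass level.
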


As we shall see in Corollary \ref{corlambdaimplique 0.2}, the existence of a solution to $(P_0)$ is, in some sense, necessary for the existence of a
solution when $\lambda>0$.
\medbreak
Next, considering stronger regularity assumptions, we derive informations on the structure of the set of  solutions in the 
non-coercive case. These informations complement Theorem \ref{th1}.  
We denote by $\gamma_1 >0$ the first eigenvalue of the problem
\begin{equation} 
\label{BVPeigenvalue}
\pLaplac u = \gamma c(x) |u|^{p-2} u\,, \quad u \in \Wop\,,
\end{equation}
and, under the assumptions
\[ \label{A2} \tag{$A_{2}$} 
\left \{
\begin{aligned}
& \Omega \subset \R^N,\, N \geq 2\,, \textup{ is a bounded domain with } 
\partial \Omega \textup{ of class } \mathcal{C}^2,\\
& c \textup{ and } h \textup{ belong to } L^{\infty}(\Omega)\,, \\
& c \gneqq 0  \textup{ and } \mu > 0\,,
\end{aligned}
\right.
\]
we state the following theorem.

\begin{theorem} \label{th2}
Assume that \eqref{A2} holds and suppose that $(P_0)$ has a solution. Then:
\begin{itemize}
\item If $h \lneqq 0$, for every $\lambda > 0$, \eqref{Plambda} has at least two solutions $u_1$, $u_2$ 
with $u_1\ll0$.
\item If $h \gneqq 0$, then $u_0\gg0$ and there exists $\overline{\lambda} \in(0, \gamma_1)$ such that:
\begin{itemize}
\item for every 
$0 < \lambda < \overline{\lambda}$, \eqref{Plambda} has at least two solutions   
satisfying $u_i\geq u_0$;
\item for  $\lambda = \overline{\lambda}$, \eqref{Plambda} has at least one solution   
satisfying $u\geq u_0$;
\item for any $\lambda > \overline{\lambda}$, \eqref{Plambda} has no non-negative solution.
\end{itemize}
\end{itemize}
\end{theorem}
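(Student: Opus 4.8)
The plan is to remove the critical gradient term by a change of variables, turning \eqref{Plambda} into a semilinear equation, and then to combine the method of lower and upper solutions (for the ordered solutions) with a mountain--pass argument (for the second one), using the $\mathcal{C}^{1,\alpha}(\overline{\Omega})$ regularity and the strong maximum principle made available by \eqref{A2}. Concretely, I would first record the substitution $v=g(u)$ with $g(t)=\frac{p-1}{\mu}\bigl(e^{\frac{\mu}{p-1}t}-1\bigr)$, an increasing bijection of $\R$ onto $\bigl(-\frac{p-1}{\mu},+\infty\bigr)$ with $g(0)=0$; a direct computation shows that $u$ solves \eqref{Plambda} if and only if $v=g(u)$ solves $\pLaplac v=\bigl(1+\muv\bigr)^{p-1}\bigl(\lambda c(x)|u|^{p-2}u+h(x)\bigr)$ with $u=g^{-1}(v)$, an equation with no gradient nonlinearity. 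Writing $v_0=g(u_0)$ for the solution $u_0$ of $(P_0)$ furnished by Theorem \ref{th3}, the transformed equation reads $\pLaplac v_0=(g'(u_0))^{p-1}h(x)$, so that $v_0$ is $p$-superharmonic when $h\gneqq0$ and $p$-subharmonic when $h\lneqq0$; Vázquez's strong maximum principle and the Hopf lemma then give $v_0\gg0$, respectively $v_0\ll0$, and since $g$ is increasing this yields $u_0\gg0$, respectively $u_0\ll0$.

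In the case $h\lneqq0$ I would observe that, for every $\lambda>0$, $u_0$ is an upper solution of \eqref{Plambda}, since $\lambda c|u_0|^{p-2}u_0\le0$ there, while a lower solution below $u_0$ is produced by solving an auxiliary coercive problem (truncating the nonlinearity, or using a large multiple of the torsion function). The lower--upper solution theorem then provides a first solution $u_1\le u_0$, and the strong maximum principle together with $u_0\ll0$ upgrades this to $u_1\ll0$. The second solution is obtained variationally in the transformed formulation: the associated energy functional has a local minimum at $g(u_1)$ and, because the term $\lambda c|u|^{p-2}u$ renders it unbounded below along suitable directions, a mountain--pass geometry is available, yielding a second critical point and hence a second solution of \eqref{Plambda}.

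In the case $h\gneqq0$, having shown $u_0\gg0$, I would note that $u_0$ is now a lower solution of \eqref{Plambda} for every $\lambda>0$, and set $\overline\lambda:=\sup\{\lambda>0:\eqref{Plambda}\text{ has a solution }u\ge u_0\}$. I would then establish, in order: (i) $\overline\lambda>0$ and, for $0<\lambda<\overline\lambda$, two solutions $\ge u_0$, the smaller one by the lower--upper solution method between $u_0$ and a supersolution constructed for small $\lambda$ as in Theorem \ref{th1}, the larger one by a mountain--pass argument in the transformed functional; (ii) $\overline\lambda<\gamma_1$, by testing the inequality $\pLaplac u\ge\lambda c(x)u^{p-1}$ (valid for any non-negative solution because $\mu,h\ge0$) against the first eigenfunction of \eqref{BVPeigenvalue} through Picone's inequality, which forces $\lambda\le\gamma_1$ and, once the strictly positive contribution of $h\gneqq0$ is retained, the strict bound; (iii) solvability at $\lambda=\overline\lambda$ by choosing $\lambda_n\uparrow\overline\lambda$ with solutions $u_n\ge u_0$, deriving uniform a priori bounds and passing to the limit; (iv) non-existence of non-negative solutions for $\lambda>\overline\lambda$, which follows from the definition of $\overline\lambda$ once one verifies, again via the transformed equation and the maximum principle, that any non-negative solution necessarily satisfies $u\ge u_0$.

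The \emph{main obstacle} I anticipate is step (iii): securing $\mathcal{C}^{1,\alpha}$ a priori bounds (equivalently, $L^\infty$ bounds on $v_n$ bounded away from the singular value $-\frac{p-1}{\mu}$) that are uniform as $\lambda_n\uparrow\overline\lambda$, so that the limit is a genuine solution at the threshold; here the semilinear transformed equation, the $L^\infty$ control on $c$ and $h$, and the Tolksdorf--Lieberman regularity theory must be combined carefully, and one must rule out the degeneration $v_n\to-\frac{p-1}{\mu}$ that would destroy the change of variables. A secondary difficulty is showing that the mountain--pass solution is distinct from, and lies above, the lower--upper solution, which requires a precise analysis of the transformed functional and verification of the Palais--Smale condition.
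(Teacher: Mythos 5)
Your skeleton is essentially the paper's: the same Hopf--Cole substitution, a first solution obtained from ordered lower/upper solutions and identified as a $\W$-local minimum, a second solution by mountain pass (the unboundedness you invoke is Lemma \ref{geometrySmall2}), $\overline{\lambda}$ defined as a supremum, Picone against $\varphi_1$ for the $\gamma_1$ bounds, and the comparison principle (Theorem \ref{compPrinciple}) to place every non-negative solution above $u_0$. Minor variations (taking $u_0$ itself as upper solution when $h\lneqq0$, where the paper uses $0$ as a \emph{strict} upper solution and then compares with $v_0$; defining $\overline{\lambda}$ via solutions $\geq u_0$ rather than non-negative solutions, which is equivalent by comparison) are harmless. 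Your ``secondary difficulty'' is also already disposed of in the paper: distinctness of the second critical point is handled by Theorem \ref{characterizacionMinimizer} (covering non-strict minima), and the compactness is the Cerami condition of Lemmata \ref{boundednessCerami} and \ref{strongConvergenceCerami}, available to you since they precede Theorem \ref{th2}; note that Lemma \ref{boundednessCerami} requires $m_p>0$, which the solvability of $(P_0)$ guarantees through Proposition \ref{necessaryCondition}.

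There are, however, two genuine gaps. First, the strict inequality $\overline{\lambda}<\gamma_1$ does \emph{not} follow from Picone alone, contrary to your step (ii): retaining the positive contribution of $h\gneqq0$ in the Picone computation rules out non-negative solutions for every $\lambda\geq\gamma_1$ (this is Lemma \ref{lemmaBehaviour} ii)--iii)), but that only yields $\overline{\lambda}\leq\gamma_1$. If $\overline{\lambda}=\gamma_1$, Picone applied along $\lambda_n\uparrow\gamma_1$ gives $(\gamma_1-\lambda_n)\int_{\Omega}c\,\varphi_1^p\,dx \geq \int_{\Omega}h\,\varphi_1^p/u_n^{p-1}\,dx>0$, which is perfectly consistent unless one controls $u_n$; strictness is instead a consequence of \emph{attainment} at $\overline{\lambda}$ --- your step (iii) --- combined with non-existence at $\gamma_1$, exactly as in the paper (Step 5 of Proposition \ref{lmhNonNeg} is deduced from its Step 4). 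Second, and more seriously, your proposed mechanism for (iii) would fail: you plan uniform $L^{\infty}$/$\mathcal{C}^{1,\alpha}$ a priori bounds for \emph{arbitrary} solutions $u_n$ of $(P_{\lambda_n})$, but no a priori upper bound is available in this setting (the paper proves only a lower bound, Lemma \ref{lowerBound}), and Tolksdorf--Lieberman regularity cannot start without one. The paper's missing idea is to pass to the limit along the \emph{distinguished} solutions $v_n$ that are the local minimizers of $I_{\lambda_n}$ produced in part i): these satisfy $I_{\lambda_n}'(v_n)=0$ together with the uniform energy bound $I_{\lambda_n}(v_n)\leq I_{\lambda_n}(0)=-\frac{p-1}{p\mu}\int_{\Omega}h(x)\,dx$, so the Cerami-boundedness argument of Lemma \ref{boundednessCerami} bounds $\{v_n\}$ in $\Wop$ and Lemma \ref{strongConvergenceCerami} gives strong convergence to a non-negative solution at $\overline{\lambda}$, which then dominates $u_0$ by comparison. (Your worry about degeneration $v_n\to-\frac{p-1}{\mu}$ is moot along this sequence, since $v_n\geq0$.) With these two repairs --- deriving strictness from attainment, and replacing elliptic a priori bounds by the variational compactness of the minimizing branch --- your outline matches the paper's proof.
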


\begin{center}
\begin{figure}[h]
\begin{tikzpicture}[line width = 0.35mm, scale = 1.6, >=stealth]
\draw[->](-0.5,0)--(2,0);
\draw[->](0,-0.5)--(0,2); 
\draw(2,0) node[right]{$\lambda$}; 
\draw[smooth,line width = 0.45mm] plot coordinates{(1.97,0.31)(1.75,0.33)(1.5,0.36)(1.25,0.4)(1,0.45)(0.9,0.48)(0.7,0.57)(0.5,0.69)(0.3,0.92)(0.15,1.5)(0.1,1.98)};
\draw[smooth,line width = 0.45mm] plot coordinates{(-0.5,-0.1)(0,-0.125) (0.25,-0.15)(0.5,-0.175)(0.75,-0.2)(1,-0.23)(1.25,-0.27)(1.5,-0.32)(1.75,-0.38)(1.97,-0.45)};
\end{tikzpicture}
\caption{Illustration of Theorem \ref{th2} with $h \lneqq 0$}
\label{fig1}
\end{figure}

\begin{figure}[h]
\begin{tikzpicture}[line width = 0.35mm, scale = 1.6, >=stealth]
\draw[->](-0.5,0)--(2,0);
\draw[->](0,-0.2)--(0,2); 
\draw(1,0) node[below]{$\overline{\lambda}$};
\draw(2,0) node[right]{$\lambda$}; 
\draw[dashed](1,0)--(1,2); 
\draw[smooth,line width = 0.45mm] plot coordinates{(-0.5,0.1)(0,0.125) (0.25,0.15)(0.5,0.175)(0.75,0.2)(1,0.26)};
\draw[smooth,line width = 0.45mm] plot coordinates{(1,0.465)(0.9,0.48)(0.7,0.57)(0.5,0.69)(0.3,0.92)(0.15,1.5)(0.1,1.98)};
\end{tikzpicture}
\caption{Illustration of Theorem \ref{th2} with $h \gneqq 0$}
\label{fig2}
\end{figure}

\end{center}
\newpage
\begin{remark}$ $
\begin{itemize}
\item[a)] As observed above, in the case $h \lneqq 0$, the assumption that $(P_0)$ has a solution is automatically 
satisfied. 
\item[b)] In the case $\mu<0$,  we have the opposite result i.e., two solutions for every 
$\lambda>0$ in case $h \gneqq 0$ and, in  case $h \lneqq 0$, the existence of $\overline \lambda>0$ such that 
 \eqref{Plambda} has at least two negative solutions,  at least one negative solution or no non-positive solution according to 
 $0<\lambda<\overline\lambda$,  $\lambda=\overline\lambda$  or $\lambda>\overline\lambda$.
\end{itemize}
\end{remark}

In case $h \gneqq 0$, we know that for $\lambda > \overline{\lambda}$, \eqref{Plambda} 
has no non-negative  solution but this does not exclude the possibility of having negative or sign changing solutions. 
Actually, we are able to prove the following result changing a little the point of view. 
We consider the boundary value problem
\begin{equation}
\label{eq cor} \tag{$P_{\lambda,k}$} 
 \pLaplac u = \lambda c(x)|u|^{p-2}u+ \mu |\nabla u|^p + kh(x)\,, 
\quad u \in W^{1,p}_0(\Omega) \cap L^{\infty}(\Omega)\,,
\end{equation}
with a dependence in the size of $h$ and we obtain the following result. 

\begin{theorem} \label{th4}
Assume that \eqref{A2} holds and that $h \gneqq 0$. Let 
\[
k_0=\sup\Big\{ k\in [0, +\infty) : \forall\ w\in \Wop\,,\  \int_{\Omega} \Big(|\nabla w|^p - 
\big(\frac{\mu}{p-1} \big)^{p-1} k\, h(x) |w|^p \Big)\,dx>0\Big\}\,.
\] 
Then:
\begin{itemize}
\item
For all $\lambda\in(0,\gamma_1)$, there exists $\overline k= \overline k(\lambda)\in(0,k_0)$ such that,
 for all $k\in(0,\overline k)$, the problem \eqref{eq cor}
has at least two  solutions $u_1$, $u_2$ with $u_i\gg0$ and for all $k> \overline k$, 
the problem \eqref{eq cor} has no solution. 
Moreover, the function  $\overline k(\lambda)$ is non-increasing.
\item
For $\lambda=\gamma_1$, the problem \eqref{eq cor} has a solution if and only if $k=0$. 
In that case, the solution is unique and it is equal to $0$.
\item
For all $\lambda>\gamma_1$, there exist $0<\tilde k_1\leq \tilde k_2<+\infty$ such that, 
for all $k\in(0,\tilde k_1)$, the problem \eqref{eq cor}
 has at least two solutions with 
$u_{\lambda,1}\ll 0$ and $\min u_{\lambda,2}<0$, for all $k>   \tilde k_2$, 
the problem \eqref{eq cor} has no solution and, in case $\tilde k_1< \tilde k_2$, for all $k\in(\tilde k_1, \tilde k_2)$, the problem \eqref{eq cor}
 has at least one solution $u$ with 
$u\not\ll 0$ and $\min u<0$. Moreover, the function $\widetilde{k}_1(\lambda)$ is non-decreasing.
\end{itemize}

\end{theorem}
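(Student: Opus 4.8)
The plan is to regard \eqref{eq cor} as the family obtained from \eqref{Plambda} upon replacing $h$ by $kh$, so that Theorems \ref{th1}, \ref{th2} and \ref{th3} apply verbatim for each fixed $k$ and one only has to track the dependence on $k$. Two preliminary reductions organise everything. First, the integral defining $k_0$ is exactly $m_p$ from \eqref{mp} computed with the weight $kh$; hence, by Theorem \ref{th3}, the limit problem $(P_0)$ attached to $kh$ is solvable precisely for $k\in[0,k_0)$, and $u\equiv 0$ solves it at $k=0$. Second, the exponential substitution $v=\tfrac{p-1}{\mu}\bigl(e^{\frac{\mu}{p-1}u}-1\bigr)$ removes the term $\mu|\gradu|^p$ and turns \eqref{eq cor} into a gradient-free, variational problem $\pLaplac v=f_{\lambda,k}(x,v)$; since $u\mapsto v$ is increasing with $v(0)=0$, solutions keep their sign and their ordering, so the sub/supersolution method and minimisation / mountain-pass on the transformed energy are both at our disposal.

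The engine of the proof is monotonicity in $k$. If $u\ge 0$ solves \eqref{eq cor} for some $k$, then, because $h\gneqq 0$, it is a supersolution for every smaller $k'\in(0,k)$ while $0$ is a subsolution, so the sub/supersolution method yields a nonnegative solution at $k'$; hence, for $\lambda<\gamma_1$, the set of $k$ for which \eqref{eq cor} admits a nonnegative solution is an interval, and $\overline k(\lambda)$ is its supremum. Comparing the term $\lambda c|u|^{p-2}u$ for two values of $\lambda$ on nonnegative functions shows that a solution at $(\lambda_2,k)$ is a supersolution at $(\lambda_1,k)$ when $\lambda_1<\lambda_2$, which gives at once that $\overline k$ is non-increasing; the dual comparison on the companion problem for negative solutions (below) gives that $\widetilde k_1$ is non-decreasing.

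For $\lambda<\gamma_1$ I would argue as follows. Since $\lambda<\gamma_1$ the origin is a strict local minimiser of the transformed energy, so for $k>0$ small the nonnegative forcing $kh$ preserves the mountain-pass geometry and produces two nonnegative solutions — a local minimiser and a mountain-pass critical point — which are $\gg0$ by the strong maximum principle; this shows $\overline k(\lambda)>0$, and the monotone iteration above propagates two solutions to the whole interval $(0,\overline k(\lambda))$. That $\overline k(\lambda)<k_0$ rather than $=k_0$ is obtained by testing \eqref{eq cor} against the first eigenfunction $\varphi_1>0$ of \eqref{BVPeigenvalue}: for $\lambda>0$ the extra mass $\lambda c|u|^{p-2}u$ forces non-solvability strictly before $k$ reaches $k_0$. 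Non-existence for $k>\overline k(\lambda)$ is then the definition of the supremum together with monotonicity. The resonant value $\lambda=\gamma_1$ is settled separately: $u\equiv0$ solves the case $k=0$, and a Picone-type inequality (equivalently, the variational characterisation of $\gamma_1$) shows both that it is the only solution there and that no solution exists for $k>0$, a nonnegative forcing being incompatible with resonance at the first eigenvalue.

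The case $\lambda>\gamma_1$ is the \textbf{main obstacle}. The origin is no longer a local minimiser, and solutions must go negative. Writing $w=-u$ turns \eqref{eq cor} into the companion problem $\pLaplac w=\lambda c|w|^{p-2}w-\mu|\nabla w|^p-kh$, whose absorbing gradient term $-\mu|\nabla w|^p$ (recall $\mu>0$) provides the a priori bounds that, for $\lambda>\gamma_1$, sustain a branch of strictly positive solutions $w\gg0$ bifurcating from the first eigenvalue of \eqref{BVPeigenvalue}; translating back gives the strongly negative solution $u_{\lambda,1}\ll0$ for small $k$ and, through the loss of those bounds, non-existence for $k>\widetilde k_2$, hence $\widetilde k_2<\infty$. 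A second solution $u_{\lambda,2}$ with $\min u_{\lambda,2}<0$ comes from a mountain-pass argument on the transformed energy. The delicate point is the window $(\widetilde k_1,\widetilde k_2)$: after the strongly negative branch folds and disappears at $\widetilde k_1$, one must still exhibit a solution, necessarily $u\not\ll0$ with $\min u<0$, which I would obtain by a degree-theoretic continuation in $k$ together with uniform $\Linfty$ and $\Wop$ bounds on compact $k$-intervals. Producing those a priori estimates in the presence of the critical term $|\gradu|^p$ — precisely what the change of variables and the $\Linfty$-regularity of \eqref{A2} serve to control — is the hardest step, and it underlies both the continuation argument and the finiteness of $\widetilde k_2$.
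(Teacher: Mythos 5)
Your outline for $\lambda\in(0,\gamma_1)$ follows the paper's skeleton (an interval of existence in $k$ obtained by sub/supersolutions, two ordered solutions, comparison in $\lambda$ for the monotonicity of $\overline k$), but two of your claims there do not hold as stated. First, for $k>0$ the origin is not a critical point of the transformed functional, since $\langle I_{\lambda}'(0),\varphi\rangle=-k\int_{\Omega}h\varphi\,dx\neq0$; so ``$0$ is a strict local minimiser and the mountain-pass geometry is preserved'' needs an actual quantitative replacement, and your ``monotone iteration propagates two solutions'' is unjustified: the sub/supersolution method propagates \emph{one} nonnegative solution to smaller $k$. The paper instead, for each $k<\overline k$, builds \emph{two ordered upper solutions} $\beta_1=\big(\frac{k}{\tilde k}\big)^{\frac{1}{p-1}}\tilde u\ll\beta_2=\tilde u$ by scaling a solution $\tilde u\gg0$ at a larger parameter $\tilde k$, gets a first solution between $0$ and $\beta_1$, and a second one either from non-strictness of $\beta_2$ or from local minimum plus mountain pass. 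Second, and more seriously, your argument for the \emph{strict} inequality $\overline k(\lambda)<k_0$ fails: testing with the first eigenfunction $\varphi_1$ of \eqref{BVPeigenvalue} and Young's inequality only gives $k\int_{\Omega}h\varphi_1^p\,dx\le\big(\frac{p-1}{\mu}\big)^{p-1}\gamma_1\int_{\Omega}c\,\varphi_1^p\,dx$, an upper bound with no relation to $k_0$, which is defined through the $h$-weighted quotient, not the $c$-weighted one. Comparison only yields $\overline k\le k_0$ (a solution is $\gg0$ by Lemma \ref{lemmaBehaviour}, hence an upper solution of $(P_{0,k})$, hence $k<k_0$ by Proposition \ref{necessaryCondition}). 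For strictness the paper argues by contradiction along $k_n\uparrow k_0$: it first secures a uniform floor $u_n\ge u_0\gg0$ by comparison with the solution of $(P_{0,\frac12 k_0})$, then tests with $\phi^p$, where $\phi\gg0$ is the \emph{extremal of the $h$-quotient at level $k_0$}, so that in the limit the leftover term $\lambda\int_{\Omega}c\,u_0^{p-1}\phi^p\,dx>0$ contradicts the defining equality for $\phi$. Both the $h$-extremal and the uniform floor are missing from your sketch, and without them this step does not close.

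For $\lambda>\gamma_1$ you replace the paper's elementary mechanisms by two unsubstantiated devices. The strongly negative solution for small $k$ does not come from a ``bifurcating branch'' of the companion problem: the tool is the anti-maximum principle (Proposition \ref{anti}) --- for $\lambda_0\in(\gamma_1,\gamma_1+\delta_0)$ the solution $w$ of \eqref{cll} satisfies $w\ll0$, and a small multiple of $w$, passed through the Hopf--Cole substitution, is an upper solution $\beta\ll0$ of \eqref{eq cor}, paired with the universal lower solution of Proposition \ref{lowerSol}. Non-existence for large $k$ (finiteness of $\tilde k_2$) rests on an observation absent from your text: the bound of Lemma \ref{lowerBound} is uniform in $k$ because it depends only on $(kh)^-\equiv0$; with $u\ge-M_{\lambda}$ in hand, testing with $\phi^p$ and Young's inequality excludes all large $k$. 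Finally, the window $(\tilde k_1,\tilde k_2)$ needs no degree theory and no a priori upper bounds: if $\tilde u$ solves $(P_{\lambda,\tilde k})$ with $k<\tilde k<\tilde k_2$, then, since $h\gneqq0$, $\tilde u$ is an upper solution of \eqref{eq cor}, and Proposition \ref{lowerSol} together with Theorem \ref{BMP1988} yields a solution $u$, which satisfies $u\not\ll0$ by definition of $\tilde k_1$ and $\min u<0$ by Lemma \ref{lemmaBehaviour}. Your continuation scheme would require uniform $\Linfty$ and $\Wop$ bounds on the whole solution set over compact $k$-intervals; such \emph{upper} bounds are established nowhere (only lower bounds are available in this problem), so the step you yourself flag as hardest is, as proposed, a genuine gap --- one the monotone sub/supersolution argument dissolves entirely.
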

\begin{center}
\begin{figure}[h]
\begin{tikzpicture}[line width = 0.4mm, scale = 2, >=stealth]
\fill[color=gray!15] (0,0) -- (1,0) -- (0.75,0.18) -- (0.5,0.38) -- (0.25,0.55)  -- (0,0.65) -- cycle   ;
\fill[color=gray!15] (2.97,0)--(1,0)--(1.25,0.15)--(1.5,0.25)--(1.75,0.32)--(2,0.39)--(2.25,0.42)--(2.5,0.45)--(2.75,0.47)--(2.97,0.485) -- cycle   ;
\fill[color=gray!55] (1,0)--(1.25,0.28)--(1.5,0.45)--(1.75,0.55)--(2,0.62)--(2.25,0.66)--(2.5,0.69)--(2.75,0.71)--(2.97,0.725)--(2.97,0.485)--(2.75,0.47)--(2.5,0.45)--(2.25,0.42)--(2,0.39)--(1.75,0.32)--(1.5,0.25)--(1.25,0.15)--(1,0)--cycle;
\draw(0.33,0.22) node{2};
\draw(2.15,0.18) node{2};
\draw(2.15,0.510) node{1};
\draw(1,0.35) node{0};
\draw(0.5,0.6) node{$\overline{k}(\lambda)$};
\draw(1,0) node[below]{$\gamma_1$};
\draw(3,0) node[right]{$\lambda$}; 
\draw[-](-0.03,0.66)--(0.03,0.66);
\draw(0,0.65) node[left]{$k_0$};
\draw(0,1.25) node[above]{$k$};
\draw[smooth, line width = 0.6mm] plot coordinates{(0,0.65)(0.25,0.54)(0.5,0.38)(0.75,0.18)(1,0)};
\draw[smooth, line width = 0.6mm] plot coordinates{(1,0)(1.25,0.15)(1.5,0.25)(1.75,0.33)(2,0.39)(2.25,0.42)(2.5,0.45)(2.75,0.47)(2.97,0.485)}node[right]{$\widetilde{k}_1(\lambda)$};
\draw[smooth, line width = 0.6mm] plot coordinates{(1,0)(1.25,0.28)(1.5,0.45)(1.75,0.55)(2,0.62)(2.25,0.66)(2.5,0.69)(2.75,0.71)(2.97,0.725)}node[right]{$\widetilde{k}_2(\lambda)$};
\draw[->](-0.2,0)--(3,0);
\draw[->](0,-0.2)--(0,1.25); 
\end{tikzpicture}
\caption{Existence regions of Theorem \ref{th4}}
\label{fig3}
\end{figure}
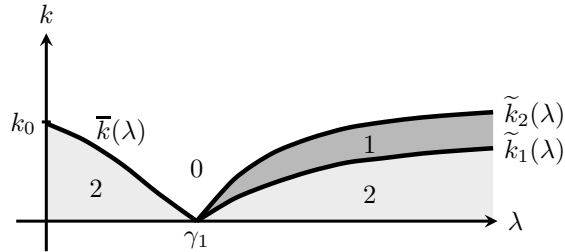
\end{center}

Let us now say some words about our proofs. First note that when  $\mu$ is assumed constant it is possible to perform a Hopf-Cole change of variable. Introducing 
$$
v = \frac{p-1}{\mu} \Big( e^{\frac{\mu}{p-1}u} -1 \Big)\,,
$$
we can check that $u$ is a solution of \eqref{Plambda} if, and only if, $v > -\frac{p-1}{\mu}$ is a solution of
\begin{equation}
\label{pbm2}
\pLaplac v = \lambda c(x) g(v) + \Big( 1 + \frac{\mu}{p-1}v\Big)^{p-1} h(x)\,, 
\quad v \in \Wop\,,
\end{equation}
where $g$ is an arbitrary function satisfying
$$
g(s) = 
\Big|\frac{p-1}{\mu}\big( 1+\frac{\mu}{p-1}s \big) \ln \big(1+ \frac{\mu}{p-1}s \big)\Big|^{p-2}
\frac{p-1}{\mu}\big( 1+\frac{\mu}{p-1}s \big) \ln \big(1+ \frac{\mu}{p-1}s \big), 
\quad
\mbox{if }s > -\frac{p-1}{\mu}\,.
$$
Working with problem \eqref{pbm2} presents the advantage that one may assume, with a suitable choice of $g$ when $s \leq -\frac{p-1}{\mu}$, that it has a variational structure. Nevertheless from this point we face several difficulties.
\medbreak
First, we need a control from below on the solutions to \eqref{pbm2}, 
i.e. having found a solution to \eqref{pbm2}  one needs to check that it satisfies  $v > -\frac{p-1}{\mu}$, in order  to perform the opposite change of
variable and  obtain a solution to \eqref{Plambda}. 
To that end, 
in Section \ref{TLSATAP-L}, we  prove 
the existence of a lower solution $\underline{u}_{\lambda}$ to \eqref{Plambda} such that every  
upper solution $\beta$ of \eqref{Plambda} satisfies
$\beta\geq \underline{u}_{\lambda}$. This allows us to transform the problem
\eqref{pbm2} in a new one, which has the advantage of being completely equivalent to \eqref{Plambda}. Note that the existence of the lower solution ultimately relies on the existence of an a priori lower bound. See Lemma \ref{lowerBound} for a more general result.
\medbreak
We denote by $I_{\lambda}$ the functional associated to the new problem, see \eqref{defI} for a precise definition. The ``geometry" of $I_{\lambda}$ crucially depends on the sign of $\lambda$. When $\lambda \leq 0$ is it essentially coercive and one may search for a critical point as a global minimum. When $\lambda >0$ the functional $I_{\lambda}$ becomes unbounded from below and presents something like a concave-convex geometry. Then, in trying to obtain a critical point, the fact that $g$ is only slightly 
superlinear at infinity is a difficulty. It implies that $I_{\lambda}$ does not satisfies an Ambrosetti-Rabinowitz-type condition and proving that Palais-Smale or Cerami sequences are bounded may be challenging. In the case of the Laplacian, when $p=2$, dealing with this issue is now relatively standard but for elliptic problems with a $p$-Laplacian things are more complex and we refer to   \cite{ DF_G_RQ_U_2017, I_L_U_2010, I_S_U_2014, L_Y_2010} in that direction. Note however that in these last works, it is always assumed a kind of homogeneity condition which is not available here. Consequently, some new ideas are required, see Section \ref{OTCCATMPG}. 
\medbreak
Having at hand the Cerami condition for $I_{\lambda}$ with $\lambda > 0$, in order to prove Theorems \ref{th1}, \ref{th2} and \ref{th4}, we shall look for critical points which are either local-minimum or of mountain-pass type. In Theorem \ref{th1} the geometry of $I_{\lambda}$ is ``simple" and permits to use only variational arguments. In Theorems \ref{th2} and \ref{th4} however it is not so clear, looking directly to $I_{\lambda}$, where to search for critical points. We shall then make uses of lower and upper solutions arguments. In both theorems a first solution is obtained through the existence of well-ordered lower and upper solutions. This solution is further proved to be a local minimum of $I_{\lambda}$ and it is then possible to obtain a second solution by a mountain pass argument. Our approach here follows the strategy presented in \cite{Chang_1, Chang_2, DF_S_1983}. See also \cite{A_B_C_1994}.
\medbreak
Finally, concerning Theorem \ref{coercive-louis}, where $\mu$ is not assumed to be constant, we obtain our solution through the existence of lower and an upper solution which correspond to solutions to $(P_{\lambda})$ where $\mu = -\|\mu^-\|_{\infty}$ and $\mu =  \|\mu^+\|_{\infty}$ respectively, see Section \ref{Coercive-Case}.






\bigbreak

The paper is organized as follows. In Section \ref{P}, we recall preliminary general results that are used 
in the rest of the paper. In Section \ref{CPAUR}, we give a comparison principle and
 prove the uniqueness result for $\lambda\leq 0$. Section \ref{TLSATAP-L} is devoted to the existence of the lower solution. In Section \ref{TLSATAP}, 
 we construct the modified problem that we use to obtain the existence results. The coercive and limit-coercive cases, corresponding to $\lambda \leq 0$ are studied in Section \ref{Coercive-Case} where we prove Theorem \ref{coercive-louis}. Theorem \ref{th3} which gives a necessary and sufficient condition to the existence of a solution to $(P_0)$ is established in Section \ref{Weakly-coercive-Case}. In Section \ref{OTCCATMPG} we show that $I_{\lambda}$ has, for $\lambda >0$ small, a mountain pass geometry and that the Cerami compactness condition holds. This permits to give the proof of Theorem \ref{th1}. Section \ref{SER2} contains the proofs of Theorems \ref{th2} and \ref{th4}. Finally in an Appendix we give conditions on $h^+$ that ensure that $m_p>0$.
\medbreak

\noindent \textbf{Acknowledgments.}
The authors thank   warmly L. Jeanjean for his help improving the presentation of the results.
\medbreak

\begin{center}
\textbf{Notation.}
\end{center}
\begin{enumerate}{\small 
\item[1)] 
For $p\in [1,+\infty[$, the norm $(\int_{\Omega}|u|^pdx)^{1/p}$
in $L^p(\Omega)$ is denoted by $\|\cdot\|_p$. We denote by $p^{\prime}$
the  conjugate exponent of $p$, namely $p^{\prime} = p/(p-1)$ and by $p^*$ the Sobolev critical exponent i.e. 
$p^*=\frac{Np}{N-p}$ if $p<N$ and $p^*=+\infty$ in case $p\geq N$. The norm in $L^{\infty}(\Omega)$ is $\|u\|_{\infty}=\mbox{esssup}_{x\in \Omega}|u(x)|$.
\item[2)] For $v \in L^1(\Omega)$ we define $v^+= \max(v,0)$ and $v^- =
\max(-v,0)$.
\item[3)] The space $W^{1,p}_0(\Omega)$ is equipped with the
norm $\|u\|:=\big( \int_\Omega |\nabla u|^p\,dx\big)^{1/p}$.
\item[4)] We denote $\mathbb R^+=(0,+\infty)$ and $\mathbb R^-=(-\infty,0)$.}
\item[5)] For $a$, $b \in L^1(\Omega)$ we denote $\{a \leq b\} = \{x\in \Omega:\, a(x)\leq b(x)\}\,.$
\end{enumerate}

\section{Preliminaries} \label{P}


In this section we present some definitions and  known results which are going to play an important role 
throughout all the work. First of all, we present some results on lower and upper solutions adapted to our 
setting. Let us consider the problem
\begin{equation} \label{p1}
\pLaplac u + H(x,u,\gradu) = f(x)\,, \quad u \in \Wop \cap \Linfty\,,
\end{equation} 
where $f$ belongs to $L^1(\Omega)$ and $H: \Omega \times \R \times \RN \rightarrow \R$ is a 
Carath\'eodory function.

\begin{definition} 
\label{lowerUpperSolution}
We say that $\alpha \in W^{1,p}(\Omega) \cap \Linfty$ is a \textit{lower solution of \eqref{p1}} if 
$\alpha^{+} \in \Wop$ and, for all $\varphi \in \Wop \cap \Linfty$ with $\varphi \geq 0$, if follows that
\[ 
\int_{\Omega} |\nabla \alpha|^{p-2} \nabla \alpha \nabla \varphi\,dx + \int_{\Omega} H(x,\alpha, \nabla \alpha) 
\varphi\, dx \leq \int_{\Omega} f(x) \varphi\,dx\,.
\]
Similarly, $\beta \in W^{1,p}(\Omega) \cap \Linfty$ is an \textit{upper solution of \eqref{p1}} if 
$\beta^{-} \in \Wop$ and, for all $\varphi \in \Wop \cap \Linfty$ with $\varphi \geq 0$, if follows that
\[ 
\int_{\Omega} |\nabla \beta|^{p-2} \nabla \beta \nabla \varphi\,dx + \int_{\Omega} H(x,\beta, \nabla \beta) 
\varphi\, dx \geq \int_{\Omega} f(x) \varphi\,dx\,.
\]
\end{definition} 

\begin{theorem} \rm \cite[Theorems 3.1 and 4.2]{B_M_P_1988} \label{BMP1988}
\it Assume the existence of a non-decreasing function $b: \R^{+} \rightarrow \R^{+}$ and a function 
$k \in L^1(\Omega)$ such that
\[ 
|H(x,s,\xi)| \leq b(|s|)[ k(x) + |\xi|^p ], \quad \textup{a.e. } x \in \Omega, \,\,
\forall (s,\xi) \in \R \times \RN\,.\]
If there exist a lower solution $\alpha$ and an upper solution $\beta$ of \eqref{p1} with $\alpha \leq \beta$, 
then there exists a solution $u$ of \eqref{p1} with $\alpha \leq u \leq \beta$.
Moreover, there exists $u_{min}$ (resp. $u_{max}$) minimum (resp. maximum) solution of \eqref{p1} with 
$\alpha \leq u_{min}\leq u_{max} \leq \beta$ and such that,  every solution $u$ of \eqref{p1} with
 $\alpha \leq u \leq \beta$ satisfies $u_{min}\leq u \leq u_{max}$.
  \end{theorem}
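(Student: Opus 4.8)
The plan is to adapt the classical truncation--approximation scheme of Boccardo--Murat--Puel to the present ordered setting. Since $\alpha,\beta\in\Linfty$, put $M:=\max\{\|\alpha\|_\infty,\|\beta\|_\infty\}$ and introduce the order truncation $T(x,s):=\min\{\beta(x),\max\{\alpha(x),s\}\}$, which projects $s$ onto $[\alpha(x),\beta(x)]$. Replacing $H$ by $\widetilde H(x,s,\xi):=H(x,T(x,s),\xi)$ yields a nonlinearity obeying the \emph{uniform} natural growth bound $|\widetilde H(x,s,\xi)|\le b(M)\,[\,k(x)+|\xi|^p\,]$, with a constant independent of $s$. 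As the gradient growth is still critical, I would regularize by setting $\widetilde H_n(x,s,\xi):=\widetilde H(x,s,\xi)/(1+\tfrac1n|\widetilde H(x,s,\xi)|)$, so that $|\widetilde H_n|\le n$ while the natural growth bound persists and $\widetilde H_n\to\widetilde H$ pointwise, and simultaneously replace the $L^1$ datum $f$ by its truncation $f_n$ at height $n$.

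For each fixed $n$ the zero--order term is bounded, hence a compact lower--order perturbation, so the operator $u\mapsto\pLaplac u+\widetilde H_n(x,u,\nabla u)$ is bounded, coercive and pseudomonotone on $\Wop$; the Leray--Lions theorem then provides $u_n\in\Wop$ solving $\pLaplac u_n+\widetilde H_n(x,u_n,\nabla u_n)=f_n$. The first essential step is to show $\alpha\le u_n\le\beta$. To obtain $u_n\le\beta$ one should not test with $(u_n-\beta)^+$ directly, because the difference $\widetilde H_n(x,u_n,\nabla u_n)-H(x,\beta,\nabla\beta)$ carries no sign; instead I would use the exponentially weighted test function $\varphi=e^{\theta(u_n-\beta)^+}-1\ge0$. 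On $\{u_n>\beta\}$ one has $T(x,u_n)=\beta$, and subtracting the upper--solution inequality for $\beta$ from the equation for $u_n$, both tested against $\varphi$, the weight $e^{\theta(u_n-\beta)}$ multiplies the nonnegative $p$--Laplacian monotonicity term by the factor $\theta$; taking $\theta$ large relative to $b(M)$ lets this good term absorb the critical contributions $b(M)|\nabla u_n|^p$ coming from $\widetilde H_n$, forcing $(u_n-\beta)^+\equiv0$. The inequality $u_n\ge\alpha$ follows symmetrically. This absorption through an exponential weight is, I expect, the main obstacle, and is precisely where the criticality of $|\xi|^p$ is felt.

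With $\alpha\le u_n\le\beta$ we gain a uniform $\Linfty$ bound, and testing once more with $(e^{\theta|u_n|}-1)\sign(u_n)$ while using $b(|u_n|)\le b(M)$ yields a uniform bound on $\int_\Omega|\nabla u_n|^p e^{\theta|u_n|}\,dx$, hence in $\Wop$. Along a subsequence $u_n\rightharpoonup u$ in $\Wop$, $u_n\to u$ in $L^p(\Omega)$ and a.e., and on $[\alpha,\beta]$ one has $T(x,u)=u$; it remains to pass to the limit in the two nonlinear terms. The delicate point is the a.e. convergence of the gradients, which I would recover by the Boccardo--Murat device, testing the difference of equations with $(u_n-u)e^{\theta|u_n|}$ to show $\int_\Omega(|\nabla u_n|^{p-2}\nabla u_n-|\nabla u|^{p-2}\nabla u)\cdot\nabla(u_n-u)\,dx\to0$, whence $\nabla u_n\to\nabla u$ a.e. and $|\nabla u_n|^{p-2}\nabla u_n\rightharpoonup|\nabla u|^{p-2}\nabla u$. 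The uniform exponential integrability then upgrades the convergence of $\widetilde H_n(x,u_n,\nabla u_n)$ by Vitali's theorem, and $u$ solves \eqref{p1} with $\alpha\le u\le\beta$.

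Finally, for the extremal solutions I would argue order--theoretically: the set $\mathcal S$ of solutions of \eqref{p1} contained in $[\alpha,\beta]$ is nonempty and, by the gradient convergence above, closed; moreover the minimum (resp. maximum) of two solutions is again an upper (resp. lower) solution, so one may rerun the construction on shrinking order intervals to produce a smallest solution $u_{\min}$ and a largest solution $u_{\max}$, compactness guaranteeing that the limiting infimum and supremum are attained and are themselves solutions between $\alpha$ and $\beta$. By construction every solution in $[\alpha,\beta]$ then satisfies $u_{\min}\le u\le u_{\max}$.
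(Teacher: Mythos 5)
You should first note that the paper itself does not prove this theorem: it is imported verbatim, with a citation, from \cite{B_M_P_1988}, so your proposal can only be measured against the classical Boccardo--Murat--Puel scheme, which your outline does follow in broad strokes. Measured that way, there is a genuine gap, and it sits exactly at the step you yourself flag as ``the main obstacle'': the localization $\alpha\le u_n\le\beta$. Two distinct things go wrong. First, you run the comparison at the approximate level, but $\beta$ is an upper solution of \eqref{p1}, not of the regularized problem: after replacing $\widetilde H$ by $\widetilde H_n$ and $f$ by $f_n$, subtracting the upper-solution inequality from the equation tested against $\varphi=e^{\theta(u_n-\beta)^+}-1$ leaves the error terms $\int_\Omega (f_n-f)\varphi\,dx$ and $\int_\Omega\big(\widetilde H-\widetilde H_n\big)(x,u_n,\nabla u_n)\,\varphi\,dx$, which carry no sign and do not vanish for fixed $n$. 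Second, and more fundamentally, your truncation acts only on the $s$-slot, so on $\{u_n>\beta\}$ the equation carries $H(x,\beta,\nabla u_n)$ while the upper-solution inequality carries $H(x,\beta,\nabla\beta)$; since $H$ is merely Carath\'eodory, the only available bound on this mismatch is $b(M)\big[2k(x)+|\nabla u_n|^p+|\nabla\beta|^p\big]$. The exponential weight absorbs the portion comparable to $|\nabla(u_n-\beta)^+|^p$ into the monotonicity term, but the leftover $b(M)\int_{\{u_n>\beta\}}\big(2k+c\,|\nabla\beta|^p\big)\big(e^{\theta(u_n-\beta)^+}-1\big)\,dx$ is nonnegative, cannot be absorbed, and does not vanish, so the final inequality is consistent with $(u_n-\beta)^+\not\equiv 0$ and nothing forces $u_n\le\beta$. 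This is not a repairable technicality of your estimate: no comparison principle holds at this level of generality --- the present paper's own Remark \ref{pascomparaison} exhibits $-\Delta_4 u=|\nabla u|^2$ on a ball with the ordered pair of solutions $0$ and $\tfrac18(R^2-|x|^2)$ --- so the ordering must be built into the truncation rather than extracted from monotonicity plus absorption. (For $1<p<2$ your absorption step is additionally problematic, since the monotonicity inequality \eqref{peralScalarProduct} is degenerate and does not directly dominate $|\nabla(u_n-\beta)^+|^p$; and your uniform $\Wop$ bound via the weight $e^{\theta|u_n|}$ presupposes the uniform $L^\infty$ bound that the failed localization was supposed to provide.)

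The classical fix is to truncate the gradient slot along with $s$: set $\widehat H(x,s,\xi)=H\big(x,\beta(x),\nabla\beta(x)\big)$ if $s\ge\beta(x)$, $\widehat H(x,s,\xi)=H(x,s,\xi)$ if $\alpha(x)\le s\le\beta(x)$, and $\widehat H(x,s,\xi)=H\big(x,\alpha(x),\nabla\alpha(x)\big)$ if $s\le\alpha(x)$. Then $|\widehat H(x,s,\xi)|\le b(M)\big[k(x)+|\nabla\alpha|^p+|\nabla\beta|^p+|\xi|^p\big]$ uniformly in $s$, and --- crucially --- the comparison is run on the \emph{exact} limit equation: if $u$ solves the truncated problem, then on $\{u>\beta\}$ its nonlinear term coincides identically with $H(x,\beta,\nabla\beta)$, so testing both relations with $(u-\beta)^+$ makes the lower-order terms cancel and strict monotonicity of $\pLaplac$ alone yields $u\le\beta$, with no exponential weight, no largeness of $\theta$, and no use of any comparison principle; symmetrically $u\ge\alpha$, whence $\widehat H(x,u,\nabla u)=H(x,u,\nabla u)$ and $u$ solves \eqref{p1}. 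Once this correction is made, the rest of your skeleton --- Leray--Lions solvability of the bounded approximations, the Boccardo--Murat device for a.e.\ convergence of gradients, Vitali's theorem in the limit passage, and the directedness argument for the extremal solutions (the minimum of two solutions is an upper solution, cf.\ \cite[Corollary 3.3]{C_1997} as used elsewhere in this paper) --- is the standard and essentially correct route to both the existence statement and the $u_{min}$, $u_{max}$ assertion.
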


Next, we state  the strong comparison principle for the $p$-Laplacian and the following order notions.

\begin{definition} For $h_1$, $h_2\in L^1(\Omega)$ we write
\begin{itemize}
\item $h_1\leq h_2$ if $h_1(x)\leq h_2(x)$ for a.e. $x\in\Omega$,
\item $h_1\lneqq h_2$ if $h_1\leq h_2$ and 
$\mbox{meas}(\{x\in\Omega:
h_1(x)<h_2(x)\})>0$.
\end{itemize}
For $u$, $v \in \mathcal{C}^{1}(\overline{\Omega})$ we write
\begin{itemize}
\item $u < v$ if, for all $x \in \Omega\,,$ $u(x) < v(x)$,
\item $u \ll v$ if $u < v$  and, for all $x \in \partial \Omega$, either $u(x) < v(x)$, or, $u(x) = v(x)$ and $\frac{\partial u}{\partial \nu}(x) > \frac{\partial v}{\partial \nu}(x)$, where $\nu$ denotes the exterior unit 
normal. 
\end{itemize}
\end{definition}

\begin{theorem}\rm \cite[Theorem 1.3]{L_P_2004} \cite[Proposition 2.4]{C_T_2000}
\label{smpLP}
\it Assume that $\partial \Omega$ is of class $\mathcal{C}^2$ and let $f_1$, $f_2 \in L^{\infty}(\Omega)$ with 
$f_2 \gneqq f_1\geq0$. 
If $u_1$, $u_2 \in \mathcal{C}_0^{1,\tau}(\overline{\Omega})\,,\ 0 < \tau \leq 1\,,$ are respectively solution of
\[ \tag{$P_i$} \pLaplac u_i = f_i\,, \quad \textup{ in } \Omega\,,\ \textup{ for } i = 1,2\,,\]
such that $u_2 = u_1 = 0$ on $\partial \Omega$. 
Then $u_2 \gg u_1$.
\end{theorem}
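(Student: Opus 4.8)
The plan is to prove the strong comparison principle in three stages: a weak comparison to fix the ordering and the signs, a linearization reducing the difference to a (possibly degenerate) linear elliptic inequality, and a careful treatment of the set on which the linearization degenerates.

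First I would establish the ordering and the signs. Since $f_2\gneqq f_1\geq 0$, both $u_i$ are $p$-superharmonic with zero boundary data, so testing $\pLaplac u_i=f_i$ against $u_i^-\in\Wop$ forces $\nabla u_i=0$ a.e.\ on $\{u_i<0\}$ and hence $u_1,u_2\geq 0$. The weak comparison principle for the $p$-Laplacian (monotonicity of $\xi\mapsto|\xi|^{p-2}\xi$, tested with $(u_1-u_2)^+\in\Wop$) then gives $0\leq u_1\leq u_2$ in $\Omega$. Because $f_2\gneqq 0$, Vázquez's strong maximum principle and Hopf lemma yield $u_2>0$ in $\Omega$ and $\partial u_2/\partial\nu<0$ on $\partial\Omega$. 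This already settles the degenerate case $f_1\equiv 0$: there $u_1\equiv 0$, so $u_1<u_2$ in $\Omega$ and $\partial u_1/\partial\nu=0>\partial u_2/\partial\nu$, i.e.\ $u_2\gg u_1$.

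Second, in the remaining case I would linearize. Writing $w=u_2-u_1\geq 0$, one has, in the weak sense,
\[
-\div\big(A(x)\nabla w\big)=f_2-f_1\geq 0,\qquad A(x)=\int_0^1 D_\xi(|\xi|^{p-2}\xi)\big|_{\xi=\nabla u_1+t\nabla w}\,dt.
\]
Since $D_\xi(|\xi|^{p-2}\xi)=|\xi|^{p-2}\big(I+(p-2)\,\xi\otimes\xi/|\xi|^2\big)$ has eigenvalues $|\xi|^{p-2}$ and $(p-1)|\xi|^{p-2}$, the matrix $A$ is positive semidefinite, and uniformly elliptic with $\mathcal{C}^{0,\tau}$ coefficients (by $u_i\in\mathcal{C}^{1,\tau}$) on any subdomain avoiding the common critical set $Z=\{\nabla u_1=\nabla u_2=0\}$. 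On such a subdomain I would invoke the classical strong maximum principle to propagate $w>0$, and near $\partial\Omega$ — where Hopf for $u_2$ forces $\nabla u_2\neq0$, hence $A$ nondegenerate — the classical Hopf lemma applied to the supersolution $w$ to obtain $\partial w/\partial\nu<0$, which is exactly the boundary part of $u_2\gg u_1$.

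I expect the main obstacle to be the interior set $Z$ of common critical points, where $A$ degenerates and the linear theory is unavailable. Here I would argue by contradiction: if $w(x_0)=0$ at an interior minimum $x_0$, then $\nabla u_1(x_0)=\nabla u_2(x_0)$; if this common value is nonzero, the linear strong maximum principle on a neighborhood forces $w\equiv 0$ locally, and a connectedness/continuation argument propagates $w\equiv 0$, forcing $f_1=f_2$ on an open set and ultimately contradicting $f_2\gneqq f_1$. The genuinely delicate contacts are those on $Z$ (where $\nabla u_1=\nabla u_2=0$): these are controlled using the $\mathcal{C}^{1,\tau}$ regularity together with the sign restriction $f_1\geq0$, $f_2\gneqq f_1$, as in \cite{C_T_2000, L_P_2004}. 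Exploiting that the nonnegativity of $f_1$ makes $u_1$ an extremal solution is precisely what renders the degenerate set harmless, and excluding contact on this set is the crux of the whole argument — it is exactly the point at which the strong comparison principle for $p\neq2$ can fail without structural hypotheses on $f_1,f_2$.
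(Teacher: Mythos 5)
The paper offers no proof of Theorem \ref{smpLP}: it is imported verbatim from \cite[Theorem 1.3]{L_P_2004} and \cite[Proposition 2.4]{C_T_2000}, so your attempt has to be measured against the proofs in those papers. Your preparatory steps are correct and standard: testing with $u_i^-$ gives $u_i\geq 0$; testing with $(u_1-u_2)^+$ and the monotonicity of $\xi\mapsto|\xi|^{p-2}\xi$ gives $u_1\leq u_2$; V\'azquez's principle \cite{V_1984} gives $u_2\gg 0$, which settles the case $f_1\equiv 0$; and the linearization $-\div(A(x)\nabla w)=f_2-f_1\geq 0$ for $w=u_2-u_1$ is uniformly elliptic away from the common critical set $Z=\{\nabla u_1=\nabla u_2=0\}$ (one caveat: for $1<p<2$ boundedness of $A$ also fails where the segment $[\nabla u_1,\nabla u_2]$ crosses the origin, so you should replace the pointwise eigenvalue computation by the standard two-sided bound $\langle A(x)\eta,\eta\rangle \simeq (|\nabla u_1|+|\nabla u_2|)^{p-2}|\eta|^2$). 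The boundary assertion also follows as you say once interior positivity of $w$ is known, since Hopf for $u_2$ and $\mathcal{C}^1$ regularity give $|\nabla u_2|\geq c>0$ in a neighbourhood of $\partial\Omega$.

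The genuine gap is that at the single step carrying the whole content of the theorem --- excluding an interior contact point $x_0$ with $w(x_0)=0$ and $\nabla u_1(x_0)=\nabla u_2(x_0)=0$ --- you supply no argument: writing that such contacts are ``controlled \dots as in \cite{C_T_2000,L_P_2004}'' cites the very result being proved, and the slogan that $f_1\geq 0$ makes $u_1$ ``extremal'' names no mechanism. Your fallback contradiction is also insufficient on its own terms: local vanishing of $w$ yields $f_1=f_2$ only on that neighbourhood, which is perfectly compatible with $f_2\gneqq f_1$, since the set $\{f_2>f_1\}$ may sit elsewhere in $\Omega$; to contradict the hypothesis you would have to propagate the coincidence set $\{w=0\}$ to all of $\Omega$, and the propagation is obstructed exactly by $Z$, because the relative boundary of $\{w=0\}$ can lie entirely inside $Z$, where the linearized operator degenerates and no strong maximum principle is available. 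This is precisely why the strong comparison principle for $p\neq 2$ is delicate, and closing it requires a genuinely new ingredient which your sketch does not contain --- in the cited papers it is obtained through a fine analysis of the critical set of $u_1$, exploiting that $f_1\geq 0$ makes $u_1$ a nonnegative supersolution of the homogeneous equation (so V\'azquez's principle controls $\{u_1>0\}$ and its critical points), together with comparison against explicit barriers near a degenerate contact point. As it stands, the outline fails at the step you yourself identify as the crux, so it cannot be accepted as a proof.
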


We need also the following anti-maximum principle.

\begin{prop}\rm \cite[Theorem 5.1]{G_G_P_2002}
\label{anti}
\it Let $\Omega \subset \R^N$, $N \geq 2$,  a bounded domain with  
$\partial \Omega$  of class   $\mathcal{C}^{1,1}$, 
$c$, $\bar{h}\in L^{\infty}(\Omega)$,
 $\gamma_1$  the  first eigenvalue of
\eqref{BVPeigenvalue}. If $\bar{h} \gneqq 0$,
then there exists $\delta_0>0$ such that, for all $\lambda \in (\gamma_1,\gamma_1+\delta_0)$, 
every  solution $w$ of
\begin{equation}
\label{cll}
\pLaplac w = \lambda \, c(x) |w|^{p-2}w  + \bar{h}(x),\quad u \in W^{1,p}_0(\Omega)
\end{equation}
satisfies $w\ll0$.
\end{prop}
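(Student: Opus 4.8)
The statement is the anti-maximum principle for the $p$-Laplacian, and the route I would take is a blow-up/contradiction argument combined with Picone's identity. Argue by contradiction: if no such $\delta_0$ exists, there are sequences $\lambda_n\downarrow\gamma_1$ and solutions $w_n$ of \eqref{cll} with $w_n\not\ll0$. Since $c,\bar h\in\Linfty$, a standard bootstrap (Moser iteration for the $L^\infty$ bound, then the Lieberman/DiBenedetto/Tolksdorf $\mathcal C^{1,\tau}$ regularity up to the $\mathcal C^{1,1}$ boundary) gives $w_n\in\mathcal C^{1,\tau}_0(\overline{\Omega})$. The first thing I would establish is that $\|w_n\|\to+\infty$: otherwise, along a subsequence $w_n\to w$ in $\mathcal C^1(\overline\Omega)$, and $w$ would solve \eqref{cll} at the resonant value $\lambda=\gamma_1$; testing that equation against the positive first eigenfunction $\varphi_1$ of \eqref{BVPeigenvalue} and using a Picone/convexity inequality forces $\int_\Omega\bar h\,\varphi_1\,dx\le0$, which contradicts $\bar h\gneqq0$ and $\varphi_1>0$.

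Next I would identify the blow-up limit. Set $v_n=w_n/\|w_n\|$, so that $v_n$ solves
\[
\pLaplac v_n=\lambda_n\,c(x)\,|v_n|^{p-2}v_n+\frac{\bar h(x)}{\|w_n\|^{p-1}}\,.
\]
The right-hand side is uniformly bounded in $L^{\infty}$ and its last term tends to $0$, so the uniform $\mathcal C^{1,\tau}$ estimate yields, along a subsequence, $v_n\to v$ in $\mathcal C^1(\overline\Omega)$ with $\|v\|=1$ and $\pLaplac v=\gamma_1\,c(x)\,|v|^{p-2}v$. As $\gamma_1$ is the simple first eigenvalue of \eqref{BVPeigenvalue}, whose eigenfunctions have constant sign, this forces $v=\pm\varphi_1$ (with $\varphi_1\gg0$ the positive eigenfunction).

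The heart of the argument is then to exclude the case $v=+\varphi_1$. If $v=\varphi_1$, then $\mathcal C^1$-convergence gives $v_n\gg0$, hence $w_n=\|w_n\|\,v_n>0$ for $n$ large. I would apply the Allegretto--Huang version of Picone's identity to the pair $(\varphi_1,w_n)$ and test \eqref{cll} (for $w_n>0$) against $\varphi_1^{\,p}/w_n^{\,p-1}\in\Wop$, together with $\int_\Omega|\nabla\varphi_1|^p\,dx=\gamma_1\int_\Omega c\,\varphi_1^p\,dx$, to obtain
\[
(\gamma_1-\lambda_n)\int_\Omega c\,\varphi_1^{\,p}\,dx\;\ge\;\int_\Omega \bar h\,\frac{\varphi_1^{\,p}}{w_n^{\,p-1}}\,dx\,.
\]
The left-hand side is strictly negative (since $\lambda_n>\gamma_1$ and $\int_\Omega c\,\varphi_1^p\,dx>0$), while the right-hand side is strictly positive (as $\bar h\gneqq0$ and $\varphi_1,w_n>0$), a contradiction. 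Hence $v=-\varphi_1\ll0$; by $\mathcal C^1$-convergence and the openness of the cone $\{\,\cdot\ll0\,\}$ in $\mathcal C^1_0(\overline\Omega)$ we get $v_n\ll0$, and therefore $w_n=\|w_n\|\,v_n\ll0$ for $n$ large, contradicting $w_n\not\ll0$. This contradiction produces the desired $\delta_0$.

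I expect two points to carry the technical weight. The first is the uniform $\mathcal C^{1,\tau}$ regularity needed to upgrade weak to $\mathcal C^1$ convergence: it is exactly what pins down $v=\pm\varphi_1$ and what lets one pass from ``$v\ll0$'' to ``$w_n\ll0$''. The second, and genuinely delicate for $p\neq2$, is justifying the Picone test function $\varphi_1^{\,p}/w_n^{\,p-1}$ up to $\partial\Omega$, where both $\varphi_1$ and $w_n$ vanish; this relies on Hopf's lemma ($\varphi_1,w_n\gg0$ with non-vanishing normal derivative) so that the quotient stays in $\Wop$. The accompanying non-existence at the resonant value $\gamma_1$, used to force the blow-up $\|w_n\|\to\infty$, is the other sensitive step, since the clean Fredholm alternative of the case $p=2$ is no longer available and must be replaced by a Picone/variational argument.
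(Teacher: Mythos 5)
Your proposal cannot be checked against an internal proof, because the paper does not prove Proposition \ref{anti}: it is imported as a black box from \cite[Theorem 5.1]{G_G_P_2002}. Measured against that source and against the techniques the paper uses elsewhere, your blow-up scheme is the standard proof and essentially the cited one: contradiction sequences $\lambda_n\downarrow\gamma_1$, $w_n\not\ll0$; uniform $L^\infty$ plus Lieberman $\mathcal C^{1,\tau}$ estimates to get $\mathcal C^1$-compactness; identification of the normalized limit as $\pm\varphi_1$ via simplicity of $\gamma_1$; exclusion of the positive limit by testing with $\varphi_1^p/w_n^{p-1}$ and Picone; and the openness of $\{\,\cdot\ll0\,\}$ in $\mathcal C^1_0(\overline\Omega)$ to conclude $w_n\ll0$. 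The Picone exclusion step, including the Hopf-lemma justification that $\varphi_1/w_n\in L^\infty$ so that $\varphi_1^p/w_n^{p-1}\in\Wop$, reproduces exactly the computation the paper itself performs in the proof of Lemma \ref{lemmaBehaviour} $iii)$, culminating in \eqref{contradiction}.

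The one step that fails as you wrote it is the nonexistence at the resonant value $\gamma_1$, which you use to force $\|w_n\|\to\infty$. You assert that if $w$ solves \eqref{cll} with $\lambda=\gamma_1$, then ``testing against $\varphi_1$ and using a Picone/convexity inequality forces $\int_\Omega\bar h\,\varphi_1\,dx\le0$''. For $p\neq2$ this derivation does not go through: testing with $\varphi_1$ yields
\begin{equation*}
\int_\Omega|\nabla w|^{p-2}\nabla w\,\nabla\varphi_1\,dx=\gamma_1\int_\Omega c\,|w|^{p-2}w\,\varphi_1\,dx+\int_\Omega\bar h\,\varphi_1\,dx\,,
\end{equation*}
and, lacking the $p=2$ symmetry, nothing compares the first two integrals; moreover both Picone (Proposition \ref{picone}) and the D\'iaz--Sa\'a convexity inequality require the pair of functions to be nonnegative/positive, which the resonant solution $w$ is not a priori --- and for $p\neq2$ no Fredholm alternative is available to substitute. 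The repair is precisely the scheme this paper uses in Lemma \ref{lemmaBehaviour} $ii)$ (and again in Part 2 of the proof of Theorem \ref{th4}): test the resonant equation with $w^-$ to obtain $\int_\Omega\bigl(|\nabla w^-|^p-\gamma_1\,c\,(w^-)^p\bigr)\,dx=-\int_\Omega\bar h\,w^-\,dx\le0$, so both sides vanish; the variational characterization and simplicity of $\gamma_1$ give $w^-=k\varphi_1$ with $k\ge0$, and $k>0$ is impossible since then $0=\int_\Omega\bar h\,w^-\,dx=k\int_\Omega\bar h\,\varphi_1\,dx>0$; hence $w\ge0$, then $w\gg0$ by the strong maximum principle (as $\pLaplac w=\gamma_1 c\,w^{p-1}+\bar h\gneqq0$), and only at this point does your Picone test with $\varphi_1^p/w^{p-1}$ produce the contradiction $0\ge\int_\Omega\bar h\,\varphi_1^p/w^{p-1}\,dx>0$. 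You flagged this step as delicate in your closing paragraph, but the mechanism you sketched for it is the one piece that must be replaced; with the sign-first argument above, your proof is complete.
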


The following result  is the well known Picone's inequality for the $p$-Laplacian. 
We state it for completeness.

\begin{prop}\rm \cite[Theorem 1.1]{A_H_1998}
\label{picone}
\it Let $u$, $v\in W^{1,p}(\Omega)$ with $u \geq 0$, $v > 0$  in $\Omega$ and 
$\frac{u}{v}\in L^{\infty}(\Omega)$. Denote
\begin{equation*}
\begin{aligned}
L(u,v) & = |\gradu|^p + (p-1)\Big( \frac{u}{v} \Big)^p |\gradv|^p 
- p \Big( \frac{u}{v} \Big)^{p-1} |\gradv|^{p-2}\gradv \gradu \, ,\\
R(u,v) & = |\gradu|^p -  \nabla \Big( \frac{u^p}{v^{p-1}} \Big)|\gradv|^{p-2}  \gradv\,.
\end{aligned}
\end{equation*}
Then, it follows that
\begin{itemize}
\item $L(u,v) = R(u,v) \geq 0$ a.e. in $\Omega$.
\item $L(u,v) = 0$ a.e. in $\Omega$ if, and only if, $u = kv$ for some constant $k \in \R$.
\end{itemize}
\end{prop}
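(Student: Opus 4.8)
\textbf{Proof proposal for Proposition \ref{picone} (Picone's inequality).}

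The plan is to establish the two bullet points separately, and the algebraic identity $L(u,v) = R(u,v)$ is the natural starting point since it is purely formal. First I would expand $R(u,v)$ by carrying out the differentiation $\nabla\big(u^p/v^{p-1}\big)$ via the quotient rule, obtaining
\[
\nabla\Big(\frac{u^p}{v^{p-1}}\Big) = p\,\frac{u^{p-1}}{v^{p-1}}\,\nabla u - (p-1)\,\frac{u^p}{v^{p}}\,\nabla v\,.
\]
Substituting this into the expression $R(u,v) = |\nabla u|^p - \nabla\big(u^p/v^{p-1}\big)|\nabla v|^{p-2}\nabla v$ and regrouping the three resulting terms shows that $R(u,v)$ coincides termwise with $L(u,v)$. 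The hypotheses $u \geq 0$, $v > 0$, $u/v \in L^\infty(\Omega)$, together with $u,v \in W^{1,p}(\Omega)$, are exactly what is needed to justify that the weak gradient $\nabla(u^p/v^{p-1})$ exists and satisfies the chain/quotient rule almost everywhere, so this identity holds a.e. in $\Omega$.

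The substantive part is the nonnegativity $L(u,v) \geq 0$. Here I would fix a point $x$ where all gradients are defined, abbreviate $\xi = \nabla u(x)$, $\eta = \nabla v(x)$, and set $t = u(x)/v(x) \geq 0$, so that
\[
L(u,v)(x) = |\xi|^p + (p-1)\,t^p\,|\eta|^p - p\,t^{p-1}\,|\eta|^{p-2}\,\eta\cdot\xi\,.
\]
By Cauchy--Schwarz, $|\eta|^{p-2}\eta\cdot\xi \leq |\eta|^{p-1}|\xi|$, so it suffices to prove the scalar inequality $a^p + (p-1)b^p - p\,b^{p-1}a \geq 0$ for all $a = |\xi| \geq 0$ and $b = t|\eta| \geq 0$. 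This is the classical weighted Young / convexity inequality: dividing by $b^p$ (the case $b=0$ being trivial) reduces it to showing $\phi(s) := s^p - p\,s + (p-1) \geq 0$ for $s = a/b \geq 0$, which follows since $\phi(1)=0$, $\phi'(s) = p(s^{p-1}-1)$ vanishes only at $s=1$, and $\phi$ is convex for $p>1$, so $s=1$ is the global minimum. This is the step I expect to carry the real content, though it is routine once set up correctly.

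For the equality case, I would trace back when equality holds in the two inequalities used. Equality in $\phi(s)\geq 0$ forces $s=1$, i.e. $a = b$, meaning $|\xi| = t|\eta|$ pointwise; equality in Cauchy--Schwarz forces $\xi$ and $\eta$ to be positively parallel. Combining these, $L(u,v) = 0$ a.e. is equivalent to $\nabla u = t\,\nabla v = (u/v)\nabla v$ a.e., i.e. $v\,\nabla u - u\,\nabla v = 0$ a.e., which is precisely $\nabla(u/v) = 0$ a.e. Since $\Omega$ need not be connected I would phrase the conclusion as $u/v$ locally constant; under the stated hypotheses one concludes $u = kv$ for a constant $k \in \R$ (with $k \geq 0$, as noted $u\geq 0$, $v>0$). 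The main obstacle throughout is not any single inequality but the careful justification of the weak differentiation of $u^p/v^{p-1}$ and the pointwise manipulation of gradients in the Sobolev setting; the $L^\infty$ bound on $u/v$ and positivity of $v$ are what make these steps legitimate.
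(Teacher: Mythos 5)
The paper offers no proof of this proposition: it is stated verbatim as a quoted result with the citation \cite[Theorem 1.1]{A_H_1998}, so there is no in-paper argument to compare against. Your proposal reproduces, correctly, the standard proof from that reference: the quotient-rule expansion of $\nabla(u^p/v^{p-1})$ giving $L=R$ termwise, the reduction via Cauchy--Schwarz to the scalar inequality $a^p+(p-1)b^p-pab^{p-1}\geq 0$ (Young's inequality, strict off $a=b$), and the equality analysis yielding $\nabla u=(u/v)\nabla v$ a.e., hence $u=kv$ since $\Omega$, being a domain, is connected (so your ``locally constant'' caveat upgrades automatically).

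Two points deserve more care than your closing remark suggests. First, the assertion that the hypotheses are ``exactly what is needed'' to justify the weak quotient rule for $u^p/v^{p-1}$, and likewise the step from $v\,\nabla u-u\,\nabla v=0$ a.e.\ to $u/v$ constant, is glib: $v\in W^{1,p}(\Omega)$ with $v>0$ pointwise need not be locally bounded away from zero (Sobolev functions are not continuous in general for $p\leq N$), so $1/v$ need not be weakly differentiable. The standard repair, used in \cite{A_H_1998}, is to argue with $v+\epsilon$ in place of $v$ and let $\epsilon\to 0$; note also that in every application in this paper ($v=e^{\frac{\mu}{p-1}u}$ with $u\in L^\infty$, or $\varphi_1^p/u^{p-1}$ with $u\geq D_1 d$ and $\varphi_1\leq D_2 d$) the relevant denominator is in fact bounded below, so the manipulations are directly legitimate there. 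Second, in the equality case your reduction ``$\phi(s)=0$ forces $s=1$'' presupposes $b=t|\eta|\neq 0$; you should treat the set where $b=0$ separately, where $L=|\xi|^p$ and $L=0$ forces $\xi=0$, which is again consistent with $\nabla u=(u/v)\nabla v$. Both gaps are routine to fill, and with them filled your proof is complete and coincides with the cited one.
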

\medbreak
Now, we consider the boundary value problem
\begin{equation} \label{p2}
\pLaplac v = g(x,v) , \qquad v \in \Wop \cap \Linfty\,,
\end{equation}
being $g: \Omega \times \R \rightarrow \R$ a Carath\'eodory function such that, for all 
$s_0 > 0$, there exists $A > 0$, with
\begin{equation} \label{linftyCarat}
 |g(x,s)| \leq A\,, 
\quad \textup{ a.e. } x \in \Omega\,, \,\, \forall\ s \in [-s_0,s_0]\,.
\end{equation}
This problem can be handled variationally. Let us consider the associated functional $\Phi: \Wop \rightarrow \R$ 
defined by
\[ 
\Phi(v) :=  \frac{1}{p} \int_{\Omega} |\gradv|^p\,dx - \int_{\Omega}G(x,v)\,dx\,,
\quad \mbox{ where }
\quad 
G(x,s) := \int_0^s g(x,t)\,dt\,.
\]
We can state the following result. 

\begin{prop}\rm \cite[Proposition 3.1]{DF_G_U_2009}
\label{minimumInterval}
\it Under the assumption \eqref{linftyCarat}, assume that $\alpha$ and $\beta$ are respectively a lower and an 
upper solution of \eqref{p2} with 
$\alpha \leq \beta$  and consider 
\[ M:= \big\{ v \in \Wop: \alpha \leq v \leq \beta \big\}.\]
Then the infimum of $\Phi$ on $M$ is achieved at some $v$, and such $v$ is a solution of \eqref{p2}.
\end{prop}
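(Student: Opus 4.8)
The plan is to run the direct method of the calculus of variations on the order interval $M$, producing a constrained minimizer, and then to upgrade this minimizer to a genuine weak solution of \eqref{p2} by a truncation argument that exploits the lower/upper solution inequalities together with the monotonicity of the $p$-Laplacian. The decisive preliminary observation is that everything is tame on $M$: every $v \in M$ satisfies $\|v\|_{\infty} \leq s_0 := \max\{\|\alpha\|_{\infty},\|\beta\|_{\infty}\}$, so by \eqref{linftyCarat} both $g(\cdot,v)$ and $G(\cdot,v)$ are bounded on $M$ by constants depending only on $s_0$. In particular $\int_{\Omega}G(x,v)\,dx$ is uniformly bounded on $M$, which makes the Nemytskii term of class $\mathcal{C}^1$ along directions staying in $M$, with derivative $\varphi\mapsto \int_{\Omega}g(x,v)\varphi\,dx$. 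I would also note that $M\neq\emptyset$, since the median $\operatorname{med}(\alpha,0,\beta)=\max(\alpha,\min(\beta,0))$ belongs to $\Wop$ (its boundary trace vanishes because the trace of $\alpha$ is $\leq 0$ and that of $\beta$ is $\geq 0$, using $\alpha^+,\beta^-\in\Wop$) and lies in $M$.

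Next I would apply the direct method. From $\Phi(v)\geq \frac{1}{p}\|v\|^p - C$ on $M$, a minimizing sequence $(v_n)\subset M$ is bounded in $\Wop$, so up to a subsequence $v_n\rightharpoonup v$ weakly in $\Wop$, $v_n\to v$ in $L^p(\Omega)$ and a.e.; the constraint $\alpha\leq v_n\leq\beta$ passes to the a.e.\ limit, giving $v\in M$. The Dirichlet part $\frac{1}{p}\|\cdot\|^p$ is weakly lower semicontinuous by convexity, while $\int_{\Omega}G(x,v_n)\,dx\to\int_{\Omega}G(x,v)\,dx$ by dominated convergence (uniform bound $|G(x,v_n)|\leq C$, continuity of $G(x,\cdot)$, a.e.\ convergence). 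Hence $\Phi(v)\leq\liminf\Phi(v_n)=\inf_M\Phi$, so $v$ realizes the infimum.

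The main work, and the only delicate point, is to show that this constrained minimizer actually solves \eqref{p2}, i.e.\ that the order constraint is inactive in the variational sense. I would fix $\varphi\in\Wop\cap\Linfty$ and $\epsilon>0$, and use the admissible competitor $v_{\epsilon}:=v+\epsilon\varphi-\psi^{\epsilon}+\psi_{\epsilon}\in M$, where $\psi^{\epsilon}:=(v+\epsilon\varphi-\beta)^{+}$ and $\psi_{\epsilon}:=(\alpha-v-\epsilon\varphi)^{+}$ are nonnegative and lie in $\Wop\cap\Linfty$. Since $M$ is convex and $v$ minimizes, the one-sided derivative gives $\langle\Phi'(v),v_{\epsilon}-v\rangle\geq 0$, that is
\[ \epsilon\,\langle\Phi'(v),\varphi\rangle \geq \langle\Phi'(v),\psi^{\epsilon}\rangle - \langle\Phi'(v),\psi_{\epsilon}\rangle. \]
To control the right-hand side I would test the upper-solution inequality with $\psi^{\epsilon}\geq 0$ and subtract: writing $\nabla\psi^{\epsilon}=\nabla v+\epsilon\nabla\varphi-\nabla\beta$ on $\{\psi^{\epsilon}>0\}$, the leading term becomes the monotonicity integrand $(|\nabla v|^{p-2}\nabla v-|\nabla\beta|^{p-2}\nabla\beta)\cdot(\nabla v-\nabla\beta)\geq 0$, while the remaining contributions — the $\epsilon$-multiple of a bounded integrand and the difference $g(x,\beta)-g(x,v)$, the latter controlled using $0\leq\psi^{\epsilon}\leq\epsilon\|\varphi\|_{\infty}$ — are supported on $\{\beta-\epsilon\|\varphi\|_{\infty}\leq v\leq\beta\}$, whose measure tends to $0$. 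This yields $\langle\Phi'(v),\psi^{\epsilon}\rangle\geq o(\epsilon)$, and the symmetric computation with the lower-solution inequality gives $\langle\Phi'(v),\psi_{\epsilon}\rangle\leq o(\epsilon)$. Dividing the displayed inequality by $\epsilon$ and letting $\epsilon\to 0^{+}$ gives $\langle\Phi'(v),\varphi\rangle\geq 0$; replacing $\varphi$ by $-\varphi$ yields equality for every $\varphi\in\Wop\cap\Linfty$, so $v$ is a weak solution of \eqref{p2}. I expect the crux of the whole argument to be exactly these shrinking-support estimates combined with the monotonicity of the $p$-Laplacian, which is what forces the correction terms $\psi^{\epsilon}$ and $\psi_{\epsilon}$ to vanish to first order in $\epsilon$.
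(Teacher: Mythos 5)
You should note at the outset that the paper offers no proof of this proposition: it is quoted verbatim from \cite[Proposition 3.1]{DF_G_U_2009}, and the argument given there is precisely the classical one you reconstruct — minimize $\Phi$ over the order interval $M$ by the direct method (using that \eqref{linftyCarat} makes $g(\cdot,v)$ and $G(\cdot,v)$ uniformly bounded on $M$), then show the constraint is variationally inactive via the competitor $v_{\epsilon}=\min(\beta,\max(\alpha,v+\epsilon\varphi))=v+\epsilon\varphi-\psi^{\epsilon}+\psi_{\epsilon}$ and the monotonicity of the $p$-Laplacian. Your nonemptiness of $M$, coercivity, weak lower semicontinuity, and the variational inequality $\langle\Phi'(v),v_{\epsilon}-v\rangle\geq 0$ are all correct, and the overall scheme is the right one.

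There is, however, one misjustified step in the crucial $o(\epsilon)$ estimates. You claim the correction terms are supported on $\{\beta-\epsilon\|\varphi\|_{\infty}\leq v\leq\beta\}$, ``whose measure tends to $0$''. That is false in general: this set contains the contact set $\{v=\beta\}$, which can have positive measure (the constrained minimizer may well coincide with $\beta$ on a large set, e.g.\ when $\beta$ is itself a solution). Moreover, the gradient cross term $\bigl(|\nabla v|^{p-2}\nabla v-|\nabla\beta|^{p-2}\nabla\beta\bigr)\cdot\nabla\varphi$ is not ``a bounded integrand'': it is merely in $L^{1}(\Omega)$, since $\nabla v,\nabla\beta,\nabla\varphi\in L^{p}$. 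The standard repair, which completes your proof, is to treat the contact set separately: a.e.\ on $\{v=\beta\}$ one has $\nabla v=\nabla\beta$ (Stampacchia) and $g(x,v)=g(x,\beta)$, so both correction integrands vanish identically there; the remaining integration takes place on $\{\beta-\epsilon\|\varphi\|_{\infty}\leq v<\beta\}\supset\{\psi^{\epsilon}>0\}\cap\{v<\beta\}$, whose measure \emph{does} tend to $0$ as $\epsilon\to0^{+}$. Then absolute continuity of the Lebesgue integral handles the $L^{1}$ gradient term, while $0\leq\psi^{\epsilon}\leq\epsilon\|\varphi\|_{\infty}$ together with $|g|\leq A$ on $M$ handles the zeroth-order term, yielding $\langle\Phi'(v),\psi^{\epsilon}\rangle\geq o(\epsilon)$, and symmetrically $\langle\Phi'(v),\psi_{\epsilon}\rangle\leq o(\epsilon)$. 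With this one-line correction your argument is complete and agrees with the proof in the cited source.
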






\begin{definition}
A \textit{lower solution} $\alpha \in \mathcal{C}^1(\overline{\Omega})$ is said to be \textit{strict}
if every solution $u$ of \eqref{p1} with $u\geq \alpha$ satisfies $u\gg\alpha$. 
\smallbreak
Similarly, an \textit{upper solution} $\beta\in \mathcal{C}^1(\overline{\Omega})$  is said to be \textit{strict}
 if every solution $u$ of \eqref{p1} such that $u\leq \beta$ satisfies $u\ll\beta$.
\end{definition}

\begin{cor} \label{localMinimizer}
Assume that \eqref{linftyCarat} is valid and that $\alpha$ and $\beta$ are strict lower and upper solutions 
of \eqref{p2} belonging to 
$\mathcal{C}^{1}(\overline{\Omega})$ and satisfying $\alpha \ll \beta$. 
Then there exists a local minimizer $v$ of the functional $\Phi$ in the $\mathcal{C}_0^1$-topology. 
Furthermore, this minimizer is a solution of  \eqref{p2} with $\alpha \ll v \ll \beta$.
\end{cor}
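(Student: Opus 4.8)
The plan is to obtain a candidate minimizer on the order interval and then upgrade it to a $\mathcal{C}_0^1$-local minimizer using strictness. First I would set $M := \{ v \in \Wop : \alpha \leq v \leq \beta\}$ and apply Proposition \ref{minimumInterval}: this produces some $v \in M$ realising $\inf_M \Phi$ and solving \eqref{p2}, with $\alpha \leq v \leq \beta$. In particular $v \in \Linfty$, so by \eqref{linftyCarat} the right-hand side $x \mapsto g(x,v(x))$ lies in $\Linfty$; invoking the standard $\mathcal{C}^{1,\tau}$ regularity theory for the $p$-Laplacian with bounded datum, $v \in \mathcal{C}_0^{1,\tau}(\overline{\Omega})$ for some $\tau \in (0,1]$, so in particular $v \in \mathcal{C}^1(\overline{\Omega})$, which is what is required to invoke the notion of strictness.

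Next I would use strictness. Since $\alpha$ is a strict lower solution and $v \geq \alpha$ is a solution, the definition gives $v \gg \alpha$; symmetrically, from $v \leq \beta$ and $\beta$ strict, $v \ll \beta$. Hence $\alpha \ll v \ll \beta$.

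The decisive step, and the one I expect to be the main obstacle, is to convert the relation $\alpha \ll v \ll \beta$ into the statement that an entire $\mathcal{C}_0^1$-ball around $v$ stays inside $M$. Concretely I would prove: there is $\rho > 0$ such that every $w \in \mathcal{C}_0^1(\overline{\Omega})$ with $\|w - v\|_{\mathcal{C}^1} < \rho$ satisfies $\alpha \leq w \leq \beta$ on $\overline{\Omega}$. Granting this, any such $w$ belongs to $M$, hence $\Phi(w) \geq \Phi(v)$, which is exactly the assertion that $v$ is a local minimizer of $\Phi$ in the $\mathcal{C}_0^1$-topology; together with the previous steps this exhibits a solution with $\alpha \ll v \ll \beta$ and finishes the proof.

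To establish the ball claim (the lower bound $w \geq \alpha$, the upper one being symmetric) I would set $\psi := v - \alpha \geq 0$. Away from the boundary, on compact subsets of $\Omega$ and on the portion of $\partial \Omega$ where $v > \alpha$, the function $\psi$ is bounded below by a positive constant, so a $\mathcal{C}^1$-small perturbation cannot destroy $w \geq \alpha$ there. The delicate region is near boundary points where $v = \alpha$: there the condition $\alpha \ll v$ forces $\frac{\partial \psi}{\partial \nu} < 0$ strictly, which by compactness yields a uniform estimate $\psi(x) \geq c\, \dist(x, \partial \Omega)$ for some $c > 0$ in a boundary neighbourhood; since $w - v$ vanishes on $\partial \Omega$ one has $|w(x) - v(x)| \leq \|w - v\|_{\mathcal{C}^1}\, \dist(x, \partial \Omega)$, so choosing $\rho < c$ gives $w - \alpha = \psi - (v - w) \geq 0$ near the boundary as well. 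Making this distance estimate uniform over all of $\partial \Omega$ — combining the strictly negative normal derivative where $\psi$ vanishes with the uniform positive lower bound elsewhere — is the technical heart of the argument.
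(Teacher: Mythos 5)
Your proposal is correct and follows essentially the same route as the paper: minimize $\Phi$ on the order interval $M$ via Proposition \ref{minimumInterval}, upgrade the minimizer to $\mathcal{C}_0^{1,\tau}(\overline{\Omega})$ by the regularity results of \cite{DB_1983, L_1988}, invoke strictness to get $\alpha \ll v \ll \beta$, and conclude that a $\mathcal{C}_0^1$-neighbourhood of $v$ lies inside $M$, so that $v$ is a $\mathcal{C}_0^1$-local minimizer. The only difference is that you work out in detail the distance-function estimate ($\psi \geq c\,\dist(\cdot,\partial\Omega)$ near the boundary, combined with interior positivity) behind the neighbourhood claim, which the paper asserts without proof --- a standard fact your argument correctly establishes.
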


\begin{proof}
First of all observe that  Proposition \ref{minimumInterval} implies the existence of  $v \in \Wop$ solution of 
\eqref{p2}, which minimizes $\Phi$ on $ M:= \{ v \in \Wop: \alpha \leq v \leq \beta\}$.
Moreover, since $g$ is an $L^{\infty}$-Carath\'eodory function, the classical regularity results 
(see \cite{DB_1983, L_1988}) imply that $v \in \mathcal{C}^{1,\tau}(\overline{\Omega})$ for some 
$0 < \tau < 1$.  Since the lower and the upper solutions are strict, it follows that $\alpha \ll v \ll \beta$ 
and so, there is a 
$\mathcal{C}_0^1$-neighbourhood of $v$ in $M$. Hence, it follows that $v$ minimizes locally $\Phi$ in the 
$\mathcal{C}_0^1$-topology. 
\end{proof}

\begin{prop}\rm \cite[Proposition 3.9]{DF_G_U_2009}
\label{c01vsWop}
\it Assume that $g$ satisfies the following growth condition
\[ |g(x,s)| \leq d \, (1+ |s|^{\sigma}), \quad \textup{a.e. }x \in \Omega\,,\ all\ s \in \R\,,\]
for some $\sigma \leq \past - 1$ and some positive constant $d$. Let $v \in \Wop$ be a local minimizer of 
$\Phi$ for the $\mathcal{C}_0^1$-topology. Then $v \in \mathcal{C}_0^{1,\tau}(\overline{\Omega})$ for some 
$0 < \tau < 1$ and $v$ is a local minimizer of $\Phi$ in the $\W$-topology.
\end{prop}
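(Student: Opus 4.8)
The plan is to follow the classical ``Sobolev versus Hölder minimizers'' strategy: combine elliptic regularity with a constrained minimization over small balls, and reach a contradiction through uniform $\mathcal{C}^1$ estimates. I would begin with the regularity of $v$ itself. Since $v$ is a local minimizer in the $\mathcal{C}_0^1$-topology, for each fixed $\varphi \in \mathcal{C}_0^1(\overline{\Omega})$ the curve $t \mapsto v + t\varphi$ stays in an arbitrarily small $\mathcal{C}_0^1$-neighbourhood of $v$ as $t \to 0$, so $t \mapsto \Phi(v + t\varphi)$ has a local minimum at $t=0$; differentiating (the growth bound on $g$ justifies differentiation under the integral) gives $\Phi'(v)\varphi = 0$, and density of $\mathcal{C}_0^1(\overline{\Omega})$ in $\Wop$ promotes this to all test functions. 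Thus $\pLaplac v = g(x,v)$ weakly. Using the subcritical growth $\sigma \le \past - 1$, a Moser/Brezis--Kato bootstrap yields $v \in \Linfty$, and then the regularity theory of \cite{DB_1983, L_1988} gives $v \in \mathcal{C}_0^{1,\tau}(\overline{\Omega})$.

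For the main assertion I would argue by contradiction, supposing $v$ is not a local minimizer in the $\W$-topology. Then, for each small $\epsilon > 0$, I would minimize $\Phi$ over the closed ball $\overline{B}_{\epsilon} := \{ w \in \Wop : \|w - v\| \le \epsilon\}$. Here $\Phi$ is weakly lower semicontinuous --- the principal part is convex and, because $\sigma \le \past - 1$, the compact Sobolev embedding makes $w \mapsto \int_{\Omega} G(x,w)\,dx$ weakly continuous --- while $\overline{B}_{\epsilon}$ is weakly compact, so a minimizer $v_{\epsilon}$ exists. The failure of $\W$-minimality means each $\overline{B}_{\epsilon}$ contains points with energy below $\Phi(v)$, hence $\Phi(v_{\epsilon}) < \Phi(v)$, so $v_{\epsilon} \ne v$, while trivially $\|v_{\epsilon} - v\| \le \epsilon \to 0$.

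Writing the Euler--Lagrange equation for the constrained problem, there is a multiplier $t_{\epsilon} \ge 0$ (with $t_{\epsilon} = 0$ when the minimizer is interior) such that $\pLaplac v_{\epsilon} + t_{\epsilon}\,\pLaplac(v_{\epsilon} - v) = g(x, v_{\epsilon})$ weakly. The heart of the proof is to bound $\|v_{\epsilon}\|_{\infty}$, and then $\|v_{\epsilon}\|_{\mathcal{C}^{1,\tau}}$, uniformly in $\epsilon$: I would first control the multiplier $t_{\epsilon}$, then run a Moser/De Giorgi iteration --- where $\sigma \le \past - 1$ is precisely what closes the iteration, with a truncation in the borderline case $\sigma = \past - 1$ --- to obtain a uniform $L^{\infty}$ bound. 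Since $v \in \mathcal{C}^{1,\tau}(\overline{\Omega})$ and $t_{\epsilon}$ stays bounded, the operator $\xi \mapsto |\xi|^{p-2}\xi + t_{\epsilon}|\xi - \gradv|^{p-2}(\xi - \gradv)$ falls within the scope of \cite{DB_1983, L_1988}, yielding a uniform $\mathcal{C}_0^{1,\tau}$ bound. By the compactness of $\mathcal{C}^{1,\tau}(\overline{\Omega}) \hookrightarrow \mathcal{C}^1(\overline{\Omega})$, a subsequence of $v_{\epsilon}$ converges in $\mathcal{C}_0^1$; since also $v_{\epsilon} \to v$ in $\Wop$, the limit is $v$, so $v_{\epsilon} \to v$ in the $\mathcal{C}_0^1$-topology. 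For $\epsilon$ small this places $v_{\epsilon}$ in the neighbourhood on which $v$ minimizes $\Phi$, contradicting $\Phi(v_{\epsilon}) < \Phi(v)$, and the conclusion follows.

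I expect the main obstacle to be exactly these uniform estimates on the constrained minimizers: the extra term $t_{\epsilon}\,\pLaplac(v_{\epsilon}-v)$ must be kept under control, which requires bounding the Lagrange multiplier, and the critical case $\sigma = \past - 1$ forces a careful truncation in the $L^{\infty}$ iteration rather than a plain bootstrap. The regularity of $v$ in the first step and the weak lower semicontinuity in the second are, by comparison, routine.
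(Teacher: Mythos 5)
The paper does not prove this proposition: it is quoted verbatim from \cite[Proposition 3.9]{DF_G_U_2009}, so the comparison is with the proof given there and in its antecedents (Brezis--Nirenberg for $p=2$; Garc\'ia Azorero--Peral--Manfredi and Guo--Zhang for the $p$-Laplacian). Your scheme is exactly that one: derive the Euler--Lagrange equation and $\mathcal{C}_0^{1,\tau}$ regularity of $v$, argue by contradiction, minimize $\Phi$ on small balls $\overline{B}_{\epsilon}$ in $\Wop$, write the multiplier equation $-\Delta_p v_{\epsilon} + t_{\epsilon}\,(-\Delta_p(v_{\epsilon}-v)) = g(x,v_{\epsilon})$, obtain $L^{\infty}$ and $\mathcal{C}^{1,\tau}$ bounds uniform in $\epsilon$, and conclude by compactness of $\mathcal{C}^{1,\tau}(\overline{\Omega}) \hookrightarrow \mathcal{C}^{1}(\overline{\Omega})$. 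In the strictly subcritical range $\sigma < \past - 1$ this outline is correct, and it is in fact the only regime the present paper ever invokes, since the proposition is applied to nonlinearities satisfying \eqref{linftyCarat} (the truncated $f_{\lambda}$, whose growth is $p-1+\delta$ by Lemma \ref{gProperties}).

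There is, however, a genuine gap in the borderline case $\sigma = \past - 1$, which the statement includes. You justify the existence of the constrained minimizer $v_{\epsilon}$ by asserting that ``the compact Sobolev embedding makes $w \mapsto \int_{\Omega} G(x,w)\,dx$ weakly continuous because $\sigma \leq \past -1$.'' That holds only for $\sigma < \past - 1$: when $\sigma = \past - 1$ the potential $G$ grows like $|s|^{\past}$, the embedding $\Wop \hookrightarrow L^{\past}(\Omega)$ is \emph{not} compact, $\int_{\Omega} G(x,w)\,dx$ is not weakly continuous, $\Phi$ need not be weakly lower semicontinuous on $\overline{B}_{\epsilon}$, and the infimum there may simply not be attained --- this is the same concentration phenomenon as in the Brezis--Nirenberg critical-exponent problem. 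You do mention ``a truncation in the borderline case,'' but you locate it inside the Moser iteration, i.e.\ \emph{after} a minimizer has been produced; in the actual proof the truncation must enter \emph{at the minimization stage}: one either replaces $g$ by subcritical truncations $g_n$ and minimizes the truncated functionals on $\overline{B}_{\epsilon}$, proving $L^{\infty}$ and $\mathcal{C}^{1,\tau}$ estimates uniform in both $n$ and $\epsilon$ before passing to the limit, or minimizes on $\overline{B}_{\epsilon}$ intersected with an $L^{\infty}$-ball and shows by De Giorgi--Stampacchia estimates that the extra constraint is not active. A second, lesser point: your remark that the operator $\xi \mapsto |\xi|^{p-2}\xi + t_{\epsilon}|\xi - \nabla v|^{p-2}(\xi - \nabla v)$ ``falls within the scope of \cite{DB_1983, L_1988}'' conceals that, unlike the case $p=2$ where one divides by $1+t_{\epsilon}$ and absorbs the multiplier into a bounded right-hand side whatever its size, for $p \neq 2$ the two degeneracies (at $\xi = 0$ and $\xi = \nabla v(x)$) do not combine, and the regularity estimates are uniform only once $t_{\epsilon}$ is shown to be bounded; you correctly flag this as the main obstacle but propose no mechanism for it, whereas it is precisely where the cited proofs expend their effort.
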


We now recall abstract results in order to find critical points of $\Phi$ other than local minima.



\begin{definition} \label{defCerami}
Let $ (X , \| \cdot \|) $  be a real Banach space with dual space $ (X^{\ast}, \| \cdot \|_{\ast}) $ and let 
$\Phi: X \rightarrow \R$ be a $\mathcal{C}^1$ functional. The functional $ \Phi $ satisfies the \textit{Cerami 
condition at level $c \in \R$} if, for any \textit{Cerami sequence} at level $c \in \R$, i.e. for any sequence 
$\{x_n\} \subset X$ with
\begin{equation*}
\begin{aligned}
&\Phi(x_n) \to c \quad \textup{ and } \quad \|\Phi'(x_n)\|_{\ast} (1+\|x_n\|) \to 0\,,
\end{aligned}
\end{equation*}
there exists a subsequence $\{x_{n_k}\}$ strongly convergent in $X$.
\end{definition}

\begin{theorem}\rm \cite[Corollary 9, Section 1, Chapter IV]{E_1990} 
\label{mpTheorem}
\it Let $ (X , \| \cdot \|) $  be a real Banach space. Suppose that $\Phi: X \rightarrow \R$ is a $\mathcal{C}^1$ 
functional. Take two points $e_1$, $e_2 \in X$ and define
\[ \Gamma := \{ \varphi \in \mathcal{C}([0,1],X): \varphi(0) = e_1, \,\varphi(1) = e_2\}\,,\]
and
\[ c := \inf_{\varphi \in \Gamma} \max_{t \in [0,1]} \Phi(\varphi(t))\,.\]
Assume that $\Phi$ satisfies the Cerami condition at level $c$ and that
\[ c > \max \{ \Phi(e_1),\Phi(e_2)\}\,.\]
Then, there is a critical point of $\Phi$ at level $c$, i.e. there exists 
$x_0 \in X$ such that $\Phi(x_0) = c$ and $\Phi'(x_0) = 0$.
\end{theorem}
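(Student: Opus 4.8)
This is the Mountain Pass Theorem under the Cerami compactness condition, so my plan is to argue by contradiction via a deformation built from (a rescaling of) the gradient flow, the weight $(1+\|x\|)$ in the Cerami condition being exactly what makes that flow globally defined. Suppose $c$ is \emph{not} a critical value, i.e. $K_c := \{x \in X : \Phi(x) = c,\ \Phi'(x) = 0\} = \emptyset$. The first step is to turn the Cerami condition into a \emph{quantitative} gradient estimate: there exist $\varepsilon, \delta > 0$ such that
\[
(1 + \|x\|)\,\|\Phi'(x)\|_{\ast} \ge \delta \qquad \text{for all } x \in S := \{x : |\Phi(x) - c| \le 2\varepsilon\}.
\]
Indeed, were this false, one could pick $\{x_n\} \subset S$ with $(1 + \|x_n\|)\,\|\Phi'(x_n)\|_{\ast} \to 0$; this is a Cerami sequence at level $c$, so along a subsequence $x_n \to x$, and the $\mathcal{C}^1$ regularity of $\Phi$ would give $\Phi(x) = c$, $\Phi'(x) = 0$, contradicting $K_c = \emptyset$. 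Since the hypothesis gives $c > \max\{\Phi(e_1),\Phi(e_2)\}$, I also shrink $\varepsilon$ so that $\Phi(e_1), \Phi(e_2) < c - 2\varepsilon$, which will keep the endpoints fixed under the deformation.

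The second step is to construct the deformation. I would take a locally Lipschitz pseudo-gradient field $V$ on $\{\Phi' \ne 0\}$ (so $\|V(x)\| \le 2\|\Phi'(x)\|_{\ast}$ and $\langle \Phi'(x), V(x)\rangle \ge \|\Phi'(x)\|_{\ast}^2$), a Lipschitz cut-off $\chi$ with $\chi \equiv 1$ on $\{|\Phi - c| \le \varepsilon\}$ and $\chi \equiv 0$ off $S$, and set
\[
f(x) := -\,\chi(x)\,(1 + \|x\|)\,\frac{V(x)}{\|V(x)\|}.
\]
The point of this particular scaling is twofold: on the one hand $\|f(x)\| \le 1 + \|x\|$, so the ODE $\partial_t \eta = f(\eta)$, $\eta(0,\cdot) = \mathrm{id}$, has at most linear growth and hence (by Gronwall) generates a flow $\eta(t,x)$ defined for all $t \ge 0$; on the other hand, where $\chi = 1$ one computes
\[
\tfrac{d}{dt}\Phi(\eta) = \langle \Phi'(\eta), f(\eta)\rangle \le -\,(1+\|\eta\|)\,\frac{\|\Phi'(\eta)\|_{\ast}^2}{\|V(\eta)\|} \le -\,\tfrac{1}{2}(1+\|\eta\|)\,\|\Phi'(\eta)\|_{\ast} \le -\,\tfrac{\delta}{2},
\]
using $\|V\| \le 2\|\Phi'\|_{\ast}$ and the quantitative bound. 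Thus $\Phi$ strictly decreases at a uniform rate inside the strip, while $f$ vanishes off $S$; in particular $f(e_1) = f(e_2) = 0$, so $e_1, e_2$ are fixed points of the flow.

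The third step derives the contradiction. Fix the time $T := 4\varepsilon/\delta$. By definition of $c$ as an infimum, choose $\varphi \in \Gamma$ with $\max_{t}\Phi(\varphi(t)) \le c + \varepsilon$, and set $\widetilde\varphi := \eta(T, \varphi(\cdot))$. Continuity of the flow and $\eta(T,e_i) = e_i$ give $\widetilde\varphi \in \Gamma$. For each $t$, if $\Phi(\eta(T,\varphi(t))) > c - \varepsilon$ then the whole trajectory stayed in $\{|\Phi - c| \le \varepsilon\}$ where $\chi \equiv 1$, so the decrease estimate yields $\Phi(\eta(T,\varphi(t))) \le (c+\varepsilon) - \tfrac{\delta}{2}T = c - \varepsilon$, a contradiction; hence $\Phi(\eta(T,\varphi(t))) \le c - \varepsilon$ for every $t$, i.e. $\max_{t}\Phi(\widetilde\varphi(t)) \le c - \varepsilon < c$, contradicting $c = \inf_{\Gamma}\max_{t}\Phi$. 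Therefore $K_c \ne \emptyset$.

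The main obstacle is precisely the construction of a \emph{globally defined} deforming flow under the Cerami condition rather than the stronger Palais--Smale condition: one must balance linear growth of the vector field (needed for completeness of the flow, and supplied by the factor $1+\|x\|$) against a uniform descent rate (supplied by the quantitative Cerami bound), and the weight $(1+\|x\|)$ appearing in the definition of a Cerami sequence is exactly what reconciles the two. An alternative route, closer to the cited source, is to apply Ekeland's variational principle to the functional $\varphi \mapsto \max_{t}\Phi(\varphi(t))$ on the complete metric space $(\Gamma, d_\infty)$, which produces an almost-optimal path whose near-maximum point furnishes a Cerami sequence directly; the compactness then yields the critical point at level $c$.
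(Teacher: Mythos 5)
Your argument is correct, but be aware that the paper itself contains \emph{no} proof of this statement: it is quoted, with attribution, from Ekeland's book \cite{E_1990}, so the only comparison available is with the cited source. What you have written is the standard deformation proof of the mountain pass theorem under the Cerami condition (in the spirit of Bartolo--Benci--Fortunato), and it checks out: the quantitative bound $(1+\|x\|)\,\|\Phi'(x)\|_{\ast}\geq\delta$ on the strip is obtained correctly (taking $\varepsilon=\delta=1/n$ in the negation produces a genuine Cerami sequence at level exactly $c$, so only the level-$c$ condition is used, as the statement requires); the rescaled field $f=-\chi\,(1+\|x\|)V/\|V\|$ is well defined and locally Lipschitz, since on $S$ the pseudo-gradient satisfies $\|V\|\geq\|\Phi'\|_{\ast}>0$ while near any zero of $\Phi'$ one has $|\Phi-c|>2\varepsilon$ and hence $f\equiv0$ on a neighbourhood; global existence of the flow follows from the linear growth bound via Gronwall; and the bookkeeping with $T=4\varepsilon/\delta$, the monotonicity of $\Phi$ along trajectories, and the fixed endpoints $e_1,e_2$ (which lie outside $S$ after shrinking $\varepsilon$) yields the contradiction with the definition of $c$ as an infimum. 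The route in the cited source is the one you mention at the end: Ekeland's variational principle applied to $\varphi\mapsto\max_{t}\Phi(\varphi(t))$ on the complete metric space $(\Gamma,d_{\infty})$, which extracts an almost-critical point near the maximum of an almost-optimal path without constructing any flow. The trade-off is instructive: the variational-principle route is shorter and avoids pseudo-gradients, but most naturally produces a Palais--Smale-type sequence, and upgrading it to a Cerami sequence needs extra care; your deformation argument, with the weight $(1+\|x\|)$ built into the vector field, handles the Cerami condition natively and moreover gives a quantitative deformation that is reusable elsewhere --- it is essentially the same device needed to adapt Theorem \ref{characterizacionMinimizer} from the Palais--Smale to the Cerami setting, which the paper asserts in the remark following that theorem without giving details.
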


\begin{theorem}\rm \cite[Corollary 1.6]{G_1993}
\label{characterizacionMinimizer}
\it Let $ (X , \| \cdot \|) $  be a real Banach space and let $\Phi: X \rightarrow \R$ be a $\mathcal{C}^{1}$ functional. 
Suppose that $u_0 \in X$ is a local minimum, i.e. there exists $\epsilon > 0$ such that
\[ \Phi(u_0) \leq \Phi(u), \quad \textup{ for } \|u-u_0\| \leq \epsilon\,,\]
and assume that $\Phi$ satisfies the Cerami condition at any level $d \in \R$. Then, the following alternative holds:
\begin{enumerate}
\item[i)] either there exists $0 < \gamma < \epsilon$ such that 
$ \inf \{\Phi(u): \|u-u_0\| = \gamma \} > \Phi(u_0),$
\item[ii)] or, for each $0 < \gamma < \epsilon$, $\Phi$ has a local minimum at a point $u_{\gamma}$ with 
$\|u_{\gamma}-u_0\| = \gamma$ and $\Phi(u_{\gamma}) = \Phi(u_0)$.
\end{enumerate}
\end{theorem}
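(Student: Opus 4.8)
Write $c:=\Phi(u_0)$ and, for $\rho>0$, set $S_\rho:=\{u\in X:\|u-u_0\|=\rho\}$ and $m(\rho):=\inf_{S_\rho}\Phi$. My plan is to prove the implication ``(i) fails $\Rightarrow$ (ii)'', which establishes the alternative. The starting remark is that, since $u_0$ is a local minimum, $\Phi(u)\ge c$ for every $u\in\overline{B}(u_0,\epsilon)$, so in particular $m(\rho)\ge c$ for all $\rho\in(0,\epsilon)$. Hence the failure of (i) means exactly that $m(\rho)=c$ for every $\rho\in(0,\epsilon)$.

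The first step I would carry out is a reduction to an \emph{attainment} problem. I claim it suffices to show that, under the failure of (i), the common value $c$ is attained on each sphere. Indeed, if $u_\gamma\in S_\gamma$ satisfies $\Phi(u_\gamma)=c$ with $0<\gamma<\epsilon$, then for every $u$ with $\|u-u_\gamma\|<\epsilon-\gamma$ one has $\|u-u_0\|<\epsilon$, whence $\Phi(u)\ge c=\Phi(u_\gamma)$; thus $u_\gamma$ is a local minimum of $\Phi$ lying on $S_\gamma$ with $\Phi(u_\gamma)=\Phi(u_0)$, which is precisely the content of (ii). So the whole theorem reduces to: \emph{if $m(\rho)=c$ for all $\rho\in(0,\epsilon)$, then $m(\gamma)$ is attained for each fixed $\gamma\in(0,\epsilon)$.}

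To prove attainment I would argue by contradiction, assuming $c$ is not attained on $S_\gamma$; note that this means $S_\gamma$ carries no critical point at level $c$, since a point of $S_\gamma$ with $\Phi=c$ would be an interior local minimum by the computation above, hence a critical point. Because $\Phi$ satisfies the Cerami condition at level $c$, the set $K_c=\{u:\Phi(u)=c,\ \Phi'(u)=0\}$ is compact and, by the previous remark, $K_c\cap S_\gamma=\emptyset$; fixing a small open neighbourhood $\mathcal N$ of $K_c$ with $\overline{\mathcal N}\cap S_\gamma=\emptyset$, the Cerami condition furnishes $\delta_0,\sigma>0$ with $\|\Phi'(u)\|_*\ge\sigma$ on $\{|\Phi-c|\le\delta_0\}\cap\overline{B}(u_0,\epsilon)\setminus\mathcal N$ (otherwise a Cerami sequence at level $c$ would converge, by boundedness of the ball, to a point of $K_c\setminus\mathcal N=\emptyset$). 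For $\delta\in(0,\delta_0]$ a quantitative deformation lemma, using a cut-off negative pseudo-gradient flow supported in $\overline{B}(u_0,\epsilon)\setminus\mathcal N$, then yields a deformation $\eta$ with $\eta(1,\{\Phi\le c+\delta\}\setminus\mathcal N)\subset\{\Phi\le c-\delta\}$ that displaces points by at most $C\delta/\sigma$. Choosing $\delta$ so small that $C\delta/\sigma<\min\{\gamma,\epsilon-\gamma\}$ and picking $v\in S_\gamma$ with $\Phi(v)<c+\delta$ (which exists since $m(\gamma)=c$ and $S_\gamma\cap\mathcal N=\emptyset$), the point $\eta(1,v)$ stays inside $B(u_0,\epsilon)$ yet satisfies $\Phi(\eta(1,v))\le c-\delta<c$, contradicting $\Phi\ge c$ on $\overline{B}(u_0,\epsilon)$. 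Hence $m(\gamma)$ is attained and (ii) follows.

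The hard part, I expect, is the compactness/localization in this last step: the mere identity $m(\gamma)=c$ cannot give a minimizer on its own because $S_\gamma$ is noncompact, so the Cerami condition must be invoked both to control $K_c$ and to produce the uniform gradient bound off $\mathcal N$, and one must take care to keep the deformation flow inside $\overline{B}(u_0,\epsilon)$ — which is arranged by shrinking $\delta$ so that the displacement is strictly less than the distance from $S_\gamma$ to $\partial B(u_0,\epsilon)$. A shorter alternative would apply Ekeland's variational principle on the complete metric space $S_\gamma$ to manufacture a Cerami sequence directly; I would avoid it in the abstract setting, however, since passing from a constrained to a free critical point there runs into the possible non-differentiability of the norm of a general Banach space $X$, an issue the deformation argument above sidesteps.
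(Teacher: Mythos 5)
Your argument is correct in structure, but note first that the paper itself contains no proof of this statement: Theorem \ref{characterizacionMinimizer} is quoted from \cite[Corollary 1.6]{G_1993}, with only the remark that Ghoussoub's proof, written for the Palais--Smale condition, adapts to the Cerami condition. Your proof is therefore genuinely different from the cited one: Ghoussoub's framework reaches such results through Ekeland-type perturbation arguments (in the variant adapted to this corollary, one applies Ekeland's variational principle on a closed annulus $\{\gamma-\rho\le\|u-u_0\|\le\gamma+\rho\}\subset\overline{B}(u_0,\epsilon)$ around the sphere, which produces an almost-critical sequence converging, by the compactness condition, to a minimizer on $S_\gamma$ --- and which sidesteps the sphere-constraint/differentiability issue you raise, since the Ekeland points land in the \emph{interior} of the annulus where the free derivative is controlled). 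Your deformation route buys a self-contained proof and, moreover, makes transparent why the paper's remark about replacing Palais--Smale by Cerami is harmless: every sequence you ever use lies in the bounded set $\overline{B}(u_0,\epsilon)$, where the factor $(1+\|x_n\|)$ is uniformly bounded, so the two conditions give identical conclusions. Your reduction step ($m(\rho)=c$ for all $\rho$, plus attainment, implies (ii)) and the compactness of $K_c$ and the uniform gradient bound off $\mathcal{N}$ are all correct.

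Two points in the deformation step need repair, though both are fixable. First, your displacement threshold $C\delta/\sigma<\min\{\gamma,\epsilon-\gamma\}$ is not the right quantity: the radius $\gamma$ only keeps the flow away from $u_0$, but $K_c$ need not reduce to $\{u_0\}$ --- indeed, under the failure of (i), any point of a neighbouring sphere $S_\rho$ where the infimum $c$ \emph{is} attained is a local minimum, hence belongs to $K_c$, and such points can be far closer to $S_\gamma$ than $u_0$ is. What you need is $C\delta/\sigma<\min\{\dist(S_\gamma,\overline{\mathcal{N}}),\epsilon-\gamma\}$; this is legitimate because $\dist(K_c,S_\gamma)>0$ automatically ($K_c$ compact, $S_\gamma$ closed and disjoint from it), so taking $\mathcal{N}$ to be a uniform $r$-neighbourhood of $K_c$ with $r<\dist(K_c,S_\gamma)$ gives $\dist(S_\gamma,\overline{\mathcal{N}})>0$; a neighbourhood merely satisfying $\overline{\mathcal{N}}\cap S_\gamma=\emptyset$, as you wrote, need not keep positive distance. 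Second, the inclusion $\eta(1,\{\Phi\le c+\delta\}\setminus\mathcal{N})\subset\{\Phi\le c-\delta\}$ cannot hold as stated, since the sublevel set contains points far outside $\overline{B}(u_0,\epsilon)$ where no gradient bound is available; you must invoke the quantitative deformation lemma in its localized form (relative to a set $S$, here $S=S_\gamma$ or just $\{v\}$, with the gradient bound required only on a $C\delta/\sigma$-neighbourhood of $S$ inside the strip $\{|\Phi-c|\le\delta_0\}$), which is in fact exactly what your displacement estimate uses. With these two adjustments the contradiction $\Phi(\eta(1,v))\le c-\delta<c$ inside $B(u_0,\epsilon)$, where $\Phi\ge c$, goes through and the proof is complete.
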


\begin{remark}
In \cite{G_1993}, Theorem \ref{characterizacionMinimizer} is proved  assuming the Palais-Smale condition which 
is stronger than our Cerami condition. 
Nevertheless, modifying slightly the proof, it is possible to obtain the same result with the Cerami condition.
\end{remark}

\section{Comparison principle and uniqueness results} 
\label{CPAUR}



In this section, we state a comparison principle
and, as a consequence, we obtain uniqueness result for \eqref{Plambda} with $\lambda \leq 0$, proving Theorem \ref{uniqueness}. 
Consider the boundary value problem
\begin{equation} \label{pCompPrinciple}
\pLaplac u = \mu |\gradu|^p + f(x,u)\,, \quad u \in \Wop \cap \Linfty\,,
\end{equation}
under the assumption
\begin{equation} \label{aCompPrinciple} 
\left \{
\begin{aligned}
& \Omega \subset \R^N,\, N \geq 2\,, \textup{ is a bounded domain with } 
\partial \Omega \textup{ of class } \mathcal{C}^{0,1}\,,\\
&f: \Omega \times \R \to \R \mbox{ is  a  $L^1$-Carath\'eodory function with }
f(x,s) \leq f(x,t) \textup{ for a.e. } x\in  \Omega\mbox{ and all } t \leq s,
\\
& \mu >0.
\end{aligned}
\right.
\end{equation}

\begin{remark} 
\label{muPositve2}
As above, the assumption $\mu>0$ is not a restriction. If $u \in \Wop \cap \Linfty$ is a solution of 
\eqref{pCompPrinciple} with $\mu < 0$ then 
$w = -u \in \Wop \cap \Linfty$ is a solution of
\begin{equation*} \label{pCoercive}
\pLaplac w =  - \mu|\nabla w|^p - f(x,-w)\,, \quad w \in \Wop \cap L^{\infty}(\Omega)\,,
\end{equation*}
with $-f(x,-s)$ satisfying the assumption \eqref{aCompPrinciple}.
\end{remark}

Under a stronger regularity on the solutions, we can prove a comparison principle for \eqref{pCompPrinciple}. 
The proof relies on the Picone's inequality (Proposition \ref{picone}) and is inspired by some ideas of 
\cite{A_P_2003}.

\begin{theorem} \label{compPrinciple}
Assume that \eqref{aCompPrinciple} holds. 
If $u_1$, $u_2 \in W^{1,p}(\Omega) \cap \mathcal{C}(\overline{\Omega})$ are respectively a lower and an upper 
solution of \eqref{pCompPrinciple}, then $u_1 \leq u_2$.
\end{theorem}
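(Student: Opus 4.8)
The plan is to remove the critical gradient term by the Hopf--Cole change of variable already highlighted in the introduction, thereby reducing the statement to a comparison principle for a pure $p$-Laplacian equation whose reaction term, once divided by $w^{p-1}$, is monotone in $w$. For such ``sublinear'' problems the comparison follows from Picone's inequality (Proposition~\ref{picone}) through a D\'iaz--Sa\'a-type symmetrization, which is exactly the mechanism I expect to underlie the reference to \cite{A_P_2003}.

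Concretely, I would set $w_i = \exp\!\big(\tfrac{\mu}{p-1}u_i\big)$ for $i=1,2$. Since $u_i \in W^{1,p}(\Omega)\cap \mathcal{C}(\overline{\Omega})$ and $\mu>0$, each $w_i$ is continuous and bounded between two positive constants on $\overline{\Omega}$, and from $u_1^{+}\in\Wop$, $u_2^{-}\in\Wop$ together with continuity one gets the boundary ordering $w_1 \leq 1 \leq w_2$ on $\partial\Omega$. Using $\nabla w_i = \tfrac{\mu}{p-1}w_i\nabla u_i$, a direct computation (the one sketched for \eqref{pbm2}) shows that the gradient term cancels, so that $w_1$ is a subsolution and $w_2$ a supersolution of
\[
\pLaplac w = F(x,w), \qquad F(x,w):=\Big(\tfrac{\mu}{p-1}\Big)^{p-1} w^{p-1}\, f\!\Big(x,\tfrac{p-1}{\mu}\ln w\Big).
\]
The crucial structural fact is that $w\mapsto F(x,w)/w^{p-1}=\big(\tfrac{\mu}{p-1}\big)^{p-1} f\big(x,\tfrac{p-1}{\mu}\ln w\big)$ is non-increasing, because $\ln$ is increasing and $f(x,\cdot)$ is non-increasing by \eqref{aCompPrinciple}.

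For the comparison itself I would argue by contradiction on $\Omega_0:=\{w_1>w_2\}$, an open set whose closure meets $\partial\Omega$ only on $\{w_1=w_2\}$. I would test the subsolution inequality for $w_1$ with $\varphi=(w_1^p-w_2^p)^{+}/w_1^{p-1}$ and the supersolution inequality for $w_2$ with $\psi=(w_1^p-w_2^p)^{+}/w_2^{p-1}$, both of which are non-negative, bounded, and lie in $\Wop$ since they vanish on $\partial\Omega$. Subtracting, the right-hand side becomes $\int_{\Omega_0}(w_1^p-w_2^p)\big[\tfrac{F(x,w_1)}{w_1^{p-1}}-\tfrac{F(x,w_2)}{w_2^{p-1}}\big]\leq 0$ by the monotonicity above, while the left-hand side rearranges exactly into $\int_{\Omega_0}\big(R(w_1,w_2)+R(w_2,w_1)\big)$, which is $\geq 0$ by Picone. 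Hence both sides vanish, so $R(w_1,w_2)=R(w_2,w_1)=0$ a.e.\ on $\Omega_0$, and the equality case of Proposition~\ref{picone} forces $w_1=k\,w_2$ on each component; continuity giving $w_1=w_2$ on $\partial\Omega_0$ then imposes $k=1$, contradicting $w_1>w_2$ inside. Therefore $|\Omega_0|=0$, i.e.\ $w_1\leq w_2$, which is equivalent to $u_1\leq u_2$.

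I expect the main obstacle to be the careful transfer of the sub/supersolution property through the nonlinear change of variable: one must verify that testing the original weak inequalities produces precisely the weak inequalities for $\pLaplac w = F(x,w)$, checking admissibility of the transformed test functions and handling the fact that $u_i$ is only in $W^{1,p}(\Omega)\cap\mathcal{C}(\overline{\Omega})$ rather than $\Wop$. The remaining delicate points are the admissibility in $\Wop\cap\Linfty$ of the Picone test functions $\varphi,\psi$ near $\partial\Omega_0$ and the rigorous justification of the equality/boundary step; these are where the continuity hypothesis on $u_1,u_2$ is essential.
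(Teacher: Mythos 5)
Your proposal is correct and is essentially the paper's own argument: the paper merely skips the explicit Hopf--Cole step by testing the original inequalities directly with the single function $\varphi = \big[e^{tu_1}-e^{tu_2}\big]^{+}$, $t=\frac{p\mu}{p-1}$, which---since $e^{tu_i}=w_i^p$ in your notation---is exactly what your two D\'iaz--Sa\'a test functions $(w_1^p-w_2^p)^{+}/w_i^{p-1}$ become when pulled back through the change of variable. All subsequent steps coincide: the left-hand side rearranges into the two Picone expressions $R(w_1,w_2)+R(w_2,w_1)\geq 0$, the right-hand side is $\leq 0$ by the monotonicity of $f(x,\cdot)$ from \eqref{aCompPrinciple}, and the equality case of Proposition~\ref{picone} together with continuity up to $\partial\{u_1>u_2\}$ forces $k=1$, just as in the paper.
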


\begin{proof}
Suppose that $u_1$, $u_2$ are respectively a lower and an upper solution of \eqref{pCompPrinciple}. 
For simplicity denote $t = \frac{p\mu}{p-1}$ and consider as test function
\[ \varphi = \big[ e^{tu_1} - e^{tu_2} \big]^{+} \in \Wop \cap \Linfty\,.\]
First of all, observe that
\[ \nabla \varphi = t \big[ \nabla u_1 e^{tu_1} - \nabla u_2 e^{tu_2} \big] \chi_{\{u_1 > u_2\}},\]
with $\chi_A$ the characteristic function of the set $A$.
Hence, using assumptions \eqref{aCompPrinciple}, it follows that 
\begin{equation*}
\begin{aligned}
\int_{\{u_1 > u_2\}} \!\!\! \Big(\big[  |\nabla u_1|^{p-2} \nabla u_1 - |\nabla u_2 |^{p-2} \nabla u_2 \big] 
\big( t\,  \nabla u_1 e^{tu_1} -& t\, \nabla u_2 e^{tu_2}  \big)
 - \mu 
 \big[ |\nabla u_1|^p - |\nabla u_2|^p \big] \big(e^{tu_1} - e^{tu_2} \big)\Big)\, dx \\
& \leq \int_{\{u_1 > u_2\}}  \big( f(x,u_1) - f(x,u_2) \big) \big(e^{tu_1} - e^{tu_2} \big)\, dx 
\leq 0.
\end{aligned}
\end{equation*}
Observe that
\begin{equation} \label{cp1}
\begin{aligned}
\int_{\{u_1 > u_2\}} & \Big[  |\nabla u_1|^{p-2} \nabla u_1 - |\nabla u_2 |^{p-2} \nabla u_2 \Big]
\big( t\,  \nabla u_1 e^{tu_1} - t\, \nabla u_2 e^{tu_2}  \big)\, dx 
\\
& \hspace{37mm} 
- \mu \int_{\{u_1 > u_2\}} \Big[ |\nabla u_1|^p - |\nabla u_2|^p \Big] \big(e^{tu_1} - e^{tu_2} \big)\, dx \\
& = \int_{\{u_1 > u_2\}} e^{tu_1} \Big[ |\nabla u_1|^p (t - \mu) 
+ \mu |\nabla u_2 |^p - t |\nabla u_2|^{p-2} \nabla u_2 \nabla u_1 \Big] \, dx \\
& \hspace{37mm} 
+ \int_{\{u_1 > u_2\}} e^{tu_2} \Big[ |\nabla u_2|^p (t - \mu) 
+ \mu |\nabla u_1 |^p - t |\nabla u_1|^{p-2} \nabla u_1 \nabla u_2 \Big] \, dx .
\end{aligned}
\end{equation}
Next, as $\nabla e^{tu_i} = t\, \nabla u_i e^{tu_i}$, $i = 1,2$, we have
\[ |\nabla u_i |^p = \frac{|\nabla e^{t u_i}|^p}{t^p e^{tpu_i}} \qquad i = 1,2\,.\]
Hence, using the above identities, and as $\frac{\mu}{t-\mu} = p-1$ and $\frac{t}{t-\mu} = p$, 
it follows that, 
\begin{equation*}
\begin{aligned}
e^{tu_1} \big[ |\nabla u_1|^p (t - \mu) 
&
+ \mu |\nabla u_2 |^p - t |\nabla u_2|^{p-2} \nabla u_2 \nabla u_1 \big]
\\
& = 
\frac{t-\mu}{t^p e^{t(p-1)u_1}} 
\Big[ |\nabla e^{tu_1}|^p + (p-1) \big( \frac{e^{tu_1}}{e^{tu_2}} \big)^p  |\nabla e^{tu_2} |^p
- p \big( \frac{e^{tu_1}}{e^{tu_2}} \big)^{p-1} |\nabla e^{tu_2}|^{p-2} 
\nabla e^{tu_2} \nabla e^{tu_1} \Big],
\\
e^{tu_2} \big[ |\nabla u_2|^p (t - \mu) 
&+ \mu |\nabla u_1 |^p - t |\nabla u_1|^{p-2} \nabla u_1 \nabla u_2 \big] 
\\
& 
= 
\frac{t-\mu}{t^p e^{t(p-1)u_2}} 
\Big[ |\nabla e^{tu_2}|^p + (p-1) \big( \frac{e^{tu_2}}{e^{tu_1}} \big)^p  |\nabla e^{tu_1} |^p 
- p \big( \frac{e^{tu_2}}{e^{tu_1}} \big)^{p-1} |\nabla e^{tu_1}|^{p-2} 
\nabla e^{tu_1} \nabla e^{tu_2} \Big].
\end{aligned}
\end{equation*}
Then, by \eqref{cp1}, we have
\begin{equation} 
\label{cp2}
\begin{aligned}
& \int_{\{u_1 > u_2\}} \frac{t-\mu}{t^p e^{t(p-1)u_1}} 
 \Big[ |\nabla e^{tu_1}|^p + (p-1) \big( \frac{e^{tu_1}}{e^{tu_2}} \big)^p  |\nabla e^{tu_2} |^p 
- p \big( \frac{e^{tu_1}}{e^{tu_2}} \big)^{p-1} |\nabla e^{tu_2}|^{p-2}
 \nabla e^{tu_2} \nabla e^{tu_1} \Big] dx \\
& 
+ \int_{\{u_1 > u_2\}} \frac{t-\mu}{t^p e^{t(p-1)u_2}} \Big[ |\nabla e^{tu_2}|^p + (p-1)
 \big( \frac{e^{tu_2}}{e^{tu_1}} \big)^p  |\nabla e^{tu_1} |^p 
- p \big( \frac{e^{tu_2}}{e^{tu_1}} \big)^{p-1} |\nabla e^{tu_1}|^{p-2} 
\nabla e^{tu_1} \nabla e^{tu_2} \Big] dx
 \leq 0\,.\hspace{-2.3mm}
\end{aligned}
\end{equation}
By  Picone's inequality (Proposition \ref{picone}), we know that both brackets in \eqref{cp2} are positive and 
are equal to zero if and only if $e^{tu_1}=k e^{tu_2}$ for some $k\in \mathbb R$. 
As $t-\mu > 0$, thanks to \eqref{cp2}, we deduce the existence of $k\in \mathbb R$ such that
\begin{equation}
\label{15v}
e^{tu_1}=k e^{tu_2}  \quad \mbox{ in } \{u_1>u_2\}.
\end{equation}
Since $u_1$ and $u_2$ are continuous on $\overline\Omega$ and satisfy $u_1-u_2\leq 0$ on $\partial\Omega$, 
we deduce that $u_1=u_2$ on $\partial  \{u_1>u_2\}$. Hence, \eqref{15v} applied to $x\in \partial  \{u_1>u_2\}$, 
implies $k=1$. This implies that $u_1=u_2$ in $\{u_1>u_2\}$, which proves $u_1\leq u_2\,,$ as desired.
\end{proof}

\begin{cor} \label{compPrincipPlambda}
Assume that \eqref{A1} holds and suppose $\lambda \leq 0$. If $u_1$, $u_2 \in W^{1,p}(\Omega) \cap \mathcal{C}(\overline{\Omega})$ are respectively a lower and an upper 
solution of \eqref{Plambda}, then $u_1 \leq u_2$.
\end{cor}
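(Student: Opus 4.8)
The plan is to deduce Corollary \ref{compPrincipPlambda} directly from Theorem \ref{compPrinciple} by showing that, when \eqref{A1} holds and $\lambda \leq 0$, the equation \eqref{Plambda} is a special case of \eqref{pCompPrinciple} whose data satisfy the monotonicity hypothesis in \eqref{aCompPrinciple}. First I would rewrite \eqref{Plambda} in the form $\pLaplac u = \mu|\gradu|^p + f(x,u)$ by setting
\[
f(x,s) := \lambda c(x)|s|^{p-2}s + h(x)\,.
\]
Since $\mu$ is a positive constant by \eqref{A1}, the term $\mu|\gradu|^p$ matches the gradient term in \eqref{pCompPrinciple} exactly, so the only thing to verify is that this $f$ meets the requirements imposed in \eqref{aCompPrinciple}.

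The verification splits into two routine checks. For the $L^1$-Carath\'eodory property, I would note that $c,\,h \in \Lq \subset L^1(\Omega)$ under \eqref{A1}, and that $s \mapsto |s|^{p-2}s$ is continuous, so $f(\cdot,s)$ is measurable for each $s$ and $f(x,\cdot)$ is continuous for a.e. $x$; the local $L^1$-bound on bounded $s$-intervals follows since $|f(x,s)| \leq |\lambda|\,|c(x)|\,s_0^{p-1} + |h(x)|$ for $|s|\leq s_0$. The key point is the monotonicity required in \eqref{aCompPrinciple}, namely that $f(x,s) \leq f(x,t)$ whenever $t \leq s$, for a.e.\ $x$. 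This is where the sign condition $\lambda \leq 0$ is used: the map $s \mapsto |s|^{p-2}s$ is strictly increasing on $\R$, and since $c \geq 0$ and $\lambda \leq 0$ we have $\lambda c(x) \leq 0$ for a.e.\ $x$, so $s \mapsto \lambda c(x)|s|^{p-2}s$ is non-increasing; as $h(x)$ does not depend on $s$, the full map $s \mapsto f(x,s)$ is non-increasing, which is exactly $f(x,s)\leq f(x,t)$ for $t\leq s$.

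With \eqref{aCompPrinciple} verified, the conclusion is immediate: if $u_1$, $u_2 \in W^{1,p}(\Omega)\cap \mathcal{C}(\overline{\Omega})$ are respectively a lower and an upper solution of \eqref{Plambda}, then by the reformulation they are respectively a lower and an upper solution of \eqref{pCompPrinciple} with the $f$ above, and Theorem \ref{compPrinciple} yields $u_1 \leq u_2$. I do not anticipate a genuine obstacle here; the corollary is essentially a specialization, and the only subtlety worth flagging is that the sign hypothesis $\lambda \leq 0$ is precisely what converts the reaction term into a monotone nonlinearity, without which the comparison principle of Theorem \ref{compPrinciple} would not apply. One should also confirm that the notions of lower and upper solution in Definition \ref{lowerUpperSolution} transfer verbatim under the substitution, which they do since the test-function inequalities are identical once $H(x,u,\gradu) = -\mu|\gradu|^p - f(x,u)$ is matched with the rewritten equation.
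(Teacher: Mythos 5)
Your proposal is correct and follows exactly the paper's own proof: the paper likewise sets $f(x,s) = \lambda c(x)|s|^{p-2}s + h(x)$, notes that under \eqref{A1} with $\lambda \leq 0$ this $f$ is an $L^1$-Carath\'eodory function satisfying \eqref{aCompPrinciple}, and concludes by Theorem \ref{compPrinciple}. Your write-up merely spells out the monotonicity and Carath\'eodory verifications that the paper leaves implicit.
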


\begin{proof}
Define the function $f: \Omega \times \R \rightarrow \R$ given by
\[ f(x,s) = \lambda c(x)|s|^{p-2}s + h(x)\,.\]
Since \eqref{A1} holds and $\lambda \leq 0$, $f$ is a $L^1$-Carath\'eodory function which satisfies 
\eqref{aCompPrinciple}. Consequently, the proposition follows  from Theorem \ref{compPrinciple}
\end{proof}

The following result guarantees the regularity that we need  to apply the previous comparison principle.

\begin{lemma} \label{lemmaUniqueness}
Assume that \eqref{A1} holds and suppose $\lambda \leq 0$. Then, any solution of \eqref{Plambda} belongs to 
$\mathcal{C}^{0,\tau}(\overline{\Omega})$.
\end{lemma}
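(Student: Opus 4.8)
The plan is to establish interior and boundary regularity for any solution $u$ of \eqref{Plambda} by viewing the equation as a $p$-Laplacian equation with a bounded right-hand side, and then invoking known regularity theory. The crucial observation is that a solution $u$ lies in $\Wop \cap \Linfty$, so $\|u\|_\infty =: s_0 < \infty$. This $L^\infty$ bound on $u$ is what allows us to control the lower-order terms.

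First I would rewrite the equation in the form $\pLaplac u = H(x)$ where $H(x) := \lambda c(x)|u|^{p-2}u + \mu|\gradu|^p + h(x)$ and try to show $H \in L^r(\Omega)$ for a suitable $r$. The terms $\lambda c(x)|u|^{p-2}u$ and $h(x)$ are immediately controllable: since $|u| \leq s_0$ a.e. and $c, h \in L^q(\Omega)$ with $q > \max\{N/p,1\}$, these two terms belong to $L^q(\Omega)$. The genuinely delicate term is the gradient term $\mu|\gradu|^p$, which is only known a priori to be in $L^1(\Omega)$ since $u \in \Wop$. The main obstacle is therefore promoting the integrability of $|\gradu|^p$ beyond $L^1$.

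To overcome this, the standard device is to test the equation against exponential test functions, exactly in the spirit of the comparison-principle computation above. Concretely, I would use test functions of the form $\varphi = (e^{\theta |u|}-1)\sign(u)$ (truncated if necessary to stay in $\Wop \cap \Linfty$, which is legitimate since $u$ is bounded) for a suitable constant $\theta > 0$ chosen so that the leading gradient contribution absorbs the critical term $\mu|\gradu|^p$. This Hopf--Cole-type test function produces, after expanding $\nabla \varphi = \theta\, e^{\theta|u|}\gradu$, a term $\int \theta\, e^{\theta|u|}|\gradu|^p\,dx$ on the left which, for $\theta$ large relative to $\|\mu\|_\infty/(p-1)$, dominates $\int \mu\, e^{\theta|u|}|\gradu|^p\,dx$ coming from the right-hand side. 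Since the remaining terms are controlled by the $L^q$ data and the $L^\infty$ bound on $u$, this yields an a priori estimate showing $\int_\Omega |\gradu|^p\,dx$ is not merely finite but that $|\gradu|^p$ enjoys better integrability; iterating or combining with the boundedness of $u$ gives $H \in L^q(\Omega)$.

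Once $H \in L^q(\Omega)$ with $q > \max\{N/p,1\}$, the conclusion follows from the classical regularity theory for the $p$-Laplacian: the interior estimates and boundary regularity results of DiBenedetto and Lieberman (the references \cite{DB_1983, L_1988} already cited in the excerpt) give $u \in \mathcal{C}^{0,\tau}(\overline{\Omega})$ for some $0 < \tau \leq 1$. I expect the exponential-test-function estimate to be the heart of the argument and the only step requiring real work; the reduction to the cited regularity theorems is then routine. One technical point to handle carefully is ensuring the test function genuinely lies in $\Wop \cap \Linfty$ and that the integration-by-parts manipulations are justified — this is where the a priori $L^\infty$ bound $\|u\|_\infty = s_0$ is used repeatedly, both to make $e^{\theta|u|}$ bounded and to keep all integrals finite.
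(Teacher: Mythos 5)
There is a genuine gap at the heart of your argument: the exponential test function does not improve the integrability of $|\gradu|^p$. With $\varphi=\Phi(u)$, $\Phi(s)=(e^{\theta|s|}-1)\sign(s)$, your computation gives, for $\theta>\mu$,
\begin{equation*}
(\theta-\mu)\int_{\Omega} e^{\theta|u|}\,|\gradu|^p\,dx \;\leq\; C(\theta,s_0)\big(s_0^{p-1}\|c\|_1+\|h\|_1\big)\,,
\end{equation*}
but since $1\leq e^{\theta|u|}\leq e^{\theta s_0}$ the weight is comparable to $1$: this is exactly the energy bound $\|u\|^p\leq C$ you already have from $u\in\Wop$, with no gain of integrability, and no iteration scheme is actually set up. Moreover the target you would need is very far away: $H\in L^q(\Omega)$ with $q>\max\{N/p,1\}$ forces $\gradu\in L^{pq}(\Omega)$ with $pq>N$, which by Morrey-type embedding already implies (essentially) the H\"older continuity you are trying to prove, and more; with data only in $L^q$, $q$ close to $N/p$, such gradient integrability does not follow from energy identities (for general $p$ the nonlinear Calder\'on--Zygmund/potential estimates yield strictly less, and the self-referential character of $H$, which contains $|\gradu|^p$ itself, is precisely the delicate point in critical-growth problems). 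Finally, the regularity theorems you invoke are mismatched: \cite{DB_1983, L_1988} provide $\mathcal{C}^{1,\tau}$ estimates for (essentially) bounded right-hand sides, whereas for $H\in L^q$, $q>\max\{N/p,1\}$, what is needed is a De Giorgi--Ladyzhenskaya--Uraltseva type $\mathcal{C}^{0,\tau}$ theorem.

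The paper's proof sidesteps all of this with a single citation: \cite[Theorem IX-2.2]{L_U_1968} applies directly to bounded weak solutions of quasilinear equations whose lower-order term has natural growth $|b(x,u,\xi)|\leq \mu|\xi|^p+\phi(x)$ with $\phi\in L^q$, $q>\max\{N/p,1\}$, and yields H\"older continuity up to the boundary of a Lipschitz domain; the critical gradient term is handled structurally inside the De Giorgi iteration and is never treated as data. If you want a self-contained proof in the spirit of your exponential functions, the correct implementation (available precisely because $\mu$ is constant under \eqref{A1}) is the Hopf--Cole change of unknown of Section \ref{TLSATAP}: $v=\frac{p-1}{\mu}\big(e^{\frac{\mu}{p-1}u}-1\big)\in\Wop\cap\Linfty$ solves $\pLaplac v=\lambda c(x)g(v)+\big(1+\frac{\mu}{p-1}v\big)^{p-1}h(x)=:f(x)$ with $f\in L^q(\Omega)$ because $v$ is bounded (Lemma \ref{linkProblem}); the gradient term is thereby eliminated rather than estimated. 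Standard $\mathcal{C}^{0,\tau}$ regularity for $\pLaplac v=f\in L^q$, $q>\max\{N/p,1\}$, then applies, and since $1+\frac{\mu}{p-1}v=e^{\frac{\mu}{p-1}u}\geq e^{-\frac{\mu}{p-1}\|u\|_{\infty}}>0$, the logarithm is Lipschitz on the relevant range and $u=\frac{p-1}{\mu}\ln\big(1+\frac{\mu}{p-1}v\big)$ inherits the H\"older continuity.
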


\begin{proof}
This follows directly from \cite[Theorem IX-2.2]{L_U_1968}.
\end{proof}


\begin{proof}[\textbf{Proof of Theorem \ref{uniqueness}}]
The proof is just the combination of Corollary \ref{compPrincipPlambda} and Lemma \ref{lemmaUniqueness}. 
\end{proof}

\begin{remark}
It is important to note that this comparison and uniqueness results do not hold in general for solution belonging 
only to 
$\Wop$. See \cite[Example 1.1]{P_2008}
\end{remark}

\begin{remark}\label{pascomparaison}
The following counter-example  (see \cite[p.7]{P_S_2007}) shows that there is no hope to obtain, when $p \neq 2$, a comparison principle like \cite[Corollary 3.1]{A_DC_J_T_2014}. For $N = 2$ and $R>0$, consider  the following problem on the ball
\begin{equation*}
\left\{
\begin{aligned}
 -\Delta_4 u & = |\gradu|^2 \qquad  \textup{ in } B(0,R)\,,\\
u & = 0   \qquad \qquad \textup{ on } \partial B(0,R)\,. 
\end{aligned}
\right.
\end{equation*}
We easily see that  $u_1 = 0$ and $u_2 = \frac{1}{8}(R^2 - |x|^2)$ are both solutions of the above problem 
belonging to $W_0^{1,4}(B(0,R)) \cap L^{\infty}(B(0,R))$.
\end{remark}

\section{A priori lower bound and existence of a lower solution }
\label{TLSATAP-L}
As explained in the introduction, the aim of this section is to find a lower solution below every upper solution of problem \eqref{Plambda}.
First of all, we show that under a rather mild assumption (in particular no sign on $c$ is required) the solutions to \eqref{Plambda} admit a lower bound. Precisely
we consider problem \eqref{Plambda} assuming now
\begin{equation}  \label{lb2}
\left \{
\begin{aligned}
& \Omega \subset \R^N,\, N \geq 2\,, \textup{ is a bounded domain with } \partial \Omega \textup{ of class }
 \mathcal{C}^{0,1}\,.\\
& c \textup{ and } h \textup{ belong to } L^q(\Omega) \textup{ for some } q > \max\{N/p,1\}, \\
& \mu \in L^{\infty}(\Omega) \textup{ satisfies } 0 < \mu_1 \leq \mu (x) \leq \mu_2.
\end{aligned}
\right.
\end{equation}
Adapting the proof of \cite[Lemma 3.1]{DC_J_2017}, based in turn on ideas of \cite{A_P_P_2007}, we obtain 



\begin{lemma}\label{lowerBound}
Under the assumptions \eqref{lb2}, for any $\lambda \geq 0$, there exists a constant 
$M_{\lambda}>0$ with  $M_{\lambda}:= M(N,p,q,|\Omega|,\lambda, \mu_1,$ $\|c^{+}\|_q,\|h^-\|_q ) > 0$
such that, every $u \in W^{1,p}(\Omega) \cap \Linfty$ upper solution of \eqref{Plambda} satisfies
\[ \min_{\Omega} u > -M_{\lambda}.\]
\end{lemma}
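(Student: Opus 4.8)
The plan is to bound $\|u^{-}\|_{\infty}$ from above, since $\min_{\Omega}u>-M_{\lambda}$ is equivalent to $\|u^{-}\|_{\infty}<M_{\lambda}$. So let $u$ be an upper solution of \eqref{Plambda} and set $v:=u^{-}$. By the definition of upper solution $v\in\Wop$, and since $u\in\Linfty$ we also have $v\in\Linfty$; hence for every nondecreasing locally Lipschitz $g:[0,\infty)\to[0,\infty)$ with $g(0)=0$ the function $\varphi:=g(v)$ is an admissible nonnegative test function in $\Wop\cap\Linfty$. The key observation I would exploit is that, for a \emph{lower} bound, the critical gradient term $\mu|\gradu|^{p}$ enters with a favorable sign and may simply be discarded.

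First I would insert $\varphi=g(v)$ into the upper-solution inequality. Since $\nabla v=-\gradu\,\chi_{\{u<0\}}$ a.e., on $\{u<0\}$ one has $|\gradu|^{p-2}\gradu\cdot\nabla\varphi=-g'(v)|\nabla v|^{p}$, while $\mu|\gradu|^{p}=\mu|\nabla v|^{p}$ and $\lambda c|u|^{p-2}u=-\lambda c\,v^{p-1}$. Rearranging the resulting inequality gives
\[
\int_{\Omega} g'(v)\,|\nabla v|^{p}\,dx + \int_{\Omega}\mu\,|\nabla v|^{p}\,g(v)\,dx \;\leq\; \int_{\Omega}\big(\lambda\,c\,v^{p-1} - h\big)\,g(v)\,dx .
\]
Using $\lambda\geq 0$, $g\geq 0$ and $v^{p-1}\geq 0$ together with $c\leq c^{+}$ and $-h\leq h^{-}$ on the right, and dropping the nonnegative second term on the left (this is where the favorable sign of $\mu|\gradu|^{p}$ is used, so that only $\mu\geq 0$ is needed), I obtain
\[
\int_{\Omega} g'(v)\,|\nabla v|^{p}\,dx \;\leq\; \int_{\Omega}\big(\lambda\,c^{+}v^{p-1} + h^{-}\big)\,g(v)\,dx .
\]
In other words, $v$ is a nonnegative subsolution of $\pLaplac v=\lambda c^{+}(x)v^{p-1}+h^{-}(x)$ in $\Wop$. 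Note that only $c^{+}$ and $h^{-}$ appear, which explains why no sign condition on $c$ is required and why $M_{\lambda}$ depends only on $\|c^{+}\|_q$ and $\|h^{-}\|_q$.

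It then remains to turn this subsolution property into a uniform $L^{\infty}$ bound. I would run a Stampacchia iteration: for $k\geq 0$ take $g(v)=(v-k)^{+}$, so that on the level sets $A_{k}:=\{v>k\}$ the displayed inequality reads $\int_{A_{k}}|\nabla v|^{p}\,dx\leq\int_{A_{k}}(\lambda c^{+}v^{p-1}+h^{-})(v-k)\,dx$. Combining the Sobolev inequality with Hölder's inequality and the integrability $c^{+},h^{-}\in\Lq$ with $q>\max\{N/p,1\}$, one is led to a recursive estimate of the form $\psi(\ell)\leq C\,(\ell-k)^{-\gamma}\psi(k)^{\delta}$ for $\ell>k$, with $\psi(k)=\meas(A_{k})$ and, crucially, $\delta>1$ precisely because $q>N/p$. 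Stampacchia's lemma then yields $\psi(k)=0$ for all $k\geq M$, i.e.\ $\|v\|_{\infty}\leq M$, with $M=M(N,p,q,|\Omega|,\lambda,\|c^{+}\|_q,\|h^{-}\|_q)$. Choosing $M_{\lambda}:=M+1$ gives the strict inequality $\min_{\Omega}u=-\|u^{-}\|_{\infty}\geq -M>-M_{\lambda}$.

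The routine part is the subsolution computation; the main obstacle is this last step. The term $\lambda c^{+}v^{p-1}$ is homogeneous of degree $p-1$ in $v$, so it is not a mere datum but behaves like a zero-order term of critical homogeneity, and one must check that the exponents produced by Sobolev and Hölder keep $\delta>1$ and that the $(v-k)^{p}$ contributions can be absorbed with a constant depending only on the listed quantities. This is exactly the role of the strict inequality $q>N/p$, and it is the estimate adapted from \cite[Lemma 3.1]{DC_J_2017} and \cite{A_P_P_2007}.
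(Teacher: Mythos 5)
There is a genuine gap, and it is located exactly where you wrote that the term $\mu|\gradu|^p$ ``may simply be discarded.'' After dropping it, all the information you retain about $v=u^-$ is the inequality $\int_{\Omega}g'(v)|\nabla v|^p\,dx\leq\int_{\Omega}(\lambda c^+v^{p-1}+h^-)g(v)\,dx$, i.e.\ that $v$ is a nonnegative subsolution of $\pLaplac v=\lambda c^+(x)v^{p-1}+h^-(x)$. No iteration based only on this can produce a bound $M$ depending solely on the data, because the zero-order term has the \emph{same} homogeneity $p-1$ as the operator: whenever $c^+\not\equiv0$ and $\lambda\geq\gamma_1^+$, the first eigenvalue of $\pLaplac w=\gamma\,c^+(x)|w|^{p-2}w$, every multiple $t\varphi_1$ ($t>0$) of the first eigenfunction satisfies $\pLaplac(t\varphi_1)=\gamma_1^+c^+(t\varphi_1)^{p-1}\leq\lambda c^+(t\varphi_1)^{p-1}+h^-$ and is therefore a subsolution of your reduced inequality with $\|t\varphi_1\|_\infty\to\infty$. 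Concretely, in your Stampacchia step the contribution $\lambda\int_{A_k}c^+\big((v-k)^+\big)^p\,dx$ can be absorbed into $\int_{A_k}|\nabla v|^p\,dx$ (via H\"older, Sobolev and $pq'<p^*$) only once $\lambda\|c^+\|_q\,S^{-1}\meas(A_k)^{\epsilon}\leq\tfrac12$, and the level $k$ at which this holds is controlled by $\|v\|_p$ --- a quantity you never bounded. So the recursion closes only conditionally, giving $\sup_\Omega u^-\leq C\big(\|u^-\|_p+\|h^-\|_q\big)$, which is precisely what the paper extracts from \cite[Theorem 6.1.2]{P_S_2007} in its Step 2; it is not the lemma. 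A telltale symptom is that your final constant $M(N,p,q,|\Omega|,\lambda,\|c^+\|_q,\|h^-\|_q)$ omits $\mu_1$, whereas the statement (and hypothesis \eqref{lb2}, which demands $\mu\geq\mu_1>0$, not merely $\mu\geq0$) includes it: if the bound held uniformly as $\mu_1\downarrow0$ it would hold for $\mu\equiv0$, contradicted by the eigenfunction family above.

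The missing idea is the paper's Step 1, which uses the $\mu$-term instead of discarding it. Testing with $\varphi=u^-$ and keeping the gradient term yields, via the identity $|\nabla u^-|^pu^-=\big(\tfrac{p}{p+1}\big)^p\big|\nabla(u^-)^{\frac{p+1}{p}}\big|^p$, the coercive contribution $\mu_1\big(\tfrac{p}{p+1}\big)^p\big\|\nabla(u^-)^{\frac{p+1}{p}}\big\|_p^p$ on the left-hand side. This term has homogeneity $p+1$ in $u^-$, strictly higher than the $p$-homogeneous danger term $\lambda\int_\Omega c^+|u^-|^p\,dx$, which can then be split by Young's inequality into a piece $\epsilon\int_\Omega c^+\big((u^-)^{\frac{p+1}{p}}\big)^p\,dx$ absorbed by the $\mu_1$-term (choosing $\epsilon$ with $\epsilon S\lambda\|c^+\|_q\leq\tfrac{\mu_1}{2}\big(\tfrac{p}{p+1}\big)^p$) and a lower-order piece $C(\epsilon)\int_\Omega c^+u^-\,dx$ --- with no smallness assumption on $\lambda$. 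This gives the a priori bound $\|u^-\|_{W_0^{1,p}}\leq M_1(N,p,q,|\Omega|,\lambda,\mu_1,\|c^+\|_q,\|h^-\|_q)$, and only then does the conditional $L^\infty$ estimate of Step 2 become a data-only bound. Your subsolution reduction and iteration scheme are fine as the \emph{second} half of such an argument, but without a preliminary $W_0^{1,p}$ (or $L^p$) bound exploiting $\mu\geq\mu_1>0$, the proof does not go through.
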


\begin{proof} Let us split the proof in two steps.
\medbreak
\noindent \textbf{Step 1:} \textit{There exists a positive constant $M_1 = M_1(p,q,N,|\Omega|,\lambda,\mu_1,
 \|c^{+}\|_q, \|h^{-}\|_q) > 0$ such that $\|u^{-}\| \leq M_1 $.}
\medbreak
First of all, observe that for every function $u\in W^{1,p}(\Omega)$, it follows that
\begin{equation}
\label{(1)}
\nabla \big( (u^{-})^{\frac{p+1}{p}} \big) = \frac{p+1}{p} (u^{-})^{1/p} \nabla u^{-},
\quad \mbox{ and so, } \quad
 |\nabla u^{-}|^p u^{-} = \big( \frac{p}{p+1}\big)^p \big| \nabla (u^{-})^{\frac{p+1}{p}} \big|^p.
 \end{equation}
 Suppose that $u \in W^{1,p}(\Omega) \cap \Linfty$ is an upper solution of \eqref{Plambda}
and let us consider $\varphi = u^{-}$ as a test function. Under the assumptions \eqref{lb2}, it follows that
\begin{equation}
\label{(2)}
\begin{aligned}
-\int_{\Omega} |\nabla u^{-}|^p dx & \geq - \lambda \int_{\Omega} c(x)|u^{-}|^p dx 
+ \int_{\Omega} \mu(x) |\nabla u^{-}|^p u^{-} dx + \int_{\Omega} h(x)u^{-} dx \\
& \geq -\lambda \int_{\Omega} c^{+}(x)|u^{-}|^p dx + \mu_1 \int_{\Omega} |\nabla u^{-}|^p u^{-} dx 
- \int_{\Omega} h^{-}(x) u^{-} dx .
\end{aligned}
\end{equation}
By \eqref{(1)} and \eqref{(2)}, we have that
\begin{equation} \label{lb3}
\mu_1 \Big( \frac{p}{p+1}\Big)^p \int_{\Omega} \big| \nabla (u^{-})^{\frac{p+1}{p}} \big|^p dx 
+ \int_{\Omega} |\nabla u^{-}|^p dx \leq \lambda \int_{\Omega} c^{+}(x)|u^{-}|^p dx 
+ \int_{\Omega} h^{-}(x)u^{-} dx\,.
\end{equation}
Firstly, we apply Young's inequality and, for every $\epsilon > 0$, it follows that
\begin{equation*}
\begin{aligned}
\int_{\Omega} c^{+}(x)|u^{-}|^p dx & = \int_{\Omega} (c^{+}(x))^{1/p} |u^{-}|^{1/p} 
(c^{+}(x))^{\frac{p-1}{p}} |u^{-}|^{\frac{(p+1)(p-1)}{p}} dx \\
& \leq C(\epsilon) \int_{\Omega} c^{+}(x)u^{-} dx 
+ \epsilon  \int_{\Omega} c^{+}(x) \big( (u^{-})^{\frac{p+1}{p}} \big)^p dx
\end{aligned}
\end{equation*}
Moreover, applying H\"older and Sobolev inequalities, observe that
\begin{equation*}
\int_{\Omega} c^{+}(x) \big( (u^{-})^{\frac{p+1}{p}} \big)^p dx 
\leq \|c^{+}\|_q \|(u^{-})^{\frac{p+1}{p}} \|_{\frac{qp}{q-1}}^p \leq S\, \|c^{+}\|_q 
\| \nabla (u^{-})^{\frac{p+1}{p}}\|_p^p
\end{equation*}
with $S$ the constant from the embedding from $W^{1,p}_0(\Omega)$ into $L^{\frac{qp}{q-1}}(\Omega)$. 
Hence, choosing $\epsilon$ small enough to ensure that 
$\epsilon\,S\,\lambda  \|c^{+}\|_q\leq  \frac{\mu_1}{2 } \big( \frac{p}{p+1} \big)^p$ 
and substituting in \eqref{lb3}, we apply again H\"older and Sobolev inequalities and we find a constant 
$C=C(\mu_1,\lambda,  \|c^{+}\|_q, p, q, |\Omega|, N)$ such that
\begin{equation*}
\begin{aligned}
\frac{\mu_1}{2} \Big( \frac{p}{p+1} \Big)^p \|\nabla (u^{-})^{\frac{p+1}{p}} \|_p^p + \|\nabla u^{-}\|_p^p 
 \leq 
 \big(\|h^{-}\|_{q}  + C(\epsilon) \|c^{+}\|_q\big) \|u^{-}\|_{\frac{q}{q-1}} 
 \leq 
C ( \|h^{-}\|_q + \|c^{+}\|_q )\|\nabla u^{-}\|_p.
\end{aligned}
\end{equation*}
This allows to conclude that
\begin{equation*}
\|u^{-}\| \leq  \Big(C\big( \|h^{-}\|_q + \|c^{+}\|_q \big)\Big)^{\frac{1}{p-1}} =: M_1 \, .
\end{equation*}

\noindent \textbf{Step 2:} \textit{Conclusion.}
\medbreak
Since \eqref{lb2} holds, every $u \in W^{1,p} (\Omega)\cap \Linfty $ upper solution of \eqref{Plambda} 
satisfies
\begin{equation} \label{lbS2}
\pLaplac u \geq \lambda c(x)|u|^{p-2}u - h^-(x)\,, \quad \textup{ in } \Omega\,.
\end{equation}
Moreover, observe that $0$ is also an upper solution of \eqref{lbS2}. Hence, since the minimum of two upper solution is an upper solution (see \cite[Corollary 3.3]{C_1997}), it follows that $\min(u,0)$ is an upper solution of \eqref{lbS2}. Furthermore, observe that $\min(u,0)$ is an upper solution of
\[ \pLaplac u \geq \lambda c^{+}(x) |u|^{p-2} u - h^{-}(x)\,, \quad \textup{ in } \Omega\,.\]
Hence, applying \cite[Theorem 6.1.2]{P_S_2007}, we have the existence of 
$M_2 = M_{2}(N,p,\lambda,|\Omega|,\|c^{+}\|_q) > 0$ and 
$M_3 = M_{3}(N,p,\lambda,|\Omega|,\|c^{+}\|_q) > 0$ such that
\[ \sup_{\Omega}u^{-} \leq  M_{2} \big[ \|u^{-}\|_p + \|h^-\|_q \big] 
\leq M_3 \big[\|u^{-}\| + \|h^-\|_q \big].\]
Finally,
the result follows  by  Step 1.
\end{proof}

\begin{remark} $ $
\begin{itemize}
\item[a)]Observe that the lower bound does not depend on $h^+$ and $c^{-}$. In particular, we have the same lower bound for all 
$h\geq 0$ and all $c \leq 0$. \label{remuniflb}
\item[b)]Since $c $ does not have a sign, there is no loss of generality in assuming $\lambda \geq 0$. If we consider $\lambda \leq 0$, we recover the same result with $M_{\lambda}$ depending on $\|c^{-}\|_q$ instead of $\|c^{+}\|_q\,.$
\end{itemize}
\end{remark}


\begin{prop} \label{lowerSol}
Under the assumptions \eqref{A1}, for any $\lambda \in \R$, there exists 
$\underline{u}_{\lambda} \in \Wop \cap \Linfty$ lower solution of \eqref{Plambda} such that, for every $\beta$ 
upper solution of \eqref{Plambda}, we have 
$\underline{u}_{\lambda} \leq \min\{0,\beta\}$.
\end{prop}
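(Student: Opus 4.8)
The plan is to invoke the a priori lower bound of Lemma~\ref{lowerBound} twice. Write $M_\lambda>0$ for the constant it provides for \eqref{Plambda}, and recall that under \eqref{A1} we have $c\ge0$, $\mu$ a positive constant, and that $M_\lambda$ depends on the data only through $\|c\|_q$ and $\|h^-\|_q$, not on $h^+$. The destabilising zeroth order term $\lambda c|u|^{p-2}u$ (with $\lambda\ge0$) destroys any direct comparison, so the strategy is to replace it first by a frozen, then by a truncated, term that is harmless for comparison, and then to show, again via the a priori bound, that the truncation is never active, so that the function produced is a genuine lower solution of \eqref{Plambda}.

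First I would let $\underline{u}$-candidate be built from the fixed-source problem: let $\underline\alpha\in\Wop\cap\Linfty$ solve $\pLaplac\underline\alpha=-\lambda c\,M_\lambda^{p-1}-h^-$. As its right-hand side is nonpositive, $\underline\alpha\le0$. For an arbitrary upper solution $\beta$ of \eqref{Plambda}, testing this equation and the upper-solution inequality for $\beta$ against $\varphi=(\underline\alpha-\beta)^+\in\Wop$ and using monotonicity of the $p$-Laplacian, the nonnegative gradient term $\mu|\nabla\beta|^p$ may be discarded and everything reduces to the sign of $\lambda c\,(|\beta|^{p-1}-M_\lambda^{p-1})-h^+$ on $\{\underline\alpha>\beta\}\subseteq\{\beta<0\}$. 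Since Lemma~\ref{lowerBound} forces $\beta>-M_\lambda$, hence $|\beta|<M_\lambda$ there, this quantity is $\le0$ and the comparison closes, giving $\underline\alpha\le\beta$. It is essential here that $\underline\alpha$ solves a problem \emph{without} gradient term, so that this elementary comparison is available.

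Next, with $\sigma(s)$ the median of $-M_\lambda$, $s$ and $0$, I would study the truncated problem $\pLaplac u=\mu|\nabla u|^p+\lambda c\,|\sigma(u)|^{p-2}\sigma(u)-h^-$. One checks that $\underline\alpha$ is a lower solution and $0$ an upper solution of it, and since the truncated zeroth order term is bounded, Theorem~\ref{BMP1988} yields a \emph{minimal} solution $\underline{u}_{\lambda}$ in $[\underline\alpha,0]$. Because $\underline{u}_{\lambda}\le0$ and $\sigma(s)\ge s$ for $s\le0$, we have $|\sigma(\underline{u}_{\lambda})|^{p-2}\sigma(\underline{u}_{\lambda})\ge|\underline{u}_{\lambda}|^{p-2}\underline{u}_{\lambda}$, so $\underline{u}_{\lambda}$ is an upper solution of \eqref{Plambda} with $h$ replaced by $-h^-$. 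As $\|(-h^-)^-\|_q=\|h^-\|_q$, Lemma~\ref{lowerBound} applies to this modified problem \emph{with the same constant} $M_\lambda$ and forces $\underline{u}_{\lambda}>-M_\lambda$. Hence $\sigma(\underline{u}_{\lambda})=\underline{u}_{\lambda}$, so $\underline{u}_{\lambda}$ solves $\pLaplac u=\mu|\nabla u|^p+\lambda c|u|^{p-2}u-h^-$, which (since $-h^-\le h$) makes it a lower solution of \eqref{Plambda}, with $\underline{u}_{\lambda}\le0$.

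Finally, to obtain $\underline{u}_{\lambda}\le\beta$ for every upper solution $\beta$, I would use minimality. Since $\beta>-M_\lambda$, $c\ge0$ and $h^+\ge0$, both $0$ and $\beta$ are upper solutions of the truncated problem, hence so is $\min\{0,\beta\}$ by \cite[Corollary~3.3]{C_1997}; moreover $\underline\alpha\le\min\{0,\beta\}$ by the first step. Applying Theorem~\ref{BMP1988} between $\underline\alpha$ and $\min\{0,\beta\}$ produces a solution $v_\beta\in[\underline\alpha,\min\{0,\beta\}]$ of the truncated problem, and minimality of $\underline{u}_{\lambda}$ in $[\underline\alpha,0]$ gives $\underline{u}_{\lambda}\le v_\beta\le\beta$. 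I expect the main difficulty to be exactly this interplay: the frozen/truncated problem is forced on us because no comparison survives the term $\lambda c|u|^{p-2}u$ when $\lambda\ge0$, yet one must still recover a \emph{true} lower solution of \eqref{Plambda}; the mechanism that makes it work is that the a priori bound of Lemma~\ref{lowerBound} is insensitive to $h^+$, so that it applies with the same constant $M_\lambda$ to the auxiliary problem with source $-h^-$, guaranteeing that the truncation is inactive.
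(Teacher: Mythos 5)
For $\lambda\geq 0$ your argument is correct and is, in all essentials, the paper's own proof: the paper likewise freezes the zeroth-order term at a constant level (its problem \eqref{ls2}, with level $-k$, $k>M_\lambda$) to produce a comparison function below every upper solution, truncates $\lambda c(x)|u|^{p-2}u$ from below while keeping the gradient term (its problem \eqref{ls3}), extracts the \emph{minimum} solution between the frozen solution and $0$ via Theorem~\ref{BMP1988}, uses that minimality against $\min\{0,\beta\}$ exactly as you do, and shows the truncation is inactive because the bound of Lemma~\ref{lowerBound} is insensitive to $h^{+}$. The differences are cosmetic: the paper applies the a priori bound to the auxiliary problem with source $-h^{-}-1$ and truncates strictly below the bound, whereas you apply it to the source $-h^{-}$ (legitimate, since $\|(-h^{-})^{-}\|_q=\|h^{-}\|_q$ gives the same constant) and truncate at exactly $-M_\lambda$, which is harmless because the bound $\min_\Omega u>-M_\lambda$ is strict; and your direct test-function comparison with $(\underline{\alpha}-\beta)^{+}$, using monotonicity of the $p$-Laplacian, replaces the paper's citation of a weak comparison principle.

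There is, however, one genuine omission: the proposition is stated for \emph{every} $\lambda\in\R$, and your proof only covers $\lambda\geq 0$. The sign $\lambda\geq0$ is used in at least three places: in the first comparison, where for $\lambda<0$ the relevant quantity becomes $-\lambda c\,(M_\lambda^{p-1}-|\beta|^{p-1})\geq 0$ on $\{\beta<0\}$, so the comparison does not close; in the verification that $\underline{\alpha}$ is a lower solution of the truncated problem (the inequality $\lambda c\,|\sigma(u)|^{p-2}\sigma(u)\geq-\lambda c\,M_\lambda^{p-1}$ reverses); and in the claim that $\beta$ and $0$ are upper solutions of it (which needs $\lambda c\,(|\beta|^{p-2}\beta-|\sigma(\beta)|^{p-2}\sigma(\beta))\geq0$). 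Note also that Lemma~\ref{lowerBound} is stated for $\lambda\geq0$; the remark following it covers $\lambda\leq0$, with a constant then depending on $\|c^{-}\|_q$, which vanishes under \eqref{A1}. The repair is short and is what the paper does: for $\lambda\leq0$ no freezing or truncation is needed, since $\lambda c(x)|u|^{p-2}u\geq0$ whenever $u\leq0$; letting $\alpha$ solve $\pLaplac\alpha=-h^{-}(x)$ in $\Wop\cap\Linfty$ and setting $\underline{u}=\alpha-M$, with $M$ the a priori constant, one checks directly that $\underline{u}\leq -M$ is a lower solution of \eqref{Plambda}, hence below every upper solution. You should add this case to make the proof complete.
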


\begin{proof}
We need to distinguish in our proof the cases $\lambda \leq 0$ and $\lambda \geq 0$. First we assume that $\lambda \leq 0$.
By Lemma \ref{lowerBound}, we have  a constant $M > 0$ such that every upper solution $\beta$  
of \eqref{Plambda} satisfies 
$\beta \geq -M$. 
Let $\alpha$ be the solution of
\begin{equation*}\label{B4}
\pLaplac u  = - h^{-}(x),  \quad u \in \Wop \cap L^{\infty}(\Omega).
\end{equation*}
It is then easy to prove that  
$\underline{u} = \alpha -M \in W^{1,p}(\Omega) \cap \Linfty$ 
is a lower solution of \eqref{Plambda} with $\underline{u} \leq -M$. By the choice of $M$, this implies that  
$\underline{u}\leq \overline{u}$
for every upper solution $\overline{u}$  of \eqref{Plambda}.
\medbreak
Now, when $\lambda \geq 0$ we first introduce the auxiliary problem
\begin{equation} \label{ls1}
\left \{
\begin{aligned}
\pLaplac u & = \lambda c(x) |u|^{p-2} u + \mu |\gradu|^p - h^{-}(x) -1, & \textup{ in } \Omega\, , \\
u & = 0\, , & \textup{ on } \partial \Omega\, .
\end{aligned}
\right.
\end{equation}
Thanks to the previous lemma, there exists $M_{\lambda} > 0$ such that, 
for every $\beta_1 \in W^{1,p} (\Omega)\cap \Linfty$ upper solution of \eqref{ls1}, 
we have $\beta_1 \geq -M_{\lambda}$. Now, for $k > M_{\lambda}$, we introduce the problem
\begin{equation} \label{ls2}
\left \{
\begin{aligned}
\pLaplac u & = -\lambda c(x) k^{p-1}  - h^{-}(x) - 1\,, & \textup{ in } \Omega\, , \\
u & = 0\,,  & \textup{ on } \partial \Omega\, ,
\end{aligned}
\right.
\end{equation}
and denote by $\alpha_{\lambda}$ its solution. Since $-\lambda  c(x) k^{p-1}- h^{-}(x) - 1 < 0$, the comparison
principle  (see for instance \cite[Lemma A.0.7]{P_1997})
 implies that  $\alpha_{\lambda} \leq 0$. 
Observe that, for every $\beta_1$ upper solution of \eqref{Plambda}, we have that
\begin{equation*}
\pLaplac \beta_1 
 \geq \lambda c(x)|\beta_1|^{p-2}\beta_1 + \mu |\nabla \beta_1|^p -h^{-}(x) -1 
 \geq - \lambda c(x) k^{p-1} - h^{-}(x) - 1 = \pLaplac \alpha_{\lambda}.
\end{equation*}
Consequently, it follows that
\begin{equation*}
\left\{
\begin{aligned}
\pLaplac \beta_1 & \geq \pLaplac \alpha_{\lambda}\,, & \textup{ in } \Omega\,,\\
 \beta_1 & \geq \alpha_{\lambda} = 0\,, & \textup{ on } \partial \Omega,
\end{aligned}
\right.
\end{equation*}
and, applying again the comparison principle, that $\beta_1 \geq \alpha_{\lambda}$. 
\smallbreak
Now, we introduce the problem
\begin{equation} \label{ls3}
\left \{
\begin{aligned}
\pLaplac u & = \lambda c(x) |\widetilde{T}_k(u)|^{p-2} \widetilde{T}_k(u) + \mu |\gradu|^p - h^{-}(x) -1, 
& \textup{ in } \Omega, \\
u & = 0 & \textup{ on } \partial \Omega,
\end{aligned}
\right.
\end{equation}
where
\begin{equation*}
\widetilde{T}_k(s) = \left\{
\begin{aligned}
 -k\,,  \qquad&\textup{ if } s \leq -k\,,\\
 s\,,  \qquad&\textup{ if } s > -k\,.
\end{aligned}
\right.
\end{equation*}
Observe that  $\beta_1$  and $0$ are upper solutions of \eqref{ls3}. Recalling that the minimum of two upper 
solution is an upper solution (see \cite[Corollary 3.3]{C_1997}), it follows that 
$\overline{\beta} = \min \{0,\beta_1\}$ is an upper solution of \eqref{ls3}. As 
$\alpha_{\lambda}$ is a lower solution of \eqref{ls3} with  $\alpha_{\lambda} \leq \overline{\beta}$, 
applying Theorem \ref{BMP1988},  we  conclude the existence of $\underline{u}_{\lambda}$ minimum solution of 
\eqref{ls3} with   $\alpha_{\lambda} \leq \underline{u}_{\lambda} \leq \overline{\beta} = \min \{0,\beta_1\}$. 
\smallbreak

As, for every upper solution $\beta$ of \eqref{Plambda}, $\beta$ is an upper solution of 
 \eqref{ls3},  we have $\alpha_{\lambda}\leq\beta$. Recalling that  
$\underline{u}_{\lambda}$ is the minimum solution of \eqref{ls3} with  
$\alpha_{\lambda}\leq \underline{u}_{\lambda} \leq 0$, we deduce that 
 $\underline{u}_{\lambda}\leq \beta$. 
\smallbreak

It remains to prove that $\underline{u}_{\lambda}$ is a lower solution of \eqref{Plambda}.
First, observe that $\underline{u}_{\lambda}$ is an upper solution of \eqref{ls1}. By construction, this implies that 
$\underline{u}_{\lambda} \geq -M_{\lambda} > -k$. Consequently, 
$\underline{u}_{\lambda}$ is a solution of 
\eqref{ls1} and so,  a lower solution of \eqref{Plambda}.  
\end{proof}

\section{The Functional setting}\label{TLSATAP}



Let us introduce some auxiliary functions which are going to play an important role in the rest of the work. 
Define
\begin{equation}
\label{defg}
g(s) = \left \{
\begin{aligned}
& \Big|\frac{p-1}{\mu}\Big( 1+\frac{\mu}{p-1}s \Big) \ln \Big(1+ \frac{\mu}{p-1}s \Big)\Big|^{p-2}
\frac{p-1}{\mu}\Big( 1+\frac{\mu}{p-1}s \Big) \ln \Big(1+ \frac{\mu}{p-1}s \Big), \quad s > -\frac{p-1}{\mu}\,,
\\
& \ 0\,, \hspace{116mm} s \leq - \frac{p-1}{\mu}\,,
\end{aligned}
\right.\hspace{-1mm}
\end{equation}
\[ G(s) = \int_{0}^s g(t)\, dt\,
\qquad \mbox{ and } \qquad  
H(s) = \frac{1}{p}g(s)s - G(s)\,.\]
In the following lemma we prove some properties of these functions.

\begin{lemma} \label{gProperties} $ $
\begin{enumerate}
\item[i)] The function $g$ is continuous on $\R$, satisfies  $g > 0$ on $\R^{+}$ and there exists $D > 0$ 
with $-D \leq g \leq 0$ on $\R^{-}$. Moreover, $G \geq 0$ on $\R$.
\item[ii)] For any $\delta > 0$, there exists $\overline{c} = \overline{c}(\delta,\mu,p) > 0$ such that, 
 for any $s > \frac{p-1}{\mu}$, 
$g(s) \leq \overline{c}\, s^{p-1+\delta}$.
\item[iii)] $\lim_{s\to +\infty}g(s)/s^{p-1} =+ \infty$ and $\lim_{s\to+\infty}G(s)/s^p = + \infty$.
\item[iv)] There exists $R > 0$ such that the function $H$ satisfies $H(s) \leq \big(\frac{s}{t}\big)^{p-1}H(t)$, 
for $R \leq s \leq t$.
\item[v)] The function $H$ is bounded on $\R^{-}$.
\end{enumerate}
\end{lemma}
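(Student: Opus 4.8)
The plan is to reduce everything to the single quantity $\tau=\tau(s):=1+\frac{\mu}{p-1}s$, which is positive precisely when $s>-\frac{p-1}{\mu}$, and to write $g(s)=|w(s)|^{p-2}w(s)$ with $w(s)=\frac{p-1}{\mu}\,\tau\ln\tau$. Since $s\mapsto\tau$ is affine increasing and $\tau\ln\tau$ vanishes as $\tau\downarrow0$ and at $\tau=1$, is negative on $(0,1)$ and positive on $(1,\infty)$, one reads off at once that $w>0$ on $\R^{+}$, $w<0$ on $(-\frac{p-1}{\mu},0)$, and $w\to0$ as $s\downarrow-\frac{p-1}{\mu}$. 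This settles i): continuity of $g$ at $s=-\frac{p-1}{\mu}$ follows since $g(s)\to0$ there and $g\equiv0$ below; the sign of $g=|w|^{p-2}w$ is that of $w$; since $|\tau\ln\tau|\leq 1/e$ on $(0,1)$ one may take $D=\big(\tfrac{p-1}{\mu e}\big)^{p-1}$; and $G\geq0$ because $g$ keeps a constant sign on each of $\R^{+}$ and $\R^{-}$, matching the sign of the corresponding integral.

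For ii) I would use the elementary bound $\ln\tau\leq\frac{p-1}{\delta e}\,\tau^{\delta/(p-1)}$, valid for all $\tau>0$, together with $\tau<\frac{2\mu}{p-1}s$ for $s>\frac{p-1}{\mu}$, to estimate $g(s)=\big(\tfrac{p-1}{\mu}\big)^{p-1}(\tau\ln\tau)^{p-1}\leq\overline{c}\,s^{p-1+\delta}$ for a suitable $\overline{c}(\delta,\mu,p)$. For iii), writing $g(s)/s^{p-1}=\big(\tfrac{p-1}{\mu}\big)^{p-1}\big(\tfrac{\tau}{s}\ln\tau\big)^{p-1}$ and using $\tfrac{\tau}{s}\to\tfrac{\mu}{p-1}$ and $\ln\tau\to+\infty$ gives the first limit; the second follows by integrating the inequality $g(t)\geq M\,t^{p-1}$ (valid for $t$ large, $M$ fixed) to get $\liminf_{s\to\infty}G(s)/s^{p}\geq M/p$ and then letting $M\to+\infty$. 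Claim v) is equally direct: for $s\leq-\frac{p-1}{\mu}$ one has $g\equiv0$, hence $H(s)\equiv-G(-\tfrac{p-1}{\mu})$ is constant, while on $(-\frac{p-1}{\mu},0)$ both $g$ and $G$ are controlled by $D$, so $H$ is bounded there.

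The real work is iv), which is equivalent to showing that $s\mapsto H(s)/s^{p-1}$ is non-decreasing for $s\geq R$. I would compute $H'(s)=\tfrac{1}{p}g'(s)s-\tfrac{p-1}{p}g(s)$ and reduce the monotonicity to the sign of
\[
\Psi(s):=s\,H'(s)-(p-1)H(s)=\tfrac{1}{p}g'(s)s^2-\tfrac{2(p-1)}{p}g(s)s+(p-1)G(s),
\]
so that $\big(H(s)/s^{p-1}\big)'=\Psi(s)/s^{p}$. Using $g'(s)=(p-1)\big(\tfrac{p-1}{\mu}\big)^{p-2}(\tau\ln\tau)^{p-2}(\ln\tau+1)$ and the change of variable $\tau=1+\frac{\mu}{p-1}s$ in $G$, an integration by parts yields the expansion $G(s)=\big(\tfrac{p-1}{\mu}\big)^{p}\big[\tfrac{\tau^p}{p}L^{p-1}-\tfrac{p-1}{p^2}\tau^p L^{p-2}+\cdots\big]$ with $L:=\ln\tau$. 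Substituting, I expect the two leading contributions (of orders $\tau^{p}L^{p-1}$ and $\tau^{p-1}L^{p-1}$) to cancel, leaving
\[
\Psi(s)=\tfrac{p-1}{p^2}\big(\tfrac{p-1}{\mu}\big)^{p}\,\tau^{p}(\ln\tau)^{p-2}\,(1+o(1))>0
\]
for $s$ large, which provides the threshold $R$.

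The main obstacle is precisely this last step. Unlike the other four items, it cannot be settled by pointwise sign considerations: it rests on a cancellation of the dominant terms, so one must control the remainders in the expansion of $G$ (via repeated integration by parts) and the discrepancy between powers of $\tau-1$ and of $\tau$ uniformly for large $s$, while keeping track of the sign of $(\ln\tau)^{p-2}$ in both regimes $p\geq2$ and $1<p<2$. This is the delicate part of the lemma, and it is exactly the monotonicity that will later substitute for an Ambrosetti--Rabinowitz condition in the boundedness of Cerami sequences.
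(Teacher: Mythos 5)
Items i), ii), iii) and v) of your proposal are correct and essentially coincide with the paper's own arguments (your explicit constant $D=\bigl(\tfrac{p-1}{\mu e}\bigr)^{p-1}$, the elementary bound $\ln\tau\leq\tfrac{p-1}{\delta e}\,\tau^{\delta/(p-1)}$ replacing the paper's compactness step in ii), and the integration of $g(t)\geq Mt^{p-1}$ replacing L'Hospital in iii) are harmless, even slightly cleaner, variants). The genuine gap is in iv), and it is the one you yourself flag. Your reduction is the same as the paper's: with $\varphi(s)=H(s)/s^{p-1}$ one has $\varphi'(s)=\Psi(s)/s^{p}$ where $\Psi(s)=sH'(s)-(p-1)H(s)$, and your algebra and predicted leading behaviour are in fact correct: setting $\tau=1+\tfrac{\mu}{p-1}s$, $L=\ln\tau$, $C=\tfrac{p-1}{\mu}$, a \emph{single} integration by parts in $G$ (legitimate for every $p>1$, since $(\ln u)^{p-2}\sim(u-1)^{p-2}$ is integrable at $u=1$) yields
\begin{equation*}
\frac{p}{(p-1)C^{p}}\,\Psi(s)
=\tau^{p}L^{p-2}-2\tau^{p-1}L^{p-2}+\tau^{p-2}L^{p-1}+\tau^{p-2}L^{p-2}
-(p-1)\int_{1}^{\tau}u^{p-1}(\ln u)^{p-2}\,du\,,
\end{equation*}
with both families of $L^{p-1}$-terms, of orders $\tau^{p}$ and $\tau^{p-1}$, cancelling exactly as you anticipated, and the integral being asymptotic to $\tfrac{p-1}{p}\tau^{p}L^{p-2}$, which leaves precisely your constant $\tfrac{p-1}{p^{2}}C^{p}\tau^{p}L^{p-2}$. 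But you do not prove this: the proposal says ``I expect \dots to cancel'' and defers the remainder control, and the specific tool you name for it --- \emph{repeated} integration by parts --- actually breaks down on half of the parameter range: the second integration by parts produces $\int_{1}^{\tau}u^{p-1}(\ln u)^{p-3}\,du$, whose integrand behaves like $(u-1)^{p-3}$ near $u=1$ and is non-integrable whenever $1<p\leq 2$. As written, the key inequality of iv) is announced, not established.

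The gap is repairable along your lines without the divergent expansion: for $p\geq 2$ use $(\ln u)^{p-2}\leq L^{p-2}$ on $[1,\tau]$ to get $(p-1)\int_{1}^{\tau}u^{p-1}(\ln u)^{p-2}\,du\leq\tfrac{p-1}{p}\tau^{p}L^{p-2}$, whence $\Psi(s)\geq\tfrac{p-1}{p}C^{p}\bigl(\tfrac{1}{p}\tau^{p}-2\tau^{p-1}\bigr)L^{p-2}>0$ as soon as $\tau>2p$; for $1<p<2$ split the integral at $\sqrt{\tau}$, bounding $(\ln u)^{p-2}\leq 2^{2-p}L^{p-2}$ on $[\sqrt{\tau},\tau]$ and the piece on $[1,\sqrt{\tau}]$ by $O(\tau^{p/2}L^{p-1})=o(\tau^{p}L^{p-2})$, and check that $2^{2-p}(p-1)/p<1$ on $(1,2)$. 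You should know, however, that the paper disposes of iv) more economically: after the same reduction to a quantity $\kappa(s)\geq 0$ containing this integral, it \emph{differentiates once more}, so the integral term disappears altogether, leaving
\begin{equation*}
\kappa'(s)=\frac{\mu}{p-1}\,\tau^{p-3}L^{p-3}
\Bigl[(p-2)\Bigl(\frac{\mu s}{p-1}-L\Bigr)^{2}+\Bigl(\frac{\mu s}{p-1}\Bigr)^{2}L\Bigr]\,,
\end{equation*}
which is manifestly positive pointwise when $p\geq2$ (and then $\kappa(0)=0$ gives $\kappa>0$ on $\R^{+}$), while for $1<p<2$ one only needs $\kappa'(s)\to+\infty$ to get $\kappa>0$ for large $s$; no asymptotic expansion or remainder estimate is needed. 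Either route works, but in its current state your iv) --- precisely the item that, as you rightly note, later substitutes for the Ambrosetti--Rabinowitz condition in the Cerami-sequence bound --- is a conjecture supported by a correct but unproven asymptotic.
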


\begin{proof}
\noindent \textit{i)} By  definition, it is obvious that $g$ is continuous, $g > 0$ on $\R^{+}$ and 
$g$ is bounded and $g \leq 0$ on $\R^{-}$. This implies also that
$ G \geq 0$ by integration. 
\medbreak

\noindent \textit{ii)} First of all, recall that for any $\epsilon > 0$ there exists $c = c(\epsilon) > 0$ such that 
$\ln(s) \leq c(\epsilon)s^{\epsilon}$ for all $s \in (1,\infty)$. This implies that, for any $\delta>0$,
$$
\lim_{s\to+\infty} \frac{g(s)}{s^{p-1+\delta}}=
\lim_{s\to+\infty} \Big(\frac{(p-1)(1+\frac{\mu}{p-1}s)}{\mu s}\Big)^{p-1} 
\frac{\big(\ln(1+\frac{\mu}{p-1}s)\big)^{p-1}}{s^{\delta}}=0\,.
$$
Hence, there exists $R>\frac{p-1}{\mu}$ such that, for all $s>R$,
$$
\frac{g(s)}{s^{p-1+\delta}}\leq 1\,.
$$
As the function $ \frac{g(s)}{s^{p-1+\delta}}$ is continuous on the compact set $[\frac{p-1}{\mu},R]$, 
we have a constant $C>0$ with 
$$
\frac{g(s)}{s^{p-1+\delta}}\leq C \quad \mbox{ on }  [\tfrac{p-1}{\mu},R]\,.
$$
The result follows for $\overline C = \max (C,1)$.
\medbreak

\noindent \textit{iii)} 
As
$$
\lim_{s\to+\infty}
\frac{\frac{p-1}{\mu}\big(1+\frac{\mu}{p-1}s\big) \ln\big(1+\frac{\mu}{p-1}s\big) }{s}=+\infty\,,
$$
and $p>1$, we easily deduce that 
$$
\lim_{s\to+\infty}\frac{g(s)}{s^{p-1}} =+\infty
$$
and, by L'Hospital's rule
$$
\lim_{s\to+\infty}\frac{G(s)}{s^{p}} =+\infty\,.
$$

\noindent \textit{iv)} First of all, integrating by parts, we observe that, for any $s \geq 0\,,$
\begin{equation*}
G(s)  = 
\Big( \frac{p-1}{\mu} \Big)^{p} \Big[ \frac{1}{p} \Big(1+\frac{\mu}{p-1} s \Big)^p 
\Big(\ln\big(1+\frac{\mu}{p-1} s \big)\Big)^{p-1}
 - 
 \frac{\mu}{p} \int_0^s \Big(1+\frac{\mu}{p-1} t \Big)^{p-1} 
 \Big(\ln\big(1+\frac{\mu}{p-1} t \big)\Big)^{p-2} dt \Big],
\end{equation*} 
and so, for any $s \geq 0$, it follows that
\begin{equation*}
H(s) = 
 \frac{1}{p} \Big( \frac{p-1}{\mu} \Big)^{p} \Big[ \mu \int_0^s \Big(1+\frac{\mu}{p-1} t \Big)^{p-1} 
 \Big(\ln\big(1+\frac{\mu}{p-1} t \big)\Big)^{p-2} dt 
 - \Big(1+\frac{\mu}{p-1} s \Big)^{p-1}\Big(\ln\big(1+\frac{\mu}{p-1} s \big) \Big)^{p-1} \Big].
\end{equation*}
To prove \textit{iv)}, we show that the function 
$\varphi(s):=  \frac{H(s)}{s^{p-1}}$
is non-decreasing on $[R,+\infty)$ for some  $R>0$. Observe that
$$
\varphi'(s) = \frac{1}{s^p}[ H'(s)s-(p-1)H(s)]\,.
$$
Hence,  we just need to prove that $H'(s)s-(p-1)H(s) \geq 0$ for $s\geq R$. 
After some simple computations, we see that it is enough to prove the existence of $R>0$ such that, 
for all $s\geq R$, $\kappa(s)\geq 0$ where
$$
\begin{aligned}
\kappa(s)=&
\Big(1 + \frac{\mu}{p-1} s \Big)^{p-2}
\Big(\ln\big(1 + \frac{\mu}{p-1} s \big) \Big)^{p-2} 
\Big( \big( \frac{\mu s}{p-1} \big)^2 + \ln\big(1 + \frac{\mu}{p-1} s \big) \Big) 
\\
&\hspace{70mm}
- \mu \int_0^s \Big(1+\frac{\mu}{p-1} t \Big)^{p-1} 
\Big(\ln\big(1+\frac{\mu}{p-1} t \big)\Big)^{p-2} dt \,.
\end{aligned}
$$
Observe that 
\begin{equation*}
\begin{aligned}
\kappa'(s) & = 
\frac{\mu}{p-1} \Big(1 + \frac{\mu}{p-1} s \Big)^{p-3} \Big(\ln \big(1 + \frac{\mu}{p-1} s \big) \Big)^{p-3} 
\\
&\hspace{50mm}
\Big[ (p-2)\Big(\frac{\mu s} {p-1} - \ln\big(1 + \frac{\mu}{p-1} s \big) \Big)^{2}
+ \Big(\frac{\mu s}{p-1}\Big)^2 \ln\big(1 + \frac{\mu}{p-1} s \big) \Big] .
\end{aligned}
\end{equation*} 
Hence, we distinguish two cases:
\begin{enumerate}
\item[i)] In case  $p \geq 2$, it is obvious that $\kappa'(s) > 0$, for any $s > 0$. This implies that $\kappa$ is 
increasing and so, that $\kappa(s) > 0$ for $s > 0$, since $\kappa(0) = 0$.
\item[ii)] If $ 1 < p < 2$,
 as $\lim_{s\to\infty} \kappa'(s)=+\infty$,  there exists $R_1 > 0$ such that, for any $s \geq R_1$, we have 
 $\kappa'(s) > 1$ and hence, there exists  $R_2 \geq R_1$ such that $\kappa(s) > 0$, for any $s \geq R_2$. 
\end{enumerate}
In any case, we can conclude the existence of  $R \geq  0$ such that $\kappa(s) > 0$ for any $s \geq R$.
 Consequently, there exists $R > 0$ such that $\varphi'(s) > 0$, for $s \geq R$, which means that $\varphi$ 
 is non-decreasing for $s \geq R$ and hence  $H$ satisfies $H(s) \leq \big(\frac{s}{t}\big)^{p-1}H(t)$, 
 for $R \leq s \leq t$.
\medbreak

\noindent \textit{v)} This follows directly from the definition of the functions $g$ and $G$. 
\end{proof}

Next, we define the function 
\begin{equation} \label{alpha}
 \alpha_{\lambda} = \frac{p-1}{\mu} \Big( e^{\frac{\mu}{p-1}\underline{u}_{\lambda}} -1 \Big) 
 \in \Wop \cap L^{\infty}(\Omega)\, ,
\end{equation}
where $\underline{u}_{\lambda} \in \Wop \cap \Linfty$ is the lower solution of \eqref{Plambda} obtained in 
 Proposition \ref{lowerSol}. Before going further, since $\underline{u}_{\lambda} \leq 0$, observe that 
 $0 \geq \alpha_{\lambda} \geq -\frac{p-1}{\mu}+\epsilon$ for some $\epsilon>0$. 
\medbreak

Now, for any $\lambda \in \R$, let us consider the auxiliary problem
\[ \label{Qlambda} \tag{$Q_{\lambda}$}
\pLaplac v = f_{\lambda}(x,v)\,, \quad v \in \Wop\,,\]
where 
\begin{equation} \label{flambda} 
f_{\lambda}(x,s) = \left\{
\begin{aligned}
& \lambda c(x) g(s) + \Big( 1 + \frac{\mu}{p-1}s\Big)^{p-1} h(x)\,, \quad
&\mbox{if }s \geq \alpha_{\lambda}(x)\,,\\
& \lambda c(x) g(\alpha_{\lambda}(x)) + \Big( 1 + \frac{\mu}{p-1}\alpha_{\lambda}(x) \Big)^{p-1} h(x)\,, \quad
&\mbox{if } s \leq \alpha_{\lambda}(x)\,,
\end{aligned}
\right.
\end{equation} 
where $g$ is defined by \eqref{defg}.
In the following lemma, we prove some properties of  the solutions of \eqref{Qlambda}.

\begin{lemma} 
\label{linkProblem}
Assume that \eqref{A1} holds. Then, it follows that:
\begin{enumerate}
\item[i)] Every solution of \eqref{Qlambda} belongs to $L^{\infty}(\Omega)$.
\item[ii)] Every solution $v$ of \eqref{Qlambda} satisfies $v \geq \alpha_{\lambda}$.
\item[iii)] A function $v \in \Wop$ is a solution of \eqref{Qlambda} if, and only if, the function
\[ u = \frac{p-1}{\mu} \ln \big( 1 + \frac{\mu}{p-1}v \big) \in \Wop \cap L^{\infty}(\Omega)\]
is a solution of \eqref{Plambda}.
\end{enumerate}
\end{lemma}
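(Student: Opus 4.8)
The plan is to prove the three assertions in the order (ii), (i), (iii), since the lower bound (ii) feeds the boundedness (i) and both are needed to make sense of (iii). The common engine is the Hopf--Cole change of variables recorded in the Introduction. A direct chain-rule computation shows that, for $v > -\frac{p-1}{\mu}$, setting $u = \frac{p-1}{\mu}\ln\big(1+\frac{\mu}{p-1}v\big)$ turns the \emph{full} transformed equation $\pLaplac v = \lambda c(x) g(v) + \big(1+\frac{\mu}{p-1}v\big)^{p-1}h(x)$ into \eqref{Plambda}; one uses $\nabla u = (1+\frac{\mu}{p-1}v)^{-1}\gradv$, the identity $(1+\frac{\mu}{p-1}v)^{p-1}|u|^{p-2}u = g(v)$ from \eqref{defg}, and the cancellation of the $\mu|\gradu|^p$ term. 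As is standard for such a monotone change of variable, the correspondence transforms sub- and super-solutions into sub- and super-solutions. In particular, since $\underline{u}_{\lambda}$ of Proposition \ref{lowerSol} is a lower solution of \eqref{Plambda}, its image $\alpha_{\lambda}$ from \eqref{alpha} is a lower solution of the full transformed equation; and because $f_{\lambda}$ in \eqref{flambda} is frozen at the value $f_{\lambda}(x,\alpha_{\lambda}(x))$ for $s\le\alpha_{\lambda}(x)$, the function $\alpha_{\lambda}$ is in fact a lower solution of \eqref{Qlambda}.

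For (ii), I would compare a solution $v$ with $\alpha_{\lambda}$ using the admissible test function $\varphi = (\alpha_{\lambda}-v)^{+}\in\Wop$. On $\{v<\alpha_{\lambda}\}$ one has $f_{\lambda}(x,v)=f_{\lambda}(x,\alpha_{\lambda})$ by the definition of $f_\lambda$, so subtracting the weak formulation of $\pLaplac v = f_{\lambda}(x,v)$ from the sub-solution inequality for $\alpha_{\lambda}$ cancels the right-hand sides and leaves $\int_{\{v<\alpha_{\lambda}\}}\big(|\nabla\alpha_{\lambda}|^{p-2}\nabla\alpha_{\lambda}-|\gradv|^{p-2}\gradv\big)\cdot\nabla(\alpha_{\lambda}-v)\,dx\le 0$. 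Strict monotonicity of the $p$-Laplacian forces the integrand to vanish a.e., hence $\nabla(\alpha_{\lambda}-v)^{+}=0$ and, as $(\alpha_{\lambda}-v)^{+}\in\Wop$, we get $v\ge\alpha_{\lambda}$. Recalling the remark after \eqref{alpha}, this gives $v\ge\alpha_{\lambda}\ge -\frac{p-1}{\mu}+\epsilon$, so $v$ is bounded below.

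For (i), the lower bound is now the one just obtained, so only an upper bound is missing, which is purely a growth-at-$+\infty$ question. By Lemma \ref{gProperties}(ii), for any small $\delta>0$ one has $g(v)\le\overline{c}\,v^{p-1+\delta}$ for large $v$, while $\big(1+\frac{\mu}{p-1}v\big)^{p-1}$ grows like $v^{p-1}$; hence $|f_{\lambda}(x,v)|\le k(x)\big(1+|v|^{p-1+\delta}\big)$ with $k\in\Lq$, $q>N/p$. Choosing $\delta$ small enough that $p-1+\delta<\past-1$ keeps the growth strictly subcritical, and then from the Sobolev embedding of $\Wop$ a Stampacchia truncation with a Moser-type bootstrap (equivalently, the regularity results already invoked in the paper) yields $v\in\Linfty$ (for $p>N$ this is immediate by Morrey). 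This step is the main obstacle: the slight superlinearity of $g$ prevents simply freezing the right-hand side, and it is precisely the freedom to take $\delta$ arbitrarily small in Lemma \ref{gProperties}(ii) that lets us stay below the critical exponent and run the iteration.

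Finally, for (iii), combining (i) and (ii) gives $v\in[\alpha_{\lambda},M]$ with $\alpha_{\lambda}>-\frac{p-1}{\mu}$, so $u=\frac{p-1}{\mu}\ln\big(1+\frac{\mu}{p-1}v\big)$ is well defined and belongs to $\Wop\cap\Linfty$ (the transform is smooth, bounded, with bounded derivative on this range), and it solves \eqref{Plambda} by the computation above, using that $v\ge\alpha_{\lambda}$ places $f_{\lambda}$ in its first branch. Conversely, if $u$ solves \eqref{Plambda} then $u$ is in particular an upper solution, so Proposition \ref{lowerSol} gives $u\ge\underline{u}_{\lambda}$; since the change of variables is increasing, $v=\frac{p-1}{\mu}\big(e^{\frac{\mu}{p-1}u}-1\big)\ge\alpha_{\lambda}$, whence $v$ solves the full transformed equation, which coincides with \eqref{Qlambda} on $\{v\ge\alpha_{\lambda}\}$. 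This closes the equivalence.
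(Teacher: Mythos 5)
Your argument is in substance the paper's own: part (ii) is proved there by exactly your device (test with $(\alpha_{\lambda}-v)^{+}$, use that $f_{\lambda}$ is frozen at $f_{\lambda}(x,\alpha_{\lambda}(x))$ below $\alpha_{\lambda}$ so the right-hand sides cancel, and conclude via the strict monotonicity inequality \eqref{peralScalarProduct}); part (i) is there simply a citation of the Ladyzhenskaya--Uraltseva $L^{\infty}$ theorem, which is the same Stampacchia/Moser content you sketch from the subcritical growth $|f_{\lambda}(x,s)|\leq k(x)(1+|s|^{p-1+\delta})$ of Lemma \ref{gProperties}(ii); and part (iii) is the same Hopf--Cole computation (the paper implements your chain rule by testing with $\varphi=\phi/(1+\frac{\mu}{p-1}v)^{p-1}$) with the converse handled, as you do, by Proposition \ref{lowerSol}. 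The one genuine deviation is your ordering: you prove (ii) before (i), whereas the paper proves (i) first precisely so that in (ii) the solution $v$ is already bounded and $(\alpha_{\lambda}-v)^{+}\in\Wop\cap\Linfty$ is an admissible test function in the sense of Definition \ref{lowerUpperSolution}. As you have arranged things, $v$ is only known to lie in $\Wop$, so $(\alpha_{\lambda}-v)^{+}$ (which is controlled only by $v^{-}+\|\alpha_{\lambda}\|_{\infty}$) need not be bounded, and calling it ``admissible'' is an unjustified step. It is repairable without circularity --- on the support of this test function the right-hand side equals the fixed datum $f_{\lambda}(x,\alpha_{\lambda}(x))\in\Lq$ with $q>\max\{N/p,1\}\geq (p^{*})'$, so one can test with the truncations $\min\{(\alpha_{\lambda}-v)^{+},k\}$ and let $k\to\infty$ --- but the justification should be stated; alternatively, note that your (i) never actually needs (ii), since the frozen branch already bounds $f_{\lambda}(x,s)$ for $s\leq 0$ by an $\Lq$ function, so the cited regularity theorem yields the two-sided bound directly and the paper's order (i) then (ii) removes the issue at no cost.
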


\begin{proof}
\textit{i)} This follows directly from \cite[Theorem IV-7.1]{L_U_1968}. 
\medbreak

\noindent \textit{ii)} First of all, observe that $\alpha_{\lambda}$ is a lower solution of \eqref{Qlambda}. 
For a solution  $v \in \Wop$   of \eqref{Qlambda}, we have 
$v \in \Wop \cap L^{\infty}(\Omega)$ by the previous step and, 
for all $\varphi \in \Wop \cap L^{\infty}(\Omega)$ with $\varphi \geq 0$,
\[ \int_{\Omega} \big[ |\gradv|^{p-2} \gradv - |\nabla \alpha_{\lambda}|^{p-2} \nabla \alpha_{\lambda} \big] 
\nabla \varphi\, dx 
\geq \int_{\Omega} \big[ f_{\lambda}(x,v) - f_{\lambda}(x,\alpha_{\lambda}) \big] \varphi\, dx.\]
Now, since there exist constants $d_1$,  $d_2 > 0$ such that for all $\xi$, $\eta \in \RN$, 
\begin{equation} \label{peralScalarProduct}
\langle |\xi|^{p-2}\xi - |\eta|^{p-2}\eta \,, \xi - \eta \rangle \geq 
\left\{
\begin{aligned}
& d_1 (|\xi|+|\eta|)^{p-2}|\xi-\eta|^2\,, \textup{ if } 1<p<2\,,\\
& d_2 |\xi-\eta|^p\,, \textup{ if } p \geq 2\,,
\end{aligned}
\right.
\end{equation}
(see for instance \cite[Lemma A.0.5]{P_1997}), we choose $\varphi = (\alpha_{\lambda} - v)^{+}$ and obtain that
\begin{equation*}
0 
 \geq
\int_{ \{\alpha_{\lambda} \geq v\} } \big[ |\gradv|^{p-2} \gradv 
- |\nabla \alpha_{\lambda}|^{p-2} \nabla \alpha_{\lambda} \big] \nabla (\alpha_{\lambda} - v)\, dx 
\geq \int_{ \{\alpha_{\lambda} \geq v\} } \big[ f_{\lambda}(x,v) 
- f_{\lambda}(x,\alpha_{\lambda}) \big] (\alpha_{\lambda} - v)\, dx = 0\,.
\end{equation*}
Consequently, using again \eqref{peralScalarProduct}, we deduce that $\alpha_{\lambda} = v$ in 
$\{\alpha_{\lambda} \geq v\} $ and so, that $v \geq \alpha_{\lambda}$.

\medbreak

\noindent \textit{iii)} Suppose that $v \in \Wop$ is a solution of \eqref{Qlambda}. The first parts, $i)$, $ii)$ 
imply that 
$v \in \Wop \cap L^{\infty}(\Omega)$ is such that 
$v \geq \alpha_{\lambda} \geq  -\frac{p-1}{\mu}+\epsilon$ with $\epsilon>0$
and hence
$u \in \Wop \cap L^{\infty}(\Omega)$. Let us prove that $u$ is a (weak) solution of \eqref{Plambda}.
Let $\phi$ be an arbitrary function belonging to $\mathcal{C}_0^{\infty}(\Omega)$ and define
$\varphi = \phi/(1+\frac{\mu}{p-1}v)^{p-1}$. It follows that $\varphi \in \Wop$. 
As $e^{\frac{\mu u}{p-1}} = 1 + \frac{\mu}{p-1}v$, we have the following identity
\begin{equation*}
\begin{aligned}
\int_{\Omega} |\gradv |^{p-2} \gradv \nabla \varphi\ dx 
& 
= \int_{\Omega} e^{\mu u} |\gradu|^{p-2} \gradu \Big( \frac{\nabla \phi}{\big(1 + \muv\big)^{p-1}} 
- \frac{\mu\, \phi\, \gradv}{\big(1+\muv\big)^p} \Big) \,dx 
\\
& 
= \int_{\Omega}  \frac{e^{\mu u}}{\big(1 + \muv\big)^{p-1}} |\gradu|^{p-2} \gradu 
\Big( \nabla \phi 
- \dfrac{\mu \phi \nabla \big(\frac{p-1}{\mu}(e^{\frac{\mu}{p-1}u}-1)\big)}{1+ \muv} \Big) \,dx 
\\
& =  \int_{\Omega}  |\gradu|^{p-2} \gradu \big( \nabla \phi - \mu \phi \gradu \big) \,dx 
= \int_{\Omega}|\gradu|^{p-2} \gradu \nabla \phi\, dx - \mu \int_{\Omega}  |\gradu|^p \phi\,  dx.
\end{aligned}
\end{equation*}
On the other hand, by definition of $g$,  observe that
\begin{equation*}
\begin{aligned}
\int_{\Omega} \Big[  \lambda  c(x) g(v) +& \big(1+\frac{\mu}{p-1}v\big)^{p-1} h(x) \Big] \varphi\, dx \\
& = \int_{\Omega}  \Big[ \lambda c(x) \Bigl|\frac{p-1}{\mu} \ln\bigl(1+\frac{\mu}{p-1}v \bigr)\Bigr|^{p-2}
\Big(\frac{p-1}{\mu} \ln \bigl(1+\frac{\mu}{p-1}v \bigr)\Big) +h(x) \Big] \phi\, dx \\
& = \int_{\Omega} \bigl[ \lambda c(x) |u|^{p-2}u + h(x) \bigr] \phi\, dx.
\end{aligned}
\end{equation*}
As $v$ is a solution of  \eqref{Qlambda}
we deduce from these two identities that
\[ \int_{\Omega}|\gradu|^{p-2} \gradu \nabla \phi\ dx = \int_{\Omega} 
 \bigl[ \lambda c(x) |u|^{p-2}u +  \mu  |\gradu|^p +h(x) \bigr] \phi\,  dx,\]
and so, $u$ is a solution of \eqref{Plambda}, as desired. 
\medbreak

In the same way,   assume that $u \in \Wop \cap \Linfty$ is a solution of \eqref{Plambda}. 
By Proposition \ref{lowerSol} we know that $u \geq\underline{u}_{\lambda}$. 
Hence, it follows that $ v = \frac{p-1}{\mu} \big( e^{\frac{\mu u}{p-1}} - 1 \big)  \in \Wop \cap \Linfty$ 
and satisfies $v \geq \alpha_{\lambda} \geq  -\frac{p-1}{\mu}+\epsilon$ for some $\epsilon>0$. 
Arguing exactly as before, we deduce that $v$ is a solution of \eqref{Qlambda}.
\end{proof}

\begin{remark} \label{linkLowerUpper}
Arguing exactly as in the proof of Lemma \ref{linkProblem}, $iii)$, we can show that $v_1  \in H^1(\Omega) \cap \Linfty$ (respectively $v_2 \in  H^1(\Omega) \cap \Linfty$) is a lower solution (respectively an upper solution) of \eqref{Qlambda} if, and only if, the function
\[ u_1 = \frac{p-1}{\mu} \ln\big(1+\frac{\mu}{p-1}v_1 \big) \qquad \qquad \Big( \textup{respectively } u_2 = \frac{p-1}{\mu} \ln\big(1+\frac{\mu}{p-1}v_2 \big) \Big)\]
is a lower solution (respectively an upper solution) of \eqref{Plambda}.
\end{remark}

The interest of problem \eqref{Qlambda} comes from the fact that it has a variational formulation. 
We can obtain the solutions of \eqref{Qlambda} as critical points of the functional 
$I_{\lambda}: \Wop \rightarrow \R$ defined as
\begin{equation}\label{defI}
 I_{\lambda}(v) = \frac{1}{p} \int_{\Omega} |\gradv|^p \,dx- \int_{\Omega} F_{\lambda}(x,v) dx\,,
\end{equation} 
where we define $F_{\lambda}(x,s)=\int_0^s f_{\lambda}(x,t)\,dt$ i.e.
\begin{equation} \label{defFLambdaPos}
F_{\lambda}(x,s) = \lambda c(x)G(s) + \frac{p-1}{\mu p} \Big( 1 + \frac{\mu}{p-1} s \Big)^p h(x), 
\quad\textup{ if } s \geq \alpha_{\lambda}(x)\,,
\end{equation}
and
\begin{equation} \label{defFLambdaNeg}
\begin{aligned}
F_{\lambda}(x,s) =  \Big[\lambda c(x)g(\alpha_{\lambda}(x)) 
& 
+ \big(1 + \frac{\mu}{p-1}\alpha_{\lambda}(x)\big)^{p-1}h(x) \Big](s-\alpha_{\lambda})  \\
& 
+ \lambda c(x)G(\alpha_{\lambda}(x)) 
+ \frac{p-1}{\mu p} \big( 1 + \frac{\mu}{p-1} \alpha_{\lambda}(x) \big)^p h(x), 
\quad \textup{ if } s\leq \alpha_{\lambda}(x)\,.
\end{aligned}
\end{equation}
Observe that under the assumptions \eqref{A1}, since $g$ has subcritical growth 
(see Lemma \ref{gProperties}), $I \in \mathcal{C}^1(\Wop,\R)$ (see for example \cite{D_J_M_2001} page 356).

\begin{lemma} \label{strongConvergenceCerami}
Assume that \eqref{A1} holds and let $\lambda \in \R$ be arbitrary,  Then, any bounded Cerami sequence for 
$I_{\lambda}$ admits a convergent subsequence.
\end{lemma}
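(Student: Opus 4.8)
The plan is to prove that any bounded Cerami sequence $\{v_n\}$ for $I_\lambda$ admits a convergent subsequence by exploiting the standard $\mathbf{(S_+)}$ property of the $p$-Laplacian operator. Since $\{v_n\}$ is bounded in the reflexive space $\Wop$, I may pass to a subsequence (not relabelled) such that $v_n \rightharpoonup v$ weakly in $\Wop$, $v_n \to v$ strongly in $L^r(\Omega)$ for every $r < p^\ast$, and $v_n \to v$ a.e.\ in $\Omega$. The goal is then to upgrade this weak convergence to strong convergence, and the natural route is to test the derivative $I_\lambda'(v_n)$ against the difference $v_n - v$.

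The key computation is to write out
\[
\langle I_\lambda'(v_n), v_n - v \rangle = \int_{\Omega} |\nabla v_n|^{p-2} \nabla v_n \nabla(v_n - v)\, dx - \int_{\Omega} f_{\lambda}(x,v_n)(v_n - v)\, dx.
\]
Because $\{v_n\}$ is a bounded Cerami sequence, $\|I_\lambda'(v_n)\|_\ast(1 + \|v_n\|) \to 0$, so in particular $\|I_\lambda'(v_n)\|_\ast \to 0$; combined with the boundedness of $\{v_n - v\}$ this forces the left-hand side to tend to $0$. First I would show that the second integral on the right also tends to $0$: the growth estimate in Lemma \ref{gProperties}, part \textit{ii)}, guarantees that $f_\lambda(x, \cdot)$ has subcritical growth, so $\{f_\lambda(x,v_n)\}$ is bounded in $L^{r'}(\Omega)$ for a suitable $r' < (p^\ast)'$, while $v_n - v \to 0$ strongly in $L^r(\Omega)$; a H\"older estimate then yields $\int_\Omega f_\lambda(x,v_n)(v_n - v)\, dx \to 0$. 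Consequently
\[
\int_{\Omega} |\nabla v_n|^{p-2} \nabla v_n \nabla(v_n - v)\, dx \to 0.
\]

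To conclude I would invoke the $(S_+)$ property: since $v_n \rightharpoonup v$ and $\int_\Omega |\nabla v|^{p-2}\nabla v \,\nabla(v_n - v)\,dx \to 0$ by weak convergence, subtracting gives
\[
\int_{\Omega} \big( |\nabla v_n|^{p-2}\nabla v_n - |\nabla v|^{p-2}\nabla v \big)\nabla(v_n - v)\, dx \to 0.
\]
By the strict monotonicity inequalities \eqref{peralScalarProduct} already recorded in the excerpt, the integrand is nonnegative, and its vanishing integral forces $\nabla v_n \to \nabla v$ in $L^p(\Omega)$, i.e.\ $v_n \to v$ strongly in $\Wop$. The main obstacle I anticipate is the first step, namely verifying carefully that $\int_\Omega f_\lambda(x,v_n)(v_n-v)\,dx \to 0$: one must confirm the exponents in the H\"older/Sobolev argument remain strictly subcritical, which is precisely what part \textit{ii)} of Lemma \ref{gProperties} is designed to supply (for the $s > 0$ regime), while the boundedness of $g$ on $\R^-$ and the $L^q$-integrability of $c$ and $h$ handle the remaining contributions. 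Once the compactness of the lower-order term is secured, the $(S_+)$ argument is routine.
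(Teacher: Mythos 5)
Your proposal is correct and follows essentially the same route as the paper: test $I_\lambda'(v_n)$ against $v_n-v$, dispose of the lower-order terms via Lemma \ref{gProperties} $ii)$ combined with H\"older and Sobolev (the paper makes your anticipated exponent check explicit by fixing $0<\delta<(\frac{p}{N}-\frac1q)p^*$, $r<p^*$ and $s<\frac{p^*}{p-1+\delta}$ with $\frac1q+\frac1r+\frac1s=1$, and by treating the region $\{v_n\leq\alpha_\lambda\}$, where $f_\lambda(x,v_n)=f_\lambda(x,\alpha_\lambda(x))$ is a fixed $L^q$ function, separately), and then conclude from the vanishing of $\int_\Omega |\nabla v_n|^{p-2}\nabla v_n\nabla(v_n-v)\,dx$. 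The only difference is at the final step, where the paper simply cites \cite[Theorem 10]{D_J_M_2001} while you prove the $(S_+)$ property directly from \eqref{peralScalarProduct}; this is fine, with the one caveat that for $1<p<2$ the inequality only yields $\int_\Omega(|\nabla v_n|+|\nabla v|)^{p-2}|\nabla(v_n-v)|^2\,dx\to 0$, so you must add the standard H\"older interpolation $\|\nabla(v_n-v)\|_p^p\leq \big(\int_\Omega(|\nabla v_n|+|\nabla v|)^{p-2}|\nabla(v_n-v)|^2\,dx\big)^{p/2}\big(\int_\Omega(|\nabla v_n|+|\nabla v|)^{p}\,dx\big)^{(2-p)/2}$, using the boundedness of $\{v_n\}$, to recover strong convergence in $\Wop$.
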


\begin{proof}
Let $\{v_n\} \subset \Wop$ be a bounded Cerami sequence for $I_{\lambda}$ at level $d \in \R$. 
We are going to show that, up to a subsequence, $v_n \rightarrow v \in \Wop$ for a  $ v \in \Wop$.
\medbreak

Since $\{v_n\}$ is a bounded sequence in $\Wop$, up to a subsequence, 
we can assume that $v_n \rightharpoonup v$ in $\Wop$, $v_n \rightarrow v$ in 
$L^r(\Omega)$, for $1 \leq r < \past$, and $v_n \rightarrow v$ a.e. in $\Omega$. First of all, recall that
$\langle I_{\lambda}'(v_n), v_n-v \rangle \to 0$ with
\begin{equation*} 
\begin{aligned}
\langle I_{\lambda}'(v_n), v_n-v \rangle = 
\int_{\Omega} &|\nabla  v_n|^{p-2} \nabla v_n \nabla (v_n-v)\, dx 
 - \int_{\{v_n \geq \alpha_{\lambda}\}} \lambda c(x)g(v_n)(v_n-v) \, dx  
 \\
& - \int_{\{v_n \geq \alpha_{\lambda}\}} \big(1+\frac{\mu}{p-1}v_n\big)^{p-1} (v_n-v)\, h(x)\, dx 
- \int_{\{v_n \leq \alpha_{\lambda}\}} f_{\lambda}(x,\alpha_{\lambda}(x))(v_n-v)\, dx .
\end{aligned}
\end{equation*}
Let $0<\delta<(\frac{p}{N}-\frac1q)p^*$, $r<p^*$ and $s<\frac{p^*}{p-1+\delta}$ such that 
$\frac1q+\frac1r+\frac1s=1$.
Using  Lemma \ref{gProperties} $ii)$, and the Sobolev embedding as well as H\"older inequality, we have that
\begin{equation*}
\begin{aligned}
 \Big| \lambda \int_{\{v_n \geq \alpha_{\lambda}\}} c(x)g(v_n)(v_n-v)\, dx \Big| 
 &\leq 
|\lambda| \int_{\Omega} |c(x)||g(v_n)||v_n-v|\, dx 
\leq |\lambda| \|c\|_q \|g(v_n)\|_s \|v_n - v\|_r 
\\
& \leq D|\lambda| \|c\|_q \big( 1+\|v_n\|_{(p-1+\delta)s}^{p-1+\delta} \big) \|v_n-v\|_r \\
& \leq DS |\lambda| \|c\|_q  \big( 1+\|v_n\|^{p-1+\delta} \big) \|v_n-v\|_r\,.
\end{aligned}
\end{equation*}
Since $\|v_n\|$ is bounded and $v_n \rightarrow v$ in $L^r(\Omega)$, for $1 \leq r < \past$, we obtain
\[ \lambda \int_{\{v_n \geq \alpha_{\lambda}\}} c(x)g(v_n)(v_n-v) \,dx \to 0\,.\]
Arguing in the same way,  we have
\[ \int_{\{v_n \geq \alpha_{\lambda}\}} \big(1+\frac{\mu}{p-1}v_n\big)^{p-1} (v_n-v)\, h(x)\, dx 
+ \int_{\{v_n \leq \alpha_{\lambda}\}}  f_{\lambda}(x,\alpha_{\lambda}(x))  (v_n-v)\, dx \to 0\,.\]
So, we deduce that
\begin{equation} \label{sc1}
\int_{\Omega} |\nabla v_n|^{p-2} \nabla v_n \nabla(v_n-v)\, dx \to 0\,.
\end{equation}
Hence, applying \cite[Theorem 10]{D_J_M_2001}, we conclude that $v_n \rightarrow v $ in $\Wop$, as desired.
\end{proof}

\section{Sharp existence results on the limit coercive case} 
\label{Coercive-Case}



In this section, following ideas from \cite[Section 3]{A_DC_J_T_2015},  we prove Theorem \ref{coercive-louis}.  As a preliminary step, considering $\mu >0$ constant, we introduce
\begin{equation*} 
m_{p,\lambda}:= \left\{
\begin{aligned}
& \inf_{u \in W_{\lambda}} \int_{\Omega} \Big(|\gradu|^p - \big(\frac{\mu}{p-1}\big)^{p-1}h(x)|u|^p\,\Big) dx\,, 
\quad & \textup{ if} \quad W_{\lambda} \neq \emptyset\,,
\\
& +\infty\,, & \textup{ if} \quad W_{\lambda} = \emptyset\,.
\end{aligned}
\right.
\end{equation*}
where
\[ W_{\lambda}:= \{ w \in \Wop: \lambda c(x)w(x) = 0 \textup{ a.e. } x \in \Omega\,,\ \|w\| = 1\}\, \]
and we define
\[ m:= \inf_{u \in \Wop} I_{\lambda}(u) \in \R \cup \{-\infty\}\,.\]

\begin{prop} \label{globalMinimum}
Assume that \eqref{A1} holds,  $\lambda \leq 0$ and that $m_{p,\lambda} > 0$. Then $m$ is finite and it is reached by a function 
$v \in \Wop$. Consequently the problem \eqref{Plambda} has a solution.
\end{prop}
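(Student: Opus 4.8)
The plan is to produce the minimizer by the direct method of the calculus of variations applied to the functional $I_{\lambda}$ of \eqref{defI}, and then to transfer it to \eqref{Plambda} through Lemma \ref{linkProblem}. Precisely, under $\lambda \leq 0$ and $m_{p,\lambda}>0$ I would show that $I_{\lambda}$ is \emph{coercive} and \emph{weakly lower semicontinuous} on $\Wop$. Coercivity then gives $m>-\infty$ together with the boundedness of every minimizing sequence; extracting a weakly convergent subsequence and using weak lower semicontinuity produces a minimizer $v \in \Wop$, which is a critical point and hence a solution of \eqref{Qlambda}. Lemma \ref{linkProblem}~$iii)$ finally yields a solution $u$ of \eqref{Plambda}, proving the last assertion.

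Weak lower semicontinuity is the routine ingredient. The term $\frac1p\int_{\Omega}|\nabla v|^p\,dx$ is convex, hence weakly lower semicontinuous, so it suffices to check that $v \mapsto \int_{\Omega}F_{\lambda}(x,v)\,dx$ is weakly continuous. Since $c,h \in L^q(\Omega)$ with $q>\max\{N/p,1\}$ we have $pq'<\past$, and by Lemma \ref{gProperties}~$ii)$ the primitive $G$ grows at most like $|s|^{p+\delta}$; choosing $\delta$ small so that $(p+\delta)q'<\past$ and combining the compactness of the embeddings $\Wop \hookrightarrow L^{pq'}(\Omega)$ and $\Wop \hookrightarrow L^{(p+\delta)q'}(\Omega)$ with the a.e.\ convergence of a weakly convergent sequence, the continuity of $\int_{\Omega}F_{\lambda}(x,\cdot)\,dx$ follows from the standard continuity of Nemytskii operators.

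The heart of the matter, and the main obstacle, is coercivity, where the hypothesis $m_{p,\lambda}>0$ is used. I would argue by contradiction: if $I_{\lambda}$ is not coercive, choose $\{v_n\}$ with $\|v_n\|\to\infty$ and $I_{\lambda}(v_n)\leq C$, and set $w_n=v_n/\|v_n\|$, so that, up to a subsequence, $w_n \rightharpoonup w$ in $\Wop$, $w_n \to w$ in $L^{pq'}(\Omega)$ and a.e. Dividing $I_{\lambda}(v_n)\leq C$ by $\|v_n\|^p$ gives $\liminf_n \|v_n\|^{-p}\int_{\Omega}F_{\lambda}(x,v_n)\,dx \geq 1/p$. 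Analysing this integral through \eqref{defFLambdaPos}--\eqref{defFLambdaNeg}: on $\{w<0\}$ one has $v_n\to-\infty$, so the affine branch \eqref{defFLambdaNeg} is active and, being linear in $v_n$, contributes nothing after division by $\|v_n\|^p$; on $\{w>0\}\cap\{c>0\}$ the term $\lambda c(x)G(v_n)$ is $\leq 0$ and, since $G(s)/s^p\to+\infty$ by Lemma \ref{gProperties}~$iii)$, Fatou's lemma forces the whole integral to $-\infty$ unless this set is null. Hence $w\leq 0$ a.e.\ on $\{c>0\}$, so that $c\,w^+=0$ a.e.\ and $w^+\in\Wop$ satisfies $\lambda c\,w^+=0$ a.e. Because $\lambda c(x)G(v_n)\leq 0$, an upper estimate of $\|v_n\|^{-p}\int F_\lambda$ by its $h$-part, whose limit is computed by the Vitali theorem and the compactness above, yields $1 \leq \big(\tfrac{\mu}{p-1}\big)^{p-1}\int_{\Omega}h(x)(w^+)^p\,dx$; in particular $w^+\not\equiv 0$.

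To conclude, set $\psi=w^+/\|w^+\|\in W_{\lambda}$ (admissible since $c\,w^+=0$; for $\lambda=0$ this is automatic as $W_0=\Wop$). Using $\|w^+\|\leq\|w\|\leq\liminf_n\|w_n\|=1$ and the inequality just obtained,
\[ m_{p,\lambda} \leq \int_{\Omega}\Big(|\nabla\psi|^p - \big(\tfrac{\mu}{p-1}\big)^{p-1}h(x)|\psi|^p\Big)\,dx = 1 - \frac{\big(\tfrac{\mu}{p-1}\big)^{p-1}\int_{\Omega}h(x)(w^+)^p\,dx}{\|w^+\|^p} \leq 1 - \frac{1}{\|w^+\|^p} \leq 0, \]
which contradicts $m_{p,\lambda}>0$ and establishes coercivity. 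The delicate points where I expect to spend the effort are the justification of the limit interchanges (Fatou for the sign-definite superlinear term and Vitali for the $h$-term), the negligibility of the affine branch \eqref{defFLambdaNeg} after rescaling, and verifying that $w^+$ is a genuine competitor for $m_{p,\lambda}$.
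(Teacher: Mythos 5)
Your proof is correct, and the heart of it --- coercivity by contradiction via the normalized sequence $w_n = v_n/\|v_n\|$, a Fatou argument on the sign-definite superlinear term $\lambda c(x)G(v_n)$ to force $c\,w^{+} = 0$ a.e., and the exploitation of $m_{p,\lambda} > 0$ through $w^{+}$ --- is the same mechanism as the paper's. You deviate in two places, both legitimately. First, to extract the minimizer once coercivity is known, the paper does not use the direct method: it invokes Lemma \ref{strongConvergenceCerami} (bounded Cerami sequences converge), i.e.\ an Ekeland-type argument, whereas you use weak lower semicontinuity of $I_{\lambda}$; your verification that $v \mapsto \int_{\Omega} F_{\lambda}(x,v)\,dx$ is weakly continuous is sound, since $q > \max\{N/p,1\}$ gives $pq' < \past$ strictly, so $\delta$ can be chosen with $(p+\delta)q' < \past$ and Lemma \ref{gProperties}~$ii)$ applies; this route is more classical and self-contained. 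Second, your endgame is reorganized: the paper splits into the cases $w^{+} \notin W_{\lambda}$ and $w^{+} \in W_{\lambda}$, and in the latter uses weak lower semicontinuity of the gradient term together with $m_{p,\lambda}>0$ to conclude $w \equiv 0$, then reaches the contradiction $0 \geq 1/p$ from $w_n \to 0$ in $L^r(\Omega)$; you instead distill from $I_{\lambda}(v_n) \leq C$ the single quantitative inequality $1 \leq \big(\tfrac{\mu}{p-1}\big)^{p-1}\int_{\Omega} h(x)(w^{+})^p\,dx$ (which already rules out $w^{+} \equiv 0$) and feed the competitor $\psi = w^{+}/\|w^{+}\|$ into $m_{p,\lambda}$, where $\|w^{+}\| \leq \|w\| \leq 1$ yields $m_{p,\lambda} \leq 1 - \|w^{+}\|^{-p} \leq 0$ in one stroke. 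This buys a shorter contradiction that never needs semicontinuity of the norm along $w_n$, at the price of carrying the full bookkeeping of $\int F_{\lambda}$ yourself; the estimates you would need for that bookkeeping (the $O(\|v_n\|^{p-1})$ control of the difference between $\big(1+\tfrac{\mu}{p-1}v_n\big)^p$ and $\big(\tfrac{\mu}{p-1}\big)^p|v_n|^p$, and the bound $|F_{\lambda}(x,s)| \leq m(x)(1+|s|)$ for $s \leq 0$) are exactly the paper's \eqref{ineqPminus1} and \eqref{Flambdaminus}, and the Vitali step works because $|w_n|^p \to |w|^p$ in $L^{q'}(\Omega)$, so all the limit interchanges you flag do go through.
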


\begin{proof}
To prove that $I_{\lambda}$ has a global minimum  since, by Lemma \ref{strongConvergenceCerami}, any bounded Cerami sequence has a convergent subsequence it suffices to show that $I_{\lambda}$ is coercive. Having found a global minimum $v \in \Wop$  we deduce, by  Lemma \ref{linkProblem},  that 
$u = \frac{p-1}{\mu} \ln \big( 1 + \frac{\mu}{p-1}v \big)$ 
is a solution of \eqref{Plambda}.  
To show that $I_{\lambda}$ is coercive we consider an arbitrary sequence $\{v_n\} \subset \Wop$  such that $\|v_n\| \to \infty$ and we prove that
\[ \lim_{n \rightarrow \infty} I_{\lambda}(v_n) = +\infty\,.\]
\medbreak

Assume by contradiction that, along a subsequence, $I_{\lambda}(v_n)$ is bounded from above and 
hence
\begin{equation} \label{B7}
\limsup_{n \rightarrow \infty} \frac{I_{\lambda}(v_n)}{\|v_n\|^p} \leq 0\,.
\end{equation}
We introduce the sequence $w_n = \frac{v_n}{\|v_n\|}$, for all $n \in \N$ and observe that, up to a subsequence 
$w_n \rightharpoonup w$ weakly in $\Wop$, $w_n \rightarrow w$ in $L^r(\Omega)$, 
for $1 \leq r < \past\,,$ and $w_n \rightarrow w$ a.e. in $\Omega$. We consider two cases: 
\medbreak

\noindent\textit{Case 1): $w^+ \not \in W_{\lambda}$.} In that case, the set 
$\Omega_0 = \{ x \in \Omega: \lambda c(x)w^+(x) \neq 0\} \subset \Omega$ has non-zero measure and so, 
it follows that $v_n(x) = w_n(x)\,\|v_n\| \to \infty$ a.e. in $\Omega_0$. Hence, taking into account that 
$G \geq 0$ and  $\lim_{s\to+\infty}G(s)/s^p = + \infty$ (see Lemma \ref{gProperties}) 
and using Fatou's Lemma, we have
\begin{equation} \label{B8}
\begin{aligned}
\limsup_{n \rightarrow \infty} \int_{\Omega} \frac{\lambda c(x)G(v_n)}{|v_n|^p} |w_n|^p\, dx
& \leq \limsup_{n \rightarrow \infty} \int_{\Omega_0} \frac{\lambda c(x)G(v_n)}{|v_n|^p} |w_n|^p \,dx 
\\
&\leq \int_{\Omega_0} \limsup_{n \rightarrow \infty} \frac{\lambda c(x)G(v_n)}{|v_n|^p}|w_n|^p \, dx = -\infty\,.
\end{aligned}
\end{equation}
On the other hand, observe that for any $v \in \Wop$, we can rewrite
\begin{equation*}
\begin{aligned}
& I_{\lambda}(v) = \frac{1}{p} \int_{\Omega} |\gradv|^p \, dx - \int_{\Omega} \lambda c(x) G(v) \, dx 
+ \int_{\{v \leq \alpha_{\lambda}\}} \lambda c(x)G(v)\,  dx \\
& \hspace{58mm} -\frac{p-1}{p\mu} \int_{\{v \geq \alpha_{\lambda}\}} \Big(1+\frac{\mu}{p-1}v \Big)^p h(x)\, dx 
- \int_{\{v \leq \alpha_{\lambda}\}} F_{\lambda}(x,v)\, dx\,.
\end{aligned}
\end{equation*}
Hence, considering together \eqref{B7} and \eqref{B8}, we obtain
\[ 0 \geq \limsup_{n \rightarrow \infty} \frac{I_{\lambda}(v_n)}{\|v_n\|^p} 
\geq \liminf_{n \rightarrow \infty} \frac{I_{\lambda}(v_n)}{\|v_n\|^p}
 \geq -C - \limsup_{n \rightarrow \infty} \int_{\Omega} \frac{\lambda c(x)G(v_n)}{\|v_n\|^p} \, dx = + \infty\,,\]
and so,  \textit{Case 1)} cannot occur. 
\medbreak

\noindent\textit{Case 2): $w^+ \in W_{\lambda}$.} First of all, since $\lambda c \leq 0$ and $G \geq 0$
(see Lemma \ref{gProperties}), observe that for any $v \in \Wop$,
\begin{equation*}
\begin{aligned}
& I_{\lambda}(v) \geq \frac{1}{p}\int_{\Omega} \Big( |\gradv|^p 
- \big(\frac{\mu}{p-1} \big)^{p-1} h(x) (v^{+})^p \Big) \,dx 
- \frac{1}{p}\big(\frac{\mu}{p-1}\big)^{p-1} \int_{\{v \geq \alpha_{\lambda}\}}h(x) (v^{-})^p \, dx \\
& \hspace{40mm}- \frac{p-1}{\mu p} \int_{\{v \geq \alpha_{\lambda}\}} \Big[\big(1+\frac{\mu}{p-1}v\big)^{p}
-\big(\frac{\mu}{p-1}\big)^p |v|^p \Big]h(x)\,dx - \int_{\{v \leq \alpha_{\lambda}\}} F_{\lambda}(x,v) \,dx\,.
\end{aligned}
\end{equation*}
Moreover, observe that
\begin{equation} \label{ineqPminus1}
\begin{aligned}
\frac{1}{p}  \Big| \int_{\{v \geq \alpha_{\lambda}\}} \Big[ \big( 1 + \frac{\mu}{p-1} v \big)^p 
& - \big( \frac{\mu}{p-1} \big)^p |v|^p \Big] h(x) \,dx \Big| \\
& \hspace{20mm}=  \Big| \int_{\{v \geq \alpha_{\lambda}\}} \Big( \int_0^1 \big| s + \frac{\mu}{p-1} v \big|^{p-2} 
\big(  s + \frac{\mu}{p-1} v \big) \,ds \Big) h(x) \, dx \Big| 
\\
& \hspace{20mm}\leq 
\int_{\Omega} \Big( 1+\frac{\mu}{p-1} |v| \Big)^{p-1}
|h(x)|\, dx  \leq D \|h\|_q\, \big(1+ \|v\|^{p-1}\big)\,, 
\end{aligned}
\end{equation}
for some constant $D > 0$. Thus, for any $v \in \Wop$, it follows that
\begin{equation} \label{B13}
\begin{aligned}
 I_{\lambda}(v) \geq \frac{1}{p}\int_{\Omega} \Big( |\gradv|^p 
- \big(\frac{\mu}{p-1} \big)^{p-1} h(x) (v^{+})^p \Big) \,dx 
& - \frac{1}{p}\big(\frac{\mu}{p-1}\big)^{p-1} \int_{\{v \geq \alpha_{\lambda}\}}h(x) (v^{-})^p \, dx \\
&   - D \|h\|_q\, \big(1+\|v\|^{p-1}\big)- \int_{\{v \leq \alpha_{\lambda}\}} F_{\lambda}(x,v) \,dx\,.
\end{aligned}
\end{equation}
Hence, using that by the definition of $F_{\lambda}$ (see \eqref{defFLambdaPos} and \eqref{defFLambdaNeg}) there exists $m \in L^q(\Omega)\,,\ q > \max\{N/p,1\}$, such that, for a.e. $x \in \Omega$ and all $s \leq 0$,
\begin{equation} \label{Flambdaminus}
|F_{\lambda}(x,s)|\leq m(x) (1+|s|)\,,
\end{equation}
and applying \eqref{B7} and \eqref{B13},  we deduce, as $w^{+} \in W_{\lambda}$, that
\[ 
 0 \geq \limsup_{n \rightarrow \infty} 
 \frac{I_{\lambda}(v_n)}{\|v_n\|^p} \geq \liminf_{n \rightarrow \infty} \frac{I_{\lambda}(v_n)}{\|v_n\|^p} 
\geq 
\frac{1}{p} \int_{\Omega} \Big( |\nabla w|^p - \big(\frac{\mu}{p-1} \big)^{p-1} h(x) 
(w^{+})^p \Big) \,dx 
\geq 
\frac{1}{p} \min\{1, m_{p,\lambda}\} \|w\|^p \geq 0\,,\]
and so, that
\begin{equation*}
\lim_{n \rightarrow \infty} \frac{I_{\lambda}(v_n)}{\|v_n\|^p} = 0 \qquad \textup{ and } \qquad  w \equiv 0\,.
\end{equation*}
Finally, taking into account that $w_n \rightarrow 0$ in $L^r(\Omega)$, for $1 \leq r < \past$, 
we obtain the contradiction
\[ 0 = \lim_{n \rightarrow \infty} \frac{I_{\lambda}(v_n)}{\|v_n\|^p} \geq \frac{1}{p}\,.\]
Hence, \textit{Case 2)} cannot occur.
\end{proof}

\begin{proof}[\textbf{Proof of Theorem \ref{coercive-louis}}]
To prove this result, we look for a couple of lower and upper solutions $(\alpha, \beta)$ of \eqref{Plambda} 
with  $\alpha \leq \beta$ and then we apply Theorem \ref{BMP1988}. First, assume that both 
$\|\mu^{+}\|_{\infty} >0$ and $\|\mu^{-}\|_{\infty} >0$. Observe that any solution of
\begin{equation} \label{B11}
\pLaplac u = \lambda c(x)|u|^{p-2}u + \|\mu^{+}\|_{\infty}|\gradu|^p + h(x), \quad u \in \Wop \cap \Linfty,
\end{equation}
is an upper solution of \eqref{Plambda} and, any solution of
\begin{equation} \label{B12}
\pLaplac u = \lambda c(x)|u|^{p-2}u - \|\mu^{-}\|_{\infty}|\gradu|^p + h(x), \quad u \in \Wop \cap \Linfty,
\end{equation}
is a lower solution of \eqref{Plambda}. Now, since $m_{p,\lambda}^{+} > 0$,  Proposition  \ref{globalMinimum} 
ensures the existence of $\beta \in \Wop \cap \Linfty$ solution of \eqref{B11}. 
In the same way, $m_{p,\lambda}^{-} > 0$ implies the existence of 
$v\in \Wop \cap \Linfty$ solution of 
\begin{equation*}
\pLaplac v = \lambda c(x)|v|^{p-2}v + \|\mu^{-}\|_{\infty}|\gradv|^p - h(x), \quad v \in \Wop \cap \Linfty,
\end{equation*}
and hence $\alpha=-v$ is a solution of 
\eqref{B12}. Moreover, Lemma \ref{lemmaUniqueness} implies $\alpha$, 
$\beta \in \Wop \cap \mathcal{C}(\overline{\Omega})$. Hence, since $\alpha$ is a lower solution of \eqref{B11}, 
it follows that $\alpha \leq \beta$, thanks to Theorem \ref{compPrinciple}. Thus, we can apply Theorem 
\ref{BMP1988} to conclude the proof. Now note that if $\|\mu^{+}\|_{\infty} =0$,
\eqref{B11} reduces to 
\begin{equation} 
\pLaplac u = \lambda c(x)|u|^{p-2}u  + h(x), \quad u \in \Wop \cap \Linfty,
\end{equation}
which has a solution by \cite[Theorem 13]{D_J_M_2001}.
This solution corresponds again to an upper solution to $(P_{\lambda})$. Similarly, we can justify the existence of the lower solution when  $\|\mu^{-}\|_{\infty} =0$.
\end{proof}


\section{A necessary and sufficient condition for the existence of a solution to $(P_0)$} 
\label{Weakly-coercive-Case}


In this section we prove Theorem \ref{th3}. First of all, following the ideas of \cite{A_DC_J_T_2015}, inspired in turn in ideas of \cite{A_DA_P_2006}, 
we  find a necessary condition for the existence of a solution of \eqref{P0}.
Recall that the problem  \eqref{P0} is given by
\[ \label{P0} \tag{$P_0$} 
\pLaplac u = \mu |\gradu|^p + h(x)\, , \quad u \in \Wop \cap L^{\infty}(\Omega)\,.\]

\begin{prop} \label{necessaryCondition}
Assume that \eqref{A1} holds and suppose that \eqref{P0} has a solution. 
Then $m_p$ defined by \eqref{mp} satisfies 
$m_p > 0$.
\end{prop}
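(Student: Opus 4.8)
The plan is to combine the Hopf--Cole change of variable with Picone's inequality (Proposition \ref{picone}). Let $u_0$ be a solution of \eqref{P0} and set
\[
\psi := e^{\frac{\mu}{p-1}u_0} = 1 + \frac{\mu}{p-1}v_0\,,
\]
where $v_0 \in \Wop$ is the image of $u_0$ under the transformation preceding \eqref{pbm2}, so that $\psi - 1 \in \Wop$. Since $u_0 \in \Linfty$, the function $\psi$ is bounded and bounded away from zero, it has trace $1$ on $\partial\Omega$, and, specialising \eqref{pbm2} to $\lambda = 0$ and using $\nabla\psi = \frac{\mu}{p-1}\nabla v_0$, one checks that $\psi$ is a positive weak solution of
\begin{equation}\label{eq:psi-plan}
\pLaplac \psi = \Big(\tfrac{\mu}{p-1}\Big)^{p-1} h(x)\,\psi^{p-1}\,,
\end{equation}
the weak formulation being valid against every test function in $\Wop$.

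First I would establish $m_p \geq 0$. Fix $w \in \Wop \cap \Linfty$ with $w \geq 0$; since $\psi$ is bounded below by a positive constant, $\varphi := w^p/\psi^{p-1} \in \Wop$ is admissible in \eqref{eq:psi-plan}, giving
\[
\int_{\Omega} |\nabla\psi|^{p-2}\nabla\psi\,\nabla\Big(\frac{w^p}{\psi^{p-1}}\Big)\,dx
= \Big(\tfrac{\mu}{p-1}\Big)^{p-1}\int_{\Omega} h(x)\,w^p\,dx\,.
\]
Inserting this identity into the definition of $R(w,\psi)$ in Proposition \ref{picone} (applied with $u=w$, $v=\psi$) and integrating, I obtain
\[
\int_{\Omega}\Big(|\nabla w|^p - \Big(\tfrac{\mu}{p-1}\Big)^{p-1}h(x)\,w^p\Big)\,dx = \int_{\Omega} R(w,\psi)\,dx \geq 0\,.
\]
Replacing a general $w$ by $|w|$, and using the density of $\Wop\cap\Linfty$ together with the continuity of $w \mapsto \int_\Omega h|w|^p\,dx$ on $\Wop$, this inequality passes to all of $\Wop$, whence $m_p \geq 0$.

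The heart of the matter is to upgrade this to $m_p > 0$. Writing $J(w) = \int_\Omega\big(|\nabla w|^p - (\tfrac{\mu}{p-1})^{p-1}h|w|^p\big)\,dx$, I argue by contradiction, assuming $m_p = 0$. If $\int_\Omega h|w|^p\,dx \leq 0$ for all $w$ then $m_p \geq 1$, so there is $w$ with $\int_\Omega h|w|^p\,dx > 0$; a minimising sequence on $\{\|w\|=1\}$ then converges weakly in $\Wop$ and strongly in $L^{pq'}(\Omega)$ (the exponent $pq'$ being subcritical precisely because $q > \max\{N/p,1\}$) to some $\phi \geq 0$. Weak continuity of the weight term forces $\phi \not\equiv 0$, and the $p$-homogeneity of $J$ gives $\|\phi\|=1$ with $J(\phi) = m_p = 0$, i.e. $\int_\Omega R(\phi,\psi)\,dx = 0$. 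Since $\phi$ solves a $p$-Laplacian eigenvalue equation with weight $h \in L^q$, $q > \max\{N/p,1\}$, the regularity invoked in Lemma \ref{linkProblem}~$i)$ yields $\phi \in \Linfty$, so Picone's inequality applies and forces $R(\phi,\psi) = 0$ a.e. By its equality case this means $\phi = k\psi$ for some $k \in \R$; comparing traces on $\partial\Omega$, where $\phi = 0$ but $\psi = 1$, gives $k = 0$, hence $\phi \equiv 0$, contradicting $\|\phi\| = 1$. Therefore $m_p > 0$.

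The main obstacle is this final strict-positivity step: one must simultaneously secure attainment of $m_p$, establish enough regularity on the minimiser to legitimately invoke Proposition \ref{picone} (whose hypotheses require $\phi/\psi \in \Linfty$), and then exploit the mismatch of boundary values between $\phi$ and $\psi$ in the equality case. By contrast, the bound $m_p \geq 0$ is routine book-keeping once \eqref{eq:psi-plan} is in hand.
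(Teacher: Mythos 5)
Your proposal is correct and is essentially the paper's own argument: the paper likewise derives the nonnegativity of $\int_\Omega\big(|\nabla\phi|^p - (\tfrac{\mu}{p-1})^{p-1}h|\phi|^p\big)\,dx$ from the solution $u$ of $(P_0)$ (via Young's inequality, which is exactly your Picone step for $\psi = e^{\frac{\mu}{p-1}u}$), then assumes $m_p=0$, produces by standard compactness a nonnegative nontrivial minimiser $\phi_0$ at level zero with enough regularity to be admissible, and applies the equality case of Proposition \ref{picone} to the pair $(\phi_0, e^{\frac{\mu}{p-1}u})$, with the boundary traces $\phi_0 = 0$ and $e^{\frac{\mu}{p-1}u} = 1$ forcing $k=0$ and the contradiction. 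The only cosmetic differences are that you perform the Hopf--Cole substitution globally at the outset, whereas the paper tests the original equation with $|\phi|^p$ and introduces the exponential only inside the final Picone bracket, and that your contradiction uses $\phi\not\equiv 0$ where the paper uses $\phi_0>0$ in $\Omega$.
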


\begin{proof}
Assume that \eqref{P0} has a solution $u \in \Wop \cap \Linfty$. 
Then, for any $\phi \in \mathcal{C}_0^{\infty}(\Omega)$, it follows that
\begin{equation}\label{nc1}
\int_{\Omega} |\gradu|^{p-2} \gradu \nabla(|\phi|^p )\, dx - \mu \int_{\Omega} |\gradu|^p |\phi|^p\,dx 
- \int_{\Omega}h(x)|\phi|^p \,dx = 0\,.
\end{equation}
Now, applying Young's inequality, observe that
\begin{equation*}
\begin{aligned}
\int_{\Omega} |\gradu|^{p-2}\gradu \nabla (|\phi|^p)\, dx & 
= p \int_{\Omega}  |\phi|^{p-2}\phi  |\gradu|^{p-2}\gradu \nabla \phi \, dx 
\leq p \int_{\Omega} |\phi|^{p-1} |\gradu|^{p-1} | \nabla \phi| \, dx 
\\
& \leq \mu \int_{\Omega} |\phi|^p |\gradu|^p\, dx 
+ \left( \frac{p-1}{\mu} \right)^{p-1} \int_{\Omega} | \nabla \phi|^p \,dx\,.
\end{aligned}
\end{equation*}
Hence, substituting in \eqref{nc1}, multiplying by $\left( \frac{\mu}{p-1} \right)^{p-1}$ and using the density of  $\mathcal{C}_0^{\infty}(\Omega)$ in $ \Wop$, we obtain
\begin{equation}\label{nc2}
\int_{\Omega} \Big(|\nabla \phi|^p  - \big( \frac{\mu}{p-1} \big)^{p-1}  h(x) |\phi|^p \Big)\,dx \geq 0\,, 
\quad \forall\ \phi \in \Wop\,.
\end{equation}
Arguing by contradiction, assume that
\[ 
\inf\Bigg\{ \int_{\Omega} \Big(|\nabla \phi|^p - \big( \frac{\mu}{p-1} \big)^{p-1}  h(x) |\phi|^p \Big)\,dx:
 \phi \in \Wop,\ \|\phi\| = 1 \Bigg\} = 0.
 \]
By standard arguments there exists $ \phi_0 \in \mathcal{C}^{0,\tau}(\overline{\Omega})$  for some $\tau \in (0,1)$, 
with $\phi_0 > 0$ in $\Omega$ such that
\begin{equation}\label{nc3}
\int_{\Omega} |\nabla \phi_0 |^p\, dx 
=  \Big( \frac{\mu}{p-1} \Big)^{p-1} \int_{\Omega} h(x) |\phi_0|^p \,dx\,.
\end{equation} 
Now, substituting the above identity in \eqref{nc1} with $\phi = \phi_0$,
we have that
\begin{equation}\label{nc4}
\int_{\Omega} \Big(|\nabla \phi_0|^p + (p-1) \big( \frac{\mu}{p-1} \big)^{p} \phi_0^p |\gradu|^p 
- p \big( \frac{\mu}{p-1} \big)^{p-1} \phi_0^{p-1} |\gradu|^{p-2}\gradu \nabla \phi_0\Big)\, dx = 0\,.
\end{equation}
Finally, observe that
\[ \frac{\mu}{p-1}\gradu =  \frac{1}{e^{\frac{\mu}{p-1}u}} \nabla e^{\frac{\mu}{p-1}u}.\]
Hence, by substituting in \eqref{nc4}, we deduce that
\begin{equation}  \label{nc5}
\int_{\Omega} \Big( |\nabla \phi_0|^p 
+ (p-1) \big( \frac{\phi_0}{e^{\frac{\mu}{p-1}u}} \big)^{p}  |\nabla e^{\frac{\mu}{p-1}u}|^p 
- p \big( \frac{\phi_0}{e^{\frac{\mu}{p-1}u}} \big)^{p-1} 
|\nabla e^{\frac{\mu}{p-1}u}|^{p-2}\nabla e^{\frac{\mu}{p-1}u} \nabla \phi_0\Big)\, dx = 0\,.
\end{equation}
Applying  Proposition \ref{picone}, this proves the existence of 
$k\in \mathbb R$ such that
$$
\phi_0=k e^{\frac{\mu}{p-1}u}.
$$
As $\phi_0=0$ and $e^{\frac{\mu}{p-1}u}=1$ on $\partial\Omega$, this implies that $k=0$ which 
contradicts the fact that $\phi_0>0$ in $\Omega$.
\end{proof}



\begin{proof}[\textbf{Proof of  Theorem \ref{th3}}]
The proof is just the combination of Proposition \ref{necessaryCondition} and of Remark \ref{reverse}.
\end{proof}

\section{On the Cerami conditon and the Mountain-Pass Geometry} 
\label{OTCCATMPG}




\medbreak

We are going to show that, for any $\lambda > 0$,  the Cerami sequences for 
$I_{\lambda}$ at any level are bounded. The proof is inspired by
\cite{J_S_2013}, see also \cite{J_1999}. Nevertheless it requires to develop some new ideas. In view of Lemma \ref{strongConvergenceCerami}, this will imply that $I_{\lambda}$ satisfies the Cerami condition at any level $d \in \R$.

\begin{lemma} 
\label{boundednessCerami}
Fixed $\lambda > 0$ arbitrary, assume that \eqref{A1} holds and suppose that $m_p> 0$
 with $m_p$ defined by \eqref{mp}. Then, the Cerami 
sequences for $I_{\lambda}$ at any level $d \in \R$ are bounded.
\end{lemma}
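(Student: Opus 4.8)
The plan is to argue by contradiction. Suppose $\{v_n\}\subset\Wop$ is a Cerami sequence for $I_{\lambda}$ at some level $d$ with $\|v_n\|\to\infty$. After normalizing I set $w_n=v_n/\|v_n\|$, so that $\|w_n\|=1$ and, up to a subsequence, $w_n\rightharpoonup w$ in $\Wop$, $w_n\to w$ in $L^r(\Omega)$ for $1\le r<\past$ and a.e. in $\Omega$. Since $I_{\lambda}(v_n)\to d$ and $\|v_n\|\to\infty$, one has $I_{\lambda}(v_n)/\|v_n\|^p\to 0$; this is the quantity I will contradict. As in the proof of Proposition \ref{globalMinimum}, I would split according to whether $w^{+}\in W_{\lambda}$ or not.

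First, \textbf{if} $w^{+}\notin W_{\lambda}$, then the set where $c>0$ and $w>0$ has positive measure, so $v_n\to+\infty$ there. Using $G\ge 0$ together with $\lim_{s\to+\infty}G(s)/s^p=+\infty$ (Lemma \ref{gProperties} iii)) and Fatou's lemma, exactly as in Case 1 of Proposition \ref{globalMinimum}, one gets $\int_{\Omega}\lambda c(x)G(v_n)/\|v_n\|^p\,dx\to+\infty$, whence $I_{\lambda}(v_n)/\|v_n\|^p\to-\infty$, a contradiction. So this case is excluded and I may assume $w^{+}\in W_{\lambda}$.

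The delicate case is \textbf{$w^{+}\in W_{\lambda}$}, where the superlinear term no longer blows up and the gradient (Cerami) information must be exploited through a maximization-along-rays argument in the spirit of \cite{J_S_2013, J_1999}. The starting identity is
\begin{equation*}
I_{\lambda}(v)-\tfrac1p\langle I_{\lambda}'(v),v\rangle=\int_{\Omega}\Big(\tfrac1p f_{\lambda}(x,v)v-F_{\lambda}(x,v)\Big)\,dx,
\end{equation*}
whose integrand, on $\{v\ge\alpha_{\lambda}\}$, equals $\lambda c(x)H(v)-\frac{p-1}{\mu p}\big(1+\frac{\mu}{p-1}v\big)^{p-1}h(x)$ with $H$ as in Lemma \ref{gProperties}. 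Since $\{v_n\}$ is a Cerami sequence, $\langle I_{\lambda}'(v_n),v_n\rangle\to 0$, so the left-hand side evaluated at $v_n$ is $d+o(1)$. I would then choose $t_n\in[0,1]$ with $I_{\lambda}(t_nv_n)=\max_{t\in[0,1]}I_{\lambda}(tv_n)$. Writing $I_{\lambda}=Q-\int_{\Omega}\lambda c\,G(\cdot)\,dx$, where $Q$ collects the gradient term, the $h$-term and the contribution on $\{v\le\alpha_{\lambda}\}$, the hypothesis $m_p>0$ together with \eqref{ineqPminus1} yields the coercivity estimate $Q(v)\ge\frac{m_p}{p}\|v\|^p-C(1+\|v\|^{p-1})$. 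Testing the maximum against $Rw_n$ (which equals $tv_n$ for $t=R/\|v_n\|\in(0,1)$) and using that $w^{+}\in W_{\lambda}$ forces $\int_{\Omega}\lambda c\,G(Rw_n)\,dx$ to stay of order $R$ (since $w\le 0$ wherever $c>0$, where $G$ grows at most linearly), I obtain, for every fixed $R$, $\liminf_n I_{\lambda}(t_nv_n)\ge \frac{m_p}{p}R^p-O(R^{p-1})$, so $I_{\lambda}(t_nv_n)\to+\infty$. At the interior maximum $\langle I_{\lambda}'(t_nv_n),t_nv_n\rangle=0$, hence $I_{\lambda}(t_nv_n)=\int_{\Omega}\big(\frac1p f_{\lambda}(x,t_nv_n)t_nv_n-F_{\lambda}(x,t_nv_n)\big)\,dx$, and the monotonicity $H(t_nv_n)\le t_n^{p-1}H(v_n)\le H(v_n)$ for $v_n\ge R$ (Lemma \ref{gProperties} iv)), with $H$ bounded on $\R^{-}$ and on $[\alpha_{\lambda},R]$ (Lemma \ref{gProperties} v)), lets me compare the superlinear part of $I_{\lambda}(t_nv_n)$ with that of $I_{\lambda}(v_n)=d+o(1)$, producing the contradiction $+\infty\le d+C$.

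The step I expect to be genuinely delicate is the control of the non-homogeneous term $-\frac{p-1}{\mu p}\big(1+\frac{\mu}{p-1}v\big)^{p-1}h$: unlike a pure power nonlinearity, only its leading $p$-homogeneous part cancels in $I_{\lambda}-\frac1p\langle I_{\lambda}',\cdot\rangle$, leaving a remainder of order $\|v\|^{p-1}$ that is not covered by the monotonicity of $H$ and contributes to both sides of the ray comparison. This is precisely where $m_p>0$ is indispensable: it guarantees that the principal part $Q$ is positive-definite and coercive, so these lower-order contributions can be absorbed and the maximization argument closes. Finally, once the boundedness of Cerami sequences is established, Lemma \ref{strongConvergenceCerami} upgrades it to the full Cerami condition at every level $d\in\R$.
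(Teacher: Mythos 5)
Your global architecture is the same as the paper's (contradiction, normalization $w_n=v_n/\|v_n\|$, exclusion of $cw^{+}\not\equiv 0$ via the superlinearity of $g$ and $G$, then a Jeanjean-type maximization along the rays $t v_n$), and your first case is a workable variant: the paper rules out $cw^{+}\not\equiv0$ using $\langle I_{\lambda}'(v_n),v_n\rangle/\|v_n\|^p\to0$ and $g(s)/s^{p-1}\to+\infty$, while your Fatou argument on $I_{\lambda}(v_n)/\|v_n\|^p$ with $G(s)/s^p\to+\infty$ works just as well for $\lambda>0$. The genuine gap is exactly at the point you yourself flag as delicate, and your proposed resolution fails. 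If you maximize $I_{\lambda}$ itself, then at the interior maximum you must compare $I_{\lambda}(t_nv_n)=\int_{\Omega}\big(\frac1p f_{\lambda}(x,t_nv_n)t_nv_n-F_{\lambda}(x,t_nv_n)\big)dx$, whose integrand on $\{s\geq\alpha_{\lambda}\}$ is $\lambda c\,H(s)-\frac{p-1}{p\mu}\big(1+\frac{\mu}{p-1}s\big)^{p-1}h(x)$, with the same quantity at $v_n$, which equals $d+o(1)$. The $H$-terms compare correctly through Lemma \ref{gProperties} iv), and for $h^{+}$ the inequality $\big(1+\frac{\mu}{p-1}t_nv_n\big)^{p-1}\geq t_n^{p-1}\big(1+\frac{\mu}{p-1}v_n\big)^{p-1}$ on $\Omega_n^{+}$ points the right way; but the $h^{-}$-remainder $+\frac{p-1}{p\mu}\int\big(1+\frac{\mu}{p-1}t_nv_n\big)^{p-1}h^{-}dx$ enters your upper bound with a plus sign, and any way of trading it against the corresponding term in $d+o(1)$ leaves an error of size $(1-t_n^{p-1})\,O(\|v_n\|^{p-1})$ (on either the $h^{-}$ or the $h^{+}$ side), which is unbounded. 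Crucially, $m_p>0$ cannot absorb this: after subtracting $\frac1p\langle I_{\lambda}'(v_n),v_n\rangle$ the $p$-homogeneous coercive part has been cancelled, so no $\|v_n\|^p$ term survives against which a $\|v_n\|^{p-1}$ error is ``lower order''; your coercivity estimate for $Q$ is only used at fixed radius $R$ (to force $I_{\lambda}(t_nv_n)\to+\infty$) and is simply not available in the final comparison. The paper's fix is structural, not an absorption: it maximizes the auxiliary functional $J_{\lambda}(v)=I_{\lambda}(v)-\frac{p-1}{\mu p}\int_{\{v\geq\alpha_{\lambda}\}}\big[\big(1+\frac{\mu}{p-1}v\big)^p-\big(\frac{\mu}{p-1}\big)^p|v|^p\big]h^{-}(x)\,dx$, which differs from $I_{\lambda}$ only by $O(\|v\|^{p-1})$ (so the ray blow-up still follows from $m_p>0$) and is built so that $J_{\lambda}-\frac1p\langle J_{\lambda}',\cdot\rangle$ contains only $h^{+}$ in its non-homogeneous remainder; the $h^{+}$-term is then controlled by the $t_n^{-(p-1)}$ scaling trick together with $H(z_n)\leq t_n^{p-1}H(v_n)$, and the contradiction is reached as $\lambda\int_{\Omega_n^{-}}c\,H(v_n)\to-\infty$ against the boundedness of $H$ on $\R^{-}$, driven by $-J_{\lambda}(z_n)/t_n^{p-1}\to-\infty$.

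A secondary omission: you never prove that $\{v_n^{-}\}$ is bounded. The paper establishes this first, by testing $I_{\lambda}'(v_n)$ against $v_n^{-}$ and using that $f_{\lambda}$ is bounded on $\Omega\times\R^{-}$, and it is needed in the concluding comparison, where quantities such as $\int_{\{z_n\leq\alpha_{\lambda}\}}\big[F_{\lambda}(x,z_n)-\frac1p f_{\lambda}(x,z_n)z_n\big]dx$ and $\int_{\Omega}\big|1+\frac{\mu}{p-1}v_n\big|^{p-2}\big(1+\frac{\mu}{p-1}v_n\big)h^{-}(x)\,dx$ are only $O\big(1+\|v_n^{-}\|^{p-1}\big)$; without the bound these are uncontrolled too. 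That step is easy to add, but the $h^{-}$ issue above is a real gap in the argument as written: as proposed, your scheme closes only when $h\geq 0$ (in which case $J_{\lambda}=I_{\lambda}$), whereas the lemma concerns a general, possibly sign-changing $h$.
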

 

\begin{proof}
Let $\{v_n\} \subset \Wop$ be a Cerami sequence for $I_{\lambda}$ at level $d \in \R$. First we claim  that $\{v_n^{-}\}$ is bounded. Indeed since 
$\{v_n\}$ is a Cerami sequence, we have that
\begin{equation}\label{bc1}
\langle I_{\lambda}'(v_n), v_n^{-} \rangle = 
- 
\int_{\Omega}|\nabla v_n^{-}|^p dx 
 - \int_{\Omega} f_{\lambda} (x, v_n) \, v_n^{-} \, dx
\to 0\,
\end{equation}
from which, since $f_{\lambda} (x,s)$ is bounded on $\Omega\times \mathbb R^-$, the claim follows. To prove that $\{v_n^{+}\}$ is also bounded we assume by contradiction  that $\|v_n\| \rightarrow \infty$. We define
\[ \Omega_n^{+} = \{x \in \Omega: v_n(x) \geq 0\} \qquad \textup{ and } \qquad 
\Omega_n^{-} = \Omega \setminus \Omega_n^+,\]
and introduce the sequence $\{w_n\} \subset \Wop $ given by $w_n = v_n/\|v_n\|$. 
Observe that $\{w_n\} \subset \Wop$ is bounded in $\Wop$. Hence, up to a subsequence, it follows that 
$w_n \rightharpoonup  w$ in $\Wop$, 
$w_n \rightarrow w$ strongly in $L^r(\Omega)$ for $1 \leq r < \past$, and $w_n \rightarrow w$ a.e. in $\Omega$. 
We split the proof in several steps. 
\bigbreak

\noindent \textbf{Step 1:} $cw \equiv 0$. 
\medbreak
As $\|v_n^-\|$ is bounded and by assumption $\|v_n\|\to\infty$, clearly  $w^{-} \equiv 0$. It remains to show that $cw^{+} \equiv 0$. 
Assume by contradiction that $cw^{+} \not\equiv 0$ i.e., defining 
$\Omega^{+} := \{ x \in \Omega: c(x)w(x) > 0\}$, we assume
$|\Omega^{+}| > 0$. 
Since $\|v_n\| \rightarrow \infty$ and $\langle I_{\lambda}'(v_n), v_n \rangle\to 0$, it follows that
\begin{equation}
\label{p18.1}
 \frac{\langle I_{\lambda}'(v_n), v_n \rangle}{\|v_n\|^p } \to 0\,.
 \end{equation}
First of all, observe that
\begin{equation} \label{cw00}
\begin{aligned}
\langle I_{\lambda}'(v_n), v_n \rangle = \|v_n\|^p & - \big( \frac{\mu}{p-1} \big)^{p-1}
 \int_{\Omega_n^+} h(x) |v_n|^p  \, dx - \int_{\Omega_n^+}
  \lambda c(x)g(v_n)v_n \,dx  
 - \int_{\Omega_n^-} f_{\lambda}(x,v_n) \,v_n \,dx
 \\
& - \int_{\Omega_n^+} \Big[ \big(1+\frac{\mu}{p-1} v_n\big)^{p-1}  
- \big( \frac{\mu}{p-1} \big)^{p-1} |v_n|^{p-2} v_n \Big] v_n h(x)\, dx .
\end{aligned}
\end{equation}
Now, since $f_{\lambda}(x,s)$ is bounded on $\Omega\times \R^{-}$, 
we deduce that
\begin{equation}
\label{p18.2}
\frac{1}{\|v_n\|^p} \int_{\Omega_n^{-}}  f_{\lambda}(x,v_n) \,v_n\, dx \to 0\,.
\end{equation}
Moreover, using that $w_n \rightarrow w$ in $L^r(\Omega)$, $1 \leq r < \past$, with $w^{-} \equiv 0$, 
we have 
\begin{equation}
\label{p18.3}
\frac{1}{\|v_n\|^p}  \int_{\Omega_n^{+}} |v_n|^p h(x)\, dx 
=  \int_{\Omega_n^{+}} |w_n|^p h(x)\, dx \to \int_{\Omega} w^p h(x)\, dx\,,
\end{equation}
Next, we are going to show that
\begin{equation}
\label{p19.1}
\frac{1}{\|v_n\|^p} \int_{\Omega_n^{+}} \Big[ \big( 1+\frac{\mu}{p-1} v_n \big)^{p-1} - 
\big(\frac{\mu}{p-1}\big)^{p-1}|v_n|^{p-2}v_n \Big] v_n h(x)\, dx \to 0\,.
\end{equation}
Observe that
\begin{equation*}
 \int_{\Omega_n^+} \Big[ \big( 1+\frac{\mu}{p-1}v_n  \big)^{p-1}  - 
\big(\frac{\mu}{p-1}\big)^{p-1}v_n^{p-1} \Big] v_n h(x)\, dx
=
 (p-1) \int_{\Omega_n^+} \Big[ \int_0^1 \big( s + \frac{\mu}{p-1}v_n\big)^{p-2} ds \Big] v_n h(x)\, dx\,.
\end{equation*}
We consider separately the case $p\geq 2$ and the case $1<p<2$. 
In case $p\geq 2$, there exists  $D>0$ such that
$$
\Big|\int_{\Omega_n^+} \Big[ \int_0^1 \big( s + \frac{\mu}{p-1}v_n\big)^{p-2} ds \Big] v_n h(x)\, dx
\Big|
\leq 
\frac{p-1}{\mu}
\int_{\Omega_n^+} \Big( 1 + \frac{\mu}{p-1}v_n\Big)^{p-1} |h(x)|\, dx
\leq 
D \|h\|_q ( 1 + \|v_n\|^{p-1})\,.
$$
On the other hand, in case $1 < p < 2$,  we have a constant $D>0$ with
$$
\Big|\int_{\Omega_n^+} \Big[ \int_0^1 \big( s + \frac{\mu}{p-1}v_n\big)^{p-2} ds \Big] v_n h(x)\, dx
\Big|
\leq 
\frac{p-1}{\mu}
\int_{\Omega_n^+} \Big(\frac{\mu}{p-1}v_n\Big)^{p-1} |h(x)|\, dx
\leq 
D \|h\|_q\|v_n\|^{p-1}\,. 
$$
The claim \eqref{p19.1} follows then directly from the above inequalities. 
So, substituting \eqref{cw00}, \eqref{p18.2}, \eqref{p18.3} and \eqref{p19.1} in \eqref{p18.1} and using that $g$ is bounded on $\R^{-}$, we deduce that
\begin{equation}
\label{cw01}
\lambda \int_{\Omega} c(x) \frac{g(v_n)}{v_n^{p-1}} w_n^p\, dx 
\to
1 - \Big( \frac{\mu}{p-1} \Big)^{p-1} \int_{\Omega} w^p h(x)\, dx.
\end{equation}
Let us prove that this is a contradiction. By Lemma \ref{gProperties}, we know that  
$\lim_{s\to+\infty} g(s)/s^{p-1} = +\infty$ and as
$w_n \rightarrow w > 0$ a.e. in $\Omega^{+}$, it follows that
\[ c(x) \frac{g(v_n)}{v_n^{p-1}} w_n^p \to + \infty \textup{ a.e. in } \Omega^{+}.\]
Since $|\Omega^{+}| > 0$, we have
\begin{equation} \label{cw02}
\int_{\Omega^{+}} c(x) \frac{g(v_n)}{v_n^{p-1}} w_n^p dx \to +\infty\,.
\end{equation}
On the other hand, as $g\geq 0$ on $\R^+$, $\frac{g(s)}{s^{p-1}}$ is bounded on $\R^-$ and 
$\|w_n\|_{pq'}$ is bounded, we have
\begin{equation} \label{cw04}
\int_{\Omega \setminus \Omega^{+}} c(x) \frac{g(v_n)}{v_n^{p-1}} w_n^p \,dx \geq -D.
\end{equation}
So \eqref{cw02} and \eqref{cw04} together give a contradiction with \eqref{cw01}. Consequently, 
we  conclude that $cw \equiv 0$. 
\medbreak

\noindent \textbf{Step 2:} \textit{Let us introduce a new functional $J_{\lambda}: \Wop \rightarrow \R$ defined as}
\begin{equation*}
J_{\lambda}(v) 
 = I_{\lambda}(v) - \frac{p-1}{\mu p} \int_{\{v \geq \alpha_{\lambda}\}} 
\Big[ \big(1+\frac{\mu}{p-1}v \big)^p - \big( \frac{\mu}{p-1} \big)^p|v|^p \Big] h^{-}(x)\, dx 
\end{equation*}
\textit{and let us introduce the sequence $\{z_n\} \subset \Wop$ defined by $z_n = t_n v_n$, 
where $t_n\in [0,1]$ satisfies 
\[ J_{\lambda}(z_n) = \max_{t \in [0,1]} J_{\lambda}(t v_n)\,,\]
(if $t_n$ is not unique we choose its smallest possible value). We claim that}
\[ \lim_{n \rightarrow \infty} J_{\lambda}(z_n) = + \infty\,.\]
\medbreak
We argue again by contradiction. Suppose the existence of  $M < +\infty$ such that
\begin{equation} \label{zn1}
\liminf_{n \rightarrow \infty} J_{\lambda}(z_n) \leq M,
\end{equation}
and  introduce a sequence $\{k_n\} \subset \Wop$, defined as
\[ k_n = \Big(\frac{2pM}{m_p}\Big)^{\frac{1}{p}}w_n 
= \Big(\frac{2pM}{m_p}\Big)^{\frac{1}{p}} \frac{v_n}{\|v_n\|}\,.\]
Let us prove, taking $M$ bigger if necessary, that for $n$ large enough we have 
\begin{equation} \label{zn10}
J_{\lambda}(k_n) > \frac32 M\,.
\end{equation}
As $\Big(\frac{2pM}{m_p}\Big)^{\frac{1}{p}} \frac{1}{\|v_n\|}\in [0,1]$ for $n$ large enough, 
this will give the contradiction
$$
\frac32 M\leq\liminf_{n \rightarrow \infty} J_{\lambda}(k_n)\leq 
\liminf_{n \rightarrow \infty}J_{\lambda}(z_n)\leq M.
$$
First of all, observe that $k_n \rightharpoonup k:= \big(\frac{2pM}{m_p}\big)^{\frac{1}{p}} w$ 
in $\Wop$,  $k_n \rightarrow k$ in $L^r(\Omega)$, for $1 \leq r < \past$, and $k_n \rightarrow k$ a.e. in $\Omega$. 
 By  the properties of $G$ (see Lemma \ref{gProperties}) together with $k\geq 0$ and  $c\, k\equiv 0$,
 it is easy to prove that
\begin{equation} \label{zn2}
\int_{\{k_n \geq \alpha_{\lambda}\}} c(x)G(k_n)\, dx \to \int_{\Omega} c(x)G(k) \, dx=0.
\end{equation}
As $w^-\equiv0$, we have $\chi_n^-\to 0$ a.e. in $\Omega$ where $\chi_n^-$ is the characteristic function of 
$\Omega_n^-$. Recall (see \eqref{Flambdaminus}) that we have $m\in L^q(\Omega)\,,\ q > \max\{N/p,1\}$, such that, for a.e. $x\in \Omega$ and all $s\leq 0$,
$$
|F_{\lambda}(x,s)|\leq m(x)(1+|s|)\,.
$$
This implies that
\begin{equation} \label{zn3}
\int_{\{k_n \leq \alpha_{\lambda}\}} F_{\lambda}(x,k_n) \, dx \to 0\,,
\qquad \mbox{ as well as }\qquad
\int_{\{k_n \leq \alpha_{\lambda}\}} |k_n|^p h(x)\, dx \to 0\,.
\end{equation}
Taking into account \eqref{zn2} and \eqref{zn3} we obtain that
\begin{equation} \label{zn8}
\begin{aligned}
J_{\lambda}(k_n) = \frac{1}{p}  \int_{\Omega} \Big(|\nabla k_n|^p  
&- \big(\frac{\mu}{p-1} \big)^{p-1} h(x)|k_n|^p\Big)\, dx 
\\
& -  \frac{p-1}{p\mu} \int_{\{k_n \geq \alpha_{\lambda}\}} \Big[ \big( 1 + \frac{\mu}{p-1}k_n \big)^p 
- \big( \frac{\mu}{p-1}\big)^p |k_n|^p \Big] h^{+}(x)\,  dx  + o(1)\,.
\end{aligned}
\end{equation}
Now, observe that, by definition of  $m_p$,
\begin{equation} \label{zn9}
\frac{1}{p} \int_{\Omega} \Big(|\nabla k_n|^p  - \big(\frac{\mu}{p-1} \big)^{p-1} h(x)|k_n|^p \Big) \,dx 
\geq \frac{1}{p} m_p \|k_n\|^p = 2 M\,. 
\end{equation}
Furthermore, arguing as in \eqref{ineqPminus1}, observe that
\begin{equation*}
\begin{aligned}
\frac{1}{p}  \Big| \int_{\{k_n \geq \alpha_{\lambda}\}} \Big[ \big( 1 + \frac{\mu}{p-1} k_n \big)^p 
&- \big( \frac{\mu}{p-1} \big)^p |k_n|^p \Big] h^{+}(x) \,dx \Big| 
 \leq 
C \|h^+\|_q \Big(1+ \big(\frac{2pM}{m_p}\big)^{\frac{p-1}{p}} \Big), 
\end{aligned}
\end{equation*}
where $C$ is independent of $M$.
This implies that
\[ J_{\lambda}(k_n) \geq 2M - C \|h^+\|_q \Big(1+ \big(\frac{2pM}{m_p}\big)^{\frac{p-1}{p}} \Big)+ o(1)\,,\]
and,  taking $M$ bigger if necessary, for any $n \in \N$ large enough,  \eqref{zn10} follows.
\medbreak

\noindent \textbf{Step 3:} For $n \in \N$ large enough, $t_n \in (0,1)$. \medbreak

By the definition of $J_{\lambda}$ and using that 
\[ \big(1+\frac{\mu}{p-1}s \big)^p - \big(\frac{\mu}{p-1} \big)^p s^p \geq 0\,, \quad \forall\ s \geq 0\,,\]
observe that
\begin{equation*}
\begin{aligned}
J_{\lambda}(v_n) 
\leq I_{\lambda}(v_n) & - \frac{p-1}{p \mu} \int_{\{\alpha_{\lambda}\leq v_n  \leq 0\}} 
\Big[ \big( 1 + \frac{\mu}{p-1} v_n \big)^p - \big(\frac{\mu}{p-1} \big)^p |v_n|^p \Big] h^{-}(x)\, dx 
\\
\leq I_{\lambda}(v_n) & + \frac{1}{p} \big(\frac{\mu}{p-1}\big)^{p-1} \int_{\{ \alpha_{\lambda} \leq v_n \leq 0\}} |v_n|^p\, h^{-}(x) dx \\
\leq 
I_{\lambda}(v_n) & + \frac{1}{p} \big(\frac{\mu}{p-1}\big)^{p-1} \|\alpha_{\lambda}\|_{\infty}^p 
\|h^{-}\|_1\,.
\end{aligned}
\end{equation*}
Consequently, since $I_{\lambda}(v_n) \rightarrow d$, there exists $D>0$ such that, 
for all $n\in \mathbb N$,  $J_{\lambda}(v_n) \leq D$. 
Thus, taking into account that $J_{\lambda}(0) = - \frac{p-1}{p\mu} \|h^{+}\|_1$ and $J_{\lambda}(t_nv_n)\to+\infty$, we conclude
that  $t_n \in (0,1)$ for $n$ large enough. 

\medbreak

\noindent \textbf{Step 4:} \textit{Conclusion.} \medbreak

First of all, as $t_n\in(0,1)$ for $n$ large enough, by the definition of $z_n$, observe that $\langle J_{\lambda}'(z_n),z_n\rangle = 0$, for those $n$. 
Thus, it follows that
\begin{equation*}
\begin{aligned}
J_{\lambda}(z_n) 
& 
= J_{\lambda}(z_n) - \frac{1}{p}\langle J_{\lambda}'(z_n), z_n \rangle 
\\
& 
= \lambda \int_{\{z_n \geq \alpha_{\lambda}\}} c(x)H(z_n)\,dx 
- \frac{p-1}{p\mu}\int_{\{z_n \geq \alpha_{\lambda}\}} \big(1+\frac{\mu}{p-1}z_n \big)^{p-1} h^{+}(x) \,dx 
\\
& \hspace{70mm}
- \int_{\{z_n \leq \alpha_{\lambda}\}} 
\big[F_{\lambda}(x,z_n)-\frac1p f_{\lambda}(x,z_n) \, z_n\big]\,dx.
\end{aligned}
\end{equation*}
Using the definition of $f_{\lambda}(x,s)$ for $s\leq \alpha_{\lambda}(x)$
 and the fact that $\|z_n^-\|$ is bounded, we easily deduce the existence of $D_1>0$ such that, for all $n$ large enough,
\begin{equation} \label{cc1}
\frac{p-1}{p\mu} \int_{\Omega} \Big|1+\frac{\mu}{p-1}z_n\Big|^{p-2}
\Big(1+\frac{\mu}{p-1}z_n\Big) h^{+}(x)\, dx 
\leq  -J_{\lambda}(z_n) + \lambda \int_{\Omega} c(x)H(z_n) \,dx + D_1\,.
\end{equation}
Now, since $\{v_n\}$ is a Cerami sequence, observe that (again for $n$ large enough)
\begin{equation*}
\begin{aligned}
d+1 & \geq I_{\lambda}(v_n) - \frac{1}{p} \langle I_{\lambda}'(v_n), v_n \rangle 
\\
& = \lambda \int_{\{v_n \geq \alpha_{\lambda}\}} c(x)H(v_n) \, dx 
- \frac{p-1}{p\mu} \int_{\{v_n \geq \alpha_{\lambda}\}} \big(1+\frac{\mu}{p-1}v_n\big)^{p-1} h(x)\, dx 
\\
 & \hspace{70mm} - \int_{\{v_n \leq \alpha_{\lambda}\}} 
 \big[F_{\lambda}(x,v_n)-\frac1p f_{\lambda}(x,v_n) \, v_n\big]\,dx
\end{aligned}
\end{equation*}
and, as above, there exists a constant $D_2>0$ such that
\begin{equation} \label{cc2}
\lambda \int_{\Omega}c(x)H(v_n)\, dx 
\leq \frac{p-1}{p\mu} \int_{\Omega} \Big|1+\frac{\mu}{p-1}v_n\Big|^{p-2}
\Big(1+\frac{\mu}{p-1}v_n \Big)h(x)\, dx + D_2.
\end{equation}
Moreover, observe that
\begin{equation*}
\begin{aligned}
\int_{\Omega} & \big| 1 + \frac{\mu}{p-1} v_n \big|^{p-2} \big(1 + \frac{\mu}{p-1} v_n \big)\,h(x)\, dx
\\
& = \int_{\Omega} \big| 1 + \frac{\mu}{p-1} v_n \big|^{p-2} \big(1 + \frac{\mu}{p-1} v_n \big)\,h^{+}(x)\, dx
 -
  \int_{\Omega} \big| 1 + \frac{\mu}{p-1} v_n \big|^{p-2} \big(1 + \frac{\mu}{p-1} v_n \big)\,h^{-}(x)\, dx 
  \\
& = \frac{1}{t_n^{p-1}} 
\int_{\Omega} \big| t_n + \frac{\mu}{p-1} z_n \big|^{p-2} \big(t_n + \frac{\mu}{p-1} z_n \big)\,h^{+}(x)\, dx
 - 
 \int_{\Omega} \big| 1 + \frac{\mu}{p-1} v_n \big|^{p-2} \big(1 + \frac{\mu}{p-1} v_n \big)\,h^{-}(x)\, dx 
 \\
& 
\leq \frac{1}{t_n^{p-1}} \int_{\Omega} \big| 1 + \frac{\mu}{p-1} z_n \big|^{p-2} 
\big(1 + \frac{\mu}{p-1} z_n \big)\,h^{+}(x)\, dx 
- 
\int_{\Omega} \big| 1 + \frac{\mu}{p-1} v_n \big|^{p-2} \big(1 + \frac{\mu}{p-1} v_n \big)\,h^{-}(x)\, dx\,. 
\end{aligned}
\end{equation*}
Considering together this inequality with \eqref{cc1} and \eqref{cc2}, we obtain that
\begin{equation}\label{cc3}
\begin{aligned}
\lambda \int_{\Omega}c(x)H(v_n)\, dx \leq 
&D_2 - \frac{J_{\lambda}(z_n)}{t_n^{p-1}} + \frac{\lambda}{t_n^{p-1}} \int_{\Omega} c(x)H(z_n)\, dx 
+ \frac{D_1}{t_n^{p-1}} \\ 
&\hspace{27mm} - \frac{p-1}{p\mu} \int_{\Omega} \big| 1 + \frac{\mu}{p-1} v_n \big|^{p-2} 
\big(1 + \frac{\mu}{p-1} v_n \big)\,h^{-}(x)\, dx.
\end{aligned}
\end{equation}
Now, since $H$ is bounded on $\R^{-}$, there exists $D_3>0$ such that, for all $n\in \N$,
\begin{equation} \label{cc4}
\int_{\Omega_n^{-}} c(x)H(z_n)\, dx \leq D_3\,.
\end{equation}
On the other hand, using $iv)$ of Lemma \ref{gProperties}, it follows that
\begin{equation} \label{cc5}
\int_{\Omega_n^{+}} c(x)H(z_n)\, dx \leq t_n^{p-1} \int_{\Omega_n^{+}} c(x)H(v_n)\, dx + D_4,
\end{equation}
for some positive constant $D_4$. 
Hence, substituting \eqref{cc4} and \eqref{cc5} in \eqref{cc3}, it follows that
\begin{equation*}
\lambda \int_{\Omega_n^{-}}c(x)H(v_n)\, dx \leq 
D_5 - \frac{J_{\lambda}(z_n)-D_6}{t_n^{p-1}}
- \frac{p-1}{p\mu} \int_{\Omega} \big| 1 + \frac{\mu}{p-1} v_n \big|^{p-2} 
\big(1 + \frac{\mu}{p-1} v_n \big)\,h^{-}(x)\, dx.
\end{equation*}
Arguing as in the previous steps, observe that
\begin{equation*}
\int_{\Omega} 
\big| 1 + \frac{\mu}{p-1} v_n \big|^{p-2} \big(1 + \frac{\mu}{p-1} v_n \big)\,h^{-}(x)\, dx 
\geq - \int_{\Omega_n^{-}} \big| 1 + \frac{\mu}{p-1} v_n \big|^{p-1}\, h^{-}(x)\, dx
\geq -D_7 \|h^{-}\|_q \big(1+ \|v_n^{-}\|^{p-1}\big),
\end{equation*}
and so, we have that
\begin{equation*}
\lambda \int_{\Omega_n^{-}}c(x)H(v_n)\, dx \leq 
 D_5 - \frac{J_{\lambda}(z_n)-D_6}{t_n^{p-1}} 
 +D_7 \|h^{-}\|_q \big(1+ \|v_n^{-}\|^{p-1}\big)\,.
\end{equation*}
By Step 1, we know that $\|v_n^{-}\|$ is bounded and Step 4  shows that $J_{\lambda}(z_n) \rightarrow \infty$. 
Recall also that, by  Step 4,  $t_n \in (0,1)$. 
This implies that
\begin{equation} \label{cc6}
\lambda \int_{\Omega_n^{-}}c(x)H(v_n)\, dx \to - \infty
\end{equation}
which contradicts the fact that  $H$ is bounded on $\R^{-}$. This allows to conclude that the Cerami sequences for $I_{\lambda}$ at level 
$d \in \R$ are bounded.
\end{proof}



Now, we turn to the verification of the mountain pass geometry when $\lambda \geq 0$ is small.

\begin{lemma} \label{geometrySmall1}
Assume that \eqref{A1} holds and suppose that $m_p > 0$. For $\lambda \geq 0$ small enough, 
there exists $r > 0$ such that $I_{\lambda}(v) > I_{\lambda}(0)$ for $\|v\| = r$.
\end{lemma}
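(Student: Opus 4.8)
The plan is to bound $I_\lambda(v)-I_\lambda(0)$ from below on the sphere $\|v\|=r$ by a leading coercive term of order $\|v\|^p$, furnished by the hypothesis $m_p>0$, plus error terms of strictly lower order in $\|v\|$ and one term that is small with $\lambda$; then to choose $r$ \emph{large} (exploiting $p>1$) and, with $r$ frozen, $\lambda$ small. First I would note that $F_\lambda(x,0)$ is a constant in $v$, so that
\[ I_\lambda(v)-I_\lambda(0)=\frac1p\|v\|^p-\int_\Omega\big[F_\lambda(x,v)-F_\lambda(x,0)\big]\,dx, \]
and split $\Omega=\{v>0\}\cup\{v\le0\}$. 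On $\{v>0\}$ I use \eqref{defFLambdaPos} together with the identity
\[ \frac{p-1}{\mu p}\Big(1+\tfrac{\mu}{p-1}v\Big)^p=\frac1p\Big(\tfrac{\mu}{p-1}\Big)^{p-1}v^p+\frac{p-1}{\mu p}\Big[\Big(1+\tfrac{\mu}{p-1}v\Big)^p-\Big(\tfrac{\mu}{p-1}\Big)^p v^p\Big], \]
so the $h$-term produces the homogeneous piece $\frac1p(\frac{\mu}{p-1})^{p-1}\int_\Omega h\,(v^+)^p$ plus a remainder bounded, exactly as in \eqref{ineqPminus1}, by $D\|h\|_q(1+\|v\|^{p-1})$. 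On $\{v\le0\}$ I invoke the bound $|F_\lambda(x,s)|\le m(x)(1+|s|)$ for $s\le0$ recorded in \eqref{Flambdaminus}, so this region contributes at most $C(1+\|v\|)$. Finally the term $\lambda\int_{\{v>0\}}c\,G(v)\,dx$ is nonnegative and, by Lemma \ref{gProperties} (ii), obeys $G(s)\le C(1+|s|^{p+\delta})$ for a small $\delta>0$ with $(p+\delta)q'<p^*$; H\"older and Sobolev then give $\lambda\int c\,G(v)\le\lambda\,C(1+\|v\|^{p+\delta})$.

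The heart of the argument is the coercive piece. Rather than applying the definition of $m_p$ to $v$ itself (which would leave an uncontrolled $(v^-)^p$ contribution), I would apply it to $w=v^+\in\Wop$, which gives $(\frac{\mu}{p-1})^{p-1}\int_\Omega h\,(v^+)^p\le\|v^+\|^p-m_p\|v^+\|^p$. Since $\|v^+\|\le\|v\|$, this yields
\[ \frac1p\|v\|^p-\frac1p\Big(\tfrac{\mu}{p-1}\Big)^{p-1}\int_\Omega h\,(v^+)^p\ \ge\ \frac{\min\{1,m_p\}}{p}\,\|v\|^p. \]
Combining the four estimates, there exist $C>0$ and $c_0:=\min\{1,m_p\}>0$ with
\[ I_\lambda(v)-I_\lambda(0)\ \ge\ \frac{c_0}{p}\|v\|^p-C\big(1+\|v\|^{p-1}+\|v\|\big)-\lambda\,C\big(1+\|v\|^{p+\delta}\big). \]

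To conclude, on $\|v\|=r$ I first fix $r$ large enough that $\frac{c_0}{p}r^p-C(1+r^{p-1}+r)\ge\frac{c_0}{2p}r^p$, which is possible because $p>1$ forces $r^p$ to dominate $r^{p-1}$, $r$ and $1$; then, with this $r$ frozen, I take $\lambda>0$ small enough that $\lambda C(1+r^{p+\delta})<\frac{c_0}{2p}r^p$, so that $I_\lambda(v)-I_\lambda(0)>0$ for every $v$ with $\|v\|=r$. I expect the main obstacle to be conceptual rather than computational: since $0$ is \emph{not} a critical point of $I_\lambda$ when $h\not\equiv0$, the inequality fails for small $r$, and one must instead select $r$ large, letting the $m_p$-coercivity of the leading term overcome the genuinely linear $h$-contribution. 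The non-Ambrosetti--Rabinowitz growth of $G$ enters only through the crude subcritical bound $G(s)\le C(1+|s|^{p+\delta})$, whose superlinear effect is then absorbed by choosing $\lambda$ small; getting the quantifier order (fix $r$, then $\lambda$) right is the delicate point.
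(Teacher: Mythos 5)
Your proposal is correct and is essentially the paper's own argument: the same coercivity step (applying the definition of $m_p$ to $v^{+}$ and using $\|v\|^p=\|v^{+}\|^p+\|v^{-}\|^p$ to get the $\min\{1,m_p\}\|v\|^p$ lower bound), the same remainder estimate as in \eqref{ineqPminus1}, the same subcritical bound $G(s)\leq C(1+|s|^{p+\delta})$ from Lemma \ref{gProperties}, and the same quantifier order --- first $r$ large, then $\lambda$ small, with all constants independent of $\lambda$. Your repackaging of the negative region via \eqref{Flambdaminus} instead of the paper's separate bounds on $\{\alpha_{\lambda}\leq v\leq 0\}$ and $\{v\leq\alpha_{\lambda}\}$ is harmless (just note that the function $m$ there can be chosen uniformly for $\lambda$ in a bounded range, since its $\lambda$-dependence is through $\lambda c(x)$ times quantities bounded on $\R^{-}$).
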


\begin{proof}
For an arbitrary fixed $r >0$, let $v \in \Wop$ be such that $\|v\| = r$. We can write
\begin{equation*}
\begin{aligned}
I_{\lambda}(v) 
& = \frac{1}{p} \int_{\Omega} \Big(|\gradv|^p - \big( \frac{\mu}{p-1} \big)^{p-1} (v^{+})^p h(x)\Big)\, dx 
- \frac{1}{p} \big( \frac{\mu}{p-1} \big)^{p-1} \int_{\{\alpha_{\lambda} \leq v \leq 0\}} |v|^p h(x)\, dx 
\\
& \qquad - \frac{p-1}{p\mu} \int_{\{v \geq \alpha_{\lambda}\}} \Big[ \big( 1 + \frac{\mu}{p-1}v \big)^p 
- \big( \frac{\mu}{p-1} \big)^{p} |v|^p \Big] h(x)\,dx 
\\
& \qquad - \int_{\{v \leq \alpha_{\lambda}\}} h(x) \Big[ \big(1+\frac{\mu}{p-1}\alpha_{\lambda} \big)^{p-1}
(v-\alpha_{\lambda}) + \frac{p-1}{p\mu} \big(1+\frac{\mu}{p-1}\alpha_{\lambda} \big)^{p} \Big]\, dx 
\\
& \qquad - \lambda \Big( \int_{\{v \geq \alpha_{\lambda}\}} c(x)G(v)\,dx  
+ \int_{\{v \leq \alpha_{\lambda}\}} c(x)\big[ g(\alpha_{\lambda})(v-\alpha_{\lambda}) 
+ G(\alpha_{\lambda}) \big] \, dx \Big).
\end{aligned}
\end{equation*}
Now, observe that, as above,
\begin{equation*}
\Bigg| \int_{\{v \geq \alpha_{\lambda}\}} 
\Big[ \big(1+\frac{\mu}{p-1}v \big)^p - \big( \frac{\mu}{p-1} \big)^p |v|^p \Big] h(x)\,dx  \Bigg|  
\leq p \int_{\Omega} \big( 1 + \frac{\mu}{p-1}|v| \big)^{p-1} |h(x)|\, dx 
\leq D_1 (1+r^{p-1}),
\end{equation*}
with $D_1$ independent of $\lambda$. In the same way, using the fact that 
$\alpha_{\lambda}\in [-\frac{p-1}{\mu},0]$, we deduce that
$$
\begin{array}{c}
\displaystyle
\Big|
- \frac{1}{p} \big( \frac{\mu}{p-1} \big)^{p-1} \int_{\{\alpha_{\lambda} \leq v \leq 0\}} |v|^p h(x)\, dx 
\Big|
\leq D_2\,,
\\[2mm]
\displaystyle
\Big|
- \int_{\{v \leq \alpha_{\lambda}\}} h(x) \Big[ \big(1+\frac{\mu}{p-1}\alpha_{\lambda} \big)^{p-1}
(v-\alpha_{\lambda}) + \frac{p-1}{p\mu} \big(1+\frac{\mu}{p-1}\alpha_{\lambda} \big)^{p} \Big]\, dx 
\Big|
\leq D_3+D_4 r\,,
\end{array}
$$
with $D_2$, $D_3$ and $D_4$ independent of $\lambda$.
Finally, observe that
\begin{equation*}
\begin{aligned}
\int_{\Omega} \Big(|\gradv|^p - \big( \frac{\mu}{p-1} \big)^{p-1} (v^{+})^p h(x)\Big)\, dx 
& 
= 
\int_{\Omega} \Big(|\nabla v^{+}|^p - \big( \frac{\mu}{p-1} \big)^{p-1} (v^{+})^p h(x)\Big)\, dx 
+  \int_{\Omega} |\nabla v^{-}|^p\, dx 
\\
& \geq m_p \|v^{+}\|^p + \|v^{-}\|^p \geq \min\{1,m_p\} \|v\|^p = \min\{1,m_p\}\,r^p\,.
\end{aligned} 
\end{equation*} 
So, we obtain that
\begin{equation}
\label{minI}
\begin{aligned}
I_{\lambda}(v) 
 \geq 
\frac{1}{p} \min\{1,m_p\}\, r^p &-  D_1 r^{p-1} 
- D_4 r - D_5\,,
\\
&
- \lambda \Big( \int_{\{v \geq \alpha_{\lambda}\}} c(x)G(v)\,dx  
+ \int_{\{v \leq \alpha_{\lambda}\}} c(x)\big[ g(\alpha_{\lambda})(v-\alpha_{\lambda}) 
+ G(\alpha_{\lambda}) \big]\, dx \Big),
\end{aligned}
\end{equation}
where the constants $D_i$ are independent of $\lambda$. Moreover, observe that for $r$ large enough, 
\begin{equation} \label{mpg7}
\frac{1}{p} \min\{1,m_p\}\, r^p -  D_1 r^{p-1} 
- D_4 r - D_5
\geq \frac{1}{2p} \min\{1,m_p\}\, r^p +I_{\lambda}(0)\,.
\end{equation}
On the other hand, by Lemma \ref{gProperties}, for every $\delta>0$, 
\begin{equation} \label{mpg8}
\Big| \Big( \int_{\{v \geq \alpha_{\lambda}\}} c(x)G(v)\,dx  
+ \int_{\{v \leq \alpha_{\lambda}\}} c(x)\big[ g(\alpha_{\lambda})(v-\alpha_{\lambda}) 
+ G(\alpha_{\lambda}) \big] \,dx \Big) \Big|
\leq 
D_6 r^{p+\delta} + D_7 r + D_8,
%
\end{equation}
for some constant $D_6$, $D_7$, $D_8$ independent of $\lambda$. 
Hence, for $\lambda$ small enough, we have
\begin{equation} \label{mpg12}
\lambda \Big( \int_{\{v \geq \alpha_{\lambda}\}} c(x)G(v)\,dx  
+ \int_{\{v \leq \alpha_{\lambda}\}} c(x)\big[ g(\alpha_{\lambda})(v-\alpha_{\lambda}) 
+ G(\alpha_{\lambda}) \big] \, dx \Big) \leq  \frac{1}{4p} \min\{1,m_p\}\, r^p,
\end{equation}
and so, gathering \eqref{minI}, \eqref{mpg7} and \eqref{mpg12}, we  conclude that
\[ I_{\lambda}(v) 
\geq  \frac{1}{4p} \min\{1,m_p\}\, r^p + I_{\lambda}(0) > I_{\lambda}(0)\,.
\]
\end{proof}

\begin{lemma} \label{geometrySmall2} Assume that \eqref{A1} holds and  that $m_p > 0$. For any $\lambda >0$, $M >0$,  and $r>0$, there exists $w \in \Wop$ such that $\|w\| > r$ and $I_{\lambda}(w) \leq -M$.
\end{lemma}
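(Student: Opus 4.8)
The plan is to exhibit an explicit ray $t \mapsto t\varphi$ along which $I_{\lambda}$ diverges to $-\infty$, using the super-$p$-homogeneous growth of $G$ recorded in Lemma \ref{gProperties} iii). First I would fix a test function $\varphi \in \mathcal{C}_0^{\infty}(\Omega)$ with $\varphi \geq 0$ and $\int_{\Omega} c(x)\varphi^p\,dx > 0$; such a $\varphi$ exists because $c \gneqq 0$ forces $\{c > 0\}$ to have positive measure, so a small nonnegative bump supported near a density point of $\{c>0\}$ works. Since $\varphi \geq 0$ and $\alpha_{\lambda} \leq 0$, we have $t\varphi \geq 0 \geq \alpha_{\lambda}(x)$ for every $t \geq 0$, so only the first branch of $F_{\lambda}$ (see \eqref{defFLambdaPos}) is relevant and
\[ I_{\lambda}(t\varphi) = \frac{t^p}{p}\|\varphi\|^p - \lambda \int_{\Omega} c(x)G(t\varphi)\,dx - \frac{p-1}{\mu p}\int_{\Omega} \Big(1 + \frac{\mu}{p-1}t\varphi\Big)^p h(x)\,dx. \]

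Next I would divide by $t^p$ and let $t \to +\infty$, analysing the three terms. The first term is the constant $\|\varphi\|^p/p$. For the last term, dominated convergence (using $h \in L^q(\Omega)$ together with the fact that $\varphi$ is bounded with compact support, which provides an $L^1$ dominating function) gives
\[ \frac{1}{t^p}\cdot \frac{p-1}{\mu p}\int_{\Omega} \Big(1 + \frac{\mu}{p-1}t\varphi\Big)^p h(x)\,dx \longrightarrow \frac{p-1}{\mu p}\Big(\frac{\mu}{p-1}\Big)^p \int_{\Omega} \varphi^p h(x)\,dx, \]
a finite limit. The decisive term is the middle one: on $\{\varphi > 0\}$ we have $t\varphi(x) \to +\infty$, whence by Lemma \ref{gProperties} iii),
\[ \frac{c(x)G(t\varphi)}{t^p} = c(x)\,\frac{G(t\varphi)}{(t\varphi)^p}\,\varphi^p \longrightarrow +\infty \quad \mbox{on } \{\varphi > 0\}\cap\{c>0\}. \]
Since the integrand is nonnegative ($c \geq 0$ and $G \geq 0$ by Lemma \ref{gProperties} i)) and the set $\{\varphi>0\}\cap\{c>0\}$ has positive measure by the choice of $\varphi$, Fatou's lemma yields $t^{-p}\int_{\Omega} c(x)G(t\varphi)\,dx \to +\infty$.

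Combining the three limits gives $I_{\lambda}(t\varphi)/t^p \to -\infty$, and hence $I_{\lambda}(t\varphi) \to -\infty$. Therefore, given $M > 0$ and $r > 0$, I can choose $t$ large enough that both $\|t\varphi\| = t\|\varphi\| > r$ and $I_{\lambda}(t\varphi) \leq -M$, and set $w = t\varphi$, which proves the claim. The only point requiring genuine care is the bookkeeping of the $h$-term: it is honestly of order $t^p$ and so a priori competes with the leading term $\|\varphi\|^p t^p/p$. The argument succeeds precisely because Lemma \ref{gProperties} iii) makes $\int_{\Omega} c(x)G(t\varphi)\,dx$ grow strictly faster than $t^p$, so after division by $t^p$ the superlinear contribution swallows both the quadratic-type term and the $h$-term; beyond verifying these growth rates, no serious obstacle remains.
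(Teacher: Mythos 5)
Your proof is correct and follows essentially the same route as the paper: both take a ray $t\mapsto tv$ with $v\geq 0$, $cv\not\equiv 0$, and let the super-$p$-homogeneous growth of $G$ from Lemma \ref{gProperties} iii) (via a Fatou-type limit) overwhelm the $t^p$-order terms. The only difference is cosmetic bookkeeping of the $h$-term --- you evaluate its full $t^p$-order limit by dominated convergence, while the paper splits off the $p$-homogeneous part and bounds the remainder $\big(1+\frac{\mu}{p-1}tv\big)^p-\big(\frac{\mu}{p-1}\big)^p(tv)^p$ at order $t^{p-1}$ --- and both treatments are sound.
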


\begin{proof}
Consider $v \in \mathcal{C}_0^{\infty}(\Omega)$ such that $v \geq 0$ and $cv \not\equiv 0$ 
and let us take $t \in \R^{+}$, $t \geq 1$. First of all, as $\alpha_{\lambda} \leq 0$, observe that
\begin{equation*}
\begin{aligned}
I_{\lambda}(tv) 
& \leq  \frac{1}{p} t^p \int_{\Omega} \Big(|\gradv|^p - \big( \frac{\mu}{p-1} \big)^{p-1} |v|^p h(x)\Big)\, dx 
- \lambda t^p \int_{\Omega} c(x)v^p \frac{G(tv)}{t^p v^p}\, dx \\
& \hspace{50mm}+ \frac{p-1}{p\mu} \int_{\Omega} \Big[ \big( 1 + \frac{\mu}{p-1}tv \big)^p 
- \big( \frac{\mu}{p-1} \big)^p (tv)^p \Big] h^{-}(x)\, dx.
\end{aligned}
\end{equation*}
As above, we have
$$
\frac{1}{p} \int_{\Omega} \Big[ \big( 1 + \frac{\mu}{p-1}tv \big)^p 
- \big( \frac{\mu}{p-1} \big)^p (tv)^p \Big] h^{-}(x)\, dx
\leq 
t^{p-1} \int_{\Omega} \big(1+ \frac{\mu}{p-1}v \big)^{p-1} h^{-}(x)\, dx.
$$
Hence we obtain
\begin{equation*}
I_{\lambda}(tv) 
 \leq 
 t^p \Bigg[ \frac{1}{p} \int_{\Omega}\Big( |\gradv|^p 
 - \big( \frac{\mu}{p-1} \big)^{p-1} |v|^p h(x)\Big)\, dx 
- \lambda \int_{\Omega} c(x)v^p \frac{G(tv)}{t^p v^p}\, dx 
+  \frac{1}{t} \frac{p-1}{\mu} \Big\|1+ \frac{\mu}{p-1}v \Big\|_{\infty}^{p-1}
 \| h^{-}\|_1 \Bigg]\,.
\end{equation*}
Now, since by Lemma \ref{gProperties}, we have
\[ \lim_{t \rightarrow \infty} \lambda \int_{\Omega} c(x)v^p  \frac{G(tv)}{(tv)^p}\, dx = \infty\,,\]
we deduce that $ \displaystyle \lim_{t \rightarrow \infty} I_{\lambda}(tv) = - \infty$ from which the lemma follows.
\end{proof}



\begin{prop} \label{localMinimumSmall}
Assume that \eqref{A1} holds and suppose that $m_p > 0$. Moreover, suppose that $\lambda \geq 0$ is small 
enough in order to ensure that the conclusion of Lemma \ref{geometrySmall1} holds. Then, $I_{\lambda}$ possesses a critical 
point $v \in B(0,r)$ with $I_{\lambda}(v) \leq I_{\lambda}(0)$, which is a local minimum of $I_{\lambda}$.
\end{prop}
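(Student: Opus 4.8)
The plan is to obtain the critical point as a minimizer of $I_{\lambda}$ over the closed ball $\overline{B(0,r)}$, with $r>0$ the radius furnished by Lemma~\ref{geometrySmall1}. Set $m_r := \inf_{\|v\|\le r} I_{\lambda}(v)$. First I would check that $m_r$ is finite: on the bounded set $\overline{B(0,r)}$ the nonlinear part $\int_{\Omega}F_{\lambda}(x,v)\,dx$ stays bounded, because $G$ has subcritical growth (Lemma~\ref{gProperties} $ii)$) and the remaining $h$-contributions are of order $|v|^p$ with $v\in L^{pq'}(\Omega)$ and $pq'<\past$ (this last inequality is exactly the assumption $q>N/p$). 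Hence $I_{\lambda}$ is bounded below on $\overline{B(0,r)}$ and clearly $m_r\le I_{\lambda}(0)$.

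The decisive input is the strict quantitative gap hidden in Lemma~\ref{geometrySmall1}: inspecting its proof one has $\inf_{\|v\|=r} I_{\lambda}(v)\ge \tfrac{1}{4p}\min\{1,m_p\}\,r^p+I_{\lambda}(0)>I_{\lambda}(0)\ge m_r$, so $m_r$ lies strictly below the infimum of $I_{\lambda}$ on the sphere $\{\|v\|=r\}$. This will force any sufficiently good minimizing point into the open ball $B(0,r)$.

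Next I would invoke Ekeland's variational principle on the complete metric space $\overline{B(0,r)}$ to produce a sequence $\{v_n\}$ with $\|v_n\|\le r$, $I_{\lambda}(v_n)\to m_r$, and $I_{\lambda}(v_n)\le I_{\lambda}(w)+\tfrac1n\|w-v_n\|$ for every $w\in\overline{B(0,r)}$. By the gap above, $I_{\lambda}(v_n)\to m_r$ forces $\|v_n\|<r$ once $n$ is large; for such $n$ one tests the Ekeland inequality with $w=v_n+t\varphi$, $t>0$ small and $\varphi\in\Wop$ arbitrary, and lets $t\to0^+$ to deduce $\|I_{\lambda}'(v_n)\|_{\ast}\le\tfrac1n$. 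Thus $\{v_n\}$ is a bounded Palais--Smale, hence bounded Cerami, sequence for $I_{\lambda}$, so Lemma~\ref{strongConvergenceCerami} yields a subsequence converging strongly to some $v\in\Wop$. Passing to the limit gives $I_{\lambda}(v)=m_r\le I_{\lambda}(0)$ and $I_{\lambda}'(v)=0$; moreover $\|v\|<r$, again because $m_r$ is strictly below the infimum on the sphere. Since $B(0,r)$ is then a neighbourhood of $v$ on which $I_{\lambda}\ge m_r=I_{\lambda}(v)$, the point $v$ is a local minimum, as required. (A slicker alternative avoids Ekeland altogether: the nonlinear part of $I_{\lambda}$ is weakly continuous by the subcritical growth and the compact embeddings above, so $I_{\lambda}$ is weakly lower semicontinuous and attains $m_r$ on the weakly compact ball $\overline{B(0,r)}$, the gap again placing the minimizer in the interior.)

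I expect the only genuinely delicate point to be the interiority of the minimizer together with the passage from the Ekeland near-minimizer to an honest unconstrained Palais--Smale sequence; both are controlled by the strict gap of Lemma~\ref{geometrySmall1}, after which the strong compactness of Lemma~\ref{strongConvergenceCerami} closes the argument routinely. Note that the full Cerami condition (Lemma~\ref{boundednessCerami}) is not needed here, since the constructed sequence is bounded a priori by living in $\overline{B(0,r)}$.
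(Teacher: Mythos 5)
Your proposal is correct, but your main route differs from the paper's. The paper proves this proposition by the direct method: it takes a minimizing sequence $\{v_n\}\subset B(0,r)$, extracts a weak limit $v$, and uses weak lower semicontinuity of the norm \emph{and of $I_{\lambda}$ itself} (the nonlinear part being weakly continuous by the subcritical growth of $g$ and the compact embeddings, exactly as you observe) to get $\|v\|\le r$ and $I_{\lambda}(v)\le m\le I_{\lambda}(0)$; the pointwise strict inequality $I_{\lambda}>I_{\lambda}(0)$ on the sphere then forces $\|v\|<r$, so $v$ is an interior minimizer, hence a local minimum and a critical point. This is precisely your parenthetical ``slicker alternative,'' and note that it needs only the strict inequality stated in Lemma~\ref{geometrySmall1}, since only the single weak limit must be placed in the interior. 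Your primary argument via Ekeland's principle is also valid but pays a price the paper avoids: to guarantee that the Ekeland points $v_n$ are themselves interior (so that two-sided variations yield $\|I_{\lambda}'(v_n)\|_{\ast}\le 1/n$), you need a \emph{uniform} gap $\inf_{\|v\|=r}I_{\lambda}(v)>m_r$, which you correctly extract from the quantitative estimate $I_{\lambda}(v)\ge \tfrac{1}{4p}\min\{1,m_p\}r^p+I_{\lambda}(0)$ inside the proof of Lemma~\ref{geometrySmall1} rather than from its statement. In exchange, your route bypasses any explicit verification of weak lower semicontinuity of $I_{\lambda}$, replacing it with the strong compactness of bounded Cerami sequences (Lemma~\ref{strongConvergenceCerami}), and your closing remark is accurate: the boundedness result of Lemma~\ref{boundednessCerami} is not needed, since the sequence is confined to $\overline{B(0,r)}$ a priori. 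Both arguments are sound; the paper's is shorter and hypothesis-lighter on the geometry lemma, while yours is more robust in settings where weak continuity of the lower-order term is harder to check.
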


\begin{proof}
From Lemma \ref{geometrySmall1}, we see that there exists  $r > 0$ such that
\[ m:= \inf_{v \in B(0,r)} I_{\lambda}(v) \leq I_{\lambda}(0) \qquad \textup{ and } \qquad 
I_{\lambda}(v) > I_{\lambda}(0) \,\,\textup{ if }\,\, \|v\| = r\,.\]
Let $\{v_n\} \subset B(0,r)$ be such that $I_{\lambda}(v_n) \to m$. Since $\{v_n\}$ is bounded, up to a subsequence, it follows that $v_n \rightharpoonup v \in \Wop$. By the weak lower semicontinuity of the norm and of the functional $I_{\lambda}$, we have
\[ \|v\| \leq \liminf_{n \to \infty} \|v_n\| \leq r \quad \textup{ and } \quad I_{\lambda}(v) \leq \liminf_{n \to \infty} I_{\lambda}(v_n) = m \leq I_{\lambda}(0)\,.\]
Finally, as $I_{\lambda}(v) > I_{\lambda}(0)$ if $\|v\| = r$, we deduce that $v \in B(0,r)$ is a local minimum of $I_{\lambda}$.
\end{proof}

\begin{proof}[\textbf{Proof of  Theorem \ref{th1}}]
Assume that 
$\lambda > 0$ is small enough in order to ensure that the conclusion of Lemma \ref{geometrySmall1} holds. 
By Proposition \ref{localMinimumSmall} we have  a first critical 
point, which is a local minimum of $I_{\lambda}$. On the other hand, since the Cerami condition holds, in view of Lemmata \ref{geometrySmall1}.
 and  \ref{geometrySmall2}, we can apply  Theorem \ref{mpTheorem} and obtain a second critical point of 
$I_{\lambda}$ at the mountain-pass level. This gives  two different solutions of \eqref{Qlambda}. 
Finally, by Lemma \ref{linkProblem}, we obtain two solutions of \eqref{Plambda}.
\end{proof}

 \section{Proof of Theorems \ref{th2} and  \ref{th4}} 
\label{SER2}
In this section, we assume the stronger assumption \eqref{A2}. 
In that case, we are able to improve our results on the non-coercive case. 

\begin{prop} \label{lmhNonPos}
Assume that 
\eqref{A2} 
holds with $h \lneqq 0$. Then, for every $\lambda > 0$, 
there exists $v \in \mathcal{C}_0^{1,\tau}(\overline{\Omega})$, for some $0 < \tau < 1$, with $v \ll 0$, 
which is a local minimum of $I_{\lambda}$ in the $W_0^{1,p}$-topology and a solution of \eqref{Qlambda} 
with $v\geq \alpha_{\lambda}$ (with $\alpha_{\lambda}$ defined by \eqref{alpha}).
\end{prop}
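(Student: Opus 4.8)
The plan is to obtain $v$ as a constrained minimizer of $I_\lambda$ and then to promote it to a local minimizer, following the scheme announced in the introduction. First I would fix an ordered pair of sub/supersolutions of \eqref{Qlambda}. The function $\alpha_\lambda$ defined in \eqref{alpha} is a lower solution of \eqref{Qlambda}: it is the image under the Hopf--Cole change of variables of the lower solution $\underline u_\lambda$ of Proposition \ref{lowerSol} (see Remark \ref{linkLowerUpper}), and $\alpha_\lambda\leq 0$. The constant $0$ is an upper solution, since $\alpha_\lambda\leq 0$ gives $f_\lambda(x,0)=\lambda c(x)g(0)+h(x)=h(x)\leq 0=-\Delta_p 0$, using $g(0)=0$ and $h\lneqq 0$. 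Both belong to $\mathcal{C}^{1,\tau}(\overline{\Omega})$ under \eqref{A2} by the regularity theory for the $p$-Laplacian with bounded right-hand side. By Proposition \ref{minimumInterval} the infimum of $I_\lambda$ over $M=\{v\in\Wop:\ \alpha_\lambda\leq v\leq 0\}$ is attained at some $v$, which is a solution of \eqref{Qlambda}; regularity again yields $v\in\mathcal{C}^{1,\tau}_0(\overline{\Omega})$, and $v\geq\alpha_\lambda$ as required.

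Next I would show $v\ll 0$, which is exactly where the hypothesis $h\lneqq 0$ enters. Since $v\leq 0$ and $v\geq\alpha_\lambda>-\tfrac{p-1}{\mu}$, Lemma \ref{gProperties}(i) gives $g(v)\leq 0$, whence
\[
-\Delta_p v=\lambda c(x)g(v)+\Big(1+\tfrac{\mu}{p-1}v\Big)^{p-1}h(x)\lneqq 0,
\]
each term being nonpositive and the $h$-term being strictly negative on a set of positive measure. Applying the strong comparison principle (Theorem \ref{smpLP}) with the right-hand sides $0$ and $-\Delta_p(-v)\gneqq 0$ then yields $-v\gg 0$, i.e. $v\ll 0$.

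Finally I would prove that $v$ is a local minimizer. The supersolution side is immediate: because $v\ll 0$, every function $\mathcal{C}^1_0$-close to $v$ is still $\leq 0$, so a whole $\mathcal{C}^1_0$-neighbourhood of $v$ lies in $\{w\leq 0\}$. For the subsolution side I would show that $v$ in fact minimizes $I_\lambda$ over the convex set $\{w\leq 0\}$, not merely over $M$. Given any $w\leq 0$, set $\tilde w=\max(w,\alpha_\lambda)\in M$; using that $F_\lambda(x,\cdot)$ is affine on $(-\infty,\alpha_\lambda(x)]$ with slope $f_\lambda(x,\alpha_\lambda)$ (see \eqref{defFLambdaNeg}), that $\alpha_\lambda$ is a lower solution (tested against $(\alpha_\lambda-w)^+$), and the convexity of $\xi\mapsto\tfrac1p|\xi|^p$, one checks that $I_\lambda(\tilde w)\leq I_\lambda(w)$; hence $\inf_{\{w\leq 0\}}I_\lambda=\inf_M I_\lambda=I_\lambda(v)$. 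Combining the two sides, $v$ is a local minimizer of $I_\lambda$ in the $\mathcal{C}^1_0$-topology, and since $g$ is subcritical (Lemma \ref{gProperties}(ii)), Proposition \ref{c01vsWop} upgrades this to a local minimum in the $\Wop$-topology.

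I expect the subsolution-side step to be the main obstacle. The naive route of declaring $\alpha_\lambda$ a \emph{strict} lower solution fails, because at the level of \eqref{Qlambda} the map $v\mapsto(1+\tfrac{\mu}{p-1}v)^{p-1}h(x)$ is decreasing when $h\leq 0$, so the ordering $v\gg\alpha_\lambda$ cannot be read off from a pointwise comparison of the right-hand sides. The truncation-plus-convexity argument circumvents this by comparing energies rather than pointwise data, and it is precisely this computation that requires care.
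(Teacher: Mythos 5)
Your proof is correct, but it follows a genuinely different route from the paper's, in two places. For the positivity-type conclusion $v\ll 0$, the paper first invokes $h\lneqq 0\Rightarrow m_p>0$ and Theorem \ref{th3} to produce a solution $u_0$ of $(P_0)$, transforms it into $v_0\ll 0$, and then shows that $0$ is a \emph{strict} upper solution of \eqref{Qlambda} by comparing any nonpositive solution of \eqref{Qlambda} with $v_0$ via Corollary \ref{compPrincipPlambda}; you instead observe directly that any solution $v\leq 0$ satisfies $-\Delta_p v=\lambda c(x)g(v)+(1+\tfrac{\mu}{p-1}v)^{p-1}h(x)\lneqq 0$ (the prefactor being strictly positive since $v\geq\alpha_\lambda>-\tfrac{p-1}{\mu}$) and apply Theorem \ref{smpLP} with $f_1=0$, which is valid under \eqref{A2} and removes the detour through $(P_0)$ entirely. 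For local minimality, the obstacle you correctly identify — that $\alpha_\lambda$ cannot be shown strict by pointwise comparison of right-hand sides — is resolved in the paper by a one-line trick you missed: replace $\alpha_\lambda$ by $\underline\alpha=\alpha_\lambda-1$, which is still a lower solution because $f_\lambda(x,\cdot)$ is frozen at the value $f_\lambda(x,\alpha_\lambda(x))$ below $\alpha_\lambda(x)$, and is automatically strict since every solution of \eqref{Qlambda} satisfies $v\geq\alpha_\lambda\gg\underline\alpha$ (the gap is identically $1$, up to the boundary); Corollary \ref{localMinimizer} and Proposition \ref{c01vsWop} then conclude. Your alternative — showing via the affine structure of $F_\lambda(x,\cdot)$ on $(-\infty,\alpha_\lambda(x)]$, the lower-solution inequality tested against $(\alpha_\lambda-w)^+$, and convexity of $\xi\mapsto\tfrac1p|\xi|^p$ that $I_\lambda(\max(w,\alpha_\lambda))\leq I_\lambda(w)$ for every $w\leq 0$ — is a correct and slightly stronger substitute: it yields that $v$ minimizes $I_\lambda$ globally on the cone $\{w\leq 0\}$, not merely on a $\mathcal{C}^1_0$-neighbourhood, and both routes end identically with Proposition \ref{c01vsWop}. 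In short, the paper's proof is shorter given its packaged machinery, while yours is more self-contained (no appeal to Theorem \ref{th3}) and isolates a reusable truncation--energy comparison.
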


\begin{proof}
First of all, observe that, as $h \lneqq 0$, we have $m_p>0$ and hence, by Theorem \ref{th3},
\eqref{P0} has a solution $u_0 \in \Wop \cap L^{\infty}(\Omega)$. 
By Lemma \ref{linkProblem}, 
\[ v_0 = \frac{p-1}{\mu} \big( e^{\frac{\mu}{p-1}u_0} - 1 \big) \in \Wop \cap \Linfty, \]
is then a weak solution of
\begin{equation*}
\left\{
\begin{aligned}
\pLaplac v_0 & = \big(1+\frac{\mu}{p-1}v_0 \big)^{p-1} h(x)\lneqq 0, & \qquad \textup{ in }  \Omega,\\
v_0 & = 0, & \qquad \textup{ on } \partial \Omega.
\end{aligned}
\right.
\end{equation*}
As moreover, $\big(1+\frac{\mu}{p-1}v_0 \big)^{p-1} h(x)\in  L^{\infty}(\Omega)$, it follows from 
\cite{DB_1983,  L_1988} that 
$v_0 \in \mathcal{C}_0^{1,\tau}(\overline{\Omega})$, for some $\tau \in (0,1)$ and, 
by  the strong maximum principle 
(see \cite{V_1984}), that $v_0 \ll 0$. Now, we split the rest of the proof in three steps.
\medbreak

\noindent\textbf{Step 1:} \textit{$0$ is a strict upper solution of  \eqref{Qlambda}.}
\medbreak

Observe that $0$ is an upper solution of \eqref{Qlambda}. In order to prove that $0$ is strict,  let $v \leq 0$ 
be a solution of \eqref{Qlambda}. As $g \leq 0$ on $\R^{-}$ (see Lemma 
\ref{gProperties}), it follows that $v$ is a lower solution of $(Q_0)$ and so, thanks to the comparison principle, see Corollary \ref{compPrincipPlambda}, 
 $v \leq v_0 \ll 0$. Hence, $0$ is a strict upper solution of \eqref{Qlambda}. 

\medbreak

\noindent\textbf{Step 2:} \textit{\eqref{Qlambda} has a strict lower solution $\underline\alpha\ll0$.}
\medbreak
By construction 
 $\underline{\alpha} = \alpha_{\lambda} - 1$ 
is a lower solution of \eqref{Qlambda}. Moreover, as every solution $v$ 
of \eqref{Qlambda} satisfies $v \geq \alpha_{\lambda}\gg\underline{\alpha}$, we conclude that 
 $\underline{\alpha}$ is a strict lower solution of \eqref{Qlambda}. 
 \medbreak
 
\noindent\textbf{Step 3:} \textit{Conclusion.} 
\medbreak
 By Corollary \ref{localMinimizer}, 
 Proposition \ref{c01vsWop}, 
 and Lemma  \ref{linkProblem}, 
 we have the existence of $v \in \Wop\cap  \mathcal{C}_0^{1,\tau}(\overline{\Omega})$, 
  local minimum of $I_{\lambda}$ and solution of \eqref{Qlambda} such that  $\alpha_{\lambda}\leq v \ll0$
 as desired. 
\end{proof}

\begin{proof}[\textbf{Proof of  the  first part of Theorem \ref{th2}}]
By  Proposition \ref{lmhNonPos}, there exists a first critical point, which is a local minimum of $I_{\lambda}$. 
By Theorem \ref{characterizacionMinimizer} and since the Cerami condition holds, we have two options. 
If we are in the first case, then together with Lemma \ref{geometrySmall2}, we see that $I_{\lambda}$ has 
the mountain-pass geometry and by Theorem \ref{mpTheorem}, we have the existence of a second solution. 
In the second case, we have directly the existence of a second solution of \eqref{Qlambda}. Then by  Lemma \ref{linkProblem} we conclude to the existence of two solutions to \eqref{Plambda}. 
\end{proof}


Now, we consider the case  $h \gneqq 0$. 

\begin{lemma} \label{lemmaBehaviour}
Assume that \eqref{A2} holds and suppose that $h \gneqq 0$. Recall that $\gamma_1$
 denotes the first eigenvalue of \eqref{BVPeigenvalue}. It follows that:
\begin{enumerate}
\item[i)] For any  $0 \leq \lambda < \gamma_1$, any solution $u$ of the problem \eqref{Plambda} satisfies 
$u\gg0$.
\item[ii)] For $\lambda = \gamma_1$, the problem \eqref{Plambda} has no solution. 
\item[iii)] For $\lambda > \gamma_1$, the problem \eqref{Plambda} has no non-negative solution. 
\end{enumerate}
\end{lemma}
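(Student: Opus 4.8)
The plan is to establish all three items with two complementary tools: testing \eqref{Plambda} against the negative part $u^{-}$ to control the sign of solutions when $\lambda<\gamma_1$, and Picone's inequality (Proposition \ref{picone}) against a positive first eigenfunction $\varphi_1$ of \eqref{BVPeigenvalue} to force nonexistence when $\lambda\geq\gamma_1$. Throughout I use that $\varphi_1\gg0$ and, since $c\gneqq0$, that $\int_{\Omega}c\,\varphi_1^p\,dx>0$; the regularity $u\in\mathcal{C}^1(\overline{\Omega})$ needed for the strong maximum principle and for Picone is guaranteed by \eqref{A2}.

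For \textit{i)}, I would take any solution $u$ and use $\varphi=u^{-}\in\Wop\cap\Linfty$ as test function. Since $\nabla u^{-}=-\gradu$ and $|u|^{p-2}u=-(u^{-})^{p-1}$ on $\{u<0\}$, while $\mu|\gradu|^p u^{-}\geq0$ and $h\,u^{-}\geq0$ there, one obtains the chain
\[
\lambda\int_{\Omega}c\,(u^{-})^p\,dx=\int_{\Omega}|\nabla u^{-}|^p\,dx+\mu\int_{\Omega}|\nabla u^{-}|^p u^{-}\,dx+\int_{\Omega}h\,u^{-}\,dx\geq\int_{\Omega}|\nabla u^{-}|^p\,dx\geq\gamma_1\int_{\Omega}c\,(u^{-})^p\,dx,
\]
the last step being the variational characterization of $\gamma_1$. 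For $\lambda<\gamma_1$ this forces $\int_{\Omega}|\nabla u^{-}|^p\,dx=0$, hence $u^{-}\equiv0$ and $u\geq0$. Because $h\gneqq0$, $u\not\equiv0$, so $\pLaplac u\geq0$ with $u\geq0$, and the strong maximum principle (\cite{V_1984}) together with Hopf's lemma give $u\gg0$.

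For \textit{iii)}, assume $u\geq0$ is a solution; then $u\gg0$ as above, and Picone's inequality applied to the pair $(\varphi_1,u)$, after integrating by parts and substituting the equation, yields
\[
\gamma_1\int_{\Omega}c\,\varphi_1^p\,dx=\int_{\Omega}|\nabla\varphi_1|^p\,dx\geq\lambda\int_{\Omega}c\,\varphi_1^p\,dx+\mu\int_{\Omega}\frac{\varphi_1^p}{u^{p-1}}|\gradu|^p\,dx+\int_{\Omega}\frac{\varphi_1^p}{u^{p-1}}h\,dx.
\]
Thus $(\gamma_1-\lambda)\int_{\Omega}c\,\varphi_1^p\,dx\geq0$, which is impossible for $\lambda>\gamma_1$, proving there is no nonnegative solution. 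For \textit{ii)}, the same Picone identity with $\lambda=\gamma_1$ forces $\int_{\Omega}\frac{\varphi_1^p}{u^{p-1}}h\,dx=0$, hence $h\equiv0$, excluding \emph{nonnegative} solutions; the point that remains, and which I expect to be the main obstacle, is ruling out \emph{sign-changing} solutions.

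To close \textit{ii)} I would rerun the $u^{-}$-test at $\lambda=\gamma_1$: the chain above becomes a string of equalities, so simultaneously $\int_{\Omega}|\nabla u^{-}|^p\,dx=\gamma_1\int_{\Omega}c\,(u^{-})^p\,dx$ and $\mu\int_{\Omega}|\nabla u^{-}|^p u^{-}\,dx+\int_{\Omega}h\,u^{-}\,dx=0$. If $u^{-}\not\equiv0$, the first equality makes $u^{-}$ a minimizer of the Rayleigh quotient, hence (by simplicity and positivity of the first eigenfunction) $u^{-}=t\varphi_1\gg0$ with $t>0$; but then $\int_{\Omega}|\nabla u^{-}|^p u^{-}\,dx>0$, contradicting the second relation. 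Therefore $u^{-}\equiv0$, reducing to the nonnegative case already excluded, so \eqref{Plambda} has no solution at $\lambda=\gamma_1$. The delicate part is precisely this equality analysis, where one must exploit both the simplicity of $\gamma_1$ and the strictly positive contribution of the gradient term $\mu|\gradu|^p$ to rule out the borderline eigenfunction profile.
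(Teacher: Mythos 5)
Your proposal is correct, and for items i) and iii) it is essentially the paper's argument: the $u^{-}$ test combined with the variational characterization of $\gamma_1$ (the paper phrases the step for $\lambda<\gamma_1$ via an $\epsilon$-coercivity of $w\mapsto\int_{\Omega}(|\nabla w|^p-\lambda c(x)|w|^p)\,dx$, while you use the Rayleigh inequality $\int_{\Omega}|\nabla u^{-}|^p\,dx\geq\gamma_1\int_{\Omega}c(x)(u^{-})^p\,dx$ directly --- same content), and then Picone with the test function $\varphi_1^p/u^{p-1}$ leading to the paper's contradiction \eqref{contradiction}. The genuine divergence is in ii), and it is to your credit. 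The paper derives $\int_{\Omega}(|\nabla u^{-}|^p-\gamma_1 c(x)(u^{-})^p)\,dx=0$, invokes simplicity to get $u^{-}=k\varphi_1$, and then asserts the dichotomy ``$u\equiv0$ or $u\ll0$'', excluding the first because $0$ does not solve $(P_{\gamma_1})$ when $h\gneqq0$ and the second because $\int_{\Omega}h\,u^{-}\,dx>0$; this dichotomy silently skips the case $k=0$ with $u\gneqq0$, which in the paper is only disposed of later, by the Picone-at-$\gamma_1$ argument written out in Part 2 of the proof of Theorem \ref{th4}. You close exactly that case explicitly (Picone at $\lambda=\gamma_1$ forces $\int_{\Omega}h\,\varphi_1^p/u^{p-1}\,dx=0$, hence $h\equiv0$, a contradiction), and in the case $u^{-}=t\varphi_1$, $t>0$, you contradict the vanishing of $\mu\int_{\Omega}|\nabla u^{-}|^p u^{-}\,dx+\int_{\Omega}h\,u^{-}\,dx$ through the gradient term rather than the $h$ term; since both terms are nonnegative and $u^{-}\gg0$, either choice works. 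Two small glosses you should make explicit: before declaring $u^{-}$ a minimizer of the Rayleigh quotient, note that $\int_{\Omega}c(x)(u^{-})^p\,dx>0$ (otherwise the equality gives $\|\nabla u^{-}\|_p=0$ and $u^{-}\equiv0$ at once); and the admissibility of $\varphi_1^p/u^{p-1}$ as a test function requires the barrier estimates $u\geq D_1 d$ and $\varphi_1\leq D_2 d$, with $d(x)=\dist(x,\partial\Omega)$, which follow from $u\gg0$ and $\mathcal{C}^1(\overline{\Omega})$ regularity and are spelled out in the paper's proof of iii).
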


\begin{proof}
Observe first that, taking  $u^{-}$ as  test function in \eqref{Plambda}, we obtain 
\begin{equation}
\label{Lemeq1}
 -  \int_{\Omega} \bigl( |\nabla u^{-}|^p - \lambda\, c(x) |u^{-}|^p \bigr)\, dx 
 =  \int_{\Omega} \bigl(  \mu |\gradu |^p u^{-} + h(x) u^{-} \bigl)\, dx.
\end{equation}

 \noindent\textit{i)} For $\lambda < \gamma_1$, there exists  $\epsilon>0$ such that, for every $u\in \Wop$, 
\[ \int_{\Omega}  \bigl( |\nabla u|^p - \lambda c(x) |u|^p \bigr) \, dx \geq \epsilon \|u\|^p\,.\]
Consequently, as $h \gneqq 0$ and $\mu > 0$, we have that
\[ 0 \geq -\epsilon \|u^-\|^p \geq - \int_{\Omega}  \bigl( |\nabla u^{-}|^p - \lambda c(x) |u^{-}|^p \bigr) \,dx 
=  \int_{\Omega} \bigl(  \mu |\gradu |^p u^{-} + h(x) u^{-} \bigl) \,dx \geq 0,\]
which implies that $u^{-} = 0$ and so that $u \geq 0$.
Hence $\pLaplac u\gneqq 0$ and by  the strong maximum principle 
(see \cite{V_1984}), we have $u \gg 0$.
\medbreak
 \noindent \textit{ii)}
In case $\lambda=\gamma_1$ we have, for every $u\in \Wop$,
\begin{equation}
\label{Lemeq2}
\int_{\Omega}  \bigl( |\nabla u|^p - \gamma_1 c(x) |u|^p \bigr) \, dx \geq0.
\end{equation}
Assume by contradiction that  \eqref{Plambda} has a solution $u$.
By  \eqref{Lemeq1} and \eqref{Lemeq2}, and using that $h \gneqq 0$ and $\mu > 0$, we have in particular
\[ \int_{\Omega}  \Bigl( |\nabla u^{-}|^p - \gamma_1 c(x) |u^{-}|^p \Bigr)\, dx=0.\]
This implies that $u^-=k\varphi_1$ for some $k\in \mathbb R$ and $\varphi_1$ the first eigenfunction of 
\eqref{BVPeigenvalue} and hence,  either $u\equiv 0$ or $u\ll0$. As $h\not\equiv0$, 
the first case cannot occur  as $0$ is not a solution of  \eqref{Plambda}. In the second case, as $h\gneqq 0$, 
we have
\[\int_{\Omega}  h(x) u^{-} \, dx>0\]
which contradicts \eqref{Lemeq1}, \eqref{Lemeq2} and $\mu>0$.
\medbreak
 
\noindent \textit{iii)}
Suppose by contradiction that $u$ is a non-negative solution of \eqref{Plambda}. 
As in the proof of i), we prove $u\gg0$ and hence, there exists $D_1>0$ such that $u\geq D_1 d$ with 
$d(x)=\dist(x, \partial\Omega)$.
Let $\varphi_1 > 0$ be the first eigenfunction of \eqref{BVPeigenvalue}. 
As $\varphi_1 \in {\mathcal C}^1(\overline\Omega)$, we have $D_2>0$ such that $\varphi_1\leq D_2 d$. 
This implies that 
$\frac{\varphi_1}{u} \in L^{\infty}(\Omega)$ and 
$\frac{\varphi_1^p}{u^{p-1}} \in \Wop$ with 
\[
\nabla\Big(\frac{\varphi_1^p}{u^{p-1}}\Big)
=p \Big(\frac{\varphi_1}{u}\Big)^{p-1} \nabla\varphi_1 - (p-1) \Big(\frac{\varphi_1}{u}\Big)^{p} \nabla u.
\] 
Hence we can take $\frac{\varphi_1^p}{u^{p-1}}$ as test function in \eqref{Plambda} and we have that
\[
\lambda \int_{\Omega} c(x) \varphi_1^p\, dx 
+ \int_{\Omega}\Big[ \mu  |\gradu|^p + h(x)\Big] \frac{\varphi_1^p}{u^{p-1}}\, dx 
= \int_{\Omega} \nabla \Big( \frac{\varphi_1^p}{u^{p-1}} \Big) |\nabla u|^{p-2} \nabla u\, dx\,.
\]
On the other hand, applying Proposition \ref{picone}, we obtain
\[ \gamma_1 \int_{\Omega} c(x) \varphi_1^p \,dx = \int_{\Omega} |\nabla \varphi_1|^p\, dx 
\geq \int_{\Omega} \nabla \Big( \frac{\varphi_1^p}{u^{p-1}} \Big) |\nabla u|^{p-2} \nabla u\, dx.\]
Consequently, gathering together both inequalities, we have the contradiction
\begin{equation} \label{contradiction} 
0\geq (\gamma_1 - \lambda) \int_{\Omega} c(x) \varphi_1^p \, dx 
\geq \int_{\Omega} [ \mu |\gradu|^p + h(x)] \frac{\varphi_1^p}{u^{p-1}}\, dx>0\,.
\end{equation}
\end{proof}

\begin{cor} 
\label{corlambdaimplique 0}
Assume that \eqref{A2} holds. If,  for some 
$\lambda >0$,  \eqref{Plambda}  has a solution $u_{\lambda}\geq0$  then  \eqref{P0} has a solution.
\end{cor}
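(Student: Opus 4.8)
The plan is to produce an ordered pair of lower and upper solutions of \eqref{P0} and conclude with the lower--upper solution theorem (Theorem \ref{BMP1988}); no variational machinery is needed. Recall that under \eqref{A2} the weaker hypothesis \eqref{A1} holds as well, so all earlier results stated under \eqref{A1} are available.

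First I would check that the given nonnegative solution $u_{\lambda}$ of \eqref{Plambda} is an \emph{upper solution} of \eqref{P0} in the sense of Definition \ref{lowerUpperSolution}. Since $u_{\lambda}\geq0$ we have $u_{\lambda}^{-}=0\in\Wop$, and for every $\varphi\in\Wop\cap\Linfty$ with $\varphi\geq0$, using that $u_{\lambda}$ solves \eqref{Plambda},
\[
\int_{\Omega}|\nabla u_{\lambda}|^{p-2}\nabla u_{\lambda}\,\nabla\varphi\,dx-\mu\int_{\Omega}|\nabla u_{\lambda}|^{p}\varphi\,dx=\int_{\Omega}\bigl(\lambda c(x)u_{\lambda}^{p-1}+h(x)\bigr)\varphi\,dx\geq\int_{\Omega}h(x)\varphi\,dx,
\]
the inequality holding because $\lambda>0$, $c\gneqq0$, $u_{\lambda}\geq0$ force $\lambda c(x)u_{\lambda}^{p-1}\varphi\geq0$. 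This is exactly the supersolution inequality for $\pLaplac u=\mu|\gradu|^{p}+h(x)$, so $u_{\lambda}$ is an upper solution of \eqref{P0}. This sign computation is the only delicate point of the argument, and it is precisely where the nonnegativity of $u_{\lambda}$ is used.

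Next I would supply a lower solution lying below it. Applying Proposition \ref{lowerSol} with $\lambda=0$ gives $\underline{u}_{0}\in\Wop\cap\Linfty$, a lower solution of \eqref{P0} satisfying $\underline{u}_{0}\leq\min\{0,\beta\}$ for every upper solution $\beta$ of \eqref{P0}; in particular $\underline{u}_{0}\leq u_{\lambda}$. Finally, writing \eqref{P0} in the form of \eqref{p1} with $H(x,s,\xi)=-\mu|\xi|^{p}$ and $f=h$, the structural bound of Theorem \ref{BMP1988} holds with the non-decreasing function $b\equiv\mu$ and $k\equiv0$, while $f=h\in L^{q}(\Omega)\subset L^{1}(\Omega)$. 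Hence Theorem \ref{BMP1988}, applied to the ordered pair $\underline{u}_{0}\leq u_{\lambda}$, yields a solution $u$ of \eqref{P0} with $\underline{u}_{0}\leq u\leq u_{\lambda}$, which is the desired conclusion. (Equivalently, one could invoke Theorem \ref{th3} and verify $m_{p}>0$, but the lower--upper solution route above is more direct.)
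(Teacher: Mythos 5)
Your proof is correct and follows the paper's own argument exactly: the paper also observes that $u_{\lambda}$ is an upper solution of \eqref{P0}, invokes Proposition \ref{lowerSol} to obtain a lower solution below it, and concludes with Theorem \ref{BMP1988}. Your only addition is to spell out the supersolution computation and the structural bound $|H(x,s,\xi)|=\mu|\xi|^{p}$, details the paper leaves implicit.
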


\begin{proof}
Observe that $u_{\lambda}$ is an upper solution of   \eqref{P0}.  
By Proposition \ref{lowerSol}, we know that  \eqref{P0} has a 
 lower solution $\alpha$ with $\alpha\leq u_{\lambda}$. The conclusion follows from Theorem \ref{BMP1988}.
\end{proof}

\begin{cor} 
\label{corlambdaimplique 0.2}
Assume that \eqref{A2} holds with $h \gneqq 0$. If   \eqref{Plambda}  has a solution for some 
$\lambda \in(0, \gamma_1)$, then  \eqref{P0} has a solution.
\end{cor}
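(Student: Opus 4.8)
The plan is to reduce this corollary to two facts already established in the excerpt: the positivity of solutions in the range $\lambda \in (0,\gamma_1)$, provided by Lemma \ref{lemmaBehaviour} i), and the implication ``a nonnegative solution for some $\lambda > 0$ forces $(P_0)$ to be solvable,'' which is exactly Corollary \ref{corlambdaimplique 0}. The whole argument is thus a short chaining of these two results, and no new estimate is needed.

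First I would take the hypothesized solution $u_{\lambda}$ of \eqref{Plambda} for the given $\lambda \in (0,\gamma_1)$. Since \eqref{A2} holds with $h \gneqq 0$ and $0 \leq \lambda < \gamma_1$, Lemma \ref{lemmaBehaviour} i) applies directly to $u_{\lambda}$ and yields $u_{\lambda} \gg 0$; in particular $u_{\lambda} \geq 0$. Here the essential ingredient is the spectral gap $\lambda < \gamma_1$, which guarantees $\int_{\Omega}(|\nabla u|^p - \lambda c(x)|u|^p)\,dx \geq \epsilon \|u\|^p$ for some $\epsilon > 0$, so that testing the equation against $u_\lambda^-$ and using $h \gneqq 0$, $\mu > 0$ kills the negative part of $u_\lambda$.

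Then, because $\lambda > 0$ and we now have a nonnegative solution $u_{\lambda}$ of \eqref{Plambda}, the hypotheses of Corollary \ref{corlambdaimplique 0} are met verbatim, and it furnishes a solution of \eqref{P0}. The mechanism underlying that corollary is that $u_{\lambda} \geq 0$ is an upper solution of \eqref{P0} (the sign of $\lambda c(x)|u_\lambda|^{p-2}u_\lambda$ is favorable once $u_\lambda \geq 0$ and $c \gneqq 0$), while Proposition \ref{lowerSol} supplies a lower solution $\alpha \leq u_{\lambda}$; Theorem \ref{BMP1988} then produces a solution trapped between them. I expect essentially no obstacle in this proof: the only point worth noting is that Corollary \ref{corlambdaimplique 0} only requires $u_{\lambda} \geq 0$, which is weaker than the $u_{\lambda} \gg 0$ actually delivered by Lemma \ref{lemmaBehaviour}, so the two pieces fit together with room to spare.
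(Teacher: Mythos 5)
Your proof is correct and follows exactly the paper's own argument: Lemma \ref{lemmaBehaviour} i) gives $u_{\lambda}\gg0$ for $\lambda\in(0,\gamma_1)$, and then Corollary \ref{corlambdaimplique 0} (upper solution $u_{\lambda}$, lower solution from Proposition \ref{lowerSol}, Theorem \ref{BMP1988}) yields a solution of \eqref{P0}. Your additional remarks on why each ingredient works are accurate but not needed beyond the two-step chaining.
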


\begin{proof}
If   \eqref{Plambda}  has a solution $u$, by Lemma \ref{lemmaBehaviour}, we have  $u\gg0$.
The result follows from Corollary \ref{corlambdaimplique 0}.
\end{proof}

\begin{prop} \label{lmhNonNeg}
Assume that \eqref{P0} has a solution $u_0 \in \Wop \cap L^{\infty}(\Omega)$ and suppose that \eqref{A2} holds 
with $h \gneqq 0$. Then there exists $\overline{\lambda}< \gamma_1$ such that:
\begin{enumerate}
\item[i)] For every $0 < \lambda < \overline{\lambda}$, there exists 
$v \in \mathcal{C}_0^{1,\tau}(\overline{\Omega})$, for some $0 < \tau < 1$, with $v \gg 0$, 
which is a local minimum of $I_{\lambda}$ in the $W_0^{1,p}$-topology and a solution of \eqref{Qlambda}.
\item[ii)] For  $\lambda = \overline{\lambda}$, there exists 
$u \in \mathcal{C}_0^{1,\tau}(\overline{\Omega})$, for some $0 < \tau < 1$, with $u \geq u_0$, 
which is a solution of \eqref{Plambda}.
\item[iii)] For $\lambda > \overline{\lambda}$, the problem \eqref{Plambda} has no non-negative solution. 
\end{enumerate}
\end{prop}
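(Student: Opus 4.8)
The plan is to describe the non-negative solution set as a maximal interval and study its endpoint. Let $\Sigma:=\{\lambda>0 : (P_\lambda)\text{ has a non-negative solution}\}$. Since $(P_0)$ has a solution, Theorem \ref{th1} gives (two) solutions of $(P_\lambda)$ for all small $\lambda>0$, and by Lemma \ref{lemmaBehaviour}(i) these are $\gg0$, so $\Sigma\neq\emptyset$; by Lemma \ref{lemmaBehaviour}(ii)--(iii), $\Sigma\subseteq(0,\gamma_1)$. Set $\overline\lambda:=\sup\Sigma\in(0,\gamma_1]$. The basic monotonicity observation is that if $\lambda'\in\Sigma$ and $0<\lambda<\lambda'$, then a solution $u_{\lambda'}$ of $(P_{\lambda'})$ (which is $\gg0$ by Lemma \ref{lemmaBehaviour}(i), and $\mathcal{C}^{1,\tau}(\overline{\Omega})$ by elliptic regularity under \eqref{A2}) is an upper solution of $(P_\lambda)$, because $(\lambda'-\lambda)c(x)u_{\lambda'}^{p-1}\geq0$. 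In particular $\Sigma$ is an interval, and this already yields part (iii): for $\overline\lambda<\lambda<\gamma_1$ we have $\lambda\notin\Sigma$, and for $\lambda\geq\gamma_1$ there is no non-negative solution by Lemma \ref{lemmaBehaviour}.

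For part (i), fix $0<\lambda<\overline\lambda$ and pick $\lambda'\in(\lambda,\overline\lambda]\cap\Sigma$. I would pass to the equivalent problem $(Q_\lambda)$ via Lemma \ref{linkProblem} and Remark \ref{linkLowerUpper}, and work with strict lower and upper solutions. The function $v_0=\tfrac{p-1}{\mu}\bigl(e^{\frac{\mu}{p-1}u_0}-1\bigr)\gg0$ solves $(Q_0)$, hence $\pLaplac v_0=\bigl(1+\tfrac{\mu}{p-1}v_0\bigr)^{p-1}h\leq f_\lambda(x,v_0)$ with strict inequality on $\{c>0\}$, so it is a strict lower solution of $(Q_\lambda)$; the transform $v_{\lambda'}$ of $u_{\lambda'}$ is a strict upper solution since $f_{\lambda'}\geq f_\lambda$ with strict inequality on $\{c>0\}$. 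Applying Corollary \ref{compPrincipPlambda} to $u_0$ (a solution, hence lower solution, of $(P_0)$) and $u_{\lambda'}$ (an upper solution of $(P_0)$) gives $u_0\leq u_{\lambda'}$, and the strong comparison principle (Theorem \ref{smpLP}, \cite{V_1984}) upgrades this to $v_0\ll v_{\lambda'}$. Corollary \ref{localMinimizer} then yields a $\mathcal{C}_0^1$-local minimizer $v$ of $I_\lambda$ with $v_0\ll v\ll v_{\lambda'}$, which Proposition \ref{c01vsWop} turns into a $W_0^{1,p}$-local minimizer; the associated $u$ solves $(P_\lambda)$ and is $\gg0$ by Lemma \ref{lemmaBehaviour}(i), so $v\gg0$.

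For part (ii) I would take $\lambda_n\uparrow\overline\lambda$ together with the first solutions $v_n\geq v_0$ produced in (i). They carry a uniform energy bound $I_{\lambda_n}(v_n)\leq I_{\lambda_n}(v_0)\leq D$, so the aim is to show $\{v_n\}$ is bounded in $\Wop$ and, along a subsequence, converges strongly to a solution $v\geq v_0$ of $(Q_{\overline\lambda})$, giving a solution $u\geq u_0$ of $(P_{\overline\lambda})$. The limit cannot be trivial, since $0$ is not a solution of $(Q_{\overline\lambda})$ (as $h\gneqq0$ and $g(0)=0$). Once this solution exists we conclude $\overline\lambda\in\Sigma$, whence $\Sigma=(0,\overline\lambda]$; and because $(P_{\gamma_1})$ has no solution (Lemma \ref{lemmaBehaviour}(ii)) we get $\overline\lambda\neq\gamma_1$, i.e.\ $\overline\lambda<\gamma_1$, which is the last assertion needed for (ii) and completes (iii).

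The main obstacle is exactly the compactness in (ii) as $\lambda_n\uparrow\overline\lambda$: here the functional $I_{\lambda_n}$ itself varies, so Lemmata \ref{boundednessCerami} and \ref{strongConvergenceCerami}, stated for a single $I_\lambda$, do not apply verbatim. I expect to reprove boundedness by the same normalisation/contradiction scheme (set $w_n=v_n/\|v_n\|$, show $c\,w\equiv0$, and exploit $m_p>0$), verifying that every estimate is uniform for $\lambda_n$ in the compact range $[\lambda_1,\overline\lambda]\subset(0,\gamma_1]$; this is legitimate because $g$, $G$, $H$ and $m_p$ are independent of $\lambda$ and $\lambda_n$ stays bounded away from $0$. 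Strong convergence should then follow from the identity $\langle I_{\lambda_n}'(v_n),v_n-v\rangle=0$ exactly as in Lemma \ref{strongConvergenceCerami}. Making this uniformity rigorous, rather than the sub- and super-solution bookkeeping, is where the real work lies.
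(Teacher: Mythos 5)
Your proposal is correct and follows the paper's own proof essentially step for step: the same definition of $\overline{\lambda}$ as the supremum of the set of $\lambda$'s admitting a non-negative solution, the same strict lower/upper solution construction for \eqref{Qlambda} fed into Corollary \ref{localMinimizer} and Proposition \ref{c01vsWop} to get the local minimizer in part (i), the same limiting sequence $\lambda_n\uparrow\overline{\lambda}$ handled by re-running Lemmata \ref{boundednessCerami} and \ref{strongConvergenceCerami} uniformly in $\lambda_n$ for part (ii) (the paper likewise just says ``arguing exactly as in'' those lemmata), and the same use of non-existence at $\gamma_1$ (Lemma \ref{lemmaBehaviour}) to conclude $\overline{\lambda}<\gamma_1$. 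The only deviations are cosmetic: you take $v_0$ rather than $0$ as the strict lower solution (both strictness claims rest on the same strong comparison principle, Theorem \ref{smpLP}), and you make explicit via Theorem \ref{th1} that the solution set is nonempty, a point the paper leaves implicit.
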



\begin{proof}
Defining
 \[ \overline{\lambda} = \sup\{ \lambda: \eqref{Plambda} \textup{ has a non-negative solution } u_{\lambda} \},\]
we directly obtain that, for $\lambda > \overline{\lambda}$, the problem \eqref{Plambda} has no 
non-negative solution and, by  Lemma \ref{lemmaBehaviour} $ii)$, we see that 
$\overline{\lambda} \leq \gamma_1$. Moreover,  arguing exactly as in the first part of 
Proposition \ref{lmhNonPos}, we deduce that
\[ v_0 = \frac{p-1}{\mu} \big( e^{\frac{\mu}{p-1}u_{0}} - 1 \big) 
\in \Wop \cap \mathcal{C}_0^1(\overline{\Omega})\]
satisfies $v_0\gg0$. 
Now, fix $\lambda\in (0, \overline{\lambda})$.
\medbreak

\noindent\textbf{Step 1:} \textit{$0$ is a strict lower solution of  \eqref{Qlambda}.}
\medbreak

The proof of this step follows the corresponding one of Proposition \ref{lmhNonPos}.
 \medbreak
 \pagebreak

\noindent\textbf{Step 2:} \textit{\eqref{Qlambda} has a strict upper solution.}
\medbreak

By the definition of $\overline{\lambda}$ we can find 
$\delta \in (\lambda, \overline{\lambda})$ and a non-negative solution $u_{\delta}$ of $(P_{\delta})$. As above, 
we easily see that
\[ v_{\delta} = \frac{p-1}{\mu} \big( e^{\frac{\mu}{p-1}u_{\delta}} - 1 \big) 
\in \Wop \cap \mathcal{C}_0^1(\overline{\Omega})\ \]
is a non-negative upper solution of \eqref{Qlambda} and $v_{\delta}\gg0$. 
%
%
Moreover, if $v$ is a solution of \eqref{Qlambda} with 
$v \leq v_{\delta}$, Theorem \ref{smpLP} implies that $v \ll v_{\delta}$. Hence, $v_{\delta}$ is a strict upper 
solution of \eqref{Plambda}. 
\medbreak
 
\noindent\textbf{Step 3:} \textit{Proof of i).} 
\medbreak

The conclusion follows as in Proposition \ref{lmhNonPos}.
\medbreak
 
\noindent\textbf{Step 4:} \textit{Existence of a solution for  $\lambda = \overline{\lambda}$.} 
\medbreak

Let $\{\lambda_n\}$ be a sequence with $\lambda_n< \overline{\lambda}$ and 
$\lambda_n\to \overline{\lambda}$  and $\{v_n\}$ be the corresponding sequence of minimum
of $I_{\lambda_n}$ obtained in i). This implies that $\langle I'_{\lambda_n}(v_n), \varphi\rangle =0$ 
for all $\varphi\in \Wop$. By the above construction, we also have
\[
I_{\lambda_n}(v_n)\leq I_{\lambda_n}(0)= -\frac{p-1}{p\mu} \int_{\Omega}h(x)\,dx.
\]

Arguing exactly as in Lemmata \ref{boundednessCerami} and \ref{strongConvergenceCerami}, we prove easily
the existence of $v\in \Wop$ such that  $v_n\to v$ in $\Wop$ with $v$ a solution of \eqref{Qlambda} for
$\lambda=\overline \lambda$. As $v_n\geq 0$ we obtain also $v\geq 0$, and, by Lemma \ref{linkProblem},
we have  the existence of a solution $u$ of \eqref{Plambda} with $u\geq 0$. 
As $u$ is then an upper solution of \eqref{P0}, we conclude that $u\geq u_0$.
\medbreak

\noindent\textbf{Step 5:} \textit{$\overline{\lambda}<\gamma_1$.} 
\medbreak

As by Lemma \ref{lemmaBehaviour}, the problem \eqref{Plambda} has no solution for $\lambda=\gamma_1$,
this follows from Step 4. 
%
\end{proof}

\begin{proof}[\textbf{Proof of  the second part of Theorem \ref{th2}}]
By Lemma \ref{lemmaBehaviour}, we have $u_0\gg0$.
Let us consider $\overline{\lambda} \in (0,\gamma_1)$ 
given by  Proposition \ref{lmhNonNeg}. Hence, for $\lambda<\overline{\lambda}$, there exists a first critical point
$u_1$, 
which is a local minimum of $I_{\lambda}$. We then argue as in the proof of the first part to obtain
 the second  solution  $u_2$ of \eqref{Plambda}. 
By Lemma \ref{lemmaBehaviour}, these two solutions satisfy $u_i\gg0$ and, by Theorem \ref{compPrinciple}, 
we conclude that  $u_i\geq u_0$. Now, for $\lambda = \overline{\lambda}$, respectively $\lambda > \overline{\lambda}$, the result follows respectively from Proposition \ref{lmhNonNeg} \textit{ii)} and \textit{iii)}.
\end{proof}

\begin{proof}[\textbf{Proof of Theorem \ref{th4}}]

\noindent\textbf{Part 1:} \textbf{Case $\boldsymbol{\lambda\in(0,\gamma_1)}$.}
\medbreak
\noindent\textbf{Step 1:} \textit{There exists $k>0$ such that \eqref{eq cor} has at least one solution.}
\medbreak
Let 
$\lambda_0\in(\lambda, \gamma_1)$ and $\delta$ small enough such that
$$
\lambda_0 \, s^{p-1}\geq  \lambda \Big(\frac{p-1}{\mu}\big( 1+\frac{\mu}{p-1}s \big)
 \ln \big(1+ \frac{\mu}{p-1}s \big)\Big)^{p-1}, \qquad \forall\ s\in [0,\delta].
$$
Define $w$ as a solution of 
\begin{equation}
\label{cll3g}
\pLaplac w = \lambda_0 \, c(x) |w|^{p-2}w  + h(x),\quad w \in W^{1,p}_0(\Omega).
\end{equation}
As $\lambda_0<\gamma_1$, we have $w\gg0$.

For $l$ small enough, $\tilde \beta= l w$ satisfies $0\leq \tilde \beta\leq\delta$ and, for $k$ such that 
$l^{p-1}\geq \big(1+ \frac{\mu}{p-1}\delta \big)^{p-1}k$, it is easy to prove that 
$\beta= \frac{p-1}{\mu} \ln \big(1+ \frac{\mu}{p-1}\tilde\beta \big)$ is an upper solution of  \eqref{eq cor} with 
$\beta\geq0$. As $0$ is a lower solution of \eqref{eq cor}, the claim follows from Theorem \ref{BMP1988}.
\medbreak

\noindent\textbf{Step 2:} \textit{For $k\geq k_0$, the problem \eqref{eq cor} has no solution.}
\medbreak

Let $u$ be a solution of \eqref{eq cor}. By Lemma \ref{lemmaBehaviour}, we have $u\gg0$. This implies that  
$u$ is an upper solution of $(P_{0,k})$. As $0$ is a lower solution of $(P_{0,k})$, 
by  Theorem \ref{BMP1988}, the problem  $(P_{0,k})$ has a solution and hence, by Proposition 
\ref{necessaryCondition}, $m_p>0$ which means that  $k< k_0$. This implies that, for $k\geq k_0$, 
the problem \eqref{eq cor} has no solution.

\medbreak

\pagebreak

\noindent\textbf{Step 3:} \textit{$\overline k = \sup\{k\in(0,k_0): 
\eqref{eq cor} \mbox{ has at least one  solution}\} <k_0$.}
\medbreak
Assume by contradiction that $\overline k=k_0$.
Let $\{k_n\}$ be an increasing sequence such that $k_n\to \overline k$, $k_n\geq \frac12\overline k$ 
and there exists $\{u_{n}\}$  a sequence of solutions of $(P_{\lambda,k_n})$. 
As in the previous step we have that 
 $u_{n}$  is an upper solution  of $(P_{0,\frac12\overline k})$. 
By Theorem \ref{compPrinciple}, we know that  $u_{n}\geq u_0$ with $u_0\gg0$ the solution of   
$(P_{0,\frac12\overline k})$.
Now, let $\phi \in \Wop\cap {\mathcal C}^1_0(\overline\Omega)$ with $\phi\gg0$ and 
\[
\Big( \frac{p-1}{\mu} \Big)^{p-1} \int_{\Omega} | \nabla \phi|^p \,dx
= k_0 \int_{\Omega} h(x) \phi^p \,dx.
\]
Using $\phi^p$ as test 
function and applying Young inequality as in the proof of Proposition \ref{necessaryCondition}, it follows that
\begin{equation*}
\begin{aligned}
\Big( \frac{p-1}{\mu} \Big)^{p-1} \int_{\Omega} | \nabla \phi|^p \,dx
&\geq 
\int_{\Omega} |\gradu_n|^{p-2}\gradu_n \nabla (|\phi|^p)\, dx 
- \mu \int_{\Omega} |\phi|^p |\gradu_n|^p\, dx 
\\
&= 
\lambda \int_{\Omega} c(x)|u_n|^{p-2}u_n \phi^p \,dx + k_n \int_{\Omega} h(x) \phi^p \,dx
\\
&\geq \lambda \int_{\Omega} c(x)|u_0|^{p-2}u_0 \phi^p \,dx +  k_n \int_{\Omega} h(x) \phi^p \,dx.
 \end{aligned}
\end{equation*}
Passing to the limit, we have the contradiction
\begin{equation*}
\Big( \frac{p-1}{\mu} \Big)^{p-1} \int_{\Omega} | \nabla \phi|^p \,dx
\geq 
\lambda \int_{\Omega} c(x)|u_0|^{p-2}u_0 \phi^p \,dx 
+  \Big( \frac{p-1}{\mu} \Big)^{p-1} \int_{\Omega} | \nabla \phi|^p \,dx.
\end{equation*}
\medbreak
\noindent\textbf{Step 4:} \textit{For $k> \overline k$, the problem \eqref{eq cor} has no 
solution and for $k< \overline k$, the problem \eqref{eq cor} 
has at least two solutions $u_1$, $u_2$ with $u_i\gg0$.}
\medbreak
The first statement is obvious by definition of $\overline k$. Now, for $k< \overline k$, let $\tilde k \in (k, \overline k)$ such that $(P_{\lambda, \tilde k})$ has a solution $\tilde u$. By Lemma \ref{lemmaBehaviour}, we have $\tilde u\gg0$. Then, it is easy to observe that
$\beta_1=\big(\frac{k}{\tilde k}\big)^{\frac{1}{p-1}}\tilde u$ and $\beta_2= \tilde u$
are both upper solutions of \eqref{eq cor} with $0\ll \beta_1\ll \beta_2$.

Observe that $0$ is a strict lower solution of \eqref{eq cor}. As $\beta_1\gg0$ is an upper solution 
of \eqref{eq cor}, by Theorem \ref{BMP1988}, the problem  \eqref{eq cor} has a minimum solution $u_1$ with 
$0\ll u_1\leq \beta_1$.

In order to prove the existence of the second solution, observe that if $\beta_2$ is not strict, it means that 
 \eqref{eq cor} has a solution $u_2$ with $u_2\leq \beta_2$ but $u_2\not\ll \beta_2$. 
 Then $u_2\not=u_1$ and we have our two solutions.
 If $\beta_2$ is strict, we argue as in the proof of Theorem \ref{th2}.
\bigbreak
\noindent\textbf{Step 5:} \textit{The function $\overline k(\lambda)$ is non-increasing.}
\medbreak
Let us consider $\lambda_1<\lambda_2$,  $\tilde k < \overline k(\lambda_2)$ and $\tilde u\gg0$ a solution of 
$(P_{\lambda_2, \tilde k})$. It is easy to prove that $\tilde u$ is an upper solution of 
$(P_{\lambda_1, \tilde k})$. As $0$ is a lower solution of $(P_{\lambda_1, \tilde k})$ with $0\leq \tilde u$, 
by Theorem  \ref{BMP1988}, the problem $(P_{\lambda_1, \tilde k})$ has a solution. 
This implies that  $\overline k(\lambda_1)\geq \overline k(\lambda_2)$.
\medbreak

\noindent\textbf{Part 2:} \textbf{Case $\boldsymbol{\lambda=\gamma_1}$.}
\medbreak
By Lemma  \ref{lemmaBehaviour}, we know that the problem $(P_{\gamma_1})$ 
has no solution for $k>0$. Moreover, by \eqref{Lemeq1}, we see that if $(P_{\gamma_1})$ with $h\equiv 0$ 
has a non-trivial solution, then $u\gneqq 0$ and hence, by the strong maximum principle $u\gg0$.
Arguing as in the proof of iii) of Lemma \ref{lemmaBehaviour}, 
we obtain the same contradiction \eqref{contradiction}.
\medbreak

\noindent\textbf{Part 3:} \textbf{Case $\boldsymbol{\lambda>\gamma_1}$.}
\medbreak

\noindent\textbf{Step 1:} \textit{There exists $k>0$ such that \eqref{eq cor} has at least one solution $u\ll0$.}
\medbreak
 
By Proposition \ref{anti} with $\bar{h} =h$, 
there exists $\delta_0>0$ such that, for $\lambda\in(\gamma_1,\gamma_1+\delta_0)$, the solution of 
\begin{equation}
\label{cll2}
\pLaplac w = \lambda \, c(x) |w|^{p-2}w  +  h(x)\,,\quad w \in W^{1,p}_0(\Omega)\,,
\end{equation}
satisfies $w\ll0$. Let us fix $\lambda_0\in (\gamma_1,\min(\gamma_1+\delta_0, \lambda))$ and
 $\delta$ small enough such that
$$
\lambda_0|s|^{p-2}s\geq  \lambda \Big|\frac{p-1}{\mu}\big( 1+\frac{\mu}{p-1}s \big)
 \ln \big(1+ \frac{\mu}{p-1}s \big)\Big|^{p-2}
\frac{p-1}{\mu}\big( 1+\frac{\mu}{p-1}s \big) \ln \big(1+ \frac{\mu}{p-1}s \big), \quad \forall\ s\in [-\delta,0]\,.
$$
Define $w$ as a solution of 
\begin{equation}
\label{cll3g}
\pLaplac w = \lambda_0 \, c(x) |w|^{p-2}w  +  h(x),\quad u \in W^{1,p}_0(\Omega).
\end{equation}
As $\gamma_1<\lambda_0<\gamma_1+\delta_0$, we have $w\ll0$.

For $l$ small enough, $\tilde \beta= l w$ satisfies $-\min(\delta, \frac{p-1}{\mu}) < \tilde \beta\leq 0$ and, 
for $k\leq l^{p-1}$, it is easy to prove that 
$\beta= \frac{p-1}{\mu} \ln \big(1+ \frac{\mu}{p-1}\tilde\beta \big)$ is an upper solution of  \eqref{eq cor} with 
$\beta\ll0$. By Proposition \ref{lowerSol},  \eqref{eq cor} has  a lower solution $\alpha$ with $\alpha\leq \beta$ 
and  the claim follows from Theorem \ref{BMP1988}.
\medbreak
\noindent\textbf{Step 2:} \textit{For $k$ large enough, the problem \eqref{eq cor} has no solution.}
\medbreak

Otherwise, let $u$ be a solution of \eqref{eq cor}. By Lemma 
\ref{lowerBound} and Remark \ref{remuniflb}, we have $M_{\lambda}>0$ such that, for all $k>0$, 
the corresponding solution $u$ satisfies $u\geq -M_{\lambda}$.
Let $\phi \in {\mathcal C}^1_0(\overline\Omega)$ with $\phi\gg0$. 
Using $\phi^p$ as test 
function,  by Young inequality as in the proof of Proposition \ref{necessaryCondition}, it follows  that
\begin{equation*}
\begin{aligned}
\Big( \frac{p-1}{\mu} \Big)^{p-1} \int_{\Omega} | \nabla \phi|^p \,dx
&\geq 
\int_{\Omega} |\gradu|^{p-2}\gradu \nabla (\phi^p)\, dx 
- \mu \int_{\Omega} \phi^p |\gradu|^p\, dx 
\\
&= 
\lambda \int_{\Omega} c(x)|u|^{p-2}u \phi^p \,dx + k\int_{\Omega} h(x) \phi^p \,dx
\\
&\geq - \lambda  M^{p-1}\int_{\Omega} c(x) \phi^p \,dx +  k \int_{\Omega} h(x) \phi^p \,dx.
 \end{aligned}
\end{equation*}
which is a contradiction for $k$ large enough.
\medbreak

\noindent\textbf{Step 3:} \textit{Define $\tilde k_1 = \sup\{k>0: 
\eqref{eq cor} \mbox{ has at least one  solution } u \ll0 \}$. For $k< \tilde k_1 $, the problem \eqref{eq cor} 
has at least two solutions with $u_1\ll0$ and $\min u_2<0$.}
\medbreak

For $k< \tilde k_1 $, let $\tilde k \in (k, \tilde k_1 )$ such that $(P_{\lambda, \tilde k})$ 
has a solution $\tilde u\ll0$. It is then easy to observe that $\beta_1= \tilde u$ and 
$\beta_2=\big(\frac{k}{\tilde k}\big)^{\frac{1}{p-1}}\tilde u$ 
are both upper solutions of \eqref{eq cor} with $ \beta_1\ll \beta_2\ll0$.

By Proposition \ref{lowerSol},  \eqref{eq cor} has  a lower solution $\alpha$ with $\alpha\leq \beta_1$ and hence, 
by Theorem \ref{BMP1988}, the problem  \eqref{eq cor} has a minimum solution $u_1$ with 
$\alpha \leq u_1\leq \beta_1$.

In order to prove the existence of the second solution, observe that if $\beta_2$ is not strict, it means that 
\eqref{eq cor} has a solution $u_2$ with $u_2\leq \beta_2$ but $u_2\not\ll \beta_2$. 
Then $u_2\not=u_1$ and we have our two solutions.
If $\beta_2$ is strict, we argue as in the proof of Theorem \ref{th2}.

\medbreak

\noindent\textbf{Step 4:} \textit{Define $\tilde k_2 = \sup\{k>0: 
\eqref{eq cor} \mbox{ has at least one  solution}\}$. For $k> \tilde k_2 $, the problem \eqref{eq cor} 
has no solution and, in case $\tilde k_1< \tilde k_2$, 
 for all $k\in(\tilde k_1, \tilde k_2)$, the problem \eqref{eq cor}
 has at least one solution $u$ with 
$u\not\ll 0$ and $\min u<0$.}
\medbreak

The first statement follows directly from the definition of $\tilde k_2$. In case $\tilde k_1< \tilde k_2$, 
for $k\in(\tilde k_1, \tilde k_2)$,  let $\tilde k \in (k, \tilde k_2)$ such that $(P_{\lambda, \tilde k})$ has a solution 
$\tilde u$. Observe that $\tilde u$ is an upper solution of  \eqref{eq cor}. Again, Proposition \ref{lowerSol} gives us 
a lower solution  $\alpha$  of  \eqref{eq cor}  with $\alpha\leq \tilde u$ and hence, 
by Theorem \ref{BMP1988}, the problem  \eqref{eq cor} has a solution $u$.
By definition of $\tilde k_1$, we have that $u\not\ll0$ and by Lemma \ref{lemmaBehaviour}, 
we know that $\min u<0$.
\medbreak

\noindent\textbf{Step 5:} \textit{The function $\tilde k_1(\lambda)$ is non-decreasing.}
\medbreak

Let us consider $\lambda_1<\lambda_2$,  $k < \tilde k_1(\lambda_1)$ and $u\ll0$ a solution of 
$(P_{\lambda_1,  k})$. It is easy to prove that $u$ is an upper solution of 
$(P_{\lambda_2, k})$. Again, applying Proposition \ref{lowerSol}  and Theorem  \ref{BMP1988}, we prove that 
the problem $(P_{\lambda_2, k})$ has a solution $u \ll 0$. 
This implies that  $\tilde k_1(\lambda_1)\leq \tilde k_1(\lambda_2)$.
\end{proof}


\appendix

\section{Sufficient condition} \label{AR}

\begin{lemma} 
Given $f \in L^r(\Omega)$, $r > \max\{ N/p,1\}$ if $p\not=N$ and  $1<r<\infty$ if $p=N$, let us consider
\[ E_f(u) = \Big( \int_{\Omega} \big(|\gradu|^p - f(x)|u|^p\big) \, dx \Big)^{\frac{1}{p}}\]
for an arbitrary $u \in \Wop$. It follows that:
\begin{enumerate}
\item[i)] If $1 < p < N$ and $\|f^{+}\|_{N/p} < S_N$, $E_f(u)$ is an equivalent norm in $\Wop$.
\item[ii)] If $p = N$ and $\|f^{+}\|_{r} < S_{N,r}$, 
$E_f(u)$ is an equivalent norm in $\Wop$.
\item[iii)] If $p > N$ and $\|f^{+}\|_1 < S_N$, $E_f(u)$ is an equivalent norm in $\Wop$.
\end{enumerate}
where, for $p \neq N$, $S_N$ denotes the optimal constant in the Sobolev inequality, i.e. 
\[ S_N = \inf \big\{ \|\gradu\|_p^p: u \in \Wop, \|u\|_{\past} = 1 \big\}\,,\]
and, for $p = N$,
\[ S_{N,r} = \inf \left\{ \|\gradu\|_p^p: u \in \Wop, \|u\|_{\frac{Nr}{r-1}} = 1 \right\}\,.\]
\end{lemma}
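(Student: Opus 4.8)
The plan is to establish, in each of the three regimes, a two-sided bound of the form $c^{\,p}\|u\|^p \le E_f(u)^p \le C^{\,p}\|u\|^p$ with $0<c\le C$, where $\|u\|=\big(\int_\Omega|\nabla u|^p\big)^{1/p}$; together with the fact that $E_f$ is positively homogeneous of degree one and that the lower bound forces $E_f(u)=0$ only for $u=0$, this yields the claimed equivalence with the standard norm. Writing
\[
E_f(u)^p=\|u\|^p-\int_\Omega f(x)|u|^p\,dx=\|u\|^p-\int_\Omega f^{+}|u|^p\,dx+\int_\Omega f^{-}|u|^p\,dx,
\]
the entire matter reduces to controlling $\int_\Omega f^{\pm}|u|^p\,dx$ by $\|u\|^p$, the positive part $f^{+}$ governing the coercivity (lower) bound and the negative part $f^{-}$ the continuity (upper) bound.

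First I would treat case i), $1<p<N$. Applying H\"older's inequality with the conjugate exponents $N/p$ and $N/(N-p)$ gives $\int_\Omega f^{\pm}|u|^p\le \|f^{\pm}\|_{N/p}\,\|u\|_{\past}^p$, since $p\cdot\frac{N}{N-p}=\past$. The Sobolev inequality, in the form $S_N\|u\|_{\past}^p\le\|\nabla u\|_p^p$ coming directly from the definition of $S_N$, then yields $\int_\Omega f^{\pm}|u|^p\le \frac{\|f^{\pm}\|_{N/p}}{S_N}\|u\|^p$. Consequently
\[
\Big(1-\tfrac{\|f^{+}\|_{N/p}}{S_N}\Big)\|u\|^p\le E_f(u)^p\le\Big(1+\tfrac{\|f^{-}\|_{N/p}}{S_N}\Big)\|u\|^p,
\]
and the hypothesis $\|f^{+}\|_{N/p}<S_N$ makes the left-hand constant strictly positive, while the right-hand bound is automatic.

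The remaining cases follow the identical scheme with the H\"older/Sobolev pair adjusted to the regime. For case ii), $p=N$, I would use H\"older with exponents $r$ and $r'=r/(r-1)$ to obtain $\int_\Omega f^{\pm}|u|^p\le\|f^{\pm}\|_r\,\|u\|_{Nr/(r-1)}^p$ (using $p\,r'=Nr/(r-1)$), and then invoke $S_{N,r}$ exactly as defined to get $\int_\Omega f^{\pm}|u|^p\le\frac{\|f^{\pm}\|_r}{S_{N,r}}\|u\|^p$. For case iii), $p>N$, one has $\past=+\infty$ and $S_N$ is the best constant of the embedding $\Wop\hookrightarrow L^{\infty}(\Omega)$; here I would bound crudely $\int_\Omega f^{\pm}|u|^p\le\|f^{\pm}\|_1\,\|u\|_\infty^p\le\frac{\|f^{\pm}\|_1}{S_N}\|u\|^p$. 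In both cases the smallness hypothesis on $\|f^{+}\|$ again produces a strictly positive coercivity constant.

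The argument is essentially a careful bookkeeping of exponents, so there is no serious analytic obstacle; the only point requiring attention is matching the H\"older exponent to the relevant Sobolev (or $L^\infty$) embedding so that the product exponent on $|u|$ lands \emph{exactly} on the space for which the optimal constant $S_N$ (resp. $S_{N,r}$) is defined. I would also record at the outset that $E_f$ is homogeneous of degree one and that the coercivity bound is precisely what certifies both positive-definiteness and, via the infimum in \eqref{mp}, the inequality $m_p>0$ used throughout the body of the paper.
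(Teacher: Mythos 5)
Your proof is correct and follows essentially the same route as the paper: in each regime you combine H\"older's inequality (with the exponent pair matched to $N/p$, $r$, or $L^1$) with the optimal Sobolev constant $S_N$ (respectively $S_{N,r}$, or the $L^\infty$-embedding constant when $p>N$) to get the two-sided bound $\big(1-\|f^{+}\|/S\big)\|u\|^p\le E_f(u)^p\le C\|u\|^p$, exactly as in the paper's proof. The only cosmetic difference is that you bound the upper side by $\|f^{-}\|$ where the paper uses $\|f\|$, a slight sharpening that changes nothing.
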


\begin{proof}
We give the proof for $1 < p < N$. The other cases can be done in the same way. 
First of all, by applying H\"{o}lder and Sobolev's inequalities, observe that, 
for any $h \in L^{\frac{N}{p}}(\Omega)$, it follows that
\[ \int_{\Omega} h(x) |u|^p dx \leq \|h\|_{\frac{N}{p}}\|u\|_{\past}^p 
\leq \frac{1}{S_N}\|h\|_{\frac{N}{p}}\|\gradu\|_p^p.\]
On the one hand, by using this inequality,  observe that
\begin{equation*}
\int_{\Omega} \Big(|\gradu|^p - f(x)|u|^p \Big)\, dx  
\leq 
\|u\|^p \Big( 1 + \frac{\|f\|_{\frac{N}{p}}}{S_N} \Big). 
\end{equation*}
On the other hand, following the same argument, we obtain that
\begin{equation*}
\int_{\Omega} \Big(|\gradu|^p - f(x)|u|^p \Big) \,dx 
\geq \int_{\Omega} \Big(|\gradu|^p - f^{+}(x)|u|^p\Big) \,dx 
\geq 
\|u\|^p \Big( 1 - \frac{\|f^{+}\|_{\frac{N}{p}}}{S_N} \Big) = A \|u\|^p
\end{equation*}
with $A > 0$  since $\|f^{+}\|_{\frac{N}{p}} < S_N$. The result follows. 
\end{proof}


As an immediate Corollary, we have a sufficient condition to ensure that $m_p > 0$.

\begin{cor}
Recall that $m_p$ is defined by \eqref{mp}.
Under the assumptions \eqref{A1},  it follows that:
\begin{enumerate}
\item[i)] If $1 < p < N$, then  $\|h^{+}\|_{N/p} < \big( \frac{p-1}{\mu} \big)^{p-1} S_N$ implies $m_p > 0$.
\item[ii)] If $p = N$, then $\|h^{+}\|_q < \big( \frac{p-1}{\mu} \big)^{p-1} S_{N,q}$ implies $m_p > 0$.
\item[iii)] If $p > N$, then $\|h^{+}\|_1 < \big( \frac{p-1}{\mu} \big)^{p-1} S_N$ implies $m_p > 0$.
\end{enumerate}
\end{cor}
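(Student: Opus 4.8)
The plan is to read the corollary off directly from the preceding Lemma, after recasting $m_p$ as the infimum of the quantity $E_f$ for a suitable weight $f$. Concretely, I would set
\[
f(x) = \Big(\frac{\mu}{p-1}\Big)^{p-1} h(x),
\]
so that, for every $w \in \Wop$,
\[
\int_{\Omega} \Big(|\nabla w|^p - \big(\tfrac{\mu}{p-1}\big)^{p-1} h(x)|w|^p\Big)\,dx = \big(E_f(w)\big)^p,
\]
and consequently $m_p = \inf\big\{ (E_f(w))^p : w\in\Wop,\ \|w\|=1\big\}$, with $m_p$ as in \eqref{mp}.

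The next step is to translate the hypotheses. Since $\mu > 0$ is a constant, the factor $\big(\frac{\mu}{p-1}\big)^{p-1}$ is a fixed positive number, so $f^+ = \big(\frac{\mu}{p-1}\big)^{p-1} h^+$ and therefore $\|f^+\|_{N/p} = \big(\frac{\mu}{p-1}\big)^{p-1}\|h^+\|_{N/p}$ in case i), with the analogous identities $\|f^+\|_q = \big(\frac{\mu}{p-1}\big)^{p-1}\|h^+\|_q$ in case ii) and $\|f^+\|_1 = \big(\frac{\mu}{p-1}\big)^{p-1}\|h^+\|_1$ in case iii). Each smallness assumption of the corollary is thus exactly equivalent to the corresponding smallness condition $\|f^+\|_{N/p} < S_N$ (resp. $\|f^+\|_q < S_{N,q}$, resp. $\|f^+\|_1 < S_N$) appearing in the preceding Lemma.

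I would then simply invoke that Lemma: under the relevant smallness condition, $E_f$ is an equivalent norm on $\Wop$, so there is a constant $A > 0$ with $\big(E_f(w)\big)^p \geq A\|w\|^p$ for all $w \in \Wop$; in fact the Lemma's proof provides the explicit value $A = 1 - \|f^+\|_{N/p}/S_N$ in case i), and similarly in the other cases. Taking the infimum over the unit sphere $\{\|w\|=1\}$ yields $m_p \geq A > 0$, which is the assertion.

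There is no genuine obstacle here: all the analytic content, namely the Hölder and Sobolev estimates establishing the lower bound, is already contained in the preceding Lemma, and the corollary is a one-line consequence once $m_p$ is rewritten as $\inf (E_f)^p$. The only point requiring care is the bookkeeping of constants, and this is immediate precisely because $\mu$ is a positive constant, so that $\big(\frac{\mu}{p-1}\big)^{p-1}$ factors cleanly out of the weight $f = \big(\frac{\mu}{p-1}\big)^{p-1} h$ and moves across each inequality without affecting the direction.
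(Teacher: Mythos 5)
Your proof is correct and is exactly the paper's route: the paper derives the corollary as an immediate consequence of the preceding lemma applied to $f = \big(\tfrac{\mu}{p-1}\big)^{p-1}h$, which is precisely your reduction, with the constant bookkeeping and the lower bound $m_p \geq A > 0$ handled as in the lemma's proof.
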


\bibliographystyle{plain}
\bibliography{Bibliography}

\end{document}